\newcommand\abs[1]{\lvert #1\rvert}
\newtheorem{THM}{Theorem}[section]
\newtheorem{LEM}[THM]{Lemma}
\newtheorem*{THMMAIN}{Theorem \ref{thm:mainthm}}
\newtheorem{COR}[THM]{Corollary}
\newtheorem{PROP}[THM]{Proposition}
\newtheorem{CLAIM}{Claim}
\theoremstyle{remark}
\theoremstyle{definition}
\newcommand\rank{\operatorname{rank}}
\newcommand\pivot{\wedge}
\newcommand\mat{\boxminus}
\newcommand\tri{\boxslash}
\newcommand\antimat{\boxtimes}
\def\K_#1{{K_{#1}}}
\def\S_#1{\overline{K_{#1}}}
\begin{document}
\title[Unavoidable vertex-minors]{Unavoidable vertex-minors\\ in large prime graphs}
\author{O-joung Kwon}
\author{Sang-il Oum}
\address{Department of Mathematical Sciences, KAIST, 291 Daehak-ro
  Yuseong-gu Daejeon, 305-701 South Korea}
\email{ojoung@kaist.ac.kr}
\email{sangil@kaist.edu}
\thanks{Supported by Basic Science Research
  Program through the National Research Foundation of Korea (NRF)
  funded by  the Ministry of Science, ICT \& Future Planning
  (2011-0011653).}
\date{\today}
\begin{abstract}
  A graph is \emph{prime} (with respect to the split decomposition)
  if its vertex set does not admit a partition $(A,B)$ (called a \emph{split}) 
  with $\abs{ A}$, $\abs{B} \ge 2$ such that 
  the set of edges joining $A$ and $B$ induces a complete bipartite
  graph.

  We prove that for each $n$,  there exists $N$ such that 
  every prime graph on at least $N$ vertices contains 
  a vertex-minor isomorphic to either a cycle of length $n$
  or  a graph consisting of two disjoint cliques of size $n$ joined by
  a matching.
\end{abstract}
\keywords{vertex-minor, split decomposition, blocking
  sequence, prime, generalized ladder}
\maketitle
\section{Introduction}\label{sec:intro}
In this paper, all graphs are simple and undirected.
We write 
$P_n$ and $C_n$ to denote a graph that is  a path and a cycle on $n$ vertices, respectively.
 We aim to find analogues of the following theorems. 
\begin{itemize}
\item (Ramsey's theorem)

  For every $n$, there exists $N$ such that
every graph on at least $N$ vertices contains 
an induced subgraph isomorphic to $\K_n$ or $\S_n$.
\item (folklore;  see Diestel's book \cite[Proposition 9.4.1]{Diestel2010}) 

  For every $n$, there exists $N$ such that 
  every \emph{connected} graph on at least $N$ vertices contains
  an induced subgraph isomorphic to 
  $K_n$, $K_{1,n}$, or $P_n$.
\item (folklore; see Diestel's book \cite[Proposition 9.4.2]{Diestel2010})

  For every $n$, there exists $N$ such that
  every \emph{$2$-connected} graph on at least $N$ vertices
  contains a topological minor isomorphic to 
  $C_n$ or $K_{2,n}$.
\item (Oporowski, Oxley, and Thomas \cite{OOT1993})

  For every $n$, there exists $N$ such that every \emph{$3$-connected}
  graph 
  on at least $N$ vertices
  contains a  minor isomorphic to  the wheel graph $W_n$ on $n$ vertices or $K_{3,n}$.
\item (Ding, Chen \cite{DC2004})

  For every integer $n$, there exists $N$ such that 
  every \emph{connected and co-connected} graph on at least $N$
  vertices
  contains an induced
  subgraph
  isomorphic to $P_n$, $K_{1,n}^s$ (the graph obtained from $K_{1,n}$
  by subdividing one edge once), 
  $K_{2,n}\setminus e$, 
  or 
  $K_{2,n}/e\setminus f \setminus g $ where $\{f,g\}$ is a matching in $K_{2,n}/e$.
  A graph is \emph{co-connected} if its complement graph is connected.

\item (Chun, Ding, Oporowski, and Vertigan~\cite{CDOV2009})

   For every integer $n\ge 5$, there exists $N$ such that
   every {internally $4$-connected} graph on at least $N$ vertices
   contains a {parallel minor} isomorphic to
   $K_n$, $K_{4,n}'$ ($K_{4,n}$ with a complete graph on the vertices of degree $n$),
   $TF_n$ (the $n$-partition triple fan with a complete graph on the vertices of degree $n$), 
   $D_n$ (the $n$-spoke double wheel), 
   $D_n'$ (the $n$-spoke double wheel with axle), 
   $M_n$ (the $(2n+1)$-rung Mobius zigzag ladder), 
   or $Z_n$ (the $(2n)$-rung zigzag ladder).

\end{itemize}
These theorems commonly state that every sufficiently large graph %
having 
certain connectivity 
contains at least one graph in the list of \emph{unavoidable}
graphs by certain graph containment relation.
Moreover in each theorem, the list of unavoidable graphs is \emph{optimal} in the sense that each unavoidable graph
in the list has the required connectivity, 
can be made arbitrary large, 
and does not contain other unavoidable graphs in the list.

In this paper, we discuss \emph{prime} graphs as a connectivity
requirement.
A \emph{split} of a graph $G$ is a partition $(A,B)$ of the vertex set $V(G)$
having subsets $A_0\subseteq A$, $B_0\subseteq B$ such that
$\abs{A},\abs{B}\ge 2$ and 
a vertex $a\in A$ is adjacent to a vertex $b\in B$
if and only if 
$a\in A_0$ and $b\in B_0$.
This concept was first studied by Cunningham~\cite{Cunningham1982} in
his research on split decompositions. 
We say that a graph is \emph{prime} if it has no splits. 
Sometimes we say a graph is \emph{prime with respect to split
decomposition} to distinguish with another notion of primeness with
respect to modular decomposition.

Prime graphs play important role in the study of circle graphs
(intersection graphs of chords in a circle) and
their recognition algorithms. Bouchet~\cite{Bouchet1987b},
Naji~\cite{Naji1985}, and Gabor, Hsu, and Supowit~\cite{GSH1989}
independently showed that 
prime circle graphs have a unique chord diagram.  This
is comparable to the fact that $3$-connected planar graphs have a
unique planar embedding.

The graph containment relation we will mainly discuss is
called a \emph{vertex-minor}. A graph $H$ is a \emph{vertex-minor} of
a graph $G$ if there exist a sequence $v_1,v_2,\ldots,v_n$ of (not
necessarily distinct) vertices and a subset $X\subseteq V(G)$ such
that
$H=G*v_1*v_2\cdots *v_n\setminus X$,
where $G*v$ is an operation called \emph{local complementation}, to
take the complement graph only in the neighborhood of $v$. The detailed
description will be given in Section~\ref{subsec:vmpm}.
Vertex-minors are important in circle graphs;
for instance, Bouchet~\cite{Bouchet1994} proved that a graph is a
circle graph if and only if it has no vertex-minor isomorphic to one
of three particular graphs.

Prime graphs have been studied with respect to vertex-minors,
perhaps because local complementation preserves prime graphs, shown by
Bouchet~\cite{Bouchet1987b}.
In addition, he showed the following. 
\begin{THM}[Bouchet~\cite{Bouchet1987b}]\label{thm:bouchet}
  Every prime graph on at least $5$ vertices must contain a
  vertex-minor isomorphic to $C_5$.
\end{THM}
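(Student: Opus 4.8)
The plan is to induct on $\abs{V(G)}$. For the base case $\abs{V(G)}=5$ it suffices to check that every prime graph on $5$ vertices is locally equivalent to $C_5$; since local complementation preserves primeness and also preserves the property of having a fixed vertex-minor, this is a finite verification, and the prime $5$-vertex graphs turn out to be exactly $C_5$, the house ($C_5$ plus one chord), and the remaining members of the local equivalence class of $C_5$---each of which has $C_5$ as a vertex-minor.

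For the inductive step it is enough to prove the reduction: \emph{if $G$ is prime with $n:=\abs{V(G)}\ge 6$, then $G$ has a prime vertex-minor on $n-1$ vertices}; granting this, that vertex-minor is prime on at least $5$ vertices, so by induction it---hence $G$ as well---has a vertex-minor isomorphic to $C_5$. To prove the reduction I would look for a graph $G'$ locally equivalent to $G$ and a vertex $v$ of $G'$ with $G'\setminus v$ prime. If $G\setminus v$ is already prime for some $v$, we are done; so suppose every $G\setminus v$ has a split $(A_v,B_v)$.

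The key tool is a comparison of cut-ranks inside a subgraph versus in $G$, i.e.\ blocking sequences. Fix $v$ and a split $(A,B):=(A_v,B_v)$ of $G\setminus v$ with $\abs A\le\abs B$ and $\min(\abs A,\abs B)$ as small as possible; write the split via $A_0\subseteq A$ and $B_0\subseteq B$. Since $G$ is prime, neither $(A\cup\{v\},B)$ nor $(A,B\cup\{v\})$ is a split of $G$, so the pair $(A,B)$ does not extend to a split of $G$; as $V(G)\setminus(A\cup B)=\{v\}$, the theory of blocking sequences tells us that $v$ alone forms a blocking sequence for $(A,B)$. Spelling this out with the cut-rank function $\rho$---using $\rho_G(A)\ge 2$ and $\rho_G(A\cup\{v\})\ge 2$ from primeness---one obtains the concrete constraints $N(v)\cap A\notin\{\emptyset,A_0\}$ and $N(v)\cap B\notin\{\emptyset,B_0\}$. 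From this configuration the strategy is: perform local complementations at $v$ and at well-chosen vertices of $A\cup B$ to simplify how $N(v)$ meets $A$ and $B$; delete one vertex of $A\cup B$ that has thereby become ``almost a twin'' of another; and verify by a short cut-rank computation that the resulting $(n-1)$-vertex graph has no split, completing the induction.

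The main obstacle is this last step. One has to split into cases according to whether $\abs A=2$ or $\abs A\ge 3$ and according to the position of $N(v)$ relative to $A_0$ and $B_0$, and in each case exhibit the right sequence of local complementations together with the vertex to delete while checking that no new split appears---this is the delicate part, the rest being bookkeeping with $\rho$. (Alternatively one could shortcut it by invoking known ``vertex reducibility'' results for prime graphs, but the scheme above is self-contained.)
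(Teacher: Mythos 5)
The paper does not prove this theorem; it is quoted as a known result of Bouchet~\cite{Bouchet1987b}, so there is no ``paper's own proof'' to compare against. Evaluating your proposal on its own merits: the overall architecture (finite base case on $5$ vertices plus an inductive reduction to a prime vertex-minor on one fewer vertex) is a legitimate route, and the blocking-sequence formalism is the right tool for analyzing why a split of $G\setminus v$ cannot extend to $G$. However, there is a genuine gap in the inductive step, and it is exactly where the mathematical substance of Bouchet's theorem lives.

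The reduction ``every prime graph on $n\ge 6$ vertices has a prime vertex-minor on $n-1$ vertices'' is nontrivial, and your sketch does not establish it. You correctly extract the constraints $N(v)\cap A\notin\{\emptyset,A_0\}$ and $N(v)\cap B\notin\{\emptyset,B_0\}$, but the next step — ``perform local complementations\ldots delete one vertex of $A\cup B$ that has thereby become almost a twin of another; and verify\ldots no split'' — is not an argument. Deleting an ``almost twin'' does not in any evident way destroy all splits of the resulting graph; if $u$ were a genuine twin of $u'$ in $G$ then $G$ would already fail to be prime, so the near-twin configuration you hope to engineer is forced to be something weaker, and you have not specified what. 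The case analysis you acknowledge is needed (on $\abs{A}=2$ versus $\abs{A}\ge 3$ and on the intersection pattern of $N(v)$ with $A_0$, $B_0$) is precisely the body of Bouchet's reducibility theorem, and invoking ``known vertex reducibility results'' as a shortcut is circular in spirit: that reducibility statement is essentially equivalent to, and no easier than, what you are trying to prove. To make the proposal complete you would need to either carry out the case analysis in full (showing for each configuration a specific sequence of local complementations and a specific vertex whose deletion yields a prime graph, verified by a cut-rank computation), or cite and correctly apply a precise reduction theorem (e.g.\ a statement of the form: for a prime $G$ on $n\ge 6$ vertices there is a vertex $v$ and a choice among $G\setminus v$, $G*v\setminus v$, $G\pivot vw\setminus v$ that is prime), being careful that such a theorem is not itself deduced from the result you are proving.
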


Here is the main theorem of this paper.
\begin{THMMAIN}
  For every $n$, there is $N$ such that
  every prime graph on at least $N$ vertices has a vertex-minor
  isomorphic to $C_n$ or $\K_n\mat\K_n$. 
\end{THMMAIN}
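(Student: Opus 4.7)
The plan is to prove the theorem in three stages. First, extract a long induced path as a vertex-minor of $G$. Second, use primeness to attach many external vertices to this path along a very restricted family of adjacency patterns. Third, carry out a case analysis on these patterns, producing either $C_n$ or $K_n\mat K_n$ after a bounded number of additional local complementations and deletions.

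Stage one starts from Theorem~\ref{thm:bouchet} as a seed and iterates a Ramsey-style induction to promote $C_5$ into an arbitrarily long induced path $P=v_1v_2\cdots v_m$ appearing as a vertex-minor of $G$. Because local complementation and vertex deletion preserve primeness, the resulting host graph $H$ remains prime and, by choosing $N$ sufficiently large, remains much larger than $m$.

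Stage two classifies the remaining vertices of $V(H)\setminus V(P)$ by their neighborhoods along $P$. There are at most $2^{m}$ possible traces, so a pigeonhole step yields a large set $S$ of external vertices all sharing a single trace $A_0\subseteq V(P)$. Primeness is essential here: were $A_0$ ``trivial'' in a suitable sense, the bipartition $(V(P),S)$, together with a handful of other vertices, would witness a split of $H$, contradicting primeness. I would make this quantitative using blocking sequences from split-decomposition theory: any split in a vertex-minor of $H$ lifts to a short blocking sequence in $H$, and because $\abs{S}$ is so large compared with the length of such sequences, only a short list of attachment types $A_0$ can survive up to local complementation.

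Stage three is the main technical hurdle. For each surviving attachment type I would simultaneously perform local complementations on $H$ and choose a subset of $S$ to keep so that the resulting induced subgraph becomes either a long cycle $C_n$ or the matching-joined-cliques graph $K_n\mat K_n$. I expect the two extremes, ``every external vertex attaches only to one endpoint of $P$'' and ``external vertices form a near-matching with internal vertices of $P$'', to lead respectively to $C_n$ and $K_n\mat K_n$, with all intermediate patterns reducible to one of these two cases via pivots along $P$. The delicate part is the bookkeeping: tracking adjacencies inside $S$ itself after local complementations and ensuring that no stray external-external edges destroy the target structure. Blocking sequences should again be the right tool here, since they give precisely the obstructions to the local splits that must be eliminated in order to read off a clean copy of $C_n$ or $K_n\mat K_n$.
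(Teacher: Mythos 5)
Your Stage One is where the argument breaks: it is simply not true that every large prime graph can be locally complemented into one containing an arbitrarily long induced path. The graph $\K_N\mat\K_N$ is itself prime and arbitrarily large, yet no graph locally equivalent to it contains a long induced path --- if it did, then by the path-to-cycle part of the argument (Theorem~\ref{thm:path-to-cycle} in the paper) it would contain $C_7$ as a vertex-minor, and Proposition~\ref{prop:optimal} shows it does not. This is exactly why $\K_n\mat\K_n$ must appear in the list of unavoidable vertex-minors in the first place. So the correct structure of the proof is a genuine dichotomy at the outset, not a sequential pipeline: one must prove separately that (a) a prime graph with a very long induced path has $C_n$ as a vertex-minor, and (b) a sufficiently large prime graph has either a long induced path or $\K_n\mat\K_n$ as a vertex-minor. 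Your proposal collapses (b) into an unconditional path-extraction step, and the $\K_n\mat\K_n$ outcome then has nowhere to arise except as an afterthought in Stage Three, which cannot be repaired because the hypothesis of Stage Three (a long path exists) already fails for the graphs that are supposed to produce $\K_n\mat\K_n$.

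Your Stages Two and Three are, in spirit, a reasonable sketch of direction (a): the paper does classify the attachment patterns of external vertices along an induced path (via ``patches'' and blocking sequences) and assembles them into a generalized ladder from which $C_n$ is extracted. But direction (b) requires an entirely different mechanism that your proposal lacks. The paper grows a ``broom'' (a path with many attached connected fibers), repeatedly applying Ramsey's theorem and the bipartite unavoidable-minor theorem of Ding, Oporowski, Oxley, and Vertigan to either lengthen the handle (eventually yielding $P_c$) or find $\S_c\antimat\S_c$-type configurations (yielding $\K_c\mat\K_c$ by Lemma~\ref{lem:lengthonecase1}). Nothing in your outline produces the two matched cliques; pivoting along a path and pruning stray edges in a set $S$ of common-trace vertices will not manufacture the required clique structure. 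You would need to add, at minimum, an argument for why a large prime graph with no long induced path must contain a large ``star of blocked fibers'' whose blocking vertices can be organized into the two cliques of $\K_n\mat\K_n$.
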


\begin{figure}
 \tikzstyle{v}=[circle, draw, solid, fill=black, inner sep=0pt, minimum width=3pt]
  \centering
  \begin{tikzpicture}[scale=0.6]
    \node[v](v1) at (0,-1){};
    \node[v](v2) at (1,-1.5){};
    \node[v](v3) at (1.5,-3){};
    \node[v](v4) at (1,-4.5){};
    \node[v](v5) at (0,-5){};
    \node[v](w1) at (5-0,-1){};
    \node[v](w2) at (5-1,-1.5){};
    \node[v](w3) at (5-1.5,-3){};
    \node[v](w4) at (5-1,-4.5){};
    \node[v](w5) at (5-0,-5){};
      \foreach \i in {1,2,3,4} {
        \foreach \j in {\i,...,5} {
          \draw (v\i)--(v\j);
          \draw (w\i)--(w\j);
          }
          }
      \foreach \x in {1,...,5} {
      \draw[thick](v\x)--(w\x);
      }
    \end{tikzpicture}
  \caption{$\K_5\mat \K_5$.}
  \label{fig:k5}
\end{figure}

 The graph $\K_n\mat\K_n$ is a graph obtained by joining two copies of
$\K_n$ by a matching of size $n$, see Figure~\ref{fig:k5}. This notation will be explained in
Section~\ref{subsec:construction}.
In addition, we show that this list of unavoidable vertex-minors in Theorem~\ref{thm:mainthm} is optimal, which will
be discussed in Section~\ref{sec:nonequiv}.
We will heavily use Ramsey's theorem iteratively and so our bound $N$ is astronomical
in terms of $n$. 

The proof is splitted into two parts. 
\begin{enumerate}
\item We first prove that for each $n$, there exists $N$ such that
  every prime graph having an induced path of length $N$
  contains a vertex-minor isomorphic to $C_n$.
  (In fact, we prove that $N=\lceil 6.75n^7\rceil$.)
\item Secondly, we prove that for each $n$, there exists $N$ such that
  every prime graph on at least $N$ vertices
  contains a vertex-minor isomorphic to $P_n$ or $\K_n\mat \K_n$.
\end{enumerate}
To prove (1), we actually prove first that 
every sufficiently large generalized ladder, a  certain type of
outerplanar graphs, contains $C_n$ as a vertex-minor. This will be
shown in Section~\ref{sec:ladder}.
Then, we use the technique of blocking sequences developed by
Geelen~\cite{Geelen1995} to construct a large generalized ladder in a
prime graph having a sufficiently long induced path, shown in
Section~\ref{sec:path}.
Blocking sequences 
will be discussed and developed  in Section~\ref{sec:blocking}.
The second part (2) is discussed in Section~\ref{sec:main}, where we
iteratively use Ramsey's theorem to find a bigger configuration
called a broom inside
a graph.
In Section~\ref{sec:boundsize},  we give
similar theorems of this type on vertex-minors
with respect to less restrictive connectivity requirements.

\section{Preliminaries}\label{sec:prelim}

  	For $X\subseteq V(G)$, let $\delta_G(X)$ be the set of edges
  	having one end in $X$ and another end in $V(G)\setminus X$.
	Let $N_G(x)$ be the set of the neighbors of a vertex $x$ in $G$.
	For $X\subseteq V(G)$, let $G[X]$ be the induced subgraph of $G$ on the vertex set $X$.
	For two disjoint subsets $S,T$ of $V(G)$, let $G[S,T]=G[S\cup T]\setminus (E(G[S])\cup E(G[T]))$. 
	Clearly, $G[S,T]$ is a bipartite graph with the bipartition $(S,T)$.

\subsection{Vertex-minors}\label{subsec:vmpm}

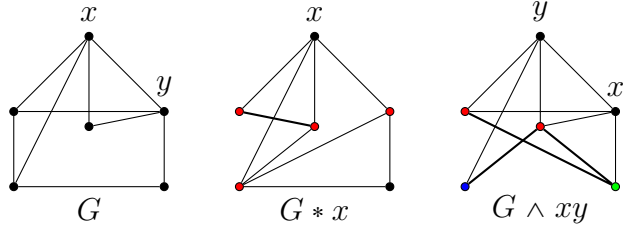
\begin{figure}
  \centering
  \tikzstyle{v}=[circle, draw, solid, fill=black, inner sep=0pt, minimum width=3pt]
  \begin{tikzpicture}
    \node [v,label=$x$] (x) at (1,2) {};
    \node [v,label=$y$] (y) at (2,1) {};
    \node [v] (d) at (2,0) {};
    \node [v] (a) at (0,1) {};
    \node [v] (b) at (0,0) {};
    \node [v] (c) at (1,0.8){};
    \draw (x)--(y)--(d)--(b);\draw (c)--(x)--(a)--(y);
    \draw(a)--(b)--(x);\draw(c)--(y);
    \node at (1,-.3) {$G$};

    \begin{scope}[xshift=3cm]
      \node [v,label=$x$] (x) at (1,2) {};
      \node [v,fill=red] (y) at (2,1) {};
      \node [v] (d) at (2,0) {};
      \node [v,fill=red] (a) at (0,1) {};
      \node [v,fill=red] (b) at (0,0) {};
      \node [v,fill=red] (c) at (1,0.8){};
      \draw (x)--(y)--(d)--(b);\draw (c)--(x)--(a);
      \draw (b)--(x);\draw [thick](c)--(a);\draw(y)--(b)--(c);
      \node at (1,-.3) {$G*x$};
    \end{scope}
    \begin{scope}[xshift=6cm]
      \node [v,label=$y$] (x) at (1,2) {};
      \node [v,label=$x$] (y) at (2,1) {};
      \node [v,fill=green] (d) at (2,0) {};
      \node [v,fill=red] (a) at (0,1) {};
      \node [v,fill=blue] (b) at (0,0) {};
      \node [v,fill=red] (c) at (1,0.8){};
      \draw (x)--(y)--(d);\draw (c)--(x)--(a)--(y);
      \draw(b)--(x);\draw(c)--(y);
      \draw[thick] (a)--(d)--(c);\draw[thick](b)--(c);
      \node at (1,-.3) {$G\pivot xy$};
    \end{scope}
  \end{tikzpicture}
  \caption{Local complementation and pivot.}
  \label{fig:lcpivot}
\end{figure}
The \emph{local complementation} of a graph $G$ at a vertex $v$
is an operation to replace the subgraph of $G$ induced by the
neighborhood of $v$ 
by its complement graph. 
In other words, to apply local complementation at $v$
for every pair $x$, $y$ of neighbors of $v$, 
we flip the pair $x$, $y$, 
where \emph{flipping} means that we delete the edge if it exists and
add it otherwise. 
We write $G*v$ to denote the graph obtained from $G$ by applying
local complementation of $G$ at $v$.
Two graphs are \emph{locally equivalent}
if one is obtained from another by applying a sequence of local
complementations. 
A graph $H$ is a \emph{vertex-minor} of $G$
if $H$ is an induced subgraph of a graph locally equivalent to $G$.

For an edge $xy$ of a graph $G$, 
a graph obtained by \emph{pivoting} an edge $xy$ of $G$
is defined as $G\pivot xy=G*x*y*x$.
Here is a direct way to see $G\pivot xy$; there are $3$ kinds of
neighbors of $x$ or $y$; some are adjacent to both, some are adjacent
to only $x$, others are adjacent to only $y$. We flip the adjacency
between all pairs of neighbors of $x$ or $y$ of distinct kinds
and then swap the two vertices $x$ and $y$.
Two graphs are \emph{pivot-equivalent} 
if one is obtained from another by a sequence of pivots.
Thus, pivot-equivalent graphs are locally equivalent.
See Figure~\ref{fig:lcpivot} for an example of these operations.

The following lemma by Bouchet provides a key tool to investigate
vertex-minors. His proof is based on isotropic systems, which are some
linear algebraic objects corresponding to the equivalence classes of
graphs with respect to local equivalence, introduced by Bouchet~\cite{Bouchet1987a}. A 
direct proof is given by Geelen and Oum~\cite{GO2009}.
\begin{LEM}[Bouchet~\cite{Bouchet1988}; see Geelen and Oum~\cite{GO2009}] \label{lem:bouchet1}
Let $H$ be a vertex-minor of $G$ and let $v\in V(G)\setminus V(H)$. Then $H$ is a vertex-minor of $G\setminus v$, $G\ast v\setminus v$, or $G\wedge vw\setminus v$ for a neighbor $w$ of $v$.
\end{LEM}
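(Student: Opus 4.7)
The plan is to induct on the length $k$ of a sequence of local complementations witnessing $H$ as a vertex-minor of $G$. Write $H = G * u_1 * u_2 * \cdots * u_k \setminus X$ with $v \in X$. The base case $k = 0$ is immediate, since $H$ is an induced subgraph of $G \setminus v$.

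For the inductive step, the goal is to push the deletion of $v$ to the front of the sequence at the cost of at most one prepended $*v$ or one prepended $\wedge vw$. This reduces cleanly to a one-step invariance claim: for every $u \in V(G) \setminus \{v\}$, each of $(G * u) \setminus v$, $(G*u) * v \setminus v$, and $(G * u) \wedge v w' \setminus v$ (for some $w' \in N_{G * u}(v)$) is locally equivalent to one of $G \setminus v$, $G * v \setminus v$, or $G \wedge v w \setminus v$ (for some $w \in N_G(v)$). Granting this, if $u_1 = v$ we absorb $*v$ into the base and shrink the sequence; if $u_1 \neq v$ we use the invariance to trade $G$ for $G * u_1$. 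In either case the inductive hypothesis, applied to the shorter sequence on the new base graph, exhibits $H$ as a vertex-minor of some reduction of that base, which by the invariance is locally equivalent to a reduction of the original $G$, as required.

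The starting observation for the invariance is that deletion of $v$ commutes with $*u$ whenever $u \neq v$: the operation $*u$ flips the edges among the pairs in $N_G(u)$, and its restriction to $V(G) \setminus \{v\}$ is exactly $*u$ on $G \setminus v$. Hence $(G * u) \setminus v = (G \setminus v) * u$ in all cases. When $u \notin N_G(v)$, one further has $N_{G*u}(v) = N_G(v)$ and edge-flipping at $N_G(v)$ commutes with edge-flipping at $N_G(u)$, whence $(G*u)*v \setminus v = (G*v \setminus v)*u$ and, using the identity $\wedge vw = *v*w*v$, also $(G*u) \wedge vw' \setminus v = (G \wedge vw' \setminus v) * u$; the three reductions are thus each individually preserved up to local equivalence.

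The main obstacle is the case $u \in N_G(v)$: now $*u$ alters the neighborhood of $v$ to $N_G(v) \triangle (N_G(u) \setminus \{v\})$, the operations $*u$ and $*v$ no longer commute, and the three reductions can genuinely get permuted among themselves. I would handle this by a direct edge-by-edge comparison of $(G*u) \setminus v$, $(G*u)*v \setminus v$, and $(G*u) \wedge v w' \setminus v$ with the three reductions of $G$, rewriting each pivot via $\wedge vw = *v*w*v$ and tracking the relevant symmetric differences. The expected outcome, and the technical heart of the lemma, is an explicit identification such as $(G*u)*v \setminus v \sim G \wedge vu \setminus v$ up to a further local complementation at $u$, together with matching identifications of the other two reductions, showing that the three-element orbit $\{G \setminus v,\ G*v \setminus v,\ G \wedge vw \setminus v : w \in N_G(v)\}$ modulo local equivalence is preserved under the $*u$-action for any neighbor $u$ of $v$. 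Once this case check is done, the invariance, and hence the lemma, follows.
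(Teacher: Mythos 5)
The paper does not prove this lemma; it is quoted from Bouchet, with Geelen and Oum credited for a direct proof, so there is no in-paper argument to compare against. Your strategy is the standard one for the direct proof: induct on the length of the witnessing sequence and reduce everything to the claim that the triple of local-equivalence classes $\bigl\{[G\setminus v],\,[G*v\setminus v],\,[G\pivot vw\setminus v]\bigr\}$ is preserved (as a set) when $G$ is replaced by $G*u$ for $u\neq v$. The induction scaffolding is sound, and your treatment of the easy cases is correct: deletion of $v$ commutes with $*u$ for $u\neq v$, and for $u\notin N_G(v)$ the two local complementations commute outright.

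The issue is that you stop exactly at what you yourself call the technical heart: for $u\in N_G(v)$ you only describe a computation you \emph{would} do and state its ``expected outcome.'' As written this is a proof plan, not a proof. For the record, the expected identities are true and the verification is short once you use the right normal forms: from $G\pivot uv=G*u*v*u$ and the fact that $*u$ commutes with $\setminus v$, one gets $G\pivot uv\setminus v=\bigl((G*u)*v\setminus v\bigr)*u$, so $(G*u)*v\setminus v$ is locally equivalent to $G\pivot uv\setminus v$; and from $\pivot vu=*u*v*u$ one gets $(G*u)\pivot vu\setminus v=G*v*u\setminus v=(G*v\setminus v)*u$, locally equivalent to $G*v\setminus v$. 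Together with $(G*u)\setminus v=(G\setminus v)*u$ this exhibits the permutation of the three classes. Two further points you should not leave implicit: (i) the ``three-element orbit'' is only well defined because $[G\pivot vw\setminus v]$ does not depend on the choice of the neighbor $w$ (the paper records this right after the lemma via $G\pivot xy=(G\pivot xz)\pivot yz$), and your invariance claim must be stated for \emph{some} neighbor on each side; (ii) in the branch $u_1=v$, the third reduction of $G*v$ satisfies $(G*v)\pivot vw\setminus v=(G*w)*v\setminus v$, so you still need the $u\in N_G(v)$ case of the invariance claim to finish there. With the case analysis actually carried out, your argument is complete and is essentially the Geelen--Oum proof.
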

The choice of a neighbor $w$ in Lemma~\ref{lem:bouchet1} does not
matter, because if $x$ is adjacent to $y$ and $z$, then 
$G\pivot
xy=(G\pivot xz)\pivot yz$ (see \cite{Oum2004}).

\subsection{Cut-rank function}
Let $A(G)$ be the adjacency matrix of $G$ over the binary field.
For an $X\times Y$ matrix $A$, if $X'\subseteq X$ and $Y'\subseteq Y$,
then 
we write $A[X',Y']$ to denote the submatrix of $A$ 
obtained by taking rows in $X'$ and columns in $Y'$.

We define $\rho^*_G(X,Y)=\rank A(G) [X,Y]$.
This function satisfies the following submodular inequality (see Oum
and Seymour~\cite{OS2004}):
\begin{LEM}[{See Oum and Seymour~\cite{OS2004}}]\label{lem:submodular}
For all $A,B,A',B'\subseteq V(G)$,
\[\rho^*_G(A,B)+\rho^*_G(A',B')\ge \rho^*_G(A\cap A',B\cup B')+
\rho^*_G(A\cup A',B\cap B').\]
\end{LEM}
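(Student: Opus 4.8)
The plan is to turn the statement into linear algebra and reduce it to a single application of the modular (dimension) law for subspaces. Work over the binary field, as in the statement, and let $W$ be the vector space of row vectors indexed by $V(G)$. For a set $X\subseteq V(G)$ of row indices, let $U_X\le W$ be the span of the rows of $A(G)$ indexed by $X$; for a set $Y\subseteq V(G)$ of column indices, let $\pi_Y\colon W\to W$ be the linear map that zeroes out every coordinate outside $Y$, and put $K_Y=\ker\pi_Y$, the space of vectors supported off $Y$. Since the row rank of a matrix equals the dimension of its row space, the rows of $A(G)[X,Y]$ span $\pi_Y(U_X)$ and so, by rank--nullity applied to $\pi_Y$ restricted to $U_X$,
\[
\rho^*_G(X,Y)=\rank A(G)[X,Y]=\dim\pi_Y(U_X)=\dim U_X-\dim(U_X\cap K_Y).
\]
I would first record the elementary facts $U_{A\cup A'}=U_A+U_{A'}$, $U_{A\cap A'}\subseteq U_A\cap U_{A'}$, $K_{B\cup B'}=K_B\cap K_{B'}$, $K_{B\cap B'}=K_B+K_{B'}$, and that $T\mapsto\dim\pi_Y(T)$ is nondecreasing under inclusion of subspaces.

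Next I would bound the right-hand side. From $U_{A\cap A'}\subseteq U_A\cap U_{A'}$ and monotonicity, $\rho^*_G(A\cap A',B\cup B')=\dim\pi_{B\cup B'}(U_{A\cap A'})\le\dim\pi_{B\cup B'}(U_A\cap U_{A'})$, while $\rho^*_G(A\cup A',B\cap B')=\dim\pi_{B\cap B'}(U_A+U_{A'})$ with equality. I would then expand these two quantities, together with $\rho^*_G(A,B)$ and $\rho^*_G(A',B')$, via $\dim\pi_Y(T)=\dim T-\dim(T\cap K_Y)$. The identity $\dim U_A+\dim U_{A'}=\dim(U_A\cap U_{A'})+\dim(U_A+U_{A'})$ cancels every ``$\dim T$'' term, and after substituting $K_{B\cup B'}=K_B\cap K_{B'}$ and $K_{B\cap B'}=K_B+K_{B'}$ the whole lemma reduces to
\[
\dim(U_A\cap K_B)+\dim(U_{A'}\cap K_{B'})\le\dim\bigl(U_A\cap U_{A'}\cap K_B\cap K_{B'}\bigr)+\dim\bigl((U_A+U_{A'})\cap(K_B+K_{B'})\bigr).
\]
To finish, I would set $X=U_A\cap K_B$ and $Y=U_{A'}\cap K_{B'}$ and observe that $X\cap Y=U_A\cap U_{A'}\cap K_B\cap K_{B'}$ exactly, while $X+Y\subseteq(U_A+U_{A'})\cap(K_B+K_{B'})$ since each of $X$ and $Y$ lies both in $U_A+U_{A'}$ and in $K_B+K_{B'}$. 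The modular law $\dim X+\dim Y=\dim(X\cap Y)+\dim(X+Y)$, combined with $\dim(X+Y)\le\dim\bigl((U_A+U_{A'})\cap(K_B+K_{B'})\bigr)$, then yields precisely the displayed inequality.

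I expect the only real difficulty to be bookkeeping: one must keep straight which of the set relations above are genuine equalities ($U_{A\cup A'}=U_A+U_{A'}$ and the two $K$-identities) and which are mere inclusions ($U_{A\cap A'}\subseteq U_A\cap U_{A'}$ and $X+Y\subseteq(U_A+U_{A'})\cap(K_B+K_{B'})$), and check that each inclusion points in the direction that preserves the inequality. The one step that is not purely mechanical --- and the thing a first attempt is most likely to miss --- is to bound $\rho^*_G(A\cap A',B\cup B')$ by the projection of $U_A\cap U_{A'}$ and then expand into $\dim T-\dim(T\cap K_Y)$, so that the ambient dimensions cancel and the statement collapses to a single modular-law step on $X$ and $Y$; applying submodularity of subspace dimension separately to the row side and to the column side, or collapsing $U_{A\cap A'}$ to its own dimension too early, loses too much to recombine.
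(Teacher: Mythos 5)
Your proof is correct. The paper does not prove this lemma --- it is stated with a citation to Oum and Seymour --- so there is no in-paper argument to compare against; the linear-algebraic route you take (encoding $\rho^*_G(X,Y)=\dim U_X-\dim(U_X\cap K_Y)$, using $U_{A\cap A'}\subseteq U_A\cap U_{A'}$ and $U_{A\cup A'}=U_A+U_{A'}$ together with $K_{B\cup B'}=K_B\cap K_{B'}$, $K_{B\cap B'}=K_B+K_{B'}$ to cancel the ambient dimensions, and finishing with one application of the modular law to $X=U_A\cap K_B$ and $Y=U_{A'}\cap K_{B'}$ plus the containment $X+Y\subseteq(U_A+U_{A'})\cap(K_B+K_{B'})$) is the standard proof and is sound.
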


The \emph{cut-rank} function $\rho_G$ of a graph $G$ is 
defined as
\[\rho_G(X)=\rho^*_G(X,V(G)\setminus X)=\rank A(G)[X,V(G)\setminus X].\]
By Lemma \ref{lem:submodular}, we have the submodular inequality: 
\[
\rho_G(A)+\rho_G(B)\ge \rho_G(A\cap B)+\rho_G(A\cup B)
\]
for all $A,B\subseteq V(G)$.

The cut-rank function is
invariant under taking local complementation, which makes it useful
for us.
\begin{LEM}[Bouchet~\cite{Bouchet1989a}; See Oum~\cite{Oum2004}]\label{lem:cutrank}
  If $G$ and $H$ are locally equivalent, then 
  $\rho_G(X)=\rho_H(X)$
  for all $X\subseteq V(G)$.
\end{LEM}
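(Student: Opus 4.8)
The plan is to reduce immediately to the case $H = G * v$ for a single vertex $v$: local equivalence is generated by single local complementations, so along a chain $G = G_0, G_1, \dots, G_k = H$ with $G_i = G_{i-1} * v_i$ the asserted identity, being an equality, propagates by transitivity. Fix $X \subseteq V(G)$ and set $Y = V(G) \setminus X$. Since the adjacency matrix is symmetric, $A(G)[X,Y]$ and $A(G)[Y,X]$ are transposes of one another, so $\rho_G(X) = \rho_G(Y)$, and likewise $\rho_H(X) = \rho_H(Y)$; hence I may assume without loss of generality that $v \in Y$.

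Next I would record the effect of local complementation at $v$ on the off-diagonal block $A(G)[X,Y]$. Let $N = N_G(v)$ and put $N_X = N \cap X$, $N_Y = N \cap Y$; note $v \notin N$. Local complementation at $v$ flips precisely the adjacencies between pairs of neighbors of $v$ and leaves every other pair untouched, so for $x \in X$ and $y \in Y$ the entry at position $(x,y)$ changes exactly when $x \in N_X$ and $y \in N_Y$. Writing $\mathbf{1}_S$ for the indicator column vector of a set $S$, this says that, over the binary field,
\[
A(H)[X,Y] = A(G)[X,Y] + \mathbf{1}_{N_X}\mathbf{1}_{N_Y}^{T}.
\]

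The crux --- and the only step that is not bookkeeping --- is the observation that this rank-one update does not change the rank, because the perturbing column $\mathbf{1}_{N_X}$ is already present in $A(G)[X,Y]$: it is the column indexed by $v$, since $A(G)[x,v] = 1$ precisely for $x \in N_X$. As $v \notin N_Y$, the transformation that, for each $y \in N_Y$, adds the column indexed by $v$ to the column indexed by $y$ is a composition of elementary column operations (the column $v$ is never itself altered), and its net effect on $A(G)[X,Y]$ is to add $\mathbf{1}_{N_X}$ to each column in $N_Y$, that is, to turn $A(G)[X,Y]$ into $A(H)[X,Y]$. Elementary column operations preserve rank, so $\rho_H(X) = \rank A(H)[X,Y] = \rank A(G)[X,Y] = \rho_G(X)$, as desired.

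I expect the main obstacle to be purely expository rather than mathematical: pinning down exactly which entries flip, and noticing that the rank-one modification is genuinely realized by column operations --- so that rank is preserved on the nose --- rather than settling for the weaker fact that a rank-one perturbation changes rank by at most one.
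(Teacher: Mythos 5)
Your proof is correct. The paper does not actually prove this lemma---it is stated as a cited result of Bouchet (see also Oum)---but your argument is essentially the standard one used in those references: express $A(G*v)[X,Y]$ as $A(G)[X,Y]$ plus the rank-one term $\mathbf{1}_{N_X}\mathbf{1}_{N_Y}^{T}$, recognize $\mathbf{1}_{N_X}$ as the column of $A(G)[X,Y]$ indexed by $v$, and realize the update as elementary column operations (adding column $v$ to each column in $N_Y$, which is legitimate since $v\notin N_G(v)$ forces $v\notin N_Y$), together with the transpose symmetry $\rho_G(X)=\rho_G(V(G)\setminus X)$ to place $v$ on the convenient side.
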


\begin{LEM}[{Oum~\cite[Lemma 4.4]{Oum2004}}]\label{lem:reduce-submodular}
  Let $G$ be a graph and $v\in V(G)$. 
  Suppose that $(X_1,X_2)$, $(Y_1,Y_2)$ are partitions of
  $V(G)\setminus\{v\}$. Then 
  we have
  \[
  \rho_{G\setminus v}(X_1)+\rho_{G*v\setminus v}(Y_1)
  \ge \rho_G(X_1\cap Y_1)+\rho_G(X_2\cap Y_2)-1.
  \]
  Similarly if $w$ is a neighbor of $v$, then 
  \[
  \rho_{G\setminus v}(X_1)+\rho_{G\pivot vw\setminus v}(Y_1)
  \ge \rho_G(X_1\cap Y_1)+\rho_G(X_2\cap Y_2)-1.
  \]
\end{LEM}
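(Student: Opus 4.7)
Set $P = X_1 \cap Y_1$, $Q = X_1 \cap Y_2$, $R = X_2 \cap Y_1$, $S = X_2 \cap Y_2$, so that $P$, $Q$, $R$, $S$, and $\{v\}$ partition $V(G)$. For $Z \subseteq V(G) \setminus \{v\}$, let $v_Z \in \{0,1\}^Z$ denote the indicator vector of $N_G(v) \cap Z$. The plan is to prove the local complementation statement in detail; the pivot statement follows by the same scheme after replacing the rank-one perturbation of $A(G)[Y_1, Y_2]$ coming from $*v$ by a perturbation of rank at most $2$ coming from $\pivot vw = *v * w * v$.

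The first ingredient is Lemma~\ref{lem:submodular} applied in $G$ with $A = X_1$, $B = X_2 \cup \{v\}$, $A' = Y_1 \cup \{v\}$, $B' = Y_2$. One checks $A \cap A' = P$, $B \cup B' = V(G) \setminus P$, $A \cup A' = V(G) \setminus S$, $B \cap B' = S$; using $\rho_G(Y_1 \cup \{v\}) = \rho_G(Y_2)$ by symmetry of the cut-rank, this gives
\[\rho_G(X_1) + \rho_G(Y_2) \ge \rho_G(P) + \rho_G(S).\]
The second ingredient bounds each left-hand term by its $v$-deleted counterpart. The matrix $A(G)[X_1, X_2 \cup \{v\}]$ is obtained from $A(G)[X_1, X_2]$ by appending the single column $v_{X_1}$, so $\rho_G(X_1) \le \rho_{G \setminus v}(X_1) + 1$ with equality iff $v_{X_1}$ lies outside the column span of $A(G)[X_1, X_2]$. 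For the other term, the identity $A(G*v)[Y_1, Y_2] = A(G)[Y_1, Y_2] + v_{Y_1} v_{Y_2}^{T}$ and an elementary row operation using the bottom row $v_{Y_2}^{T}$ of $A(G*v)[Y_1 \cup \{v\}, Y_2]$ cancel the rank-one perturbation, yielding $\rho_G(Y_2) = \rank A(G*v)[Y_1 \cup \{v\}, Y_2] \le \rho_{G*v \setminus v}(Y_1) + 1$ with equality iff $v_{Y_2}^{T}$ lies outside the row span of $A(G*v)[Y_1, Y_2]$.

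Combining these three bounds naively gives only $\rho_{G \setminus v}(X_1) + \rho_{G*v \setminus v}(Y_1) \ge \rho_G(P) + \rho_G(S) - 2$, one short of the lemma, and the main obstacle is to save the missing unit. I would do this by establishing the following strict-submodularity claim: whenever both of the equality conditions above hold simultaneously, the submodular inequality must be strict by at least one, i.e.\ $\rho_G(X_1) + \rho_G(Y_2) \ge \rho_G(P) + \rho_G(S) + 1$. The claim is a direct column-space computation: the two independence hypotheses produce a fresh dimension from the $v$-column in $\rho_G(X_1)$ and a fresh dimension from the $v$-row in $\rho_G(Y_2)$, and these contributions, being supported on opposite sides of the partition $P \sqcup Q \sqcup R \sqcup S$, cannot both be absorbed into the common $P \times S$ sub-block that equality in Lemma~\ref{lem:submodular} would demand. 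Plugging the claim back into the three-way combination yields the stated inequality.
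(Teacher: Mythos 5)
The paper does not actually prove this lemma---it is quoted from Oum's paper \cite[Lemma 4.4]{Oum2004}---so there is no in-paper proof to compare against; judging your argument on its own, the scaffolding is right but the core is missing. Your three preliminary facts are all correct: the instance of Lemma~\ref{lem:submodular} with $A=X_1$, $B=X_2\cup\{v\}$, $A'=Y_1\cup\{v\}$, $B'=Y_2$ does give $\rho_G(X_1)+\rho_G(Y_2)\ge\rho_G(X_1\cap Y_1)+\rho_G(X_2\cap Y_2)$; the bound $\rho_G(X_1)\le\rho_{G\setminus v}(X_1)+1$ with the stated equality criterion is correct; and the row-operation argument showing $\rho_G(Y_2)=\rank A(G*v)[Y_1\cup\{v\},Y_2]\le\rho_{G*v\setminus v}(Y_1)+1$ is correct. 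The problem is that your ``strict-submodularity claim'' is, given those three facts, \emph{logically equivalent} to the lemma itself: the lemma follows from the claim exactly as you say, but conversely the lemma immediately yields the claim (if both equality conditions hold, then $\rho_G(X_1)+\rho_G(Y_2)=\rho_{G\setminus v}(X_1)+\rho_{G*v\setminus v}(Y_1)+2\ge\rho_G(X_1\cap Y_1)+\rho_G(X_2\cap Y_2)+1$). So you have reformulated the statement, not reduced it to something easier, and the entire difficulty is concentrated in the claim.

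The one-sentence justification of the claim is not a proof. Equality in the rank submodular inequality is governed by whether $(R_{A}+K_{B})\cap(R_{A'}+K_{B'})$ equals $R_{A\cap A'}+K_{B\cup B'}$ in the standard subspace proof of Lemma~\ref{lem:submodular}; it is not captured by anything being ``absorbed into the common $P\times S$ sub-block,'' and you give no argument connecting your two independence hypotheses to a failure of that equality. Worse, the two ``fresh dimensions'' do not even live in comparable places: the first is a column condition on $A(G)[X_1,X_2]$ in $G$, while the second is a row condition on $A(G*v)[Y_1,Y_2]$ in the \emph{locally complemented} graph, i.e.\ on $A(G)[Y_1,Y_2]+v_{Y_1}v_{Y_2}^{T}$; translating the latter back to $G$ is itself a nontrivial step (the row spaces of $B$ and $B+uw^{T}$ can differ in either direction). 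A correct proof here genuinely requires a case analysis on whether $v_{Y_1}$ lies in the column span of $A(G)[Y_1,Y_2]$ and whether $v_{Y_2}$ lies in its row span, combined with the fact that a rank-one update $B+uw^{T}$ gains a rank when $u\notin\operatorname{colspan}B$ and $w^{T}\notin\operatorname{rowspan}B$; none of this appears. The pivot case is likewise waved through (``the same scheme''), even though $\pivot vw$ both perturbs $A(G)[Y_1,Y_2]$ by a rank-two matrix and relabels $w$, which needs its own verification. As it stands, the proof has a genuine gap at its central step.
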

Lemma~\ref{lem:reduce-submodular} is equivalent to the following lemma, which we will
use in the proof of Proposition~\ref{prop:reduce-blocking}.
\begin{LEM}\label{lem:reduce-submodular-2}
  Let $G$ be a graph and $v\in V(G)$. 
  Suppose that $X_1$, $X_2$, $Y_1$,  $Y_2$ are subsets of $V(G)\setminus\{v\}$
  such that $X_1\cup X_2=Y_1\cup Y_2$ and 
  $X_1\cap X_2=Y_1\cap Y_2=\emptyset$.
  Then 
  \begin{multline*}
  \rho^*_G(X_1,X_2)+\rho^*_{G*v}(Y_1,Y_2)\\\ge
  \rho^*_G(X_1\cap Y_1,X_2\cup Y_2\cup\{v\})
  +\rho^*_G(X_1\cup Y_1\cup \{v\},X_2\cap Y_2)-1.
  \end{multline*}
  Similarly if $w\in X_1\cup X_2$ is a neighbor of $v$, then 
  \begin{multline*}
  \rho^*_G(X_1,X_2)+\rho^*_{G\pivot vw}(Y_1,Y_2)\\\ge
  \rho^*_G(X_1\cap Y_1,X_2\cup Y_2\cup\{v\})
  +\rho^*_G(X_1\cup Y_1\cup \{v\},X_2\cap Y_2)-1.
  \end{multline*}
\end{LEM}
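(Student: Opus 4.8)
The plan is to show that the two inequalities of Lemma~\ref{lem:reduce-submodular-2} are exactly Lemma~\ref{lem:reduce-submodular} applied to the induced subgraph $G[U\cup\{v\}]$, where $U=X_1\cup X_2=Y_1\cup Y_2$, rewritten in terms of $\rho^*$. Granting this dictionary, Lemma~\ref{lem:reduce-submodular-2} follows at once from Lemma~\ref{lem:reduce-submodular}; conversely Lemma~\ref{lem:reduce-submodular} is the special case of Lemma~\ref{lem:reduce-submodular-2} in which $U=V(G)\setminus\{v\}$, $X_2=(V(G)\setminus\{v\})\setminus X_1$, and $Y_2=(V(G)\setminus\{v\})\setminus Y_1$, so the two statements are equivalent as claimed.

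First I would record three elementary facts. (a) For any graph $F$ and disjoint subsets $S,T$ of a set $W\subseteq V(F)$, the matrix $A(F)[S,T]$ uses only rows and columns indexed by $W$, so $\rho^*_F(S,T)=\rho^*_{F[W]}(S,T)$; in particular, if $v\notin S\cup T$ then deleting $v$ leaves this rank unchanged, and since the rank of a matrix equals the rank of its transpose (and $A(F)$ is symmetric) we also have $\rho^*_F(S,T)=\rho^*_F(T,S)$. (b) Local complementation and pivoting commute with passing to an induced subgraph that keeps the vertices involved: $(G*v)[W]=(G[W])*v$ whenever $v\in W$, and $(G\pivot vw)[W]=(G[W])\pivot vw$ whenever $v,w\in W$, since in each case the flipped adjacencies stay inside $W$. (c) From $X_1\cap X_2=Y_1\cap Y_2=\emptyset$, an element chase gives $U\setminus(X_1\cap Y_1)=X_2\cup Y_2$ and $U\setminus(X_2\cap Y_2)=X_1\cup Y_1$.

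Now set $W=U\cup\{v\}$, so that $(X_1,X_2)$ and $(Y_1,Y_2)$ are partitions of $V(G[W])\setminus\{v\}$, and apply Lemma~\ref{lem:reduce-submodular} to $G[W]$. On the left-hand side, $G[W]\setminus v=G[U]$ and $G[W]*v\setminus v=(G*v)[U]$ by (b), so by (a) the two left-hand terms become $\rho^*_G(X_1,X_2)$ and $\rho^*_{G*v}(Y_1,Y_2)$, and likewise $\rho_{G[W]\pivot vw\setminus v}(Y_1)=\rho^*_{G\pivot vw}(Y_1,Y_2)$ when $w\in U$ is a neighbor of $v$. On the right-hand side, $V(G[W])\setminus(X_1\cap Y_1)=(U\setminus(X_1\cap Y_1))\cup\{v\}=X_2\cup Y_2\cup\{v\}$ by (c), and similarly $V(G[W])\setminus(X_2\cap Y_2)=X_1\cup Y_1\cup\{v\}$, so by (a) the two right-hand terms become $\rho^*_G(X_1\cap Y_1,X_2\cup Y_2\cup\{v\})$ and $\rho^*_G(X_2\cap Y_2,X_1\cup Y_1\cup\{v\})=\rho^*_G(X_1\cup Y_1\cup\{v\},X_2\cap Y_2)$. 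Substituting these equalities, the two inequalities of Lemma~\ref{lem:reduce-submodular} for $G[W]$ turn term by term into the two inequalities of Lemma~\ref{lem:reduce-submodular-2}.

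This is a bookkeeping lemma and I expect no serious obstacle; the only two points that need attention are (i) not tacitly assuming $U=V(G)\setminus\{v\}$, which is why one passes to $G[W]$ and must check the compatibility statement (b), and (ii) keeping track of the single vertex $v$, which is absent from every set on the left-hand side (where ranks are computed in $G\setminus v$ and $G*v\setminus v$) but reappears in the column index of every $\rho^*$ on the right-hand side (where ranks are computed in $G$, whose vertex set contains $v$) --- this is exactly the term $-1$ already present in Lemma~\ref{lem:reduce-submodular}.
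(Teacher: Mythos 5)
Your proposal is correct and is exactly the paper's argument: the paper's entire proof is the one line ``Apply Lemma~\ref{lem:reduce-submodular} with $G'=G[X_1\cup X_2\cup\{v\}]$,'' and you have simply spelled out the bookkeeping (restriction to $G[W]$, compatibility of local complementation and pivoting with induced subgraphs, and the identities $U\setminus(X_1\cap Y_1)=X_2\cup Y_2$, $U\setminus(X_2\cap Y_2)=X_1\cup Y_1$) that the paper leaves to the reader.
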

\begin{proof}
  Apply 
  Lemma~\ref{lem:reduce-submodular} with $G'=G[X_1\cup X_2\cup\{v\}]$.
\end{proof}

\subsection{Prime graphs}
For a graph $G$, a partition $(A,B)$ of $V(G)$ is called a
\emph{split}
if $\abs{A},\abs{B}\ge 2$
and there exist $A'\subseteq A$ and $B'\subseteq B$
such that $x\in A$ is adjacent to $y\in B$
if and only if $x\in A'$ and $y\in B'$.
A graph is \emph{prime} (with respect to the split decomposition)
if it has no splits. 
These concepts were introduced by Cunningham~\cite{Cunningham1982}.

Alternatively, a split can be understood with the \emph{cut-rank}
function $\rho_G$.
A partition $(A,B)$ of $V(G)$ is a split
if and only if 
$\abs{A},\abs{B}\ge 2$ 
and yet $\rho_G(A)\le 1$.

The following lemma is natural.
\begin{LEM}\label{lem:primeinduced}
  If a prime graph $H$ on at least $5$ vertices
  is a vertex-minor of a graph $G$, then 
  $G$ has a prime induced subgraph $G_0$ 
  such that $G_0$ has a vertex-minor isomorphic to $H$.
\end{LEM}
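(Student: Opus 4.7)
The plan is to take $G_0$ to be an induced subgraph of $G$ of minimum order such that $H$ is a vertex-minor of $G_0$, and then to show $G_0$ is prime. If $G_0$ is not prime, we derive a contradiction by constructing a strict induced subgraph of $G_0$ that still has $H$ as a vertex-minor.

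Assume $G_0$ has a split $(A,B)$ with marker $(A',B')$, and fix a witness $H=G_0'[W]$ where $G_0'$ is locally equivalent to $G_0$ and $W\subseteq V(G_0)$. By Lemma~\ref{lem:cutrank}, $\rho_{G_0'}(A)=\rho_{G_0}(A)\le 1$, and since $A(H)[W\cap A,W\cap B]$ is a submatrix of $A(G_0')[A,V(G_0')\setminus A]$ we have $\rho_H(W\cap A)\le 1$. Primality of $H$ on at least $5$ vertices forces any partition of $V(H)$ into parts of size at least $2$ to have cut-rank at least $2$, so $\min(|W\cap A|,|W\cap B|)\le 1$; WLOG $|W\cap A|\le 1$. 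In particular $A\setminus W\ne\emptyset$.

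If $A'=\emptyset$, then $G_0$ decomposes as $G_0[A]\sqcup G_0[B]$; local complementation respects connected components, so every graph locally equivalent to $G_0$ retains this decomposition. Since $H$ is connected (being prime on $\ge 5$ vertices) and $|W\cap A|\le 1<|V(H)|$, $H$ lives entirely in $B$ and is therefore a vertex-minor of the strict induced subgraph $G_0[B]$. Otherwise $A'\ne\emptyset$. When $W\cap A=\emptyset$, pick any $v\in A'$; the key claim is that $H$ is a vertex-minor of the strict induced subgraph $G_0[B\cup\{v\}]$. When $W\cap A=\{w_0\}$, connectedness of $H$ implies $w_0$ can reach $A'$ through the $A$-subgraph, so a finite sequence of local complementations at vertices of $A$ brings $w_0$ into the cross-marker set; in the resulting locally equivalent graph $w_0$ becomes a true twin of some vertex $v\in A'$ with respect to $B$, and the automorphism swapping $v$ and $w_0$ shows $H$ is a vertex-minor of $G_0\setminus w_0$---again a strict induced subgraph.

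The main obstacle is the key claim ``$H$ is a vertex-minor of $G_0[B\cup\{v\}]$'' for $v\in A'$ in the case $W\cap A=\emptyset$. We prove it by iteratively peeling vertices of $A\setminus\{v\}$ using Lemma~\ref{lem:bouchet1} (Bouchet): for each such $a$, which lies outside $W$, Bouchet gives three candidate reductions $G_0\setminus a$, $G_0*a\setminus a$, or $G_0\wedge ab\setminus a$. The rank-$1$ structure of the split ensures all vertices of $A'$ share the neighborhood $B'$ in $B$ and vertices of $A\setminus A'$ have empty $B$-neighborhood, so the non-deletion outcomes can be transformed by an additional local complementation at a vertex of $A\setminus\{a\}$ or $B$ into a graph locally equivalent to $G_0\setminus a$; careful bookkeeping of how the marker sets evolve under local complementation (Bouchet: local equivalence preserves splits) completes the reduction by induction on $|A|$.
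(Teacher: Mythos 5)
Your setup---taking a minimum-order induced subgraph $G_0$, the cut-rank computation forcing $\min(\abs{W\cap A},\abs{W\cap B})\le 1$, and the disconnected case---matches the paper's, but the step that carries all the weight, namely getting from ``the witness for $H$ avoids all but at most one vertex of $A$'' to ``$H$ is a vertex-minor of a \emph{proper induced subgraph} of $G_0$,'' has a genuine gap. When you peel a vertex $a\in A\setminus W$ with Lemma~\ref{lem:bouchet1}, only the outcome $G_0\setminus a$ is an induced subgraph of $G_0$; the outcomes $G_0*a\setminus a$ and $G_0\pivot ab\setminus a$ are not, so they do not contradict the minimality of $G_0$, and your repair---that each non-deletion outcome ``can be transformed by an additional local complementation into a graph locally equivalent to $G_0\setminus a$''---is false. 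For instance, if $a\in A\setminus A'$ (so $N_{G_0}(a)\subseteq A$) is adjacent to a vertex $a_2$ whose only neighbour is $a$ and also to some $a_1\in A'$, then $a_2$ is isolated in $G_0\setminus a$ but adjacent to $a_1$ in $G_0*a\setminus a$; since local complementation preserves isolated vertices, these two graphs are not locally equivalent, and no further local complementations help. In general $*a$ complements $G_0$ inside $N(a)$, and nothing forces this to be undoable after deleting $a$.

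The same gap reappears in your $W\cap A=\{w_0\}$ case: even after producing a twin $v$ of $w_0$ (whose existence is not guaranteed---in the witness graph the marker set containing $w_0$ may be the singleton $\{w_0\}$), you only obtain that $H$ is isomorphic to an induced subgraph, avoiding $w_0$, of a graph locally equivalent to $G_0$; passing from this to ``$H$ is a vertex-minor of $G_0\setminus w_0$'' is again exactly Lemma~\ref{lem:bouchet1} with its three outcomes, two of which are not induced subgraphs. The paper closes this gap by a different minimization: it keeps one vertex $x$ of the small side, writes $H'=G*v_1*\cdots*v_n\setminus(B\setminus\{x\})$ with $n$ minimum, and shows that if $n>0$ then $v_n\in B\setminus\{x\}$ and the split $(A,\{x,v_n\})$ of the intermediate graph $H_0$ forces one of three rank-one configurations, each of which shortens the sequence; hence $n=0$ and $H$ is a vertex-minor of the genuine induced subgraph $G\setminus(B\setminus\{x\})$. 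Some argument of this kind---controlling \emph{where} the local complementations occur, not merely which vertices get deleted---is needed and is missing from your proposal.
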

\begin{proof}
  We may assume that $G$ is connected.
  It is enough to prove the following claim:
  if $G$ has a split $(A,B)$, then there exists a vertex $v$ such
  that $H$ is isomorphic to a vertex-minor of $G\setminus v$.
  Let $G'$ be a graph locally equivalent to $G$ such that $H$ is an
  induced subgraph of $G'$.
  We have 
  $\rho_H(V(H)\cap A)
  = \rho_{G'}^*(V(H)\cap A,V(H)\cap B)
  \le \rho_{G'}^*(A,B)\le 1$ and therefore 
  $\abs{V(H)\cap A}\le 1$ or $\abs{V(H)\cap B}\le 1$ because $H$ is
  prime. 
  By symmetry, let us assume $\abs{V(H)\cap B}\le 1$.
  Let us choose $x\in B$ such that 
  $x$ has a neighbor in $A$ and 
  $x\in V(H)$ if $V(H)\cap B$ is nonempty.

  Let $H'$ be a vertex-minor of $G$ on $A\cup\{x\}$ such that
  $H$ is isomorphic to a vertex-minor of $H'$.
  Then $H'=G*v_1*v_2\cdots *v_n\setminus (B\setminus\{x\})$
  for some sequence $v_1,v_2,\ldots,v_n$ of vertices.
  We may choose $H'$ and $n$ so that $n$ is minimized.
  
  Suppose $n>0$.
  Then $v_n\in B\setminus \{x\}$.
  Let $H_0=G*v_1*v_2\cdots  *v_{n-1}\setminus (B\setminus\{x,v_n\})$.
  Since $(A,\{x,v_n\})$ is a split of $H_0$,
  one of the following holds.
  \begin{enumerate}[(i)]
  \item   The two vertices $v_n$ and $x$ have the same set of neighbors in $A$.
  \item The vertex $v_n$ has no neighbors in $A$.
  \item The vertex $x$ has no neighbors in $A$.
  \end{enumerate}
  If we have the case (i), then 
  $(H_0\setminus v_n) *x =H'$ and therefore $H$ is
  isomorphic to a
  vertex-minor of $H_0\setminus  v_n$, contradicting our
  assumption that $H$ is chosen to minimize $n$.
  If we have the case (ii),  then 
  $H_0\setminus v_n=H'$, contradicting the assumption too.
  Finally if we have the case (iii), then 
  $x$ is adjacent to $v_n$ in $G$ because $G$ is connected.
  Then $H_0*v_n\setminus v_n$ is isomorphic to $H_0*v_n\setminus x$.
  Then $H_0\setminus x$ has a vertex-minor isomorphic to $H$,
  contradicting our assumption that $n$ is minimized.
\end{proof}

\subsection{Constructions of graphs}
\label{subsec:construction}
For two graphs $G$ and $H$ on the same set of $n$ vertices, we would like
to introduce operations to construct  graphs on $2n$ vertices by making the disjoint
union of them and adding some edges between two graphs. 
Roughly speaking, $G\mat H$ will add a perfect matching,
 $G\antimat H$ will add the complement of a perfect matching, 
 and $G\tri H$ will add a bipartite chain graph.
Formally, 
for two graphs $G$ and $H$ on $\{v_1,v_2,\ldots,v_n\}$, 
let 
$G\mat H$, $G\antimat H$, $G\tri H$
be graphs on $\{v_1^1,v_2^1,\ldots,v_n^1,
v_1^2,v_2^2,\ldots, v_n^2\}$
such that  for all $i,j\in \{1,2,\ldots,n\}$, 
\begin{enumerate}[(i)]
\item 
$v_i^1v_j^1\in E(G\mat H)$ if and only if $v_i v_j\in E(G)$,
\item 
$v_i^2v_j^2\in E(G\mat H)$ if and only if $v_i v_j\in E(H)$,
\item 
$v_i^1v_j^2\in E(G\mat H)$ if and only if $i=j$,
\item 
$v_i^1v_j^1\in E(G\antimat H)$ if and only if $v_i v_j\in E(G)$,
\item 
$v_i^2v_j^2\in E(G\antimat H)$ if and only if $v_i v_j\in E(H)$,
\item 
$v_i^1v_j^2\in E(G\antimat H)$ if and only if $i\neq j$,
\item 
$v_i^1v_j^1\in E(G\tri H)$ if and only if $v_i v_j\in E(G)$,
\item 
$v_i^2v_j^2\in E(G\tri H)$ if and only if $v_i v_j\in E(H)$,
\item 
$v_i^1v_j^2\in E(G\tri H)$ if and only if $i\ge j$.
\end{enumerate}
See Figure~\ref{fig:construction} for $\K_5\mat\S_5$,
$\K_5\antimat\S_5$, and $\K_5\tri \S_5$.
\begin{figure}
 \tikzstyle{v}=[circle, draw, solid, fill=black, inner sep=0pt, minimum width=3pt]
  \centering
  \newcommand\Sfive[1]{    \begin{tikzpicture}[scale=0.6]
      \foreach \x in {1,...,5} {
        \node [v]  (v\x) at(0,-\x){};
        \node [v]  (w\x) at (2,-\x){};
        \draw (-.5,-\x) node [left] {$v^1_\x$};
        \draw (w\x) node [right] {$v^2_\x$};
      }
      \draw (v1)--(v5);
      \draw(v1) [in=120,out=-120] to (v3);
      \draw(v2) [in=120,out=-120] to (v4);
      \draw(v3) [in=120,out=-120] to (v5);
      \draw(v1) [in=120,out=-120] to (v4);
      \draw(v2) [in=120,out=-120] to (v5);
      \draw(v1) [in=120,out=-120] to (v5);
      \foreach \x in {1,...,5} 
      \foreach \y in {1,...,5} {
        #1
      }
    \end{tikzpicture}
    }
    \Sfive{       \ifnum \x=\y  \draw (v\x)--(w\y);       \fi      }
    $\qquad$
    \Sfive{       \ifnum \x=\y  \else      \draw (v\x)--(w\y);       \fi      }
    $\qquad$
    \Sfive{       \ifnum \x>\y   \draw (v\x)--(w\y);       
    \else \ifnum\x=\y \draw (v\x)--(w\y);\fi\fi}
  \caption{$\K_5\mat \S_5$, $\K_5\antimat \S_5$, and $\K_5\tri \S_5$.}
  \label{fig:construction}
\end{figure}

We will use the following lemmas.

\begin{LEM}\label{lem:lengthonecase1} Let $n\ge 3$ be an integer.
\begin{enumerate}
\item  $\K_n\antimat\S_n$ has a vertex-minor isomorphic to $\K_{n-1}\mat\K_{n-1}$.
\item $\S_n\antimat\S_n$ has a vertex-minor isomorphic to $\K_{n-2}\mat\K_{n-2}$.
\end{enumerate}
\end{LEM}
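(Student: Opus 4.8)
The plan is to analyze both graphs explicitly by tracking what local complementation and pivoting do to the three blocks of edges (the two cliques/cocliques on $\{v_i^1\}$ and $\{v_i^2\}$ and the bipartite part between them), and to perform enough operations to convert the ``non-matching'' connection into a matching, possibly after deleting one or two vertices. For part (1): in $\K_n\antimat\S_n$ the first block $\{v_i^1\}$ is a clique $\K_n$, the second block $\{v_i^2\}$ is edgeless, and $v_i^1$ is adjacent to $v_j^2$ exactly when $i\neq j$. I would first pivot along the edge $v_1^1 v_2^2$ (which exists since $1\neq2$), or more cleverly apply local complementation at a single vertex $v_n^2$: since $N(v_n^2)=\{v_1^1,\dots,v_{n-1}^1\}$, which already forms a clique, the operation $*v_n^2$ leaves that clique intact but does nothing useful; instead I would apply $*v_k^1$ for some $k$ to flip adjacencies inside the neighborhood of $v_k^1$. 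The cleaner route: observe that $\K_n\antimat\S_n$ is locally equivalent, after complementing at one vertex in the second part, to a graph where a matching appears, and then delete the leftover vertex to land on $\K_{n-1}\mat\K_{n-1}$; here the second block becomes a clique because each $v_i^2$ gets connected to the others once we locally complement at a well-chosen vertex adjacent to all of them.

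More concretely for (1), I would pick the vertex $v_n^1$: its neighborhood is $(\{v_1^1,\dots,v_{n-1}^1\})\cup(\{v_j^2 : j\neq n\})=\{v_1^1,\dots,v_{n-1}^1,v_1^2,\dots,v_{n-1}^2\}$. Applying $*v_n^1$ flips all non-edges among these to edges: the $v_i^1$'s stay a clique, the $v_j^2$'s ($j<n$) become a clique, and $v_i^1$ becomes adjacent to $v_j^2$ (for $i,j<n$) iff they were \emph{non}-adjacent before, i.e.\ iff $i=j$. So after $*v_n^1$ and deleting $v_n^1$ and $v_n^2$, the induced subgraph on $\{v_1^1,\dots,v_{n-1}^1,v_1^2,\dots,v_{n-1}^2\}$ is exactly $\K_{n-1}\mat\K_{n-1}$. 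That settles (1). For (2), $\S_n\antimat\S_n$ has both blocks edgeless and $v_i^1\sim v_j^2$ iff $i\neq j$. I would first apply $*v_n^1$: its neighborhood is $\{v_1^2,\dots,v_{n-1}^2\}$, an independent set, so this creates a clique on $\{v_1^2,\dots,v_{n-1}^2\}$ and leaves everything else unchanged. Then apply $*v_n^2$: its neighborhood is now $\{v_1^1,\dots,v_{n-1}^1\}$ (still independent) together with possibly $v_n^1$ — but $v_n^1\sim v_n^2$? In $\S_n\antimat\S_n$, $v_n^1\not\sim v_n^2$ since $n=n$, so $v_n^2$'s neighborhood is just $\{v_1^1,\dots,v_{n-1}^1\}$, and $*v_n^2$ makes that a clique. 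Now delete $v_n^1$ and $v_n^2$: we are left with a clique on $\{v_1^1,\dots,v_{n-1}^1\}$, a clique on $\{v_1^2,\dots,v_{n-1}^2\}$, and $v_i^1\sim v_j^2$ iff $i\neq j$ for $i,j<n$ — i.e.\ $\K_{n-1}\antimat\S_{n-1}$ (the bipartite part is still the complement of a matching). By part (1), $\K_{n-1}\antimat\S_{n-1}$ has a vertex-minor isomorphic to $\K_{n-2}\mat\K_{n-2}$, which is what we want.

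I expect the main obstacle to be bookkeeping: making sure I correctly track the three blocks of adjacencies through each local complementation, especially the cross edges $v_i^1 v_j^2$, and in particular confirming that the vertices I delete ($v_n^1$, $v_n^2$) are genuinely detached or harmlessly attached so that the \emph{induced} subgraph on the remaining $2(n-1)$ vertices is precisely the claimed graph. A secondary subtlety is the hypothesis $n\ge 3$: for (1) we need $n-1\ge 2$ so that $\K_{n-1}\mat\K_{n-1}$ is a legitimate target (and so that the edge $v_1^1 v_2^2$ used implicitly exists, i.e.\ $n\ge 2$), and for (2) we need $n\ge 3$ so that after one application of (1) the index $n-2\ge 1$ and the intermediate graph $\K_{n-1}\antimat\S_{n-1}$ with $n-1\ge 2$ makes sense. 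Everything else is a direct verification, and no deep tool beyond the definitions of $\mat$, $\antimat$, local complementation, and ``vertex-minor'' is needed.
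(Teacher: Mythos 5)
Your proof of part (1) contains a substantive error in how local complementation acts on the first block. You correctly state that $*v_n^1$ flips the cross-edges (so $v_i^1\sim v_j^2$ becomes $i=j$ for $i,j<n$) and turns the coclique $\{v_1^2,\dots,v_{n-1}^2\}$ into a clique, but you then assert ``the $v_i^1$'s stay a clique.'' That is false: $\{v_1^1,\dots,v_{n-1}^1\}$ is entirely contained in $N(v_n^1)$, and local complementation takes the \emph{complement} of the induced subgraph on the neighborhood, so the clique on $\{v_1^1,\dots,v_{n-1}^1\}$ becomes a coclique. After deleting $v_n^1$ and $v_n^2$ you land on $\S_{n-1}\mat\K_{n-1}$, not $\K_{n-1}\mat\K_{n-1}$. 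The fix, which is what the paper does, is a second local complementation at the mate $v_n^2$ (or $v_1^2$ in the paper's indexing): since $v_n^2\notin N(v_n^1)$, we still have $N(v_n^2)=\{v_1^1,\dots,v_{n-1}^1\}$ after $*v_n^1$, now a coclique, and $*v_n^2$ turns it back into a clique without touching the cross-edges among the surviving vertices. Then deleting $v_n^1,v_n^2$ yields $\K_{n-1}\mat\K_{n-1}$.

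Part (2) has a matching slip in the other direction. After $*v_n^1*v_n^2$ and deleting $v_n^1,v_n^2$, you correctly compute two cliques on $\{v_1^1,\dots,v_{n-1}^1\}$ and $\{v_1^2,\dots,v_{n-1}^2\}$ joined by $v_i^1\sim v_j^2$ iff $i\neq j$, but you label it $\K_{n-1}\antimat\S_{n-1}$. Two cliques joined by the complement of a matching is $\K_{n-1}\antimat\K_{n-1}$, to which part (1) does not apply. The correct move is to stop after a \emph{single} local complementation: $(\S_n\antimat\S_n)*v_n^1\setminus v_n^1\setminus v_n^2$ already gives a coclique on $\{v_1^1,\dots,v_{n-1}^1\}$, a clique on $\{v_1^2,\dots,v_{n-1}^2\}$, and anti-matching cross edges, i.e.\ $\S_{n-1}\antimat\K_{n-1}\cong\K_{n-1}\antimat\S_{n-1}$, and then (1) applies directly. (The paper's own write-up of (2) displays $\mat$ where $\antimat$ is meant, but its reduction to (1) makes clear which graph is intended.) In short: your overall strategy matches the paper's, but part (1) is missing a second local complementation and part (2) has one too many, with the resulting graphs misidentified so that the reduction in (2) breaks.
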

\begin{proof}
(1) Let $V(\K_n)=V(\S_n)=\{v_i:1\le i\le n\}$.
	The graph $(\K_n\antimat\S_n)*v^1_1*v^2_1\setminus v^1_1 \setminus v^2_1$ is isomorphic to $\K_{n-1}\mat\K_{n-1}$.
     
    (2) Let $V(\S_n)=\{v_1,v_2, \ldots, v_n\}$. 
    The graph $(\S_n\antimat\S_n)*v^1_1\setminus v^1_1 \setminus v^2_1$ is isomorphic to $\S_{n-1}\mat\K_{n-1}$.
    By (1), $\S_n\antimat\S_n$ has a vertex-minor isomorphic to $\K_{n-2}\mat\K_{n-2}$.
   \end{proof}

\begin{LEM}\label{lem:lengthonecase2} Let $n$ be a positive integer.
\begin{enumerate}
	\item The graph $\S_{n}\tri\S_{n}$ is pivot-equivalent to $P_{2n}$.
	\item The graph $\K_{n}\tri\S_{n}$ is locally equivalent to $P_{2n}$.
\end{enumerate}
\end{LEM}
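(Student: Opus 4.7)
My plan is to prove (1) by exhibiting an explicit sequence of pivots transforming $\S_n\tri\S_n$ into $P_{2n}$, and then to deduce (2) from (1) by a single local complementation. The small cases are immediate: $\S_1\tri\S_1$ consists of the single edge $v_1^1v_1^2$, which is $P_2$; and $\S_2\tri\S_2$ has precisely the edges $v_1^1v_1^2$, $v_2^1v_1^2$, $v_2^1v_2^2$, which form a path on $v_1^1, v_1^2, v_2^1, v_2^2$.

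For $n\ge 3$, I would pivot on $v_2^1v_2^2,\ v_3^1v_3^2,\ \ldots,\ v_{n-1}^1v_{n-1}^2$ in this order, and prove by induction on $k\in\{1,\ldots,n-2\}$ the following invariant: after the first $k$ pivots, the graph is the edge-disjoint union of (a) the path through $v_1^1,v_1^2,v_2^2,v_2^1,v_3^2,v_3^1,\ldots,v_{k+1}^2,v_{k+1}^1$ in that order, (b) a star joining $v_{k+1}^1$ to each of $v_{k+2}^1,\ldots,v_n^1$, and (c) a copy of $\S_{n-k-1}\tri\S_{n-k-1}$ on $\{v_{k+2}^1,\ldots,v_n^1, v_{k+2}^2,\ldots,v_n^2\}$. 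The base case $k=1$ is a direct single-pivot calculation on $\S_n\tri\S_n$, where the only toggled edges are $v_1^2v_j^1$ for $j\ge 3$. For the inductive step, in the invariant graph $v_{k+2}^1$ has only the neighbors $\{v_{k+1}^1, v_{k+2}^2\}$ and $v_{k+2}^2$ has only the neighbors $\{v_{k+2}^1, v_{k+3}^1,\ldots,v_n^1\}$, so pivoting on $v_{k+2}^1v_{k+2}^2$ toggles exactly the edges $v_{k+1}^1v_j^1$ for $j\ge k+3$, every one of which is present by (b). After the label swap $v_{k+2}^1\leftrightarrow v_{k+2}^2$ built into $\pivot$, the path extends by the two vertices $v_{k+2}^2, v_{k+2}^1$, the star dies at $v_{k+1}^1$ and re-emerges at $v_{k+2}^1$, and (c) shrinks to $\S_{n-k-2}\tri\S_{n-k-2}$, closing the induction. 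Taking $k=n-2$ leaves only the $\S_1\tri\S_1 = P_2$ block attached through $v_{n-1}^1$, so the whole graph is $P_{2n}$.

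For (2), observe that in $\S_n\tri\S_n$ the neighborhood of $v_1^2$ is exactly the side-$1$ vertex set $\{v_1^1,\ldots,v_n^1\}$. Applying $*\,v_1^2$ therefore turns the empty graph on side~$1$ into $\K_n$, while leaving side~$2$ and every cross edge intact; the result is precisely $\K_n\tri\S_n$. Hence $\K_n\tri\S_n$ is locally equivalent to $\S_n\tri\S_n$, which by (1) is pivot-equivalent, and in particular locally equivalent, to $P_{2n}$.

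The only real obstacle is bookkeeping: the invariant (a)--(c) must be phrased tightly enough that the flip-and-swap step of each pivot $v_i^1v_i^2$ verifies mechanically, with particular care given to how the label swap built into $\pivot$ redistributes edges among $v_{k+1}^1$, $v_{k+2}^1$, $v_{k+2}^2$, and the tail vertices $v_{k+3}^1,\ldots,v_n^1$. No further tool from the preceding sections is needed.
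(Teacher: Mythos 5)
Your proof is correct and follows essentially the same route as the paper: part (1) by an explicit pivot computation relating $\S_n\tri\S_n$ to $P_{2n}$ (the paper pivots the alternate edges $p_1p_2,p_3p_4,\ldots$ of the path and merely asserts the check, while you run a sequence of $n-2$ pivots in the opposite direction with a verified invariant), and part (2) by the single local complementation at $v_1^2$, which is exactly the paper's step. The extra bookkeeping you supply for (1) is sound; I verified the base case and the inductive flip-and-swap.
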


\begin{proof}
	(1) Let $P=p_1p_2 \ldots p_{2n}$. 
	We can check that $\S_{n}\tri\S_{n}$ can be obtained from $P$ by pivoting $p_ip_{i+1}$ for all $i=1,3,\ldots, 2n-1$.	

	(2) Let $V(\K_{n})=V(\S_{n})=\{v_1, v_2, \ldots, v_n\}$. 
	Since $(\K_{n}\tri\S_{n})*v^2_1$ is isomorphic to $\S_{n}\tri\S_{n}$, 
	the result follows from (1).
\end{proof}

\subsection{Ramsey numbers}
	A \emph{clique} is a set of pairwise adjacent vertices.
        A \emph{stable set} or an
\emph{independent set} is a set of pairwise non-adjacent vertices.

We write  $R(n_1,n_2, \ldots, n_k)$ to denote
the minimum number $N$ such that
in every $k$ coloring of the edges of $K_N$, 
there exist $i$ and a clique of size $n_i$ 
whose edges are all colored with the $i$-th color.
Such a number exists by Ramsey's theorem~\cite{Ramsey1930}.

\section{Unavoidable vertex-minors in large
  graphs}\label{sec:boundsize}
We present three simple statements on unavoidable vertex-minors. These
are optimal as discussed in Section~\ref{sec:intro}.
\begin{THM}\label{thm:boundedsize}
  \begin{enumerate}
  \item For every $n$, there exists $N$ such that 
  every graph on at least $N$ vertices
  has  a vertex-minor isomorphic to $\S_n$.

  \item  For every $n$, there exists $N$ such that 
  every connected graph having at least $N$ vertices
  has a vertex-minor isomorphic to $\K_n$.

  \item For every $n$, there exists $N$ such that
  every graph having at least $N$ edges
  has a vertex-minor isomorphic to $K_n$ or $\S_n\mat\S_n$.
  \end{enumerate}
\end{THM}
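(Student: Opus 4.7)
All three parts reduce to locating a specific induced subgraph via a Ramsey-type result and then converting it to the desired vertex-minor by one or two local-complementation moves. I would handle the three parts separately, in order.

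For part~(1), the plan is to set $N=R(n+1,n)$. Ramsey's theorem produces either an induced $\S_n$, which is already the required vertex-minor, or an induced $\K_{n+1}$; in the latter case, picking any vertex $v$ of the clique gives $\K_{n+1}*v\setminus v=\S_n$, since local complementation at $v$ turns the remaining $\K_n$ into $\S_n$.

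For part~(2), I would invoke the folklore theorem (Diestel, Proposition~9.4.1) cited in the introduction to obtain $N$ such that every connected graph on at least $N$ vertices contains an induced $\K_n$, $K_{1,n}$, or $P_{2n}$. An induced $\K_n$ is already a vertex-minor. From an induced $K_{1,n}$, a single local complementation at the center yields $\K_{n+1}$, which contains $\K_n$. From an induced $P_{2n}$, Lemma~\ref{lem:lengthonecase2}(2) gives that $P_{2n}$ is locally equivalent to $\K_n\tri\S_n$, whose first $n$ vertices $v_1^1,\ldots,v_n^1$ induce $\K_n$; deleting the remaining vertices $v_1^2,\ldots,v_n^2$ then yields $\K_n$ as a vertex-minor.

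For part~(3), the plan is a dichotomy on the maximum degree $\Delta(G)$ with threshold $R:=R(n+1,n)$. If some vertex $v$ satisfies $\deg(v)\ge R$, then Ramsey applied to $G[N(v)]$ produces inside $N(v)$ either an induced $\K_{n+1}$, giving an induced $\K_{n+2}$ in $G$ and hence $\K_n$ as a vertex-minor, or an induced $\S_n$, which together with $v$ forms an induced $K_{1,n}$ and yields $\K_{n+1}$ (hence $\K_n$) after local complementation at $v$. Otherwise $\Delta(G)<R$, and I would extract an induced matching of size $n$ greedily: repeatedly pick an edge $e=uv$, add it to the matching, and delete every vertex of $N[u]\cup N[v]$. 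Each step destroys at most $2\Delta(\Delta+1)=O(R^2)$ edges, so once $\abs{E(G)}\ge cnR^2$ for a suitable constant $c$ the procedure returns $n$ matching edges whose $2n$ endpoints induce exactly $\S_n\mat\S_n$. The only real work is this greedy step; parts~(1) and~(2) each reduce to a single local-complementation identity.
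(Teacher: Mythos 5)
Your proposal is correct. Parts (1) and (2) match the paper's argument in substance: part (1) is identical (Ramsey with $R(n,n+1)$ plus the observation $\K_{n+1}*v\setminus v=\S_n$), and part (2) differs only cosmetically — the paper re-derives the long induced path by bounding the maximum degree via $R(n-1,n-1)$, whereas you cite the folklore theorem directly; both then extract $\K_n$ from a path through Lemma~\ref{lem:lengthonecase2}. Part (3) is where you genuinely diverge. The paper leans on part (2): every component has at most $M$ vertices, and since an edge from each of $n$ distinct non-trivial components would induce $\S_n\mat\S_n$ (an $n$-edge induced matching), there are fewer than $n$ non-trivial components, giving the bound $\binom{M}{2}(n-1)$ on the edge count. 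You instead run a max-degree dichotomy: a vertex of degree at least $R(n+1,n)$ yields $\K_n$ via Ramsey on its neighborhood, and otherwise bounded degree lets you greedily extract an induced matching of size $n$ — whose $2n$ endpoints induce exactly $\S_n\mat\S_n$ — since each greedy step destroys only $O(\Delta^2)$ edges. Your route is self-contained (it does not invoke part (2)) and gives an explicit bound polynomial in the Ramsey number, at the cost of redoing a Ramsey argument; the paper's route is shorter given (2) but inherits whatever bound (2) provides. Both correctly identify $\S_n\mat\S_n$ as an induced $n$-edge matching, which is the crux of this part.
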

\begin{proof}
(1)  If a graph has no $\S_n$ as a vertex-minor, then it has no
  vertex-minor isomorphic to $\K_{n+1}$.
  So we can take $N=R(n,n+1)$.

(2)  Let us assume that $G$ has no vertex-minor
  isomorphic to $K_n$.
  Then the maximum degree of $G$ is less than $\Delta=R(n-1,n-1)$ 
  by Ramsey theorem.
  If $\abs{V(G)}$ is big enough, 
  then it contains an induced path $P$ of length $2n-3$
  because the maximum degree is bounded.
  By Lemma~\ref{lem:lengthonecase2}, 
  $P_{2n-2}$
  has a vertex-minor isomorphic to 
  $\K_{1,n-1}$, that is locally equivalent to $\K_{n}$.
(3)  Let $G$ be a graph having no vertex-minor isomorphic to $\K_n$ or $\S_n\mat\S_n$.
  Each component of $G$ has bounded number of vertices, say $M$, by (2).
  Since $\S_n\mat\S_n$ is not a vertex-minor of $G$, 
  $G$ has less than $n$ 
  non-trivial components. (A component is trivial if it has no edges.)
  So $G$ has at most $\binom{M}{2} (n-1)$ edges.
\end{proof}

\section{Obtaining a long cycle in a huge generalized ladder}\label{sec:ladder}
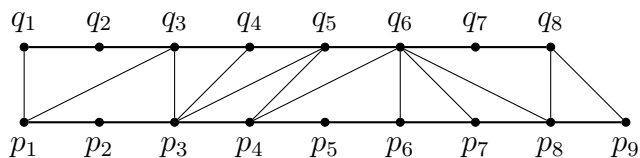
\begin{figure}
  \centering
  \tikzstyle{v}=[circle, draw, solid, fill=black, inner sep=0pt, minimum width=3pt]
  \begin{tikzpicture}
    \foreach \x in {1,...,9}
    {
      \node [v,label=below:$p_{\x}$] (p\x) at (\x,0) {};
    }
    \foreach \x in {1,...,8}
    {
      \node [v,label=$q_{\x}$] (q\x) at (\x,1) {};
    }
    \draw [thick](p1)--(p9);
    \draw [thick] (q1)--(q8);
    \foreach \x in {1,3} {\draw (p1)--(q\x);}
    \foreach \x in {3,4,5} {\draw (p3)--(q\x);}
    \foreach \x in {5,6} {\draw (p4)--(q\x);}
    \foreach \x in {6,7,8} {\draw (q6)--(p\x);}
    \foreach \x in {8,9}{\draw(q8)--(p\x);}
  \end{tikzpicture}
  \caption{An example of a generalized ladder.}
  \label{fig:ladder}
\end{figure}
A \emph{generalized ladder} is a graph $G$ with two vertex-disjoint paths
$P=p_1p_2\ldots p_a$, $Q=q_1q_2\ldots q_b$ ($a,b\ge 1$)
with additional edges, called \emph{chords}, each joining a vertex of $P$ with a vertex of $Q$
such that $V(P)\cup V(Q)=V(G)$,
$p_1$ is adjacent to $q_1$,
$p_a$ is adjacent to $q_b$,
and no two chords cross. Two chords $p_iq_j$ and $p_{i'}q_{j'}$ ($i<i'$) \emph{cross} if
and only if $j>j'$.
We remark that 
a generalized ladder is a outerplanar graph whose weak dual is a path.
We call $p_1q_1$ the \emph{first chord}
and $p_aq_b$ the \emph{last chord} of $G$.
Since no two chords cross, $p_1$ or $q_1$ has degree at most $2$.
Similarly, $p_a$ or $q_b$ has degree at most $2$. 
See Figure~\ref{fig:ladder} for an example.

We will prove the following proposition.
\begin{PROP}\label{prop:ladder}
  Let $n\ge 2$.
  Every generalized ladder having at least 
  $4608n^5$
  vertices 
  has a cycle of length $4n+3$ as  a vertex-minor.
\end{PROP}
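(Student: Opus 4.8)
The plan is to reduce a huge generalized ladder to a clean ``ladder-like'' substructure and then exhibit a long cycle as a vertex-minor of that substructure. First I would observe that a generalized ladder $G$ on many vertices must contain a long ``stretch'' of uncrossed chords. Concretely, order the chords $c_1,c_2,\ldots,c_m$ from the first chord $p_1q_1$ to the last chord $p_aq_b$ along the outer cycle; consecutive chords $c_k=p_iq_j$ and $c_{k+1}=p_{i'}q_{j'}$ bound a ``cell'' of the outerplanar graph, and between them the paths $P$ and $Q$ advance by $i'-i\ge 0$ and $j'-j\ge 0$ steps respectively. Since $V(P)\cup V(Q)=V(G)$, the total number of vertices is roughly $\sum_k((i'-i)+(j'-j))$ plus $m$, so if $\abs{V(G)}\ge 4608n^5$ then either there are many chords or some cell is large; in both cases I can localize to a piece of the ladder that is either (a) a long ``true ladder'' $P_k\mat P_k$-like strip where the two paths run in parallel with a matching of rungs, or (b) a long induced path together with one or two pendant-ish chords. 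The point of the polynomial bound $4608n^5$ is to absorb the Ramsey-type and pigeonhole losses incurred while cleaning up the chord pattern.

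The key technical step is to handle the ``true ladder'' case: a subgraph consisting of two vertex-disjoint paths $p_1\cdots p_s$ and $q_1\cdots q_s$ together with all rungs $p_iq_i$ — essentially $P_s\mat P_s$ — and to show it has $C_{4n+3}$ as a vertex-minor once $s$ is linear in $n$. For this I would use pivots along the rungs, in the spirit of Lemma~\ref{lem:lengthonecase2}: pivoting a rung $p_iq_i$ swaps the two endpoints and toggles adjacencies among their other neighbors, which lets me ``weave'' the two parallel paths into a single long path or cycle. After performing a carefully chosen alternating set of rung-pivots, the strip becomes locally equivalent to a single long cycle (or a long path plus a closing edge coming from the first/last chord of the ladder), and deleting the excess vertices leaves $C_{4n+3}$. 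The mixed cells — where the chord pattern is a nontrivial bipartite chain rather than a clean matching — are dealt with similarly using the $\tri$ operation: by Lemma~\ref{lem:lengthonecase2}, chain-type connections are locally/pivot equivalent to paths, so a long run of them again reduces to a long path, which is locally equivalent to a cycle of comparable length after adding one chord of the ladder to close it up.

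The remaining case is when $G$ is ``thin'' — essentially a long induced path $p_1\cdots p_a$ with only a bounded number of chords. Then $G$ itself contains a long induced path, and a long induced path (hence $P_{4n+4}$) together with the guaranteed first and last chords $p_1q_1$, $p_aq_b$ forms a long cycle-like structure; a long path plus an edge joining near-endpoints gives $C_{4n+3}$ as a subgraph, or at worst as a vertex-minor after one pivot to ``shortcut'' the chord into place. To make the numbers work I would split into the two regimes by a threshold on the number of chords $m$: if $m\ge$ (something like) $\sqrt{4608n^5}$ I use the chord-rich analysis; otherwise some cell has $\ge 4n+$const interior path vertices, handing me the long induced path directly.

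The main obstacle I expect is the careful bookkeeping in the ``true ladder'' reduction: choosing exactly which rungs to pivot, and in which order, so that the resulting graph is provably a single cycle of the right parity and length — not a disjoint union of shorter cycles or a path with unwanted extra edges — and then tracking which vertices to delete. Getting the length to be exactly $4n+3$ (an odd number, which is why cycles rather than paths are the target, matching Bouchet's $C_5$ phenomenon) forces a parity argument, and I anticipate that the factor-of-$4$ and the ``$+3$'' are artifacts of needing room to absorb up to a constant number of ``wasted'' rungs at the ends and one parity correction. The chord-pattern cleanup (reducing an arbitrary noncrossing chord diagram to alternating matching/chain blocks via Ramsey on the cell types) is conceptually routine but is where the $n^5$, rather than a linear bound, comes from.
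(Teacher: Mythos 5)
Your high-level dichotomy (chord-rich stretches give a ladder-like strip; chord-poor stretches give a long cell hence a long induced cycle) matches the shape of the paper's argument, and the idea of pivoting along rungs to weave two parallel paths into one long cycle is exactly what Lemma~\ref{lem:ladder-deg3} does. However, the step you dismiss as ``conceptually routine'' --- reducing an arbitrary noncrossing chord pattern to clean matching/chain blocks ``via Ramsey on the cell types'' --- is where the proof actually lives, and as stated it does not work. A cell of a generalized ladder is not drawn from a finite alphabet: it is determined by how far the two chord endpoints advance on each side, which is unbounded, so there is no finite set of ``types'' on which to run Ramsey. Moreover your proposal never addresses high-degree vertices at all; a single vertex of $P$ adjacent to a large interval of $Q$ creates a fan, not a ladder strip, and this is handled in the paper by a separate argument (Lemma~\ref{lem:pathone}, which uses the fan Lemma~\ref{lem:fan}) before any ladder cleanup can begin.

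The paper replaces your Ramsey cleanup with a degree-reduction cascade that has no Ramsey component: Lemma~\ref{lem:boundingmaxdeg} and the opening of the proof of Proposition~\ref{prop:ladder} knock all degrees $\ge 5$ down to $\le 4$ (using Lemma~\ref{lem:pathone} to dispose of any vertex whose neighbors span a long interval), and Lemma~\ref{lem:removedeg4} is a six-case local induction that converts degree-$4$ vertices to degree-$3$ while losing only a constant fraction of the remaining vertices. Only after both reductions do we reach the ``true ladder'' (max degree $3$) you want to pivot on, and Lemma~\ref{lem:ladder-deg3} then performs an explicit alternating pivot sequence whose output is a cycle of length exactly $4n+3$ --- so that number is the result of the construction, not slack built in to absorb parity losses. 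In short: the target structure and the pivot trick in your proposal are right, but the reduction to that target is the hard part, it is a degree argument rather than a Ramsey argument, and your sketch has no substitute for it.
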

\subsection{Lemmas on a fan}\label{subsec:fan}
Let $F_n$ be a graph on $n$ vertices with a specified vertex $c$, called the center,
such that $F_n\setminus c$ is a path on $n-1$ vertices
and $c$ is adjacent to all other vertices.
We call $F_n$ a \emph{fan} on $n$ vertices.
\begin{LEM}\label{lem:fan}
 A fan $F_{3n}$ has a vertex-minor isomorphic to a
 cycle of length $2n+1$.
\end{LEM}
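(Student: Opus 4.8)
The plan is to show that a fan $F_{3n}$ can be reduced, by a sequence of pivots/local complementations and vertex deletions, to a cycle of length $2n+1$. Write $F_{3n}$ with center $c$ adjacent to all vertices of a path $p_1p_2\cdots p_{3n-1}$. The key observation is that pivoting a ``spoke'' edge of a fan creates local structure that, after deleting one endpoint, again looks like a fan but with a triangle attached at the end, or produces a longer cycle through the spine. Concretely, I would first understand what $F_{3n}\pivot cp_i\setminus c$ and $F_{3n}*p_i\setminus p_i$ look like: because $c$ is universal, applying local complementation at a spine vertex $p_i$ flips all pairs among $\{c,p_{i-1},p_{i+1}\}$, which toggles the edge $p_{i-1}p_{i+1}$ and toggles the spokes $cp_{i-1}$, $cp_{i+1}$. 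So each such move is a small, completely local modification.

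The heart of the argument is a reduction that trades three consecutive spine vertices for one cycle edge. I would look at a block $p_{3k-2},p_{3k-1},p_{3k}$ together with the center. Using the fact that $c$ is adjacent to all three, I expect that an appropriate combination — say, local complementation at $p_{3k-1}$ followed by deleting $p_{3k-1}$, possibly together with a pivot on a spoke — leaves $p_{3k-2}$ and $p_{3k}$ adjacent to each other and to $c$, i.e.\ collapses the three-vertex block into a single ``rung'' $c p_{3k-2} p_{3k}$ while keeping the rest of the fan intact. Iterating this over the $n$ disjoint blocks (there are $3n-1$ spine vertices, enough for $n$ blocks with a little slack) turns $F_{3n}$ into a graph that is locally equivalent to a wheel-like graph on about $2n+1$ vertices: the center $c$ plus $2n$ spine vertices forming a path, with $c$ adjacent to all of them and with the two ends $p_1$, $p_{3n-1}$ adjacent to $c$. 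That graph is a fan $F_{2n+1}$ in which additionally the two endpoints of the spine are adjacent — equivalently a wheel $W_{2n+1}$ minus nothing, or just short of it. One then removes $c$ by a single pivot $F\pivot c p_1\setminus c$ (or $*c\setminus c$): since $c$ was universal, pivoting a spoke and deleting $c$ turns the spine path plus the two endpoint-to-$c$ edges into a single cycle through all $2n+1$ non-center vertices. I would verify the endpoints work out so the result is exactly $C_{2n+1}$ and not a path.

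An alternative, cleaner route I would try first: it is a standard fact (and easy to check directly) that $F_{m}\pivot p_1 c$ or $F_m*c$ relates fans to cycles — indeed a wheel $W_m$ (a cycle $C_{m-1}$ plus a universal vertex) satisfies $W_m*c\setminus c = C_{m-1}$, and a fan is a wheel with one rim edge deleted. So if I can first get from $F_{3n}$ to $W_{2n+2}$ (a cycle on $2n+1$ vertices plus a hub) as a vertex-minor, I am done by deleting the hub after one local complementation. Getting $W_{2n+2}$ from $F_{3n}$ amounts to (a) adding the missing rim edge $p_1p_{3n-1}$, which can be arranged by the local moves above since both are neighbors of $c$, and (b) shortening the rim from length $3n-2$ down to $2n$; the shortening is exactly the three-for-one block reduction described above. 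I would present the block reduction as a small self-contained claim with an explicit figure, then the wheel-to-cycle step as a one-line computation.

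The main obstacle I anticipate is bookkeeping at the boundary of each block and making the three-for-one reduction genuinely local, i.e.\ checking that collapsing block $k$ does not disturb the spokes or spine edges at blocks $k\pm 1$, so that the reductions can be applied independently and simultaneously. A secondary subtlety is the off-by-small-constant arithmetic: with $3n-1$ spine vertices one gets $n$ full blocks only with a vertex or two to spare, so I would either start from $F_{3n}$ and accept ending slightly above $C_{2n+1}$ and then delete a vertex, or track the count carefully to land exactly on $2n+1$ as the lemma claims. Neither obstacle is deep, but the argument is easiest to make rigorous by drawing the local picture of $F\setminus$(block) before and after and quoting Lemma~\ref{lem:bouchet1} is not even needed here since we are building the vertex-minor explicitly by an exhibited sequence of operations.
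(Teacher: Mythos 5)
Your proposal correctly identifies the type of move the paper uses (local complementation at an interior spine vertex, then deletion), and even computes its effect correctly: you note that $*p_i$ ``toggles the edge $p_{i-1}p_{i+1}$ and toggles the spokes $cp_{i-1}$, $cp_{i+1}$.'' But your block reduction then contradicts this: you claim that $*p_{3k-1}\setminus p_{3k-1}$ ``leaves $p_{3k-2}$ and $p_{3k}$ adjacent to each other \emph{and to $c$}.'' That is wrong --- as you had just observed, the local complementation \emph{deletes} the spokes $cp_{3k-2}$ and $cp_{3k}$. Consequently the intermediate target you aim for (a smaller fan/wheel-like graph in which $c$ is still universal) is not what these moves produce, and it is also the wrong target: the correct endgame is to leave $c$ with degree exactly $2$ so that $c$ becomes an ordinary vertex of the final cycle, not a hub to be deleted. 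The paper's proof does exactly this: it applies $*v_3*v_6\cdots*v_{3n-3}$ (positions $\equiv 0 \pmod 3$, not the middles of consecutive triples), which strips $c$ of all spokes except $cv_1$ and $cv_{3n-1}$ and creates the shortcut edges $v_{3i-1}v_{3i+1}$; after deleting $v_3,v_6,\ldots,v_{3n-3}$ one is left with a $(2n+1)$-cycle through $c$.

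Your ``alternative, cleaner route'' is also broken: the claimed identity $W_m*c\setminus c = C_{m-1}$ is false, since $*c$ complements the rim, so $W_m*c\setminus c$ is $\overline{C_{m-1}}$, which is a cycle only for $m-1\le 5$. (The statement $W_m\setminus c = C_{m-1}$ is trivially true but unhelpful here, because your block reduction never creates the missing rim edge $p_1p_{3n-1}$, so you never actually reach a wheel.) The arithmetic issues you flag are real but secondary. The essential missing idea is that $c$ should survive into the final cycle as a degree-$2$ vertex, and that the choice of which spine vertices to locally complement must be made so that the two extreme spokes $cv_1$, $cv_{3n-1}$ are preserved while all interior spokes incident to surviving vertices are destroyed; with the paper's choice of positions $3,6,\ldots,3n-3$ this happens automatically.
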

\begin{proof}
  Let $c$ be the center of $F_{3n}$.
  Let $v_1,v_2,\ldots,v_{3n-1}$ be the non-center vertices in
  $F_{3n}$ forming a path.
  Let $G=F_{3n}*v_3*v_6*v_9\cdots*v_{3n-3}$.
  Clearly $c$ is adjacent to $v_i$ in $G$ if and only if 
  $i\in \{1,3n-1\}$ or 
  $i\equiv 0\pmod 3$ and furthermore $v_{3i-1}$ is adjacent to
  $v_{3i+1}$ in $G$ for all $i$.
  Let $H=G\setminus \{v_3,v_6,\ldots,v_{3n-3}\}$.
  Then  $H$ is a cycle of length $3n-(n-1)$.
\end{proof}

\begin{LEM}\label{lem:pathone}
  Let $n\ge 2$.
  Let $G$ be a graph with  a vertex $c$ 
  such that $G\setminus c$ is isomorphic to an induced path $P$
  whose both ends are adjacent to $c$.
  If $\abs{V(G)}\ge 6(n-1)^2-3$,
  then  $G$ has a vertex-minor isomorphic to a cycle of length $2n+1$.
\end{LEM}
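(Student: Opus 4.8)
The plan is to reduce Lemma~\ref{lem:pathone} to the fan case, Lemma~\ref{lem:fan}, by showing that $G$ contains a sufficiently large fan as a vertex-minor. The graph $G$ consists of an induced path $P=p_1p_2\cdots p_m$ (with $m=\abs{V(G)}-1\ge 6(n-1)^2-4$) together with a vertex $c$ adjacent to $p_1$ and $p_m$ and possibly to many interior vertices of $P$. Call $p_i$ a \emph{join vertex} if $p_i\in N_G(c)$. If $c$ happens to be adjacent to \emph{every} $p_i$, then $G$ is a fan $F_{m+1}$ and we are done by Lemma~\ref{lem:fan} as soon as $m+1\ge 3n$, which certainly holds. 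The difficulty is the general case, where the join vertices are scattered along $P$ with long stretches of non-neighbors between consecutive ones.

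The key idea is that a maximal subpath $p_ip_{i+1}\cdots p_j$ of $P$ whose internal vertices are all non-neighbors of $c$, but with $p_i,p_j\in N_G(c)$, together with $c$ forms exactly a graph of the type handled by Lemma~\ref{lem:pathone} restricted to that subpath — in fact it is a cycle $p_ip_{i+1}\cdots p_jc p_i$ with no chords, i.e. a cycle of length $(j-i)+2$, which is already a vertex-minor of $G$ if we delete everything else; but simply deleting loses too much. Instead, I would pivot or locally complement along these non-neighbor stretches to ``pull'' the endpoints together. Concretely, given a stretch $p_i,p_{i+1},\ldots,p_j$ with only $p_i,p_j$ adjacent to $c$, pivoting the edge $p_{i+1}p_{i+2}$, then $p_{i+3}p_{i+4}$, and so on (as in the proof of Lemma~\ref{lem:lengthonecase2}(1), which recognizes $\S_n\tri\S_n$ as a pivoted path) lets us shorten the path between $p_i$ and $p_j$ while keeping $c$ adjacent to both endpoints and to no new vertex of that stretch, at the cost of deleting about half the internal vertices. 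After processing every stretch this way, $c$ becomes adjacent to a constant fraction of the surviving path vertices; iterating a bounded number of times, or choosing the stretch decomposition carefully, we reach a configuration where $c$ is adjacent to \emph{all} remaining path vertices, i.e. a fan. Counting: each stretch of length $\ell$ (number of edges) contributes at least $\lceil \ell/2\rceil$ survivors when we pivot away alternate interior vertices, and a more careful bookkeeping — splitting into the number $k$ of join vertices and the total non-join length, and optimizing — is what produces the bound $6(n-1)^2-3$. This is essentially the same arithmetic as in Lemma~\ref{lem:pathone}'s sibling statements; I would set it up so that whatever the distribution of join vertices, we are guaranteed a fan on at least $3n$ vertices, then invoke Lemma~\ref{lem:fan}.

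The main obstacle I anticipate is the pivoting bookkeeping: I must verify that pivoting an edge $p_{i+1}p_{i+2}$ interior to a non-neighbor stretch does not accidentally create an edge from $c$ to some other interior vertex (it should not, since $c$ is non-adjacent to both $p_{i+1}$ and $p_{i+2}$, so $c$ is unaffected by that pivot), and that after deleting the pivoted-away vertex the path structure on the survivors is preserved and still induced, with $c$ now adjacent to the new consecutive survivors. One has to be careful at the boundary of a stretch, where $p_i$ or $p_j$ is a join vertex and its neighbor inside the stretch is not: a single pivot there does change $c$'s neighborhood, so the alternation must be started from the correct parity, and the endpoints $p_1,p_m$ of the whole path need special (but routine) handling since they are forced join vertices. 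Once those local checks are in place, the global count is straightforward, and the worst case — join vertices spaced as far apart as possible — is exactly what forces the quadratic bound $6(n-1)^2-3$, matching the statement.
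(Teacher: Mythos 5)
Your plan to reduce to Lemma~\ref{lem:fan} only works when $c$ has many neighbors on $P$, and you have no fallback when it does not. Here is the obstruction: every local complementation or pivot you perform at a vertex non-adjacent to $c$ leaves the neighborhood of $c$ unchanged, so no amount of ``pulling the endpoints together'' inside a non-neighbor stretch can create new edges from $c$ to the survivors. Consequently, after all stretches are fully contracted, the resulting fan has one rim vertex per original join vertex, so its size is (number of neighbors of $c$ on $P$) $+ 1$, not anything proportional to $\abs{V(P)}$. If $c$ has, say, only $4$ neighbors on $P$, your procedure can never produce a fan on $3n$ vertices no matter how long $P$ is, and the lemma is false along this route.

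The paper handles exactly this with a dichotomy you partially glimpsed but then discarded. If $c$ has at least $3n-1$ neighbors on $P$, smooth away the non-neighbors (local complementation at a degree-$2$ non-neighbor of $c$, then delete it) to get a fan on at least $3n$ vertices, then apply Lemma~\ref{lem:fan}. If $c$ has at most $3n-2$ neighbors, the neighbors cut $P$ into at most $3n-3$ subpaths; since $\abs{E(P)}$ is large enough, pigeonhole gives a subpath $P'$ with at least $2n-1$ edges whose endpoints are neighbors of $c$ and whose interior is not. Then $P'$ together with $c$ is already an induced cycle of length at least $2n+1$ in $G$ --- precisely the ``just delete everything else'' option you dismissed as ``losing too much.'' It loses too much only when the stretch is short, which is exactly when the other branch of the dichotomy applies. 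To repair your proof you should keep the fan reduction as one case, add the pigeonhole/induced-cycle argument as the other, and balance the threshold (around $3n$ neighbors) against the pigeonhole bound to recover the stated $6(n-1)^2-3$.

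Two smaller issues: the boundary normalization (ensuring $v_2,v_3$ and $v_{k-1},v_{k-2}$ are adjacent to $c$ by local complementation at $v_1$ and pivoting $v_1v_2$, and symmetrically at the other end) is what makes the pigeonhole count clean, and you should include it rather than wave at ``routine handling.'' Also, pivoting a stretch-interior edge reduces the path length by $2$, while local complementation at a degree-$2$ vertex reduces it by $1$; both work for shortening, but you should pick one and track the arithmetic, since the final bound is tight enough that the constants matter.
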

\begin{proof}
  We may assume that $n\ge 3$.
  Let $P=v_1v_2\ldots v_k$ with $k\ge 6$.
  We may assume that $v_2$ is adjacent to $c$ because otherwise we
  replace $G$ with $G*v_1$.
  Similarly we may assume that $v_{k-1}$ is adjacent to $c$. 
  We may also assume $v_3$ is adjacent to $c$ because otherwise we
  replace $G$ with $G\pivot v_1v_2$.
  Similarly we may assume that $v_{k-2}$ is adjacent to $c$.

  If $c$ is adjacent to at least $3n-1$ vertices on
  $P$, then $G$ has a vertex-minor isomorphic to $F_{3n}$.
  So by Lemma~\ref{lem:fan}, $G$ has a vertex-minor isomorphic
  to a cycle of length $2n+1$.
  Thus we may assume that the number of neighbors of $c$ is at most $3n-2$.
  The neighbors of $c$ gives a partition of $P$ into 
  at most $3n-3$ subpaths.
  We already have $4$ subpaths at both ends having length $1$.
  Since \[
  \abs{E(P)}\ge 
  6(n-1)^2-3-2>  (2n-2)((3n-3)-4)+4, \]
  there exists a subpath $P'$ of $P$ having length at least $2n-1$
  such that no internal vertex of $P'$ is adjacent to $c$
  and the ends of $P'$ are adjacent to $c$.
  This together with $c$ gives an induced cycle of length at least $2n+1$.
\end{proof}

\subsection{Generalized ladders of maximum degree at most $3$}

\begin{LEM}\label{lem:ladder-deg3}
  Let $G$ be a generalized ladder of maximum degree $3$.
  If $G$ has at least $6n$ vertices of degree $3$, 
  then $G$ has a cycle of length $4n+3$ as a vertex-minor.
\end{LEM}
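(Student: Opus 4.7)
Because $G$ has maximum degree~$3$, every degree-$3$ internal vertex of $P$ (resp.\ $Q$) is incident with exactly one chord, and each of the four extreme vertices $p_1,p_a,q_1,q_b$ attains degree~$3$ only when it is incident with two chords. Counting chord-incidences on each path then gives, after a short calculation, that the number of degree-$3$ vertices on that path equals the total chord count minus~$2$. Hence $G$ has at least $3n+2$ chords in total.

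\emph{Step~1 (reduction to a standard grid $L_{3n+2}$).} For every internal path vertex $v$ of degree~$2$, the two path-neighbors $u,w$ of $v$ are non-adjacent in $G$, so the smoothing $G\ast v\setminus v$ inserts the edge $uw$ and deletes $v$, producing a generalized ladder with one fewer vertex and the same chord set. Iterating, I obtain a $2\times k$ standard ladder $L_k$ in which every internal $p_i,q_i$ sits on a rung, with $k\ge 3n+2$. If $k>3n+2$, I then successively delete the pairs $p_i,q_i$ for $i=k,k-1,\ldots,3n+3$ to reduce to $L_{3n+2}$.

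\emph{Step~2 (middle-rung pivoting in $L_{3n+2}$).} For any rung $p_iq_i$ with $2\le i\le 3n+1$, a direct calculation using the pivot rule of Section~\ref{subsec:vmpm} shows that $L_{3n+2}\pivot p_iq_i\setminus p_i\setminus q_i$ removes the three consecutive rungs $p_{i-1}q_{i-1},\,p_iq_i,\,p_{i+1}q_{i+1}$, inserts the two cross-edges $p_{i-1}q_{i+1}$ and $p_{i+1}q_{i-1}$, and severs both $P$ and $Q$ at position~$i$. I iterate this operation at the $n$ positions $i_\ell=3\ell$ for $\ell=1,\ldots,n$, so that the affected windows $\{i_\ell-1,i_\ell,i_\ell+1\}$ exactly tile $\{2,\ldots,3n+1\}$; then every internal $p_j,q_j$ has lost its rung and every surviving vertex has degree exactly~$2$. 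Tracing the cross-edges alternately with the surviving path-pieces identifies this $2$-regular graph as a single induced cycle of length $2(3n+2)-2n=4n+4$, closed by the two untouched rungs $p_1q_1$ and $p_{3n+2}q_{3n+2}$. A single smoothing $\ast v\setminus v$ on a degree-$2$ vertex of this cycle trims its length to exactly $4n+3$.

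The main obstacle is verifying that in Step~2 the cross-edges and surviving path-pieces concatenate into a \emph{single} induced cycle rather than several disjoint components. Each pivot introduces a ``twist'' that routes the cycle between the two halves of the ladder via its two cross-edges $p_{i_\ell-1}q_{i_\ell+1}$ and $p_{i_\ell+1}q_{i_\ell-1}$, and the untouched first and last rungs $p_1q_1,p_{3n+2}q_{3n+2}$ supply the two closure edges. This amounts to a direct finite check on the schematic structure of the grid, which is straightforward because the first window $\{2,3,4\}$ leaves rung~$1$ intact and the last window $\{3n-1,3n,3n+1\}$ leaves rung~$3n+2$ intact, so no pendants are produced at the boundary.
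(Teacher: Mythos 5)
Your overall strategy (smooth away degree-$2$ internal vertices to reach a canonical ladder, then pivot every third rung and trace out one long induced cycle) is the same as the paper's, and your Step~2 computation is correct: pivoting a middle rung of the grid ladder does exactly what you say, the disjoint windows do not interfere, and the trace does close up into a single induced cycle of length $2m-2n$.

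The gap is in Step~1. Smoothing the internal degree-$2$ vertices does \emph{not} in general produce the standard grid ladder with rungs $p_iq_i$. Once every internal vertex has exactly one chord and the chords are non-crossing, the chord pattern is forced by the degrees of the four end vertices: if, say, $p_1$ has degree $2$ and $q_1$ has degree $3$, then $q_1$ must be adjacent to $p_2$, and inductively $q_i$ is adjacent to $p_{i+1}$ for all $i$ --- a \emph{shifted} ladder whose ends are capped by triangles $p_1p_2q_1$ and $p_aq_{b-1}q_b$ (this is precisely the configuration the paper's proof reduces to). You get the vertical-rung ladder $L_k$ only when both end vertices on each side have degree $2$. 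Your Step~2 pivot calculation genuinely needs the vertical rungs ($p_iq_i$ must be an edge), so in the shifted case you must first relabel, which means deleting or absorbing the two capping vertices. That costs up to two rungs: your (correct) count gives at least $3n+2$ chords, but two of them are the triangle caps, leaving only $L_{3n}$. Since the window construction needs $m\equiv 2\pmod 3$ and yields a cycle of length $\tfrac{4m+4}{3}$, from $L_{3n}$ you reach length at most $4n+2$, short of $4n+3$. The paper avoids this loss by an induction that explicitly tracks the end-vertex degrees and by a pivot sequence ($G*p_1\pivot p_4q_3\cdots$) that keeps the capping vertices and uses them as two extra cycle vertices. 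To repair your argument you would need either to carry out that endpoint case analysis, or to show how the two capping triangles can be incorporated into the final cycle (e.g.\ as subdivided first and last rungs) so as to recover the missing length.
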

\begin{proof}
  We proceed by induction on $\abs{V(G)}$.
  Let $P$, $Q$ be two defining paths of $G$.
  We may assume that all internal vertices of $P$ or $Q$ has degree
  $3$, because if $P$ or $Q$ has an internal vertex $v$ of degree $2$,
  then we apply the induction hypothesis to $G*v\setminus v$.
  Since $p_1$ or $q_1$ has degree $2$, 
  we may assume that $p_1$ has degree $2$ by symmetry.
  We may assume that $q_1$  has degree $3$ because otherwise we can
  apply the induction hypothesis to $G*q_1\setminus q_1$.
  Consequently $q_1$ is adjacent to $p_2$ and thus for each internal
  vertex $q_i$ of $Q$, $q_i$ is adjacent to $p_{i+1}$
  and each internal vertex $p_{i+1}$ of $P$ is adjacent to $q_i$.
  Thus either $a=b$ and $p_a$ has degree $3$
  or $a=b+1$ and $p_a$ has degree $2$.
  But if $a=b+1$ and $p_a$ has degree $2$, then we can apply the induction
  hypothesis to $G*p_a\setminus p_a$. 
  Thus we may assume that $a=b$ and $p_a$ has degree $3$.
  Since $G$ has at least $6n$ vertices of degree $3$, $a> 3n$ and
  $b>3n$.
  If $a=b>3n+1$, then we can apply the induction hypothesis to
  $G\setminus q_b$.
  Thus we may assume that $a=b=3n+1$
  and $p_a$ has degree $3$ and $q_b$ has degree $2$.
  Note that $p_i $ is adjacent to $q_{i-1}$ for all $i=2,\ldots,3n+1$.
  Then $G*p_1\pivot p_4q_3 \pivot p_7q_6 \cdots \pivot p_{3n+1}q_{3n}
  \setminus
  \{p_4,p_7,\ldots,p_{3n-2},q_3,q_6,\ldots,q_{3n-3},q_{3n+1}\}$ 
  is isomorphic to a cycle of length $4n+3$.
\end{proof}
\begin{LEM}\label{lem:ladder-final}
  Let $G$ be a generalized ladder of maximum degree $3$. 
  If $\abs{V(G)}\ge12n^2$, then 
  $G$ has a cycle of length $4n+3$ as a vertex-minor.
\end{LEM}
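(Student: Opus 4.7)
The plan is to split based on the number $k$ of degree-$3$ vertices in $G$. Since the hypothesis $|V(G)| \ge 12n^2$ forces both $P$ and $Q$ to have at least two vertices, every vertex has degree $2$ or $3$; summing degrees then yields $k = 2c - 4$, where $c$ is the number of chords.

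If $k \ge 6n$, then Lemma~\ref{lem:ladder-deg3} applies directly. Otherwise $k \le 6n - 2$, so $c \le 3n+1$, and I turn to the outerplanar face structure of $G$. Each of the $c - 1$ internal face boundaries is an induced cycle of $G$, and by Euler's formula these lengths sum to $|V(G)| + 2c - 4 \ge 12n^2$, so some internal face has length at least $(12n^2 + 2c - 4)/(c-1)$. When $c \le 3n$, a direct calculation shows this bound is already at least $4n+3$, giving an induced $C_m$ with $m \ge 4n+3$; I would then reduce it to $C_{4n+3}$ by iteratively applying $G * v \setminus v$ at a degree-$2$ vertex of the cycle, each such step shortening the cycle by one.

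The only remaining case is $c = 3n+1$, where the averaging yields only a face of length $4n+2$. Here I would pick an internal chord $p_i q_j$ (guaranteed to exist since $c - 2 \ge 3n - 1 \ge 2$ for $n \ge 1$) and consider the vertex-minor $G * p_i \setminus p_i * q_j \setminus q_j$. A direct computation shows that this operation creates new edges among $\{p_{i-1}, p_{i+1}, q_{j-1}, q_{j+1}\}$ that splice one extra vertex into a neighbouring face cycle; combined with a face of length at least $4n+2$, this produces an induced cycle of length $4n+3$, which is the desired vertex-minor.

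The main obstacle is precisely this last case: the vertex-minor operations disrupt the generalized-ladder structure, so one must carefully track how the face boundaries are modified to confirm that the splicing produces an induced $C_{4n+3}$ rather than just a longer, non-induced cycle.
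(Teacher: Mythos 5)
Your overall strategy matches the paper's: if there are at least $6n$ vertices of degree $3$ you invoke Lemma~\ref{lem:ladder-deg3}, and otherwise you look for a long induced face cycle. But your second half has a genuine gap, and you have located it yourself: the case $c=3n+1$. Your averaging bound $(12n^2+2c-4)/(c-1)$ really does fail there --- with $c=3n+1$ the $3n$ internal faces have total length $\abs{V(G)}+6n-2$, so for $\abs{V(G)}=12n^2$ it is perfectly possible that every face has length at most $4n+2$ (e.g.\ $3n-1$ faces of length $4n+2$ and one of length $4n$). The proposed repair --- applying $G*p_i\setminus p_i*q_j\setminus q_j$ at a chord to ``splice'' a vertex into a neighbouring face --- is not a proof and is, I believe, false as stated: locally complementing at $p_i$ adds the edge $p_{i-1}p_{i+1}$ and flips the adjacencies of $q_j$ to $p_{i-1},p_{i+1}$, and the subsequent complementation at $q_j$ then scrambles $\{p_{i-1},p_{i+1},q_{j-1},q_{j+1}\}$; the typical outcome is new chords and triangles inside the adjacent faces, not a longer induced cycle. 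Since you delete two vertices and the long face already has length only $4n+2$, there is no reason to expect an induced $C_{4n+3}$ to survive.

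The paper avoids this extremal case by a preprocessing step you are missing: it first shows that, after local complementations (replacing $G$ by $G*p_1$ and arguing symmetrically at the other end), one may assume the faces containing the first chord $p_1q_1$ and the last chord $p_aq_b$ each contain at most $2$ edges of $E(P)\cup E(Q)$. With that normalization the count closes for \emph{every} $c\le 3n+1$: there are at most $3n$ faces, the two end faces contribute at most $4$ edges of $E(P)\cup E(Q)$, and if every remaining face had length at most $4n+2$ (hence at most $4n$ edges of $E(P)\cup E(Q)$, since each face contains exactly two chords) one would get $\abs{E(P)\cup E(Q)}\le (3n-2)(4n)+4<12n^2-2\le\abs{E(P)\cup E(Q)}$, a contradiction. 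If you insert this end-face reduction into your argument, your own averaging also goes through for $c=3n+1$, and the rest of your write-up (the identity $k=2c-4$, the face-length sum $\abs{V(G)}+2c-4$, and the reduction of an induced $C_m$, $m\ge 4n+3$, to $C_{4n+3}$ by repeated $*v\setminus v$) is correct. Without it, the lemma is not proved.
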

\begin{proof}
  Let $P$, $Q$ be two defining paths of $G$. 
  We may assume $a>1$ and $b>1$ because
  otherwise $G$ has an induced cycle of length at least $6n^2+1\ge
  4n+3$.

  Let $p_xq_y$ be the unique chord other than $p_1q_1$ with minimum $x+y$.
  We claim that we may assume $(x-1)+(y-1)\le 2$.
  Suppose not. Then $p_xq_y$, $p_1q_1$ and subpaths of $P$ and $Q$
  form a cycle of length $x+y\ge 5$ and
  $p_1,p_2,\ldots,p_{x-1},q_1,q_2,\ldots,q_{y-1}$ have degree $2$.
  By moving the first few vertices of $P$ to $Q$ or $Q$ to $P$,
  we may assume that $x\ge 3$ and $y\ge 2$.
  Then we may replace $G$ with $G*p_1$. This proves the claim.

  Thus the induced cycle containing $p_1q_1$ has at most $2$ edges
  from $E(P)\cup E(Q)$.
  Similarly we may assume that 
  the induced cycle containing $p_aq_b$ has at most $2$ edges
  from $E(P)\cup E(Q)$.

  If $G$ has at least $6n$ vertices of degree $3$,
  then by Lemma~\ref{lem:ladder-deg3}, we obtain a desired
  vertex-minor.
  So we may assume that $G$ has at most $6n-1$ vertices of degree
  $3$. 
  Thus $G$ has at most $3n-1$ chords other  than $p_1q_1$ and
  $p_aq_b$.
  These chords give at most $3n$ induced cycles of $G$
  where each edge in $E(P)\cup E(Q)$ appears in
  exactly one of them.
  If every such induced cycle has length at most $4n+2$, then 
  \[ \abs{E(P)\cup E(Q)} \le (3n-2)(4n)+4 = 12n^2-8n+4 <12n^2-2.\]
  Since $\abs{V(G)}\ge 12n^2$, 
  we have $\abs{E(P)\cup E(Q)}\ge 12n^2-2$. This leads to a contradiction.
\end{proof}
\subsection{Generalized ladders of maximum degree $4$}
\begin{LEM}\label{lem:removedeg4}
  Let $G$ be a generalized ladder
  of maximum degree at most $4$. 
  Let $\alpha$ be the number of vertices of $G$ having degree $3$ or $4$.
  Then $G$ has a vertex-minor $H$ that is a generalized ladder
  of maximum degree at most $3$
  such that $\abs{V(H)}\ge  \alpha/4$.
\end{LEM}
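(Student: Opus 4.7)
I would prove the lemma by induction on the number of degree-$4$ vertices of $G$. The base case is trivial: if $G$ already has maximum degree at most $3$ then take $H=G$, so that $\abs{V(H)}=\abs{V(G)}\ge\alpha\ge\alpha/4$.

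For the inductive step I would first normalize $G$ by a \emph{smoothing step}: at every internal vertex $u$ of $P$ or $Q$ that has no incident chord, apply $G\mapsto G*u\setminus u$. Because $N_G(u)=\{u_-,u_+\}$ consists only of the two path-neighbors of $u$, local complementation flips the single pair $u_-u_+$ into a new path edge, and the deletion of $u$ removes just one vertex. This preserves the generalized-ladder structure, the maximum degree and the value of $\alpha$, and afterwards every internal vertex of $P$ and $Q$ is a chord endpoint. In particular, the two chord partners of any internal degree-$4$ vertex $p_i\in V(P)$ are consecutive on $Q$, say $q_j$ and $q_{j+1}$.

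Next, to reduce a single degree-$4$ vertex $p_i$, consider first the clean subcase in which $q_{j+1}$ carries no other chord and is internal, so $N_G(q_{j+1})=\{q_j,q_{j+2},p_i\}$. I would apply $G\mapsto G*q_{j+1}\setminus q_{j+1}$: local complementation flips the three pairs in this triangle, installing the path edge $q_jq_{j+2}$ (which together with the deletion of $q_{j+1}$ repairs $Q$), deleting the chord $p_iq_j$, and creating the chord $p_iq_{j+2}$. The new chord inherits the noncrossing property from $p_iq_{j+1}$, the rest of the graph is untouched, and $p_i$ now has degree $3$ at the cost of removing exactly one vertex. The boundary subcase in which $p_i$ is an endpoint of $P$ carrying three chords, and the subcase in which $q_{j+1}$ has a second chord, are handled by the obvious variants of the same operation at a nearby vertex.

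Each reduction step removes one vertex (always of degree $3$ in the current graph) and strictly decreases the number of degree-$4$ vertices, \emph{unless} it creates a new one at $q_{j+2}$: the new chord $p_iq_{j+2}$ raises the chord-degree of $q_{j+2}$ by one, which may push $q_{j+2}$ from degree $3$ to degree $4$. I would process the degree-$4$ vertices in left-to-right order along the chord sequence, so that any newly created degree-$4$ vertex lies strictly to the right of the vertex just reduced and the cascades must terminate.

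The hard part will be the amortized count. The goal is to show that the total number of vertices removed during the whole process is at most $3\alpha/4$, so that $\abs{V(H)}\ge\abs{V(G)}-3\alpha/4\ge\alpha/4$. I would set this up by a charging scheme assigning each removed vertex to three distinct vertices of the original $\alpha$ lying in a bounded window of the chord sequence around the reduction; this is feasible because the chord-degree at every vertex is bounded by $3$, which bounds the local cascade length and ensures no vertex of $\alpha$ is charged more than once. Making this bookkeeping precise — in particular verifying that the windows assigned to distinct reductions are disjoint — is what I expect to be the most delicate step.
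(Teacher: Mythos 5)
Your overall strategy (normalize away degree-$2$ vertices, then sweep left to right performing local complementations that trade one deleted vertex for one repaired degree-$4$ vertex, with an amortized count at the end) is the same in spirit as the paper's proof, which sweeps a frontier $X_{i,j}$ rightward while maintaining the invariant $\abs{X_{i,j}}+\alpha_{i,j}(G)/4\ge\alpha/4$. But there are two genuine gaps. First, the case analysis: your ``clean subcase'' assumes $q_{j+1}$ carries no chord other than $p_iq_{j+1}$, and you dismiss the remaining configurations as ``obvious variants.'' They are not. When the partner vertex on the other path also has degree $4$ (e.g.\ in a ladder where every internal vertex carries two chords), the single local complementation $G*q_{j+1}\setminus q_{j+1}$ installs a chord $p_iq_{j+2}$ at a vertex that already has degree $4$, producing a degree-$5$ vertex and destroying the structure you are inducting on. The paper needs six distinct cases here, several of which use pivots and delete two or three vertices at once precisely to avoid this; your proposal contains no mechanism for them.

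Second, and more seriously, the quantitative content of the lemma --- the $\alpha/4$ bound --- is exactly the step you defer. As described, your scheme does not obviously achieve it. Each of your reductions deletes one vertex (which, after smoothing, is a degree-$3$ vertex counted in $\alpha$) and may spawn a new degree-$4$ vertex to the right, so the natural potential $\Phi=\sum_{\deg v\ge 3}(\deg v-2)$ drops by only $2$ per reduction while $\Phi\le 2\alpha$ initially; this bounds the number of deletions by $\alpha$, giving $\abs{V(H)}\ge \abs{V(G_{\mathrm{smoothed}})}-\alpha$, which can be $0$ rather than $\alpha/4$. A cascade running the full length of a ``double ladder'' realizes essentially this worst case. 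So the charging scheme you sketch is not a bookkeeping detail: without an argument that each deleted vertex can be charged to four $\alpha$-vertices that are never charged again (which is what the paper's invariant $\abs{X}+\alpha_{i,j}/4$ encodes, case by case), the proof does not go through.
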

\begin{proof}
  Let $P=p_1p_2\ldots p_a$, $Q=q_1q_2\ldots q_b$ be the paths defining
  a generalized ladder $G$.
  Let $X_{i,j}=\{p_1,p_2,\ldots,p_i,q_1,q_2,\ldots,q_j\}$.
  We may assume $\alpha>8$.

  If $a=1$, then $p_1$ has at least $\alpha-1$ neighbors but the
  maximum degree is $4$ and therefore $\alpha\le 5$, contradicting our assumption.
  Thus $a>1$. Similarly $b>1$.

  We may also assume that no internal vertex of $P$ or $Q$ has degree
  $2$, because otherwise we can apply local complementation and remove
  it.

  Let $\alpha_{i,j}(G)$ be the number of vertices in $V(G)\setminus
  X_{i,j}$
  having degree $3$ or $4$. 
  We will prove the following.
  \begin{CLAIM}\label{claim1}
    Suppose that there exist $1\le i<a$ and $1\le j<b$ such that 
    $\delta_G(X_{i,j})$ has exactly two edges
    and 
    every vertex in $X_{i,j}$ has degree $2$ or $3$ in $G$.
    Then $G$ has a vertex-minor $H$ that is a generalized ladder of
    maximum degree at most $3$ such that
    $\abs{V(H)}\ge \abs{X_{i,j}}+\alpha_{i,j}(G)/4$.
  \end{CLAIM}

  Before proving Claim~\ref{claim1}, let us see why this claim implies
  our lemma.
  First we would like to see why there exist $i$ and $j$ such that
  $\delta_G(X_{i,j})$ has exactly two edges.
  If $p_1$ has degree bigger than $2$, then $p_1$ is adjacent to $q_2$
  and so $G*q_1=G\setminus p_1q_2$. 
  Thus we may assume that both $p_1$ and $q_1$ have degree $2$.
  Keep in mind that the number of vertices of degree $3$ or $4$ in
  $X_{1,1}$ may be decreased by $1$ by replacing $G$
  with $G*q_1$ and so $\alpha_{1,1}(G)\ge \alpha-2$.
  
  By applying Claim~\ref{claim1} with $i=j=1$, 
  we obtain a generalized ladder $H$ of maximum degree at most $3$ as a
  vertex-minor
  such that $\abs{V(H)}\ge 2+(\alpha-2)/4\ge \alpha/4$.
  This completes the proof of the lemma, assuming Claim~\ref{claim1}.

  \medskip

  We now prove Claim~\ref{claim1} by induction on $\abs{V(G)}-\abs{X_{i,j}(G)}$. 
  We may assume that every vertex in $V(G)\setminus (X_{i,j}\cup
  \{p_a,q_b\})$ has degree $3$ or $4$
  because otherwise we can apply local complementation and delete it
  while keeping $\alpha_{i,j}$.
  Then $p_{i+1}$ is obviously adjacent to $q_{j+1}$.

  We may assume that $i<a-1$ 
  because otherwise $G$ is a generalized ladder of maximum degree
  $3$ if $p_a$ has degree $3$
  and $G\setminus q_b$ is a generalized ladder of maximum degree $3$
  otherwise.
  Similarly we may assume $j<b-1$.
 Either $p_{i+1}$ or $q_{j+1}$  has degree $4$, because otherwise
 $\delta_G(X_{i+1,j+1})$  has exactly two edges.
 By symmetry, we may assume that $p_{i+1}$ has degree $3$ and 
 $q_{j+1}$ has degree $4$
 and therefore $q_{j+1}$ is adjacent to $p_{i+2}$.

  If $\alpha_{i,j}(G)\le 12$, then $H=G[X_{i+2,j+1}]$ is a generalized
  ladder of maximum degree at most $3$.
  Thus we may assume that $\alpha_{i,j}(G)>12$.
  If $b-j\le 4$, then $a-i\le 8$ because each vertex in
  $q_{j+1},q_{j+2},\ldots,q_b$ has degree at most $4$
  and each vertex in $p_{i+1},p_{i+2},\ldots,p_{a-1}$ has degree at
  least $3$.
  This contradicts our assumption that $\alpha_{i,j}(G)>12$. 
  So we may assume that $b-j\ge 5$ and similarly $a-i\ge 5$.

Let $R$ be the component of $G\setminus (E(P)\cup E(Q))$ containing
$p_{i+1}$. Because of the degree condition, $R$ is a path.
We now consider six cases, see Figure \ref{fig:removedeg4}.

\begin{figure}
  \centering
  \tikzstyle{v}=[circle, draw, solid, fill=black, inner sep=0pt, minimum width=3pt]
  \tikzstyle{vv}=[inner sep=0pt, minimum width=3pt]
  \tikzstyle{background rectangle}=[draw=blue!50, rounded corners=1ex]
  \tikzstyle{showbg}=[]
  \newcommand\setup[2]{
    \foreach \x in {0}{
        \node (p\x) [v,label=$p_{i}$] at (.3,.8) {};
        \node (q\x) [v,label=below:$q_{j}$] at (.3,0) {};
      }
    \foreach \x in {1,...,#1}      {
        \node (p\x) [v,label=$p_{i+\x}$] at (\x,.8) {};
        }
        \foreach \x in {1,...,#2} {
        \node (q\x) [v,label=below:$q_{j+\x}$] at (\x,0) {};
      }
      \draw [dotted] (p0)--(q0);
      \foreach \y in {0,.8}
      {
        \draw(-.2,\y) [dotted] -- (0,\y);
      }
      \draw(0,.8)--(#1.3,.8);
      \draw(0,0)--(#2.3,0);
      \draw [dashed,color=gray] (.5,-.5)--(.5,1.3);
      \node [left] at (.3,.3){$X_{i,j}$};
    }
  \newcommand\setuphide[7]{
    \foreach \x in {0}{
        \node (p\x) [v,label=$p_{i}$] at (.3,.8) {};
        \node (q\x) [v,label=below:$q_{j}$] at (.3,0) {};
      }
      \draw [dotted](p0)--(q0);
      \foreach \y in {0,.8}
      {
        \draw(-.2,\y) [dotted] -- (0,\y);
      }
      \draw(0,.8)--(#1.3,.8);
      \draw(0,0)--(#2.3,0);
      \node [left] at (.3,.3){$\Rightarrow #7$};
    \foreach \x in {#3}      {
        \node (p\x) [v,label=$p_{i+\x}$] at (\x,.8) {};
        }
        \foreach \x in {#5} {
        \node (q\x) [v,label=below:$q_{j+\x}$] at (\x,0) {};
      }
    \foreach \x in {#4}      {
        \node (p\x) [v,fill=red,label=$p_{i+\x}$] at (\x,.8) {};
        }
        \foreach \x in {#6} {
        \node (q\x) [v,fill=red,label=below:$q_{j+\x}$] at (\x,0) {};
      }
    }
\subfloat[(a) Apply $G*p_{i+2}\setminus p_{i+2}$]{\begin{tikzpicture}[showbg]    \setup32
    \draw (p1)--(q1)--(p2);\draw (p3)--(q2);
    \draw [dashed](q2)--(2.3,.1);
  \begin{scope}[xshift=6cm]
      \setuphide32{}{1,3}{2}{1}{X_{i+1,j}}
    \draw (p3)--(q2);
    \draw [dashed](q2)--(2.3,.1);
      \draw [dashed,color=gray] (.5,-.5)--(1.5,1.3);
      \draw [thick] (p1)--(p3)--(q1);
  \end{scope}
    \end{tikzpicture}
  }

  \subfloat[(b) Apply $G*p_{i+2}*q_{j+2}\setminus p_{i+2}\setminus q_{j+2}$]{\begin{tikzpicture}[showbg]      \setup33
    \draw (p1)--(q1)--(p2);\draw (p3)--(q2);\draw (p3)--(q3);
    \draw [dashed](q3)--(3.3,.1);
  \begin{scope}[xshift=6cm]
      \setuphide33{}{1,3}{}{1,3}{X_{i+1,j+1}}
    \draw [dashed](q3)--(3.3,.1);
      \draw [dashed,color=gray] (1.5,-.5)--(1.5,1.3);
      \draw [thick] (p1)--(p3);\draw [thick] (q3)--(q1);
  \end{scope}
    \end{tikzpicture}  
  }
  
  \subfloat[(c) Apply $G*q_{j+2}\setminus q_{j+2}$]{\begin{tikzpicture}[showbg]      \setup33
    \draw (p1)--(q1)--(p2)--(q2);\draw (p3)--(q3);
    \draw [dashed](p3)--(3.3,.7);
  \begin{scope}[xshift=6cm]
      \setuphide33{1,3}{2}{}{1,3}{X_{i+1,j+1}}
      \draw (p1)--(q1);
      \draw (p3)--(q3);
      \draw [dashed](p3)--(3.3,.7);
      \draw [dashed,color=gray] (1.5,-.5)--(1.5,1.3);
      \draw [thick] (p2)--(q3);\draw [thick] (q3)--(q1);
  \end{scope}
    \end{tikzpicture}    }

  \subfloat[(d) Apply $G*q_{j+2}*p_{i+3}\setminus q_{j+2}\setminus p_{i+3}$]{\begin{tikzpicture}[showbg]      \setup43
    \draw (p1)--(q1)--(p2)--(q2);\draw (p3)--(q3)--(p4);
    \draw [dashed](p4)--(4.3,.7);
  \begin{scope}[xshift=6cm]
      \setuphide43{1}{2,4}{}{1,3}{X_{i+2,j+1}}
      \draw (p1)--(q1);
    \draw [dashed](p4)--(4.3,.7);
      \draw [dashed,color=gray] (1.5,-.5)--(2.5,1.3);
      \draw [thick] (p2)--(p4);\draw [thick] (q3)--(q1);
  \end{scope}
     \end{tikzpicture}    }

  \subfloat[(e) Apply $G\pivot p_{i+2}q_{j+2}*p_{i+3}
  \setminus p_{i+2}\setminus q_{j+2}\setminus p_{i+3}$]{\begin{tikzpicture}[showbg]      \setup43
    \draw (p1)--(q1)--(p2)--(q2)--(p3);\draw (q3)--(p4);
    \draw [dashed](q3)--(3.3,.1);
    \draw [dashed](p4)--(4.3,.7);
  \begin{scope}[xshift=6cm]
      \setuphide43{}{1,4}{}{1,3}{X_{i+1,j+1}}
    \draw [dashed](p4)--(4.3,.7);
      \draw [dashed,color=gray] (1.5,-.5)--(1.5,1.3);
      \draw [thick] (p1)--(p4); 
      \draw [thick] (q1)--(q3);
    \draw [dashed](q3)--(3.3,.1);
  \end{scope}
    \end{tikzpicture}    }

  \subfloat[(f) Apply $G\pivot p_{i+2}q_{j+2}\setminus
  p_{i+2}\setminus q_{j+2}$]{\begin{tikzpicture}[showbg]      \setup33
    \draw (p1)--(q1)--(p2)--(q2)--(p3)--(q3);
    \draw [dashed](q3)--(3.3,.1);
  \begin{scope}[xshift=6cm]
      \setuphide33{}{1,3}{}{1,3}{X_{i,j+1}}
    \draw [dashed](q3)--(3.3,.1);
      \draw [dashed,color=gray] (1.5,-.5)--(.5,1.3);
      \draw [thick] (p3)--(p1)--(q3)--(q1);
  \end{scope}
    \end{tikzpicture}    }
  \caption{Cases in the proof of Lemma~\ref{lem:removedeg4}.}
  \label{fig:removedeg4}
\end{figure}

 \begin{enumerate}[(a)]
 \item %
   If $R$ has length $2$ and $p_{i+3}$ has degree $3$ in $G$, 
   then $G'=G*p_{i+2}\setminus p_{i+2}=
   (G\setminus p_{i+2} +p_{i+1}p_{i+3}+q_{j+1}p_{i+3})\setminus
   p_{i+1}q_{j+1}$
   is a generalized ladder of maximum degree at most $4$.
   Every vertex in $G'$ not in $X_{i,j}$ has degree at most $4$.
   Furthermore $p_{i+1}$ has degree $2$ in $G'$.
   Thus, $\delta_{G'}(X_{i+1,j})$ has exactly $2$ edges.
   Then $\abs{X_{i+1,j}}+\alpha_{i+1,j}(G')/4
   \ge (\abs{X_{i,j}}+1)+ (\alpha_{i,j}(G)-2)/4 \ge
   \abs{X_{i,j}}+\alpha_{i,j}(G)/4$.
   By the induction hypothesis, we find a desired vertex-minor $H$ in
   $G'$.
 \item %
   If $R$ has length $2$ and $p_{i+3}$ has degree $4$ in $G$, then
   the vertex $q_{j+2}$ has degree $3$.
   Then $G'=G*p_{i+2}*q_{j+2}\setminus p_{i+2}\setminus q_{j+2}$
   is a generalized ladder of maximum degree at most $4$.
   Then $\delta_{G'}(X_{i+1,j+1})$ has exactly two edges
   and $\alpha_{i+1,j+1}(G')\ge \alpha_{i,j}(G)-6$.
   Again, $\abs{X_{i+1,j+1}}+\alpha_{i+1,j+1}(G')/4\ge
   \abs{X_{i,j}}+2+(\alpha_{i,j}(G)-6)/4\ge
   \abs{X_{i,j}}+\alpha_{i,j}(G)/4$
   and therefore we are done.
 \item %
   If $R$ has length $3$ and $q_{j+3}$ has degree $3$ in $G$, then
   $G'=G*q_{j+2}\setminus q_{j+2}$ 
   is a generalized ladder of maximum degree at most $4$.
   Then $\delta_{G'}(X_{i+1,j+1})$ has exactly two edges
   and $\alpha_{i+1,j+1} (G')\ge \alpha_{i,j}(G)-3$.
   We deduce that $\abs{X_{i+1,j+1}}+\alpha_{i+1,j+1}(G')/4\ge
   \abs{X_{i,j}}+2+(\alpha_{i,j}(G)-3)/4
   \ge \abs{X_{i,j}}+\alpha_{i,j}(G)/4$.
 \item %
   If $R$ has length $3$ and $q_{j+3}$ has degree $4$ in $G$, then 
   $p_{i+3}$ has degree $3$
   and 
   $G'=G*q_{j+2}*p_{i+3}\setminus q_{j+2}\setminus p_{i+3}$ 
   is a generalized ladder of maximum degree at most $4$.
   Then $\delta_{G'}(X_{i+2,j+1})$ has exactly two edges
   and $\alpha_{i+2,j+1} (G')\ge \alpha_{i,j}(G)-7$.
   We deduce that $\abs{X_{i+2,j+1}}+\alpha_{i+2,j+1}(G')/4\ge
   \abs{X_{i,j}}+3+(\alpha_{i,j}(G)-7)/4\ge
   \abs{X_{i,j}}+\alpha_{i,j}(G)/4$.
   By the induction hypothesis, $G'$ has a desired vertex-minor and so
   does $G$.

   \item %
     If $R$ has length $4$, then
     $G'=G\pivot p_{i+2}q_{j+2}*p_{i+3}\setminus p_{i+2}\setminus p_{i+3}\setminus q_{j+2}$
     is a generalized ladder of maximum degree at most $4$.
     Then $\delta_{G'}(X_{i+1,j+1})$ has exactly two edges
     and $\alpha_{i+1,j+1}(G')\ge \alpha_{i,j}(G)-7$
     and therefore $\abs{X_{i+1,j+1}}+\alpha_{i+1,j+1}(G')/4
     \ge \abs{X_{i,j}}+2+(\alpha_{i,j}(G)-7)/4
     \ge \abs{X_{i,j}}+\alpha_{i,j}(G)/4$.
     Our induction hypothesis implies that $G'$ has a desired
     vertex-minor.

   \item %
     If $R$ has length at least $5$, 
     then      $G'=G\pivot p_{i+2}q_{j+2}\setminus p_{i+2}\setminus q_{j+2}$
     is a generalized ladder of maximum degree at most $4$.
     Then $\delta_{G'}(X_{i,j+1})$ has exactly two edges
     and $\alpha_{i,j+1}(G')\ge \alpha_{i,j}(G)-4$
     and therefore $\abs{X_{i,j+1}}+\alpha_{i,j+1}(G')/4
     \ge\abs{X_{i,j}}+1+(\alpha_{i,j}(G)-4)/4
     = \abs{X_{i,j}}+\alpha_{i,j}(G)/4$.
     Our induction hypothesis implies that $G'$ has a desired
     vertex-minor.
 \end{enumerate}
  In all cases, we find the
  desired
  vertex-minor $H$.
  This completes the proof of Claim~\ref{claim1}.
\end{proof}

\begin{LEM}\label{lem:ladder-final2}
  Let $G$ be a generalized ladder of maximum degree at most $4$.
  If $\abs{V(G)}\ge 192n^3$, then $G$ has a cycle of length $4n+3$ as a vertex-minor.
\end{LEM}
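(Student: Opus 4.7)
The strategy is to combine Lemmas~\ref{lem:removedeg4} and~\ref{lem:ladder-final} via a dichotomy on $\alpha$, the number of vertices of $G$ having degree $3$ or $4$. Each lemma dominates in one regime: Lemma~\ref{lem:removedeg4} is efficient when $\alpha$ is large, while the outerplanar structure of $G$ directly supplies a long induced cycle when $\alpha$ is small, because then $G$ has few chords.

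\emph{Case 1: $\alpha \ge 48n^{2}$.} By Lemma~\ref{lem:removedeg4}, $G$ has a vertex-minor $H$ that is a generalized ladder of maximum degree at most $3$ with $\abs{V(H)} \ge \alpha/4 \ge 12n^{2}$. Lemma~\ref{lem:ladder-final} applied to $H$ then produces a cycle of length $4n+3$ as a vertex-minor of $G$.

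\emph{Case 2: $\alpha < 48n^{2}$.} Every chord contributes two chord-incidences at the vertices of $P \cup Q$, and each interior vertex of $P$ or $Q$ of degree $3$ (resp.\ $4$) contributes one (resp.\ two) chord-incidences; together with a bounded contribution from the four path endpoints $p_{1}, p_{a}, q_{1}, q_{b}$ this gives $c \le \alpha + 6 \le 48n^{2} + 5$ for the number $c$ of chords. Following the opening reduction in the proof of Lemma~\ref{lem:ladder-final}, we may also assume that the two end cells of the outerplanar structure of $G$ have length at most $4$; this reduction uses only local complementations at $p_{1}$ or $q_{1}$ and their counterparts at the other end, so it preserves both $\abs{V(G)}$ and $c$. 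The chords then decompose $G$ into $c - 1$ induced cycles (cells) whose total length is $\abs{V(G)} + 2c - 4$, because each $P$-edge and each $Q$-edge lies in exactly one cell while each of the $c - 2$ interior chords lies in two cells. Subtracting the two end cells leaves the $c - 3$ middle cells with total length at least $\abs{V(G)} + 2c - 12$, and averaging, together with $\abs{V(G)} \ge 192n^{3}$, forces one middle cell to have length at least $4n + 3$. This gives an induced cycle of length at least $4n + 3$ in $G$, and since $C_{m} * v \setminus v \cong C_{m-1}$ for any vertex $v$ of $C_{m}$, we extract $C_{4n+3}$ as a vertex-minor of $G$.

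The main obstacle will be making the arithmetic in Case 2 match the stated bound $192n^{3}$: a naive averaging over the $c-3$ middle cells yields a cell only of length roughly $4n + 2$, so one must squeeze slightly more, either by tightening the bound on $c$ via the precise chord-incidence contributions of the four path endpoints (which are often much smaller than three after the end reduction) or by refining the count of edges in the two end cells. Conceptually the proof is a direct pairing of Lemma~\ref{lem:removedeg4} with Lemma~\ref{lem:ladder-final}; the entire subtlety lies in the constants.
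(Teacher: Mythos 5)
Your Case 1 is exactly the paper's: the dichotomy on the number $\alpha$ of vertices of degree $3$ or $4$ at threshold $48n^2$, followed by Lemma~\ref{lem:removedeg4} and then Lemma~\ref{lem:ladder-final}, is verbatim the published argument. The problem is Case 2, where you assert that averaging forces a middle cell of length at least $4n+3$ and then immediately concede that the averaging only yields about $4n+2$. That concession is accurate, and as written the step is a genuine gap: averaging the total cell length $\abs{V(G)}+2c-4$ over the cells charges each cell its two chords \emph{inside} the average, so the ``$+2$'' bonus from the chords is diluted rather than added on top of the maximum, and no amount of fiddling with the count $c$ alone will recover it. The paper avoids this entirely by counting differently: it partitions the outer Hamiltonian cycle (of length $\abs{V(G)}\ge 192n^3$) into its maximal arcs whose internal vertices have degree $2$; there are fewer than $48n^2$ such arcs (one per vertex of degree at least $3$), so some arc has length greater than $4n$, hence at least $4n+1$. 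That arc lies on the boundary of a single cell, which by the end-cell reduction cannot be the first or last cell and therefore carries exactly two chords, giving an induced cycle of length at least $(4n+1)+2=4n+3$. The point is that the maximum is taken over arcs first and the two chords are added afterwards.

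For what it is worth, your averaging route can be pushed through, but only by using \emph{both} of the refinements you mention simultaneously, and you should verify the arithmetic rather than leave it open: the end-cell reduction gives each end cell length at most $4$ (two outer edges plus two chords), and double-counting chord-ends over vertices of degree $3$ or $4$ gives at most $\alpha\le 48n^2-1$ interior chords, hence at most $48n^2-2$ middle cells. If every middle cell had length at most $4n+2$, then $\abs{V(G)}+2k\le 8+(4n+2)(k-1)$ with $k$ the number of interior chords, i.e.\ $\abs{V(G)}\le 4n(k-1)+6\le 4n(48n^2-2)+6=192n^3-8n+6<192n^3$, a contradiction. Since you flagged this as an unresolved obstacle instead of closing it, the submitted proof is incomplete at precisely the step the lemma turns on. (A minor additional inaccuracy: the end-cell reduction $G*p_1$ does not preserve $c$ and $\alpha$ --- it creates a new chord and raises the degrees of $p_2$ and $q_1$ --- though only by a bounded amount, which does not affect the argument.)
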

\begin{proof}
  Let $P$, $Q$ be two defining paths of $G$. 
  We may assume $a>1$ and $b>1$ because $(192n^3-2)/3+2\ge 4n+3$.

  Let $p_xq_y$ be the unique chord other than $p_1q_1$ with minimum $x+y$.
  We claim that we may assume $(x-1)+(y-1)\le 2$.
  Suppose not. Then $p_xq_y$, $p_1q_1$ and subpaths of $P$ and $Q$
  form a cycle of length $x+y\ge 5$ and
  $p_1,p_2,\ldots,p_{x-1},q_1,q_2,\ldots,q_{y-1}$ have degree $2$.
  By moving the first few vertices of $P$ to $Q$ or $Q$ to $P$,
  we may assume that $x\ge 3$ and $y\ge 2$.
  Then we may replace $G$ with $G*p_1$. This proves the claim.

  Thus the induced cycle containing $p_1q_1$ has at most $2$ edges
  from $E(P)\cup E(Q)$.
  Similarly we may assume that 
  the induced cycle containing $p_aq_b$ has at most $2$ edges
  from $E(P)\cup E(Q)$.

  If $G$ has at least $48n^2$ vertices of degree $3$ or $4$, then 
  by Lemma~\ref{lem:removedeg4}, $G$ has a generalized ladder $H$ as a
  vertex-minor such that $\abs{V(H)}\ge 12n^2$ and $H$ has maximum degree
  at most $3$. By Lemma~\ref{lem:ladder-final}, 
  $H$ has a cycle of length $4n+3$ as a vertex-minor.

  Thus we may assume that $G$ has less than $48n^2$ vertices of degree
  $3$ or $4$. 
  We may assume that $G$ has at least one vertex of degree at least $3$.
  The cycle formed by edges in $E(P)\cup E(Q)\cup
  \{p_1q_1,p_aq_b\}$ is partitioned into less than $48n^2$ paths whose
  internal vertices have degree $2$ in $G$.
  One of the paths has length greater than $192n^3/(48n^2)=4n$.
  Then there is an induced cycle $C$ of $G$ containing this path.
  Since $C$ does not contain $p_1q_1$ or $p_aq_b$, $C$ must contain
  two edges not in $E(P)\cup E(Q)\cup \{p_1q_1,p_aq_b\}$.
  Thus the length of $C$ is at least $4n+3$.
\end{proof}
\subsection{Treating all generalized ladders}
\begin{LEM}\label{lem:boundingmaxdeg}
  Let $G$ be a generalized ladder.
  If $G$ has $n$ vertices of degree at least $4$, 
  then $G$ has a vertex-minor $H$ 
  that is a generalized ladder such that 
  the maximum degree of $H$ is at most $4$
  and $H$ has at least $n$ vertices.
\end{LEM}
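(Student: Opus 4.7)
The plan is a direct induction on $\abs{V(G)}$. If $G$ has maximum degree at most $4$, set $H = G$. Otherwise, I would identify a vertex $v$ of degree at least $5$, reduce its degree by a sequence of local complementations and deletions, then invoke induction on the resulting smaller generalized ladder. The key structural input is the non-crossing property of chords, which lets me identify and carefully handle ``middle'' chord-neighbors.

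Concretely, suppose $v = p_i \in P$ has degree at least $5$, with chord-neighbors $\{q_{m_1}, \ldots, q_{m_s}\}$ on $Q$, where $s \ge 3$. By non-crossing, any vertex $q_l$ on $Q$ strictly between $q_{m_1}$ and $q_{m_s}$ that is not itself adjacent to $v$ must have no chord at all, since any other chord at $q_l$ would cross either $v q_{m_1}$ or $v q_{m_s}$; hence every such ``gap'' vertex has degree exactly $2$. I first smooth out each such gap vertex $q_l$ by the operation $G * q_l \setminus q_l$, which adds $q_{l-1} q_{l+1}$ as a new $Q$-edge, preserves the generalized-ladder structure, and changes no other vertex's degree. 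After smoothing, $v$'s chord-neighbors become consecutive on the new $Q$, say $q_j, q_{j+1}, \ldots, q_k$ with $k - j + 1 = s$. Now a middle chord-neighbor $q_m$ (with $j < m < k$) has degree exactly $3$ in the current graph — its two $Q$-neighbors are also chord-neighbors of $v$, and by the same non-crossing argument $q_m$ has no other chord — so the operation $G * q_m \setminus q_m$ toggles the triangle $\{q_{m-1}, q_{m+1}, v\}$ cleanly, introducing the edge $q_{m-1} q_{m+1}$ and removing $v q_{m-1}$ and $v q_{m+1}$. Thus $G'$ is again a generalized ladder, $v$'s chord-count drops by exactly $3$, and no other degree increases. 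Iterating on a spaced subset of middle chord-neighbors, for instance $q_{j+1}, q_{j+4}, q_{j+7}, \ldots$, reduces $v$'s degree below $5$ using at most $\lceil (s - 2)/3 \rceil$ deletions.

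The vertex-count bound comes from a disjointness argument. All deletions fall into two classes: degree-$2$ gap vertices and degree-$3$ middle chord-neighbors. Non-crossing makes both classes disjoint across distinct degree-$\ge 5$ vertices $v, v'$: on the same path of $G$, their chord-intervals $[q_{m_1}, q_{m_s}]$ on the opposite path meet in at most a single boundary vertex, so the gaps and middles of $v, v'$ do not overlap; on opposite paths they lie on opposite paths. Hence the total number of deletions is bounded by $D_2(G) + D_3(G) = \abs{V(G)} - n$, where $D_2, D_3$ count degree-$2$ and degree-$3$ vertices. This gives $\abs{V(H)} \ge n$. The main obstacle is to preserve the endpoint chords $p_1 q_1$ and $p_a q_b$: a careless choice of $q_m$ near the start or end of a chord-block would toggle one of these and destroy the ladder structure, so $m$ must be chosen to avoid this. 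At most two middle chord-neighbors per $v$ are excluded by the restriction, and the count $\lceil (s - 2)/3 \rceil$ of operations is small enough that enough safe choices remain; the tight boundary cases (small $c_v$ combined with $v$ being an endpoint of $P$, or $v = p_1 = p_a$) are dispatched by a short direct analysis exploiting the flexibility to choose which middle chord-neighbor to operate on first.
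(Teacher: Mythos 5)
Your proposal is correct and takes essentially the same route as the paper: both rest on the observation that, by the non-crossing of chords, every vertex strictly inside the minimal subpath of the opposite path spanning a high-degree vertex's chord-neighbours has at most one chord (hence degree $2$ or $3$), so these vertices can be removed by local complementation and deletion, and the removed sets for distinct high-degree vertices lie in disjoint open intervals, giving $\abs{V(H)}\ge n$. The only difference is in execution --- the paper locally complements at and deletes \emph{all} internal vertices of $P_v$ in one pass, whereas you delete the degree-$2$ gap vertices and then roughly every third chord-neighbour via triangle-toggles --- and your version is, if anything, more explicit about preserving the required edges $p_1q_1$ and $p_aq_b$.
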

\begin{proof}
  Let $S$ be the set of vertices having degree at least $4$.
  For each vertex $v$ in $S$, let $P_v$ be the minimal subpath of 
  $Q$ containing all neighbors of $v$ in $Q$ if $v\in V(P)$
  and let $P_v$ be the minimal subpath
  of $P$ containing all neighbors of $v$ in $P$ if $v\in V(Q)$.

  Then each internal vertex of $P_v$ has degree $2$ or $3$
  and has degree $3$ if and only if it is adjacent to $v$.
  We apply 
  local complementation to each internal vertex and delete all
  internal vertices of $P_v$.
  It is easy to see that the resulting graph $H$  is a generalized ladder
  and moreover $S\subseteq V(H)$ and 
  every vertex in $S$ has degree at most $4$ in $H$.
\end{proof}

We are now ready to prove the main proposition of this section.
\begin{proof}[Proof of Proposition~\ref{prop:ladder}]
  Let $G$ be such a graph.
  If $G$ has at least $192n^3$ vertices of degree at least $4$, 
  then by Lemma~\ref{lem:boundingmaxdeg}, $G$ has a vertex-minor $H$
  having at least $192n^3$ vertices such that
  $H$ is a generalized ladder of maximum degree at most $4$.
  By Lemma~\ref{lem:ladder-final2}, $H$ has a cycle of length $4n+3$
  as a vertex-minor.

  Thus we may assume that $G$ has less than  $192n^3$ vertices of degree
  at least $4$. 
  For a vertex $v$ in $P$ having degree at least $5$, 
  let $q_i$, $q_j$ be two neighbors of $v$ in $Q$ such that 
  if $q_k$ is a neighbor of $v$ in $Q$, then $i\le k\le j$.
  By Lemma~\ref{lem:pathone}, if $j-i+2\ge 24n^2-3$, then $G$ contains a
  cycle of length $4n+3$ as a vertex-minor. Thus we may assume
  $j-i\le 24n^2-6$.
  The subpath of $Q$ from $q_i$ to $q_j$  contains $j-i-1\le 24n^2-7$
  internal vertices.
  Similarly the same bound holds for a vertex $v$ in $Q$ having degree
  at least $5$.
  As in the proof of Lemma~\ref{lem:boundingmaxdeg}, 
  we apply local complementation and delete all internal vertices 
  of the minimal path spanning the neighbors of 
  each vertex of degree at least $5$
  to
  obtain $H$.
  Then each vertex of degree at least $5$  in $G$ will have degree at most
  $4$ in $H$.
  Since we remove at most $(192n^3-1)(24n^2-7)$ vertices, 
  \[   \abs{V(H)} \ge \abs{V(G)}-(192n^3-1)(24n^2-7)
  >192n^3.\]
  By Lemma~\ref{lem:ladder-final2}, $H$ has  a cycle of length $4n+3$
  as a vertex-minor.
\end{proof}
\section{Blocking sequences}\label{sec:blocking}
Let $A,B$ be two disjoint subsets of the vertex set of a graph $G$.
By the definition of $\rho^*_G$ and $\rho_G$,
it is clear that \[
\text{if }A\subseteq X\subseteq V(G)\setminus B, 
\text{ then }
\rho^*_G(A,B)\le \rho_G(X).\]
What prevents us to achieve the equality for some $X$?
We now present a tool called a blocking sequence, 
that is a certificate to guarantee that 
no such $X$ exists.
Blocking sequences were introduced by Geelen~\cite{Geelen1995}.

A sequence $v_1,v_2,\ldots,v_m$ ($m\ge 1$) 
is called  a \emph{blocking sequence}
of a pair $(A,B)$ of disjoint subsets $A$, $B$ of $V(G)$
if 
\begin{enumerate}[(a)]
\item\label{b1} $\rho^*_G(A,B\cup\{v_1\})>\rho^*_G(A,B)$,
\item\label{b2} $\rho^*_G(A\cup \{v_i\},B\cup\{v_{i+1}\})>\rho^*_G(A,B)$
  for all $i=1,2,\ldots, m-1$,
\item\label{b3} $\rho^*_G(A\cup \{v_m\},B)>\rho^*_G(A,B)$,
\item no proper subsequence of $v_1,\ldots,v_m$ satisfies (a), (b),
  and (c).
\end{enumerate}

The condition (d) is essential for the following standard lemma. 
\begin{LEM}\label{lem:cut-blocking}
  Let $v_1,v_2,\ldots,v_m$ be a blocking sequence for $(A,B)$ in a
  graph $G$. 
  Let $X$, $Y$ be disjoint subsets of $\{v_1,v_2,\ldots,v_m\}$
  such that if $v_i\in X$ and $v_j\in Y$, then $i<j$.
  Then 
  \[\rho^*_G(A\cup X,B\cup Y)=\rho^*_G(A,B)\]
  if and only if
  $v_1\notin Y$, $v_m\notin X$,
  and for all $i\in\{1,2,\ldots,m-1\}$, 
  either $v_i\notin X$ or $v_{i+1}\notin Y$.
\end{LEM}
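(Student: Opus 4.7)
Let $k = \rho^*_G(A,B)$. The forward direction follows immediately from monotonicity of $\rho^*_G$ (adding rows or columns to a submatrix cannot decrease its rank) together with conditions (a), (b), (c) of the blocking sequence. For instance, if $v_1 \in Y$ then
\[
\rho^*_G(A \cup X,\,B \cup Y) \ge \rho^*_G(A,\,B \cup \{v_1\}) > k
\]
by (a), contradicting the assumed equality; the cases $v_m \in X$ and ``there exists $i$ with $v_i \in X$ and $v_{i+1} \in Y$'' are handled symmetrically via (c) and (b).

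For the backward direction my plan is to induct on $|X| + |Y|$, with the vacuous base case $X = Y = \emptyset$. The main reservoir of rank equalities comes from the minimality clause (d): for each index $1 \le i \le m$, the sequence obtained by deleting $v_i$ cannot be blocking, and since the conditions (a) and (c) of that shortened sequence continue to hold (they involve only $v_1$ and $v_m$), the failure must be a newly created (b)-clause across the gap. This yields the \emph{skip equality}
\[
\rho^*_G(A \cup \{v_{i-1}\},\,B \cup \{v_{i+1}\}) = k,
\]
with the convention $\{v_0\} = \{v_{m+1}\} = \emptyset$, so that the boundary instances become $\rho^*_G(A,B\cup\{v_2\}) = k$ and $\rho^*_G(A\cup\{v_{m-1}\},B) = k$.

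A short check shows that under the three conditions on $(X,Y)$, the set $X \cup Y$ is always a proper subset of $\{v_1,v_2,\ldots,v_m\}$: any exhaustion would force $X$ to be an initial segment and $Y$ a final segment of the sequence, which is ruled out by $v_1 \notin Y$ if $X = \emptyset$, by $v_m \notin X$ if $Y = \emptyset$, and by the no-adjacent condition at the split point if both are nonempty. So a gap index $i$ with $v_i \notin X \cup Y$ always exists. For the inductive step, I would apply the submodular inequality of Lemma \ref{lem:submodular} to a pair $(A_1, B_1), (A_2, B_2)$ chosen around such a gap, arranged so that $A_1 \cup A_2 = A \cup X$, $B_1 \cap B_2 = B \cup Y$, each of $\rho^*_G(A_1, B_1)$ and $\rho^*_G(A_2, B_2)$ is already pinned at $k$ --- either by the induction hypothesis (after verifying that the corresponding reduced pair $(X', Y')$ still satisfies the three conditions) or by a skip equality --- and $\rho^*_G(A_1 \cap A_2,\,B_1 \cup B_2) \ge k$ is obtained from monotonicity. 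Rearranging submodularity then yields $\rho^*_G(A \cup X, B \cup Y) \le k$, while the reverse inequality is again monotonicity.

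The main obstacle is the combinatorial bookkeeping of the inductive step: the decomposition $(A_1,B_1),(A_2,B_2)$ must be tailored to the position of the chosen gap relative to the initial segments of $X$ and final segment of $Y$, and one must treat separately the degenerate cases $|X|=1$ or $|Y|=1$ (where one cannot split $X$ or $Y$ into two nonempty pieces) and the boundary positions $i=1$ or $i=m$. Once this case analysis is in place, both directions of the biconditional fit together.
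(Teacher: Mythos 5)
Your forward direction is correct and matches the paper's (which simply declares it trivial). Your backward direction has the same architecture as the paper's proof --- induction on $\abs{X}+\abs{Y}$, the submodular inequality, and condition (d) for the base cases --- but you extract strictly less from condition (d) than is needed, and this is a genuine gap rather than deferred bookkeeping.

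The only consequences of (d) you permit yourself are the single-deletion skip equalities $\rho^*_G(A\cup\{v_{i-1}\},B\cup\{v_{i+1}\})=k$. These are correctly derived but do not suffice. Concretely, take $m=4$, $A=\{a\}$, $B=\{b\}$, $k=0$, and let $G$ be the graph on $\{a,b,v_1,v_2,v_3,v_4\}$ whose edges are $av_1$, $bv_4$, and the four-cycle $v_1v_2v_3v_4v_1$. One checks directly that conditions (a), (b), (c) hold for $v_1,v_2,v_3,v_4$, that all four single-deletion skip equalities hold, and that every non-crossing pair $(X,Y)$ with $\abs{X}+\abs{Y}\le 1$ satisfies $\rho^*_G(A\cup X,B\cup Y)=0$; yet $\rho^*_G(A\cup\{v_1\},B\cup\{v_4\})=1$ because of the edge $v_1v_4$. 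Every fact your argument is allowed to cite (the clauses (a)--(c), the skip equalities, the induction hypothesis at the lower level, monotonicity, submodularity) is true in this graph while the target equality is false, so no choice of decomposition $(A_1,B_1),(A_2,B_2)$ in your inductive step can close the case $X=\{v_1\}$, $Y=\{v_4\}$. (Of course $v_1,v_2,v_3,v_4$ is not a blocking sequence of this graph --- the proper subsequence $v_1,v_4$ satisfies (a), (b), (c) --- but that is precisely the part of (d) your proof never invokes.)

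The repair is to use (d) at full strength in the base cases, which is what the paper's closing sentence does: for $0\le i<j\le m+1$ with $j\ge i+2$ (reading $v_0$ and $v_{m+1}$ as absent), the sequence $v_1,\ldots,v_i,v_j,\ldots,v_m$ is a proper subsequence all of whose clauses hold except possibly the new one, so (d) forces $\rho^*_G(A\cup\{v_i\},B\cup\{v_j\})=k$. With these block-deletion equalities in hand the reduction is also simpler than your gap-splitting: if $\abs{X}\ge 2$, split $X=X_1\cup X_2$ into nonempty parts, note that $(X_1,Y)$ and $(X_2,Y)$ still satisfy the three conditions so $\rho^*_G(A\cup X_1,B\cup Y)=\rho^*_G(A\cup X_2,B\cup Y)=k$ by minimality, and apply Lemma~\ref{lem:submodular} with intersection term $\rho^*_G(A,B\cup Y)\ge k$ to get $\rho^*_G(A\cup X,B\cup Y)\le k$; argue symmetrically when $\abs{Y}\ge 2$; the surviving cases $\abs{X},\abs{Y}\le 1$ are exactly the block-deletion equalities above.
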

\begin{proof}
  The forward direction is trivial.
  Let us prove the backward implication. 
  Let $k=\rho^*_G(A,B)$.
  It is enough to prove $\rho^*_G(A\cup X,B\cup Y)\le k$.
  Suppose that 
  $v_1\notin Y$, $v_m\notin X$,
  and for all $i\in\{1,2,\ldots,m-1\}$, 
  either $v_i\notin X$ or $v_{i+1}\notin Y$
  and yet
  $\rho^*_G(A\cup X,B\cup Y)>k$.
  We may assume that $\abs{X}+\abs{Y}$ is chosen to be minimum.
  If $\abs{X}\ge 2$, then we can partition $X$ into two nonempty sets
  $X_1$ and $X_2$.
  Then by the hypothesis,
  $\rho^*_G(A\cup X_1,B\cup Y)=\rho^*_G(A\cup X_2,B\cup
  Y)=k$.
  By Lemma~\ref{lem:submodular}, we deduce
  that $\rho^*_G(A\cup X_1,B\cup Y)+\rho^*_G(A\cup X_2,B\cup Y)
  \ge k+\rho^*_G(A\cup X,B\cup Y)$
  and therefore we deduce that $\rho^*_G(A\cup X,B\cup Y)\le k$.
  So we may assume $\abs{X}\le 1$. By symmetry we may also assume
  $\abs{Y}\le 1$.
  Then by the condition (d), this is clear. 
\end{proof}

The following proposition states that a blocking sequence is a
certificate that $\rho_G(X)>\rho^*_G(A,B)$ for all $A\subseteq
X\subseteq V(G)\setminus B$.
This appears in almost all applications of blocking sequences. The
proof uses the submodular inequality (Lemma \ref{lem:submodular}).
\begin{PROP}[{Geelen~\cite[Lemma 5.1]{Geelen1995}; see
    Oum~\cite{Oum2006}}]
  \label{prop:block}
  Let $G$ be a graph and $A$, $B$ be two disjoint subsets of
  $V(G)$. Then $G$ has a blocking sequence for $(A,B)$
  if and only if 
  $\rho_G(X)>\rho^*_G(A,B)$ 
  for all $A\subseteq X\subseteq V(G)\setminus B$.
\end{PROP}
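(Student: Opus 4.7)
The plan is to prove the two directions of the equivalence separately, using the submodular inequality and an iterative construction.

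For the forward direction, suppose a blocking sequence $v_1,\ldots,v_m$ for $(A,B)$ exists, and let $k=\rho^*_G(A,B)$. Given an arbitrary $X$ with $A\subseteq X\subseteq V(G)\setminus B$, I would argue that at least one of the following three cases must occur: (i) $v_1\notin X$, (ii) $v_m\in X$, or (iii) some $i<m$ has $v_i\in X$ and $v_{i+1}\notin X$. Indeed, otherwise we would have $v_1\in X$ and no index at which the sequence exits $X$, forcing all $v_j\in X$ including $v_m$, contradicting (ii). In case (i), $A\subseteq X$ and $B\cup\{v_1\}\subseteq V(G)\setminus X$ give $\rho_G(X)\ge\rho^*_G(A,B\cup\{v_1\})>k$ by condition (a); cases (ii) and (iii) are analogous, using (c) and (b) respectively. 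Hence $\rho_G(X)>k$ in every case.

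For the backward direction, suppose $\rho_G(X)>k$ for all $A\subseteq X\subseteq V(G)\setminus B$. I would construct the sequence iteratively. Starting from $X=A$, the assumption $\rho_G(A)>k=\rho^*_G(A,B)$ together with $B\subseteq V(G)\setminus A$ forces some $v_1\in V(G)\setminus(A\cup B)$ with $\rho^*_G(A,B\cup\{v_1\})>k$ (otherwise every column of $A(G)[A,V(G)\setminus A]$ outside $B$ would lie in the span of $B$-columns, forcing $\rho_G(A)=k$). Given $v_1,\ldots,v_i$ in $V(G)\setminus(A\cup B)$ satisfying (a) and (b) for indices below $i$, I would test whether $\rho^*_G(A\cup\{v_i\},B)>k$: if so, the sequence satisfies (a), (b) and (c), and passing to any minimal subsequence still satisfying these three conditions automatically gives (d). If not, applying the hypothesis to $X=A\cup\{v_i\}$ (valid because $v_i\notin B$) yields some $v_{i+1}\in V(G)\setminus(A\cup B\cup\{v_i\})$ with $\rho^*_G(A\cup\{v_i\},B\cup\{v_{i+1}\})>k$, and the construction continues.

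The main obstacle is termination of this iterative construction, since $v_{i+1}$ is only required to avoid $A\cup B\cup\{v_i\}$ and might coincide with some earlier $v_j$ with $j<i$. To address this, I would observe that whenever such a coincidence occurs, the shortened sequence $v_1,\ldots,v_j,v_{i+1},v_{i+2},\ldots$ still satisfies (a) and (b), so after all such shortenings we may assume the chosen vertices are distinct; the process then terminates in at most $\abs{V(G)\setminus(A\cup B)}$ steps. Alternatively, if the procedure failed to terminate, the set $T$ of all chosen vertices would satisfy $\rho^*_G(A\cup T,B)=k$ by iterated application of Lemma~\ref{lem:submodular}, and then the hypothesis applied to $X=A\cup T$ would produce some $v\in V(G)\setminus(A\cup B\cup T)$ with $\rho^*_G(A\cup T,B\cup\{v\})>k$; a further submodular argument would then reveal a $v_i\in T$ making $(v_i,v)$ a valid extension step, contradicting the assumption that $T$ is closed under the construction. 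Either way the procedure terminates, producing a sequence satisfying (a), (b) and (c), and minimality yields (d).
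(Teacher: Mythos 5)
The paper itself gives no proof of this proposition---it is quoted from Geelen's thesis and Oum's paper, with only the remark that the proof uses the submodular inequality---so your argument can only be judged against that standard proof, which it essentially reproduces. Your forward direction is correct and complete: for any $X$ with $A\subseteq X\subseteq V(G)\setminus B$ exactly the trichotomy you describe applies, and each case gives $\rho_G(X)\ge\rho^*_G(A',B')>\rho^*_G(A,B)$ for a pair with $A'\subseteq X$ and $B'\subseteq V(G)\setminus X$. The extraction of a minimal subsequence to secure condition (d) at the end is also fine.

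In the backward direction, however, your primary termination argument is not airtight. After you shorten the chain upon a coincidence $v_{i+1}=v_j$, the procedure resumes at $v_j$, and nothing prevents it from retracing the same loop forever; the claim that ``after all such shortenings we may assume the chosen vertices are distinct'' silently assumes that a fresh successor is always available, which is precisely what can fail (every $w$ with $\rho^*_G(A\cup\{v_i\},B\cup\{w\})>k$ may already occur earlier in the chain). Your ``alternative'' argument is the correct one and should be the main proof: define $T$ to be the set of \emph{all} vertices reachable from $A$ by chains satisfying (a) and (b) (a single greedy chain need not be closed under the construction, which your phrasing presupposes). If no $v\in T$ satisfies $\rho^*_G(A\cup\{v\},B)>k$, iterated submodularity gives $\rho^*_G(A\cup T,B)=k$, and for each $w\notin A\cup B\cup T$ one gets $\rho^*_G(A\cup T,B\cup\{w\})=k$ from $\rho^*_G(A,B\cup\{w\})=k$ together with $\rho^*_G(A\cup\{v\},B\cup\{w\})=k$ for all $v\in T$ (your sketch omits the first of these two possibilities, i.e.\ that $w$ could be directly reachable from $A$; both must be excluded, and both are, since either would place $w$ in $T$). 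Hence $\rho_G(A\cup T)=k$, contradicting the hypothesis. With that repair the proof is complete and coincides with the cited one.
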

The following proposition allows us to change the graph 
to reduce the length of a blocking sequence.
This was pointed out by Geelen
[private communication with the second author, 2005].
A special case of the following proposition is presented in \cite{Oum2006}.
\begin{PROP}\label{prop:reduce-blocking}
  Let $G$ be a graph and $A$, $B$ be disjoint subsets of $V(G)$. 
  Let $v_1,v_2,\ldots,v_m$ be a blocking sequence for $(A,B)$ in $G$. 
  Let $1\le i\le m$.
  \begin{itemize}
  \item   If $m>1$, then 
  $\rho_{G*v_i}^*(A,B)=\rho_G^*(A,B)$ and 
  a sequence \[v_1,v_2,\ldots,v_{i-1},v_{i+1},\ldots,v_m\] obtained by
  removing $v_i$ from
  the blocking sequence is a blocking
  sequence  for $(A,B)$
  in $G*v_i$.
  \item If $m=1$, then
  $\rho_{G*v_i}^*(A,B)=\rho_G^*(A,B)+1$.
  \end{itemize}
\end{PROP}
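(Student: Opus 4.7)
The plan is to apply Lemma~\ref{lem:reduce-submodular-2} in conjunction with the minimality of the blocking sequence (condition (d)) and the invariance $\rho_{G*v}^*(A',B') = \rho_G^*(A',B')$ whenever $v$ lies in exactly one of $A'$ or $B'$ (a short column reduction on the adjacency matrix, using that local complementation at $v$ does not change $v$'s incidences).

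For $m=1$, set $v=v_1$ and $k=\rho_G^*(A,B)$. Conditions (a) and (c) read $\rho_G^*(A,B\cup\{v\})\ge k+1$ and $\rho_G^*(A\cup\{v\},B)\ge k+1$. Applying Lemma~\ref{lem:reduce-submodular-2} with $X_1=Y_1=A$ and $X_2=Y_2=B$ yields
\[
\rho_G^*(A,B) + \rho_{G*v}^*(A,B) \ge \rho_G^*(A,B\cup\{v\}) + \rho_G^*(A\cup\{v\},B) - 1 \ge 2k+1,
\]
hence $\rho_{G*v}^*(A,B)\ge k+1$. The rank-one perturbation identity $A(G*v)[A,B]=A(G)[A,B]+A(G)[A,v]\cdot A(G)[v,B]$ gives the matching upper bound $\rho_{G*v}^*(A,B)\le k+1$.

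For $m>1$, rather than verifying each shortened blocking condition in isolation, I would prove the combined characterization: for disjoint $X,Y\subseteq \{v_1,\ldots,v_m\}\setminus\{v_i\}$, $\rho_{G*v_i}^*(A\cup X,B\cup Y)=k$ if and only if $(X,Y)$ satisfies the hypothesis of Lemma~\ref{lem:cut-blocking} for the shortened sequence. This simultaneously yields $\rho_{G*v_i}^*(A,B)=k$ (take $X=Y=\emptyset$) and, via Lemma~\ref{lem:cut-blocking} in reverse, the blocking property of the shortened sequence. The ``only if'' direction goes by placing $v_i$ on one side to form an extension $(X^*,Y^*)$ satisfying the original blocking conditions; a short case check shows at least one such placement always works when the subsequence condition holds. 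Then Lemma~\ref{lem:cut-blocking} on $G$ gives $\rho_G^*(A\cup X^*,B\cup Y^*)=k$, which transfers to $G*v_i$ by the invariance above, and monotonicity delivers the bound. For the ``if'' direction the main case is the newly glued position $v_{i-1}\in X$, $v_{i+1}\in Y$ (when $1<i<m$): apply Lemma~\ref{lem:reduce-submodular-2} with $X_1=Y_1=A\cup\{v_{i-1}\}$ and $X_2=Y_2=B\cup\{v_{i+1}\}$; the right-hand side is at least $2k+1$ by the original conditions (b) at positions $i-1$ and $i$, while the first left-hand term equals $k$ because minimality (condition (d)) forces the removal of $v_i$ to break precisely condition (b) at the glued position. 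Other violations of the subsequence conditions coincide with violations of the original and are handled analogously with Lemma~\ref{lem:reduce-submodular-2} plus monotonicity; the boundary cases $i\in\{1,m\}$ invoke the corresponding minimality failure, namely $\rho_G^*(A,B\cup\{v_2\})=k$ or $\rho_G^*(A\cup\{v_{m-1}\},B)=k$.

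The main obstacle I expect is obtaining equality (rather than only an inequality) in the ``only if'' direction: a generic rank-one perturbation can drop $\rho^*$ by one, so ruling out this drop requires exploiting the symmetric availability of the two candidate extensions $v_i\in X^*$ and $v_i\in Y^*$ whenever both are valid, combining them via submodularity of $\rho^*$ together with the invariance identity, to pin the rank in $G*v_i$ at exactly $k$ rather than $k-1$.
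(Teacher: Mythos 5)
Your $m=1$ argument is correct and matches the paper's (the rank-one-update upper bound is a fine substitute for the paper's monotonicity argument), and your treatment of the glued pair $v_{i-1}\in X$, $v_{i+1}\in Y$ via Lemma~\ref{lem:reduce-submodular-2} with $X_1=Y_1=A\cup\{v_{i-1}\}$, $X_2=Y_2=B\cup\{v_{i+1}\}$ is exactly right: both right-hand terms are crossing pairs of the \emph{original} sequence, hence at least $k+1$, while the anchor $\rho^*_G(A\cup\{v_{i-1}\},B\cup\{v_{i+1}\})$ equals $k$ by Lemma~\ref{lem:cut-blocking}. The extension argument for the upper bounds is also sound. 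But the two places where you defer to ``analogous'' arguments are precisely where the work lies, and the fixes you sketch do not go through.

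First, ``other violations \ldots are handled analogously'' fails for the remaining strict inequalities. Take condition (a) of the shortened sequence, $\rho^*_{G*v_i}(A,B\cup\{v_1\})>k$ with $i>1$: applying Lemma~\ref{lem:reduce-submodular-2} with $X_1=Y_1=A$, $X_2=Y_2=B\cup\{v_1\}$ gives $\rho^*_G(A,B\cup\{v_1\})+\rho^*_{G*v_i}(A,B\cup\{v_1\})\ge 2k+1$, but now the anchor term $\rho^*_G(A,B\cup\{v_1\})$ is itself a crossing pair, hence at least $k+1$, and you conclude only $\rho^*_{G*v_i}(A,B\cup\{v_1\})\ge k$. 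The paper's device is to augment the $B$-side by $v_{i+1}$ (setting $B'=B\cup\{v_{i+1}\}$) so that the anchor $\rho^*_G(A,B')$ is exactly $k$ while both right-hand terms remain crossing; this yields $\rho^*_{G*v_i}(A,B'\cup\{v_1\})>k$, and a separate submodularity step strips $v_{i+1}$ off again. Second, your mechanism for pinning $\rho^*_{G*v_i}(A,B)$ at $k$ does not work: when both placements of $v_i$ are non-crossing, Lemma~\ref{lem:reduce-submodular-2} with $X_1=Y_1=A$, $X_2=Y_2=B$ gives only $\rho^*_{G*v_i}(A,B)\ge 2k-1-\rho^*_G(A,B)=k-1$, and that bound is sharp without further hypotheses: for $G$ a triangle on $a,b,v$ with $A=\{a\}$, $B=\{b\}$ one has $\rho^*_G(A\cup\{v\},B)=\rho^*_G(A,B\cup\{v\})=\rho^*_G(A,B)=1$ yet $\rho^*_{G*v}(A,B)=0$. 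Ruling out the drop genuinely requires the conditions tying $v_i$ to $v_{i\pm1}$; the paper obtains $\rho^*_{G*v_i}(A,B)\ge k$ only after first proving the strict inequality (a) or (c) for the shortened sequence (via the augmentation above) and then subtracting $1$. Until you supply that augmentation step, the plan does not close.
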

\begin{proof}
  Let $k=\rho^*_G(A,B)$ and $H=G*v_i$.

  If $m=1$, then by Lemma~\ref{lem:reduce-submodular-2}, 
  \[
  \rho_H^*(A,B)+\rho_G^*(A,B)\ge
  \rho_G^*(A\cup\{v_1\},B)+\rho_G^*(A,B\cup\{v_1\})-1
  \ge 2k+1\]
  and therefore $\rho_H^*(A,B)\ge k+1$. 
  Since $\rho_H^*(A,B)\le \rho_H^*(A,B\cup\{v_1\})=\rho_G^*(A,B\cup\{v_1\})\le k+1$, 
  we deduce that $\rho_H^*(A,B)=k+1$ if $m=1$.

  Now we assume that $m\neq 1$.
  First it is easy to observe that 
  $\rho^*_{H}(X,Y)\le  \rho_{G}^*(X,Y\cup \{v_i\})$
  and $\rho^*_{H}(X,Y)\le  \rho_{G}^*(X\cup\{v_i\},Y)$
  whenever $X$, $Y$ are disjoint subsets of
  $V(G)\setminus\{v_i\}$,
  because the local complementation does not change the cut-rank
  function of $G[X\cup Y\cup\{v_i\}]$.
  This with Lemma~\ref{lem:cut-blocking} implies that
  \begin{itemize}
  \item 
    $\rho^*_{H}(A,B)\le k$, 
  \item
    $\rho^*_{H} (A\cup\{v_j\},B)\le k$ for all $j\in
    \{1,2,\ldots,m\}\setminus\{i-1,m\}$,
  \item 
    $\rho^*_H (A\cup\{v_{i-1}\}, B)\le k$ if $i\neq 1,m$.
  \item
    $\rho^*_{H} (A,B\cup\{v_j\})\le k$ for all $j\in
    \{1,2,\ldots,m\}\setminus\{1,i+1\}$.
  \item 
    $\rho^*_{H}(A,B\cup\{v_{i+1}\})\le k$ if $i\neq 1,m$.
  \item
    $\rho^*_{H}(A\cup\{v_j\},B\cup\{v_\ell\})\le k$
    for all  $j,\ell\in \{1,2,\ldots,m\}\setminus\{i\}$ with
    $\ell-j>1$,
    unless $j+1=i=\ell-1$.
  \end{itemize}
  Let $B'=B\cup \{v_{i+1}\}$ if $i<m$ and $B'=B$ otherwise.
  Then
  $\rho^*_G(A\cup\{v_i\},B')=k+1$
  and $\rho^*_G(A,B')=k$.

  (1)
  We claim that if $i>1$, then $\rho_H^*(A,B\cup\{v_1\})>k$.
  By Lemma~\ref{lem:reduce-submodular-2}, 
  \[
  \rho_{H}^*(A,B'\cup\{v_1\})+\rho_G^*(A,B')
  \ge \rho_G^*(A,B'\cup\{v_1,v_i\})+\rho_G^*(A\cup\{v_i\},B')-1,\]
  and therefore we deduce that
  $\rho_H^*(A,B'\cup\{v_1\})\ge 
  \rho_G^*(A,B'\cup\{v_1,v_i\})>k$.
  By Lemma~\ref{lem:submodular}, 
  $\rho_H^*(A,B'\cup\{v_i\})+\rho_H^*(A,B\cup\{v_1\})\ge 
  \rho_H^*(A,B'\cup \{v_1,v_i\})+\rho_H^*(A,B)>2k$.
  We deduce that $\rho_H^*(A,B\cup\{v_1\})>k$
  because $\rho_H^*(A,B'\cup\{v_i\})=\rho_G^*(A,B'\cup\{v_i\})=k$ by
  Lemma \ref{lem:cut-blocking}.

  (2) 
  By (1) and symmetry between $A$ and $B$, 
  if $i<m$, then 
  $\rho_H^*(A\cup\{v_m\},B)>k$.

  Then we deduce that $\rho_H^*(A,B)\ge k$ and therefore 
  $\rho_H^*(A,B)=k$.

  (3) We claim that if $j<i-1$, then 
  $\rho_H^*(A\cup\{v_j\},B\cup\{v_{j+1}\})>k$.
  By Lemma~\ref{lem:reduce-submodular-2}, 
  \begin{multline*}
    \rho_{H}^*(A\cup\{v_j\},B'\cup\{v_{j+1}\})+\rho_G^*(A\cup\{v_j\},B')\\
    \ge
    \rho_G^*(A\cup\{v_j\},B'\cup\{v_{j+1},v_i\})+\rho_G^*(A\cup\{v_j,v_i\},B')-1
    >2k,
  \end{multline*}
  and therefore $\rho_H^*(A\cup\{v_j\},B'\cup\{v_{j+1}\})>k$.
  By Lemma~\ref{lem:submodular}, 
  $\rho_H^*(A\cup\{v_j\},B\cup\{v_{j+1}\})
  +\rho_H^*(A\cup\{v_j\},B')
  \ge \rho_H^*(A\cup\{v_j\},B'\cup\{v_{j+1}\})+
  \rho_H^*(A\cup\{v_j\},B)>2k$.
  Note that $\rho_H^*(A\cup \{v_j\},B)\ge \rho_H^*(A,B)=k$.
  Since $\rho_H^*(A\cup\{v_j\},B')\le
  \rho_H^*(A\cup\{v_j\},B'\cup\{v_i\})
  =\rho_G^*(A\cup\{v_j\},B'\cup\{v_i\})
  \le k$, we deduce that 
  $\rho_H^*(A\cup\{v_j\},B\cup\{v_{j+1}\})>k$.

  (4) By symmetry, we deduce from (3) that 
  if $i<j<m$, then 
  $\rho_H^*(A\cup\{v_j\},B\cup\{v_{j+1}\})>k$.

  (5) We claim that 
  $\rho_H^*(A\cup\{v_{i-1}\},B')>k$.
  By Lemma~\ref{lem:reduce-submodular-2},
  \begin{multline*}
  \rho_H^*(A\cup\{v_{i-1}\},B')+
  \rho_G^*(A\cup\{v_{i-1}\},B')\\
  \ge 
  \rho_G^*(A\cup\{v_{i-1}\},B'\cup\{v_i\})
  +\rho_G^*(A\cup\{v_{i-1},v_i\},B')-1>2k.
  \end{multline*}
  Since $\rho_G^*(A\cup\{v_{i-1}\},B')=k$, we have
  $\rho_H^*(A\cup\{v_{i-1}\},B')>k$.

  This completes the proof of the lemma that
  $v_1,v_2,\ldots,v_{i-1},v_{i+1},\ldots,v_m$ is a blocking sequence
  of $(A,B)$ in $G*v_i$.
\end{proof}

\begin{COR}\label{cor:blocking-pivot}
  Let $G$ be a graph and $A$, $B$ be disjoint subsets of $V(G)$. 
  Let $v_1,v_2,\ldots,v_m$ be a blocking sequence for $(A,B)$ in $G$. 
  Let $1\le i\le m$.
  Suppose that $v_i$ has a neighbor $w$ in $A\cup B$.
  \begin{itemize}
  \item   If $m>1$, then 
  $\rho_{G\pivot v_iw}^*(A,B)=\rho_G^*(A,B)$ and 
  the sequence 
  $v_1,v_2,\ldots,v_{i-1},v_{i+1},\ldots,v_m$ obtained by removing $v_i$ from
  the blocking sequence is a blocking
  sequence  for $(A,B)$
  in $G\pivot v_iw$.
  \item If $m=1$, then
  $\rho_{G\pivot v_iw}^*(A,B)=\rho_G^*(A,B)+1$.
  \end{itemize}
\end{COR}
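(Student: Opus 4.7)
The plan is to reduce Corollary \ref{cor:blocking-pivot} to Proposition \ref{prop:reduce-blocking} via the identity $G \pivot v_i w = G * w * v_i * w$, which holds because pivoting is symmetric in its two vertices (so $G * v_i * w * v_i = G * w * v_i * w$). Once this identity is in hand, the only new ingredient needed is an understanding of how a local complementation at a vertex of $A \cup B$ affects both $\rho^*_G(A, B)$ and a blocking sequence for $(A, B)$.

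The key auxiliary claim is that if $w \in X \cup Y$ for disjoint subsets $X, Y \subseteq V(G)$, then $\rho^*_{G * w}(X, Y) = \rho^*_G(X, Y)$. Assuming $w \in X$ (the case $w \in Y$ being symmetric by a column-operation argument), a direct inspection of the bipartite adjacency matrix $A(G)[X, Y]$ shows that applying $* w$ amounts to adding row $w$ to every row indexed by a neighbor of $w$ in $X \setminus \{w\}$, an elementary row operation over the binary field that preserves rank. Applying this claim to each $\rho^*$-value appearing in conditions (a)--(c) of a blocking sequence, each of which is taken over a pair of sets whose union contains $A \cup B$ and hence $w$, we conclude that $v_1, v_2, \ldots, v_m$ remains a blocking sequence for $(A, B)$ in $G * w$, with $\rho^*_{G * w}(A, B) = \rho^*_G(A, B)$ and with the minimality condition (d) preserved.

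With the auxiliary claim established, the proof assembles in three stages: first transfer the blocking sequence from $G$ to $G * w$ by the claim; next apply Proposition \ref{prop:reduce-blocking} at $v_i$ inside $G * w$ to obtain the desired conclusion for $G * w * v_i$ (a shortened blocking sequence if $m > 1$, or an increment $\rho^*_{G * w * v_i}(A, B) = \rho^*_G(A, B) + 1$ if $m = 1$); finally apply the claim once more to transport these statements from $G * w * v_i$ to $G * w * v_i * w = G \pivot v_i w$. The main obstacle is the clean verification of the auxiliary claim, after which the corollary follows by this mechanical three-step chaining and does not require revisiting the submodular estimates that underlie Proposition \ref{prop:reduce-blocking}.
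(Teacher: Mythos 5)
Your proposal is correct and is essentially the paper's own proof: the paper likewise reduces the corollary to Proposition~\ref{prop:reduce-blocking} via $G\pivot v_iw=G*w*v_i*w$ together with the fact that $\rho^*_{G*w}(X,Y)=\rho^*_G(X,Y)$ whenever $w\in X\cup Y$. You have merely spelled out the row-operation verification of that fact and the three-step chaining that the paper leaves implicit.
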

\begin{proof}
  It follows easily from the facts that $G\pivot v_i w= G*w *v_i * w$ and 
  $\rho_G^*(X,Y)=\rho_{G*w}^*(X,Y)$ for all graphs $G$ with $w\in X\cup Y$.
\end{proof}

\begin{COR}\label{cor:blocking-pivot2}
  Let $G$ be a graph and $A$, $B$ be disjoint subsets of $V(G)$. 
  Let $v_1,v_2,\ldots,v_m$ be a blocking sequence for $(A,B)$ in $G$. 
  Let $1\le i\le m$.
  Suppose that $v_i$ and $v_{i'}$ are adjacent and $i<i'$.
  \begin{itemize}
  \item   If $m>2$, then 
  $\rho_{G\pivot v_iv_{i'}}^*(A,B)=\rho_G^*(A,B)$ and 
  the sequence 
  $v_1,v_2,\ldots,v_{i-1},v_{i+1},\ldots,v_{i'-1},v_{i'+1},\ldots,v_m$ 
  obtained by removing $v_i$ and $v_{i'}$ from
  the blocking sequence is a blocking
  sequence  for $(A,B)$
  in $G\pivot v_iv_{i'}$.
  \item If $m=2$, then
  $\rho_{G\pivot v_iv_{i'}}^*(A,B)=\rho_G^*(A,B)+1$.
  \end{itemize}
\end{COR}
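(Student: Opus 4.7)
The plan is to invoke the identity $G\pivot v_iv_{i'} = G*v_i*v_{i'}*v_i$, which follows from the definition of pivot, and to iterate Proposition~\ref{prop:reduce-blocking} once for each of the three local complementations. Since $i<i'$ forces $m\ge 2$, the first application, to $G$ at $v_i$ (in the blocking sequence of length $m>1$), yields $\rho^*_{G*v_i}(A,B)=\rho^*_G(A,B)$ together with the reduced blocking sequence $v_1,\ldots,v_{i-1},v_{i+1},\ldots,v_m$ of length $m-1$ in $G*v_i$. Since $v_{i'}$ still lies in this shortened sequence, a second application of Proposition~\ref{prop:reduce-blocking}, to $G*v_i$ at $v_{i'}$, yields two sub-cases: if $m>2$, so $m-1>1$, then $\rho^*_{G*v_i*v_{i'}}(A,B)=\rho^*_G(A,B)$ and $v_1,\ldots,\hat{v_i},\ldots,\hat{v_{i'}},\ldots,v_m$ (of length $m-2$) is a blocking sequence in $G*v_i*v_{i'}$; if $m=2$, so $m-1=1$, then $\rho^*_{G*v_i*v_{i'}}(A,B)=\rho^*_G(A,B)+1$.

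The delicate third stage is the local complementation at $v_i$ taking $G*v_i*v_{i'}$ to $G\pivot v_iv_{i'}$. Since $v_i$ was already removed from the blocking sequence in the first stage, Proposition~\ref{prop:reduce-blocking} cannot be applied directly in the forward direction. My plan is to invoke Lemma~\ref{lem:reduce-submodular-2} with $v=v_i$, together with its counterpart in the reverse direction, to obtain matched submodular bounds on $\rho^*_{G\pivot v_iv_{i'}}(A,B)$ in terms of $\rho^*_{G*v_i*v_{i'}}(A,B\cup\{v_i\})$ and $\rho^*_{G*v_i*v_{i'}}(A\cup\{v_i\},B)$. The key observation is that a local complementation $*v_i$ preserves $\rho^*(X,Y)$ whenever $v_i\in X\cup Y$, so these two auxiliary quantities take the same value in $G*v_i*v_{i'}$ and in $G\pivot v_iv_{i'}$; tracing them back to $\rho^*$-values in $G$ via a further application of Lemma~\ref{lem:reduce-submodular-2} (now at $v=v_{i'}$), combined with the original blocking sequence conditions (a)--(c) on $v_1,\ldots,v_m$ in $G$ and Lemma~\ref{lem:cut-blocking}, should pin down $\rho^*_{G\pivot v_iv_{i'}}(A,B)$ to $\rho^*_G(A,B)$ when $m>2$ and to $\rho^*_G(A,B)+1$ when $m=2$.

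The main obstacle is this final submodular bookkeeping: since $*v_{i'}$ does not in general preserve $\rho^*(X,Y)$, each auxiliary value has to be bounded by further appeals to submodularity, with case splits according to whether $i=1$ or $i'=m$ (i.e., whether the pivoted endpoints sit at the extremes of the blocking sequence). This parallels the five-step calculation in the proof of Proposition~\ref{prop:reduce-blocking} and constitutes the technical heart of the argument. Once $\rho^*_{G\pivot v_iv_{i'}}(A,B)$ is determined, the claim that the residual sequence is a blocking sequence in $G\pivot v_iv_{i'}$ (when $m>2$) follows from Proposition~\ref{prop:block} via the cut-rank invariance $\rho_{G\pivot v_iv_{i'}}=\rho_G$ and the minimality inherited from the original blocking sequence in $G$.
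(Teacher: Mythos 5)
Your decomposition $G\pivot v_iv_{i'}=G*v_i*v_{i'}*v_i$ and the two applications of Proposition~\ref{prop:reduce-blocking} are fine as far as they go: they correctly give that $\rho^*_{G*v_i*v_{i'}}(A,B)=\rho^*_G(A,B)$ (resp.\ $\rho^*_G(A,B)+1$) and that the doubly reduced sequence is a blocking sequence in $G*v_i*v_{i'}$ when $m>2$. The gap is in how you finish. For the third local complementation you assert that, once $\rho^*_{G\pivot v_iv_{i'}}(A,B)$ is pinned down, the residual sequence being a blocking sequence in $G\pivot v_iv_{i'}$ ``follows from Proposition~\ref{prop:block} via the cut-rank invariance \dots and the minimality inherited from the original blocking sequence.'' Proposition~\ref{prop:block} only guarantees that \emph{some} blocking sequence for $(A,B)$ exists in $G\pivot v_iv_{i'}$; it says nothing about the particular sequence $v_1,\ldots,\widehat{v_i},\ldots,\widehat{v_{i'}},\ldots,v_m$. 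The last operation $*v_i$ flips edges among the neighbours of $v_i$ in $G*v_i*v_{i'}$, which may include other terms $v_j$, $v_\ell$ of the residual sequence, so the quantities $\rho^*(A\cup\{v_j\},B\cup\{v_\ell\})$ entering conditions (a)--(d) can all change. Verifying (a)--(d) for the residual sequence --- in particular the new consecutive condition $\rho^*(A\cup\{v_{i-1}\},B\cup\{v_{i'+1}\})>k$ and the minimality condition (d) --- is precisely the technical content of the statement, and your proposal does not supply it. ``Minimality inherited from $G$'' is not an argument: minimality of the original sequence concerns subsequences of $v_1,\ldots,v_m$ tested in $G$, not subsequences of the residual sequence tested in the pivoted graph.

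You should also compare your intended case split with the one that actually makes the problem tractable. The paper first disposes of the case where $v_i$ (or $v_{i'}$) has a neighbour $w$ in $A\cup B$, using the composition identity $G\pivot v_iv_{i'}=G\pivot v_iw\pivot wv_{i'}$ and Corollary~\ref{cor:blocking-pivot}; this settles $m=2$ and the endpoint cases $i=1$, $i'=m$ outright (condition (a) forces $v_1$ to have a neighbour in $A$, and (c) forces $v_m$ to have one in $B$). In the remaining case both pivoted vertices have no neighbours in $A\cup B$, which forces $m\ge 4$, lets one take $i'=i+1$, makes $\rho^*(A,B)$ manifestly invariant, and still requires a page of submodularity to re-verify (a)--(d). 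Splitting on ``$i=1$ or $i'=m$'' instead of on ``has a neighbour in $A\cup B$'' leaves you in the hardest configuration without the structural facts that make it manageable.
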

\begin{proof}
  If $v_i$ has a neighbor $w$ in $A\cup B$, then 
  $G\pivot v_i v_{i'}=G\pivot v_i w\pivot wv_{i'}$ and this corollary follows
  from Corollary \ref{cor:blocking-pivot}.
  So we may assume that $v_i$ has no neighbors in $A\cup B$
  and similarly $v_{i'}$ has no neighbors in $A\cup B$.
  Thus $i,i'\notin \{ 1,m\}$ and $m\ge 4$.

  Since $v_i$ and $v_{i'}$ are adjacent, we may assume that $i'=i+1$.
  Let $H=G\pivot v_iv_{i+1}$
  and $k=\rho_G^*(A,B)$.
  Since $v_i$ and $v_{i+1}$ have no neighbors in $A\cup B$,
  $\rho_H^*(A,B)=k$.

  Then $v_1,v_2,\ldots,v_i$ is a blocking sequence for
  $(A,B\cup\{v_{i+1}\})$ in $G$ 
  by Lemma~\ref{lem:cut-blocking}.
  Similarly $v_{i+1},v_{i+2}, \ldots, v_m$ is a blocking sequence for
  $(A\cup\{v_i\},B)$ in $G$.

  By Corollary~\ref{cor:blocking-pivot}, 
  $v_1,v_2,\ldots,v_{i-1}$ is a blocking sequence for
  $(A,B\cup\{v_{i+1}\})$ in $H$.
  Then $\rho_H^*(A,B\cup\{v_1\})=\rho_H^*(A,B\cup\{v_1,v_{i+1}\})>k$,
  because $v_{i+1}$ has no neighbors of $H$ in $A$.

  For $1\le j<i-1$, 
  $\rho_H^*(A\cup\{v_j\},B\cup\{v_{j+1}\})
  +\rho_H^*(A\cup\{v_j\},B\cup\{v_{i+1}\})
  \ge \rho_H^*(A\cup\{v_j\},B\cup\{v_{j+1},v_{i+1}\})
  +\rho_H^*(A\cup\{v_j\},B)>2k$ and therefore
  \[\rho_H^*(A\cup\{v_j\},B\cup\{v_{j+1}\})>k\]
  because $\rho_H^*(A\cup \{v_j\},B)\le \rho_H^*(A\cup \{v_j\},B\cup \{v_{i+1}\})\le k$.

  Similarly $v_{i+2},v_{i+3},\ldots,v_m$ is a blocking sequence for
  $(A\cup\{v_i\},B)$ in $H$. 
  By symmetry, we deduce that 
  $\rho^*_H(A\cup \{v_m\},B)>k$
  and 
  $\rho_H^*(A\cup\{v_j\},B\cup \{v_{j+1}\})>k$ for all $i+1<j<m$.

  We now claim that $\rho_H^*(A\cup\{v_{i-1}\},B\cup\{v_{i+2}\})>k$.
  By Lemma~\ref{lem:submodular}, 
  \begin{multline*}
    \rho_H^*(A\cup\{v_{i-1}\},B\cup\{v_{i+2}\})
    +\rho_H^*(A\cup\{v_{i+1}\},B\cup\{v_{i+2}\})\\
    \ge 
    \rho_H^*(A\cup\{v_{i-1},v_{i+1}\},B\cup\{v_{i+2}\})
    +\rho_H^*(A,B\cup\{v_{i+2}\}).
  \end{multline*}
  Since $v_{i+1}$ has no neighbors in $A\cup B$,
  we have
  $\rho_H^*(A\cup\{v_{i+1}\},B\cup\{v_{i+2}\})
  =\rho_G^*(A\cup\{v_{i}\},B\cup\{v_{i+2}\})=k$
  and 
  $\rho_H^*(A,B\cup\{v_{i+2}\})=
  \rho_G^*(A,B\cup\{v_{i+2}\})=k$. Therefore
  \[
    \rho_H^*(A\cup\{v_{i-1}\},B\cup\{v_{i+2}\})
    \ge 
    \rho_H^*(A\cup\{v_{i-1},v_{i+1}\},B\cup\{v_{i+2}\}).
  \]
  By Lemma~\ref{lem:reduce-submodular-2}, 
  \begin{multline*}
    \rho_H^*(A\cup\{v_{i-1},v_{i+1}\},B\cup\{v_{i+2}\})
    +\rho_G^*(A\cup\{v_{i-1}\},B\cup\{v_{i+1},v_{i+2}\})
    \\\ge 
    \rho_G^*(A\cup\{v_{i-1}\},B\cup\{v_i,v_{i+1},v_{i+2}\})
    \\
    +
    \rho_G^*(A\cup\{v_{i-1},v_i,v_{i+1}\},B\cup\{v_{i+2}\})-1.
  \end{multline*}
  By Lemma~\ref{lem:cut-blocking}, 
  $\rho_G^*(A\cup\{v_{i-1},v_i,v_{i+1}\},B\cup\{v_{i+2}\})>k$
  and 
  $\rho_G^*(A\cup\{v_{i-1}\},B\cup\{v_{i+1},v_{i+2}\})=k$.
  Therefore
  $    \rho_H^*(A\cup\{v_{i-1}\},B\cup\{v_{i+2}\})
  \ge 
  \rho_H^*(A\cup\{v_{i-1},v_{i+1}\},B\cup\{v_{i+2}\})\ge 
  \rho_G^*(A\cup\{v_{i-1}\},B\cup\{v_i,v_{i+1},v_{i+2}\})>k$.
  This proves the claim.
  
  So far we have shown that the sequence
  $v_1,v_2,\ldots,v_{i-1},v_{i+2},\ldots,v_m$ satisfies (a), (b), (c)
  of the definition of blocking sequences. 
  It remains to show (d).
  For $j\in \{2,3,\ldots,m\}\setminus\{i,i+1\}$, 
  $\rho_H^*(A,B\cup\{v_j\})=\rho_G^*(A,B\cup\{v_j\})=k$
  because $v_i$
  and $v_{i+1}$ have no neighbors in $A\cup B$.
  Similarly 
  $\rho_H^*(A\cup\{v_j\},B)=\rho_G^*(A\cup\{v_j\},B)=k$
  for $j\in\{1,2,\ldots,m-1\}\setminus \{i,i+1\}$.
  For $j,\ell\in \{1,2,\ldots,m\}\setminus\{i,i+1\}$ with $\ell-j>1$, 
  either
  $\rho_G^*(A\cup\{v_j\},B\cup\{v_\ell,v_i,v_{i+1}\})= k$
  or 
  $\rho_G^*(A\cup\{v_j,v_i,v_{i+1}\},B\cup\{v_\ell\})= k$
  and therefore
  $\rho_H^*(A\cup\{v_j\},B\cup\{v_\ell\})  \le k$,
  unless $j=i-1$ and $\ell=i+2$.
  This  completes the proof.
\end{proof}

We will now prove that 
without loss of generality,
a blocking
sequence for $(A,B)$ is short
by applying local complementation  while keeping the subgraph induced on $A\cup B$.

\begin{PROP}\label{prop:short-patch}
  Let $G$ be a prime graph and 
  let $A$, $B$ be disjoint subsets of $V(G)$ with $\abs{A},\abs{B}\ge 2$.
  Suppose that there exist two nonempty sets $A_0\subseteq A$ and $B_0\subseteq B$ 
  such that the set of all edges between $A$ and $B$ is
  $\{xy:x\in A_0,y\in B_0\}$.
  Let 
  \[
  \ell_0=
  \begin{cases}
    3 & \text{if }\abs{A_0}=\abs{B_0}=1,\\
    4 & \text{if }\abs{A_0}=1 \text{ or } \abs{B_0}=1,\\
    6 &\text{otherwise.}
  \end{cases}
  \]
  Then there exists a graph $G'$ locally equivalent to $G$
  satisfying the following.
  \begin{enumerate}[(i)]
  \item $G[A\cup B]=G'[A\cup B]$.
  \item 
    $G'$ has a blocking sequence $b_1,b_2,\ldots,b_\ell$ of length at most $\ell_0$
    for $(A,B)$.
  \end{enumerate}
\end{PROP}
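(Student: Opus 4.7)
The plan is to pick, among all graphs $G'$ locally equivalent to $G$ satisfying $G'[A\cup B]=G[A\cup B]$, one that admits a blocking sequence for $(A,B)$ of minimum length $m$, and then show that $m\le\ell_0$. Such a graph exists: since $G$ is prime, $\rho_G(X)\ge 2$ for every $X$ with $2\le\abs{X}\le\abs{V(G)}-2$, while $\rho^*_G(A,B)=1$ from the hypothesis that the edges between $A$ and $B$ form the complete bipartite graph from $A_0$ to $B_0$, so Proposition~\ref{prop:block} yields a blocking sequence for $(A,B)$ in $G$ itself. Let $v_1,v_2,\ldots,v_m$ denote the chosen minimum-length blocking sequence in $G'$.

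Assume for contradiction that $m>\ell_0$. The goal is to produce $G''$ locally equivalent to $G'$ with $G''[A\cup B]=G'[A\cup B]$ and a blocking sequence for $(A,B)$ of length strictly less than $m$, contradicting minimality. The engine of the reduction is Proposition~\ref{prop:reduce-blocking} together with Corollaries~\ref{cor:blocking-pivot} and~\ref{cor:blocking-pivot2}: applying $\ast v_i$ to $G'$ deletes $v_i$ from the blocking sequence, and if $v_iv_{i+1}\in E(G')$ then $G'\pivot v_iv_{i+1}$ deletes two consecutive entries. The only obstruction is that these operations may alter $G'[A\cup B]$, since $\ast v_i$ flips exactly the adjacencies inside $N_{G'}(v_i)\cap(A\cup B)$, and the pivot produces an analogous bipartite-type flip. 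The plan is to follow each reduction with a short correction sequence of local complementations at vertices of $A\cup B$ that restores $G'[A\cup B]$; since local complementation preserves $\rho^*$ by Lemma~\ref{lem:cutrank}, a careful choice of corrections will leave the shortened blocking sequence intact.

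The main obstacle, and the heart of the argument, is to show that when $m>\ell_0$ at least one internal vertex $v_i$ (with $2\le i\le m-1$) admits such a correction, and that the number of corrections needed is controlled exactly by $\ell_0$. I would handle this by a case analysis on the bipartite-type of $N_{G'}(v_i)\cap(A\cup B)$ relative to the partition $(A_0,\,A\setminus A_0,\,B_0,\,B\setminus B_0)$. The rigidity of $G'[A,B]$ being the complete bipartite graph from $A_0$ to $B_0$ makes the action of $\ast u$ for $u\in A\cup B$ on $G'[A\cup B]$ very constrained, so only a short list of neighborhood types of $v_i$ can actually occur on a minimum-length blocking sequence. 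The three thresholds $\ell_0\in\{3,4,6\}$ match the three regimes $\abs{A_0}=\abs{B_0}=1$, $\min(\abs{A_0},\abs{B_0})=1<\max(\abs{A_0},\abs{B_0})$, and $\abs{A_0},\abs{B_0}\ge 2$: fewer correction moves remain available as the bipartite connection becomes richer. To locate a "benign" $v_i$, I would exploit minimality of the sequence through Lemma~\ref{lem:cut-blocking} --- in particular the equalities $\rho^*_{G'}(A\cup\{v_{i-1}\},B\cup\{v_{i+1}\})=1$ for every internal $i$ --- to tightly restrict each $N_{G'}(v_i)\cap(A\cup B)$, and then apply a pigeonhole argument over the $m-2$ internal positions to extract a $v_i$ of a type that the correction list handles. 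This produces the desired strictly shorter blocking sequence, contradicting the minimality of $m$ and completing the proof.
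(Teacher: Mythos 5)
Your setup --- minimizing the blocking-sequence length over all $G'$ locally equivalent to $G$ with $G'[A\cup B]=G[A\cup B]$, then using Lemma~\ref{lem:cut-blocking} to pin down $N_{G'}(v_i)\cap(A\cup B)$ and finishing by pigeonhole over the internal vertices --- is the same frame as the paper's proof. But your reduction mechanism is where the actual content lives, and it does not work as described. You propose to apply $*v_i$ at a single internal vertex and then ``restore'' $G'[A\cup B]$ by a correction sequence of local complementations at vertices of $A\cup B$. Applying $*v_i$ complements the induced subgraph on $N_{G'}(v_i)\cap(A\cup B)$; to undo this you would need local complementations inside $A\cup B$ whose net effect on the induced subgraph $G'[A\cup B]$ is exactly that complementation and nothing else. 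No such operation exists in general (for instance, if $N_{G'}(v_i)\cap(A\cup B)=\{x,y\}$ you would have to flip the single pair $xy$ while fixing every other adjacency in $A\cup B$, which local complementation cannot do), and indeed $(G'*v_i)[A\cup B]$ need not even be locally equivalent to $G[A\cup B]$. You give no construction of the claimed correction sequence, and the assertion that ``the number of corrections needed is controlled exactly by $\ell_0$'' is unsubstantiated; the case analysis you defer to is the whole proof.

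The idea you are missing is to perform only reductions that preserve $G'[A\cup B]$ \emph{exactly}, with no correction step. These come in two flavours. First, if an internal $v_i$ has at most one neighbour in $A\cup B$, then $*v_i$ flips no pair inside $A\cup B$, so Proposition~\ref{prop:reduce-blocking} shortens the sequence within the class; this is what forces $N_{G'}(v_i)\cap A=A_0$ for internal $i$ when $\abs{B_0}=1$ (and symmetrically). Second --- and this is the key pairing trick --- if two internal vertices satisfy $N_{G'}(v_i)\cap(A\cup B)=N_{G'}(v_j)\cap(A\cup B)$, then $G'*v_i*v_j$ (if $v_iv_j\notin E(G')$) or $G'\pivot v_iv_j$ (if $v_iv_j\in E(G')$) leaves $G'[A\cup B]$ unchanged because the two flips on that common neighbourhood cancel, and Proposition~\ref{prop:reduce-blocking} resp.\ Corollary~\ref{cor:blocking-pivot2} again shortens the sequence. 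Minimality (via Lemma~\ref{lem:cut-blocking}) gives $N_{G'}(v_i)\cap A\in\{A_0,\emptyset\}$ and $N_{G'}(v_i)\cap B\in\{B_0,\emptyset\}$ for internal $i$, so internal vertices carry at most four distinct traces on $A\cup B$ in general, at most two when exactly one of $\abs{A_0},\abs{B_0}$ is $1$, and exactly one when both are $1$; since all internal traces must be pairwise distinct, the pigeonhole bound is precisely $\ell_0$. As written, your proposal has a genuine gap at the reduction step.
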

\begin{proof}
  Since $G$ is prime, $G$ has a blocking sequence for $(A,B)$ by Proposition~\ref{prop:block}.
  Let $\mathcal G$ be the set of all graphs $G'$ locally equivalent to $G$
  such that $G'[A\cup B]=G[A\cup B]$.
  We assume that $G$ is chosen in $\mathcal G$ 
  so that the length $\ell$ of a blocking
  sequence $b_1,b_2,\ldots,b_\ell$ for $(A,B)$ is minimized.

  For $1\le i<\ell$, $N_G(b_i)\cap B= B_0$ or $\emptyset$
  because $\rho_G(A\cup \{b_i\},B)=1$.
  For $1<i\le \ell$, $N_G(b_i)\cap A=A_0$ or $\emptyset$
  because $\rho_G(A, B\cup \{b_i\})=1$.

  Suppose that $N_G(b_i)\cap (A\cup B)=N_G(b_{j})\cap (A\cup B)$ for some
  $1<i<j<\ell$.
  If $b_i$ and $b_j$ are adjacent, then $G'=G\pivot b_i b_j\in \mathcal
  G$.
  If $b_i$ and $b_j$ are non-adjacent, then $G'=G*b_i *b_j\in \mathcal
  G$.
  In both cases, we found a graph in $\mathcal G$ having  a shorter
  blocking sequence by Proposition \ref{prop:reduce-blocking} or
  Corollary \ref{cor:blocking-pivot2}, contradicting our assumption.

  If $\abs{B_0}=1$, then for all $1<i<\ell$, $N_G(b_i)\cap A=A_0$
  because otherwise $G*b_i\in \mathcal G$ has a shorter blocking
  sequence by Proposition~\ref{prop:reduce-blocking}, contradicting
  our assumption.
  Similarly if $\abs{A_0}=1$, then $N_G(b_i)\cap B=B_0$ for all $1<i<\ell$.

  By the pigeonhole principle, we deduce that $\ell\le \ell_0$.
\end{proof}
\section{Obtaining a long cycle from a huge induced path}\label{sec:path}
In this section we aim to prove the following theorem.
\begin{THM}\label{thm:path-to-cycle}
  If a prime graph has an induced path of length 
  $\lceil 6.75 n^7\rceil$, then 
  it has a cycle of length $n$ as a vertex-minor.
\end{THM}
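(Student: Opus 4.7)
The strategy is to build a large generalized ladder as a vertex-minor of $G$, then invoke Proposition~\ref{prop:ladder}. Let $P=p_1p_2\cdots p_N$ be the given induced path of length $N=\lceil 6.75n^7\rceil$ in the prime graph $G$. For each index $i$ with $2\le i\le N-2$, consider the cut $(A_i,B_i)=(\{p_1,\ldots,p_i\},\{p_{i+1},\ldots,p_N\})$ of $V(P)$. Since $P$ is induced, $\rho_G^*(A_i,B_i)=1$, and the only edge between $A_i$ and $B_i$ is $p_ip_{i+1}$, so in the language of Proposition~\ref{prop:short-patch} we have $A_0=\{p_i\}$, $B_0=\{p_{i+1}\}$, and hence $\ell_0=3$. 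Because $G$ is prime and $|A_i|,|B_i|\ge 2$, no set $X$ with $A_i\subseteq X\subseteq V(G)\setminus B_i$ satisfies $\rho_G(X)\le 1$, so by Proposition~\ref{prop:block} there is a blocking sequence for $(A_i,B_i)$. Proposition~\ref{prop:short-patch} then yields a graph $G_i'$ locally equivalent to $G$ with $G_i'[V(P)]=G[V(P)]$ and a blocking sequence $v^{(i)}_1,\ldots,v^{(i)}_{m_i}$ of length $m_i\le 3$, where the proof of Proposition~\ref{prop:short-patch} pins the neighborhoods of each $v^{(i)}_j$ in $A_i$ and in $B_i$ down to a small finite list of possibilities.

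Next, I would catalogue each index $i$ by the \emph{type} of its short blocking sequence: the length $m_i$ together with the adjacency pattern of every $v^{(i)}_j$ with the boundary vertices $p_i,p_{i+1}$ and with the other $v^{(i)}_{j'}$. There are only a constant number of such types, so by iteratively applying Ramsey's theorem along $i=2,\ldots,N-2$ (also recording how pairs or triples of nearby cuts interact), I would extract a long subsequence $i_1<i_2<\cdots<i_K$ whose cuts have the same type and whose blocking-sequence vertices line up uniformly. The explicit bound $N=\lceil 6.75n^7\rceil$ is chosen so that the resulting $K$ exceeds the threshold $4608(n')^5$ of Proposition~\ref{prop:ladder} with $n'=\lceil (n-3)/4\rceil$.

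Using this uniform sequence of cuts, I would assemble the generalized ladder: the chosen boundary vertices of $P$ (roughly $p_{i_1},p_{i_1+1},p_{i_2},p_{i_2+1},\ldots$, together with the intermediate path segments collapsed or pivoted away) form one defining path $P^*$, while the blocking-sequence vertices $v^{(i_j)}_\bullet$, after further local clean-up supplied by Propositions~\ref{prop:reduce-blocking} and~\ref{prop:short-patch} and Corollaries~\ref{cor:blocking-pivot} and~\ref{cor:blocking-pivot2}, form the other defining path $Q^*$. The chords of the ladder encode exactly the adjacencies forced by the blocking-sequence conditions (a)--(c), and the ordering along $P$ guarantees the non-crossing condition. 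Once this generalized ladder of at least $4608(n')^5$ vertices has been realized as a vertex-minor of $G$, Proposition~\ref{prop:ladder} produces a cycle of length $4n'+3\ge n$ as a vertex-minor.

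The hard part will be the ladder construction itself. Proposition~\ref{prop:short-patch} is applied once per cut, and the local complementations it demands at different cuts will generally differ; the key technical task is to spread the $K$ cuts far enough apart along $P$, and to execute the necessary local complementations and pivots carefully enough, that $G[V(P)]$ remains intact throughout, all $K$ short blocking sequences coexist in their normal forms, and the blocking vertices from distinct cuts do not create unwanted crossing chords or destroy the inducedness of the second path $Q^*$. Keeping these interactions under control, together with the repeated Ramsey extractions required to match types across many cuts, is what forces the polynomial $n^7$ bound on the length of the original induced path.
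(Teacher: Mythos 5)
Your endgame is the paper's: reduce to a large generalized ladder and invoke Proposition~\ref{prop:ladder}. But the middle of your argument --- the actual construction of the ladder --- is where the entire difficulty of the theorem lives, and the plan you sketch for it would not work. You take the cuts $(A_i,B_i)=(\{p_1,\ldots,p_i\},\{p_{i+1},\ldots,p_N\})$ \emph{independently}, normalize each one via Proposition~\ref{prop:short-patch}, classify each by a local ``type,'' and hope that Ramsey-uniformity makes the blocking vertices of the selected cuts line up into a second induced path $Q^*$ with non-crossing chords. Nothing in the blocking-sequence machinery supports this. For two indices $i<i'$, the blocking vertices $v^{(i)}_\bullet$ and $v^{(i')}_\bullet$ are unrelated: they may coincide, be adjacent or not in arbitrary patterns, and --- crucially --- the last vertex of the blocking sequence for $(A_i,B_i)$ is only guaranteed to have \emph{some} neighbor in $B_i\setminus\{p_{i+1}\}$; that neighbor can sit anywhere among $p_{i+2},\ldots,p_N$, far from the cut. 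These long-range adjacencies along $P$ and among the blocking vertices are exactly what produce crossing chords and destroy inducedness of $Q^*$, and they are not controlled by a Ramsey extraction over a constant list of \emph{local} types, because the ``type'' at cut $i$ says nothing about adjacencies to vertices or blocking vertices of distant cuts. A second, related gap: each application of Proposition~\ref{prop:short-patch} replaces $G$ by a different locally equivalent graph $G_i'$, and you give no mechanism by which a single graph locally equivalent to $G$ realizes all $K$ normalized blocking sequences at once.

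The paper resolves both problems structurally rather than by Ramsey. It builds the auxiliary vertices $w_1,w_2,\ldots$ \emph{one at a time}, taking the blocking sequence for a cut $(A,V(P)\setminus A)$ whose left side $A$ already contains all previously constructed $w_j$'s; this is what forces each new $w_i$ to attach to the earlier part of the structure (condition (ii) of a $k$-patch) and is the hypothesis that makes the non-crossing ladder construction in Lemma~\ref{lem:findladder} (the Type 0/1/2/3 case analysis) go through. The long-range neighbor problem is handled by Lemma~\ref{lem:shortening}, which re-routes and shortens $P$ (at a cost of $O(n^2)$ edges per step, which is where the $n^7$ comes from) so that the new vertex has a \emph{unique} neighbor on the surviving path; and the interference problem is handled by doing the construction sequentially, with each step explicitly preserving the induced subgraph on the already-built portion. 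You would need to supply substitutes for all three devices; as written, the ``hard part'' you defer is not a technicality but the theorem itself.
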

The main idea is to find a big generalized ladder, defined in
Section~\ref{sec:ladder} 
as  a vertex-minor
by using blocking sequences in Section~\ref{sec:blocking}.

\subsection{Patching a path}
For $1\le k\le n-2$, a \emph{$k$-patch} of an induced path
$P=v_0v_1\cdots v_n$ of a graph $G$
is a sequence $Q=w_1,w_2,\ldots,w_k$ of distinct vertices not on $P$
such that 
for each $i\in \{1,2,\ldots,k\}$, 
\begin{enumerate}[(i)]
\item $v_{i+2}$ is the only vertex adjacent to $w_i$
  among $v_{i+1}$, $v_{i+2}$, $\ldots$,  $v_{n}$,
\item $\emptyset\neq N_G(w_i)\cap 
  \{v_0,\ldots,v_i,w_1,\ldots,w_{i-1}\}
  \neq
    \{v_i,w_{i-1}\}$ if $i>1$,
  \item $N_G(w_1)\cap \{v_0,v_1\}=\{v_0\}$.
\end{enumerate}
An induced path is called \emph{$k$-patched} if it has a $k$-patch.
An induced path of length $n$ is called \emph{fully patched}
if it is equipped with a $(n-2)$-patch.
See Figure~\ref{fig:patched} for an example.
\begin{figure}
  \centering
  \tikzstyle{v}=[circle, draw, solid, fill=black, inner sep=0pt, minimum width=3pt]
  \begin{tikzpicture}
    \foreach \x in {0,...,8}
    {
      \node (v\x) at (\x,0) [v,label=below:$v_\x$] {};
    }
    \foreach \x in {1,...,4}
    {
      \node (w\x) at (\x+1,1) [v,label=$w_\x$] {};
    \draw [color=gray,dashed](\x+.5,-.5)--(\x+.5,1.2);
    }
    \draw [thick](v0)--(v8);
    \draw [thick](w1)--(v3);
    \draw [thick](w2)--(v4);
    \draw [thick](w3)--(v5);
    \draw [thick](w4)--(v6);
    \draw (w1)--(v0);
    \draw (w2)to (w1);
    \draw (w2)to (v1);
    \draw (w3)[bend left]to (w1);
    \draw (w3)to (v3);
    \draw (w4)to (v4);
  \end{tikzpicture}
  \caption{An example of a $4$-patched path of length $8$.}
  \label{fig:patched}
\end{figure}
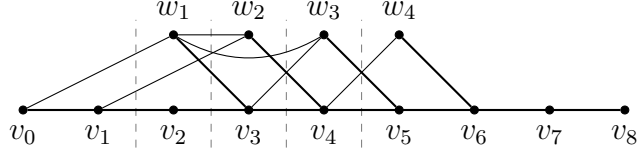

Our goal is to find a fully patched long induced path  in a vertex-minor 
of a prime graph having a very long induced path.

\begin{LEM}\label{lem:shortening}
  Let $P=v_0v_1\ldots v_m$ be an induced path from $s=v_0$ to $t=v_m$ in a graph $G$
  and let $H$ be a connected induced subgraph of $G\setminus V(P)$.
  Let $v$ be a vertex in $V(G)\setminus (V(H)\cup V(P))$.
  Suppose that $N_G(V(H))\cap V(P)=\{s\}$, $\abs{E(P)}\ge 6(n-1)^2-5$, 
  and $v$ has neighbors in both $V(P)\setminus\{s\}$
  and $V(H)$.

  If $G$ has no cycle of length $2n+1$ as a vertex-minor,
  then there exist a graph $G'$ locally equivalent to $G$
  and an induced path $P'$ from $s$ to $t$ of $G'$ disjoint from $V(H)$
  satisfying
  the following.
  \begin{enumerate}[(i)]
  \item $G[V(H)\cup\{s\}]=G'[V(H)\cup\{s\}]$,
  \item $N_{G}(v)\cap V(H)=N_{G'}(v)\cap V(H)$,
  \item $P'=v_0 v_iv_{i+1}v_{i+2}\cdots v_m$ for some $i$,
  \item $v_i$ is the only vertex on $V(P')$ adjacent to $v$ in $G'$,
  \item $\abs{E(P')}\ge \abs{E(P)}-6(n-1)^2+6$.
  \end{enumerate}
\end{LEM}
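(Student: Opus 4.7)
The proof naturally splits into two parts: first, bound the largest index $j$ with $v_j \sim v$ using Lemma~\ref{lem:pathone} combined with a path through $H$; second, construct $G'$ and $P'$ via local complementations restricted to $V(P) \setminus \{s\}$.

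For the first part, let $j$ be the largest index in $\{1, \ldots, m\}$ with $v_j \sim v$ in $G$, which exists by the hypothesis that $v$ has neighbors in $V(P) \setminus \{s\}$. Choose any neighbor $h \in V(H)$ of $v$, and let $W\colon s = w_0, w_1, \ldots, w_k = h$ be a shortest path from $s$ to $h$ in the connected subgraph $G[V(H) \cup \{s\}]$; this $W$ is induced and has length $k \geq 1$. The concatenation $Q\colon v_j, v_{j-1}, \ldots, v_0, w_1, \ldots, w_k$ is an induced path in $G$: both halves are induced, the hypothesis $N_G(V(H)) \cap V(P) = \{s\}$ rules out edges between $\{v_1, \ldots, v_j\}$ and $V(H)$, and the shortest-path property of $W$ rules out chords within $W$. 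The vertex $v$ is adjacent to both endpoints $v_j$ and $w_k = h$ of $Q$, so the induced subgraph $G[V(Q) \cup \{v\}]$ satisfies the hypothesis of Lemma~\ref{lem:pathone} and has $j + k + 2$ vertices. If $j + k + 2 \geq 6(n-1)^2 - 3$, then this subgraph, and hence $G$, contains a cycle of length $2n+1$ as a vertex-minor, contradicting the assumption. Therefore $j + k \leq 6(n-1)^2 - 6$, and since $k \geq 1$ we obtain $j \leq 6(n-1)^2 - 7$.

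For the second part, we build $G'$ via a sequence of local complementations at vertices of $V(P) \setminus \{s\}$. Because every such vertex $v_k$ (for $k \geq 1$) has $N_G(v_k) \cap V(H) = \emptyset$ by hypothesis, any such complementation flips no edge within $V(H) \cup \{s\}$ and no edge between $v$ and $V(H)$, so conditions (i) and (ii) are preserved automatically. Setting $i = j$, condition (v) follows immediately from $i \leq 6(n-1)^2 - 7 < 6(n-1)^2 - 5$. In the clean case where $v$'s only neighbor on $V(P)$ is $v_j$, a direct induction verifies that applying local complementations at $v_1, v_2, \ldots, v_{j-1}$ in order creates the chain of edges $v_0 v_2, v_0 v_3, \ldots, v_0 v_j$ (along with various edges among $\{v_0, \ldots, v_{j-1}\}$ which do not appear on $P'$), introduces no edges involving $\{v_{j+1}, \ldots, v_m\}$ beyond the preexisting path edges (since none of these vertices ever enters the neighborhood of any $v_i$ with $i \leq j-1$), and leaves $v$'s adjacencies untouched (since $v \notin N(v_i)$ for $1 \leq i \leq j-1$); therefore $P' = v_0 v_j v_{j+1} \cdots v_m$ is induced and $v_j$ is the unique neighbor of $v$ on $P'$.

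The main obstacle is the general case, where $v$ may be adjacent to additional vertices $v_k$ with $k < j$. Performing a local complementation at such a $v_k$ flips $v$'s adjacency to all other members of $N(v_k)$, potentially destroying the crucial adjacency $v \sim v_j$ (if $v_j$ has entered $N(v_k)$ via earlier operations) or creating undesired adjacencies such as $v \sim v_{j+1}$. Overcoming this requires either a careful case analysis tracking the evolving adjacencies of $v$ through the sequence of operations, or a preliminary cleanup phase that uses local complementations (and pivots) at suitable path vertices to shift $v$'s attachment to $P$ down to a single vertex $v_i$ — where $i$ may be slightly larger than $j$, but still bounded by $6(n-1)^2 - 5$ thanks to the slack in condition (v) — before applying the clean-case construction.
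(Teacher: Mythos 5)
Your first half is exactly the paper's argument: take $j$ maximal with $v_j$ adjacent to $v$, route an induced path from $v_j$ back through $s$ into $H$ to a neighbor of $v$, and apply Lemma~\ref{lem:pathone} with center $v$ to conclude $j\le 6(n-1)^2-7$. That part is correct.

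The second half, however, has a genuine gap: you prove the lemma only in the ``clean case'' where $v_j$ is the sole neighbor of $v$ on $P$, and for the general case you merely name two possible strategies (``a careful case analysis \ldots or a preliminary cleanup phase'') without carrying either out. Establishing conditions (iii) and (iv) simultaneously in the presence of extra neighbors $v_k$ with $k<j$ is precisely the content of the lemma, so this cannot be deferred. The paper closes the gap as follows: set $G_0=G*v_1*v_2\cdots*v_{j-2}$ and $P_0=v_0v_{j-1}v_jv_{j+1}\cdots v_m$. Since none of $v_0,v_j,v_{j+1},\ldots,v_m$ ever lies in the neighborhood of a complemented vertex together with $v$ except possibly $v_0$ and $v_{j-1}$, one gets $v_j\in N_{G_0}(v)\cap V(P_0)\subseteq\{v_0,v_{j-1},v_j\}$ --- so the uncontrolled adjacencies of $v$ to $v_1,\ldots,v_{j-2}$ are simply discarded from the path rather than tracked. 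There remain only four possibilities for $N_{G_0}(v)\cap V(P_0)$, and each is resolved by one to three further local complementations at $v_{j-1}$, $v_j$, $v_{j+1}$ (e.g.\ $G_0*v_j*v_{j-1}$ with $P'=v_0v_{j+1}\cdots v_m$ when the set is $\{v_{j-1},v_j\}$), shortening $P'$ by at most two more edges, which the bound $j\le 6(n-1)^2-7$ absorbs. Without this (or an equivalent worked-out case analysis) your proof is incomplete. A minor additional point: your preservation of (i) and (ii) needs the inductive remark that a vertex of $V(P)\setminus\{s\}$ never \emph{acquires} a neighbor in $V(H)$ during the sequence, since local complementation at such a vertex only flips edges between its current neighbors.
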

\begin{proof}
  Since $G$ has a cycle using $H$ with $s$ and $P$, $G$ is not a
  forest and therefore $n\ge 2$.
  Let $v_0=s,v_1,v_2,\ldots,v_m=t$ be vertices in $P$.
  Let $v_k$ be the neighbor of $v$ with maximum $k$.
  Then $G$ has a fan having at least $k+3$ vertices because $H$ is
  connected
  and $v$ has a neighbor in $H$.
  If $k\ge 6(n-1)^2-6$, then $G$ has a fan having at least $6(n-1)^2-3$
  vertices and by Lemma~\ref{lem:pathone}, $G$ contains a cycle of
  length $2n+1$ as a vertex-minor. This contradicts to our assumption
  that $G$ has no such vertex-minor. 
  Thus, $k\le 6(n-1)^2-7$.

  Let $G_0=G*v_1*v_2*v_3\cdots*v_{k-2}$
  and let $P_0=v_0v_{k-1}v_{k}v_{k+1}\cdots v_m$. 
  (If $k\le 2$, then let $G_0=G$ and $P_0=P$.)
  Then clearly $P_0$ is an induced path of $G_0$ and 
  $v_k\in N_{G_0}(v)\cap V(P_0)\subseteq \{v_0,v_{k-1},v_k\}$.

  If $N_{G_0}(v)\cap V(P_0)=\{v_{k}\}$, then we are done by taking
  $G'=G_0*v_{k-1}$ and $P'=v_0v_{k}v_{k+1}\cdots v_m$.

  If $N_{G_0}(v)\cap V(P_0)=\{v_{k-1},v_{k}\}$, then we can take
  $G'=G_0*v_{k}*v_{k-1}$ and $P'=v_0v_{k+1}v_{k+2}\cdots v_m$.

  If $N_{G_0}(v)\cap V(P_0)=\{v_0,v_{k}\}$, then we can take
  $G'=G_0*v_{k-1}*v_{k}$ and $P'=v_0v_{k+1}v_{k+2}\cdots v_m$.

  Finally,   if $N_{G_0}(v)\cap V(P_0)=\{v_0,v_{k-1},v_{k}\}$, then we can take
  $G'=G_0*v_k*v_{k-1}*v_{k+1}$ and $P'=v_0v_{k+2}v_{k+3}\cdots v_m$.

  In all cases, $\abs{E(P')}\ge \abs{E(P)}-(k+1)\ge \abs{E(P)}-6(n-1)^2+6$.
\end{proof}

\begin{LEM}\label{lem:1patch}
  Let $n\ge 2$.
  Let $G$ be a prime graph having an induced path of length $t$.
  If $t\ge 6(n-1)^2-3$, then 
  there exists a graph $G'$ locally equivalent to $G$
  having a $1$-patched induced path of length $t-6(n-1)^2+6$,
  unless $G$ has a cycle of length $2n+1$ as a vertex-minor.
\end{LEM}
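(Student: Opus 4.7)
The plan is to use the primeness of $G$ to expose a small off-path structure at the head of $P$, apply Lemma~\ref{lem:shortening} to shorten the path with the prescribed loss, and then convert the resulting configuration into a $1$-patched path via one further local operation.

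I would first argue that $v_0$ must have an off-path neighbor. If not, every edge crossing the partition $(\{v_0,v_1\},V(G)\setminus\{v_0,v_1\})$ is incident to $v_1$, so the cut-rank is at most $1$ and this partition is a split, contradicting primeness. Let $h$ be such an off-path neighbor of $v_0$, and let $H$ be the maximal connected induced subgraph of $G\setminus V(P)$ containing $h$ with $N_G(V(H))\cap V(P)=\{v_0\}$. A further split argument applied to $(V(H),V(G)\setminus V(H))$ forces $V(H)=\{h\}$, and another such argument rules out $h$ being a leaf of $G$. Hence $h$ has an off-path neighbor $u\notin V(H)$, and by the maximality of $H$, this $u$ has an on-path neighbor $v_j$ for some $j\ge 1$.

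With $H=\{h\}$ and $v=u$, Lemma~\ref{lem:shortening} applies (its hypothesis $\abs{E(P)}\ge 6(n-1)^2-5$ holds since $t\ge 6(n-1)^2-3$), so either $G$ has $C_{2n+1}$ as a vertex-minor and we are done, or we obtain a locally equivalent $G'$ with an induced path $P'=v_0v_iv_{i+1}\cdots v_m$ of length at least $t-6(n-1)^2+6$ in which $v_i$ is $u$'s only on-path neighbor and the edges $hv_0$ and $hu$ are preserved. Since $h$ is not adjacent in $G$ to any $v_j$ with $j\ge 1$ and the local complementations performed inside the proof of Lemma~\ref{lem:shortening} occur only at such path vertices, $h$'s adjacencies to $v_1,\dots,v_m$ remain unchanged, so $h$ is still adjacent only to $v_0$ on $V(P')$ in $G'$. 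This yields the $4$-cycle $h\mbox{--}v_0\mbox{--}v_i\mbox{--}u\mbox{--}h$ attached at the head of $P'$.

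To finish, I would apply one further pivot or local complementation (for instance $G'\pivot hv_0$) to rearrange this $4$-cycle together with a nearby vertex into the induced $5$-cycle required at the start of a $1$-patched induced path of the claimed length $t-6(n-1)^2+6$. The \textbf{main obstacle} is this last step: verifying that a single carefully chosen local operation produces a vertex $w_1$ satisfying exactly the $1$-patch adjacency pattern on the resulting path, without destroying the induced property of the tail $v_iv_{i+1}\cdots v_m$. Handling this cleanly may require either an additional primeness-based argument to guarantee the existence of a suitable patch vertex, or a careful case analysis tracking how the local complementations inside Lemma~\ref{lem:shortening} alter the adjacencies of the vertices $v_1,\dots,v_{i-1}$, which remain in $G'$ (though no longer on $P'$) with modified neighborhoods that can then be harnessed to supply the needed patch vertex.
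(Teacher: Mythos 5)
Your opening moves (primeness gives $v_0$ an off-path neighbor; Lemma~\ref{lem:shortening} does the quantitative work) are in the right spirit, but the detour through the intermediate vertex $h$ creates problems that the proposal does not resolve. Two of your structural claims are unjustified: the subgraph $H$ you define need not exist at all, since it requires $N_G(h)\cap V(P)=\{v_0\}$ while an off-path neighbor of $v_0$ may perfectly well have further neighbors on $P$ (a case you never treat); and the assertion that a split argument on $(V(H),V(G)\setminus V(H))$ forces $V(H)=\{h\}$ is false --- primeness only excludes the crossing edges forming a complete bipartite pattern, which a larger connected $H$ easily avoids. More decisively, the configuration you arrive at is not a $1$-patch and cannot be repaired by ``one further pivot or local complementation'' in any evident way: on the induced path $h\,v_0\,v_i\,v_{i+1}\cdots$ your candidate patch vertex $u$ is adjacent to the $0$th and the \emph{second} vertices, whereas the definition requires adjacency to the $0$th and the \emph{third}, with the two vertices in between non-adjacent to $w_1$. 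Your suggested operation $G'\pivot hv_0$ makes things worse: $u$ is a neighbor of $h$ only and $v_i$ a neighbor of $v_0$ only, so the pivot flips the pair $u,v_i$ and deletes the very edge $uv_i$ you need. You correctly flag this as the main obstacle, but it is precisely the part of the argument that has to be supplied, so the proof is incomplete.

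The paper avoids the detour entirely by adding one idea your proposal is missing: choose $G$ within its local equivalence class so that the induced path $P$ is \emph{longest}, let $v$ be an off-path neighbor of $v_0$, and apply $*v_0$ if necessary so that $v$ is non-adjacent to $v_1$. Maximality of $P$ then forces $v$ to have a neighbor in $\{v_2,\ldots,v_m\}$, since otherwise $vv_0v_1\cdots v_m$ would be a longer induced path. One then applies Lemma~\ref{lem:shortening} with the two-vertex hanging subgraph $H=G[\{v_0,v_1\}]$ and the path $v_2\cdots v_m$: the output path $v_0v_1v_2v_iv_{i+1}\cdots v_m$ together with $w_1=v$, which is adjacent to $v_0$ and to $v_i$ (the fourth vertex) and to nothing on the path in between, is already a $1$-patched path of the required length. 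No intermediate vertex $h$ and no final repair step are needed.
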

\begin{proof}
  We may choose $G$ so that the length $t$ of an induced path $P$  is
  maximized among all graphs locally equivalent to $G$.
  Let $v_0,v_1,\ldots,v_m$ be vertices of $P$ in this order.
  Since $G$ is prime, $v_0$ has a neighbor $v$ other than $v_1$.
  We may assume that $v$ is non-adjacent to $v_1$ 
  because otherwise we can replace $G$ with $G*v_0$.

  Since $P$ is a longest induced path, 
  $v$ must have some neighbors in $V(P)\setminus\{v_0,v_1\}$.
  We now apply Lemma~\ref{lem:shortening} with $H=G[\{v_0,v_1\}]$, 
  deducing that there exists a graph $G'$ locally equivalent to $G$
  having a $1$-patched induced path of length $t-6(n-1)^2+6$,
  unless $G$ has a cycle of length $2n+1$ as a vertex-minor.
\end{proof}
\begin{LEM}\label{lem:patchinginductively}
  Let $n\ge 2$.
  Let $G$ be a prime graph and let $P$ be a $k$-patched induced path 
  $v_0v_1\cdots v_t$.
  If $t\ge 6(n-1)^2+k$, then 
  there exists a graph $G'$ locally equivalent to $G$
  having a $(k+1)$-patched induced path 
  $v_0v_1\cdots v_{k+2}v_iv_{i+1}\cdots v_t$
  of length at least  $t-6(n-1)^2+3$ with some $i>k+2$, unless
  $G$ has  a cycle of length $2n+1$ as a vertex-minor.
\end{LEM}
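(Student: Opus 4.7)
The plan is to exploit primeness of $G$ via a short blocking sequence across a natural bipartition of $V(G)$, and then use Lemma~\ref{lem:shortening} to shorten the right tail of $P$ so that a blocking-sequence vertex becomes the next patch vertex $w_{k+1}$. Set $L=\{v_0,\ldots,v_{k+1},w_1,\ldots,w_k\}$, $A=L\cup\{v_{k+2}\}$, and $B=\{v_{k+3},\ldots,v_t\}$. Because $P$ is induced and the rightmost neighbor on $P$ of each $w_i$ $(i\le k)$ is $v_{i+2}\in A$, the only edge of $G$ joining $A$ to $B$ is $v_{k+2}v_{k+3}$; hence $\rho^*_G(A,B)=1$ with $A_0=\{v_{k+2}\}$ and $B_0=\{v_{k+3}\}$. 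Since $|A|,|B|\ge 2$ and $G$ is prime, Proposition~\ref{prop:short-patch} (applied with $|A_0|=|B_0|=1$ and $\ell_0=3$) produces a graph $G_1$ locally equivalent to $G$ with $G_1[A\cup B]=G[A\cup B]$, together with a blocking sequence $b_1,\ldots,b_\ell$ for $(A,B)$ in $G_1$ of length $\ell\le 3$. In particular the path $P$ and patch $Q$ are preserved in $G_1$.

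I would next reduce to the case $\ell=1$. When $\ell\in\{2,3\}$ an iterative application of Proposition~\ref{prop:reduce-blocking} or Corollary~\ref{cor:blocking-pivot2} at an interior vertex of the blocking sequence trims the sequence by one at a time; however each such step may flip edges inside $G_1[A\cup B]$ through the complemented neighborhood, so I would choose the complemented vertex so as to minimize the disturbance, and afterwards re-identify the path-and-patch structure (possibly after absorbing the extra flips into one more local complementation at a path vertex). The outcome is a graph $G_2$ locally equivalent to $G$ still carrying the induced path $P$ and the $k$-patch $Q$, together with a single blocking vertex $b$. The defining inequalities then give $\emptyset\neq N_{G_2}(b)\cap A\neq\{v_{k+2}\}$ and $\emptyset\neq N_{G_2}(b)\cap B\neq\{v_{k+3}\}$, so $b$ has a neighbor in $L$ and a neighbor in $\{v_{k+4},\ldots,v_t\}$.

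Now I would apply Lemma~\ref{lem:shortening} to the sub-path $P''=v_{k+2}v_{k+3}\cdots v_t$ of $G_2$ with $s=v_{k+2}$, with $H=G_2[L]$ (a connected induced subgraph of $G_2\setminus V(P'')$ whose only neighbor on $P''$ is $v_{k+2}$), and with $v=b$. The length hypothesis $|E(P'')|\ge 6(n-1)^2-5$ is implied by $t\ge 6(n-1)^2+k$. Either $G$ has a cycle of length $2n+1$ as a vertex-minor, or Lemma~\ref{lem:shortening} yields a graph $G'$ locally equivalent to $G$ and a sub-path $P'''=v_{k+2}v_iv_{i+1}\cdots v_t$ with $i>k+2$, such that $G'[L\cup\{v_{k+2}\}]=G_2[L\cup\{v_{k+2}\}]$, $N_{G'}(b)\cap L=N_{G_2}(b)\cap L$, $v_i$ is the unique neighbor of $b$ on $V(P''')$, and $|E(P''')|\ge|E(P'')|-6(n-1)^2+6$. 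Splicing $P'''$ to $v_0v_1\cdots v_{k+2}$ gives the induced path $v_0v_1\cdots v_{k+2}v_iv_{i+1}\cdots v_t$ of total length at least $t-6(n-1)^2+3$, with $b$ in the role of $w_{k+1}$.

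To finish I would verify that $w_1,\ldots,w_k,b$ is a $(k+1)$-patch of the new path. Condition (i) at index $k+1$ is exactly the uniqueness statement in Lemma~\ref{lem:shortening}; the nonemptiness half of condition (ii) follows from $N_{G_2}(b)\cap A\neq\{v_{k+2}\}$ together with the identity $N_{G'}(b)\cap L=N_{G_2}(b)\cap L$. I expect the main obstacle to be twofold. First, the reduction from $\ell\le 3$ to $\ell=1$ via Proposition~\ref{prop:reduce-blocking} or Corollary~\ref{cor:blocking-pivot2} can flip edges within $G_1[A\cup B]$ and thereby perturb the path and patch; a careful case analysis choosing the complemented vertex so as to preserve (or cheaply restore) the path-patch structure will be needed. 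Second, the forbidden residual subcase $N_{G'}(b)\cap L=\{v_{k+1},w_k\}$ of condition (ii) must be excluded, which I would address by pivoting $b$ with $v_{k+1}$ or $w_k$ in order to redistribute its neighborhood in $L$, or alternatively by selecting a different witness of primeness supplied by the blocking-sequence machinery.
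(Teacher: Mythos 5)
Your overall strategy (a short blocking sequence across the cut just to the right of the patched prefix, followed by Lemma~\ref{lem:shortening} to turn a blocking vertex into $w_{k+1}$) is the right one, and you correctly identify the two obstacles; but you resolve neither, and both are genuine gaps. The first is fatal as stated: reducing the blocking sequence to length $1$ via Proposition~\ref{prop:reduce-blocking} or Corollary~\ref{cor:blocking-pivot2} requires local complementation (or pivoting) at blocking-sequence vertices, and these vertices do have neighbors in $A\cup B$. With your bipartition ($A_0=\{v_{k+2}\}$, $B_0=\{v_{k+3}\}$), the proof of Proposition~\ref{prop:short-patch} shows that when $\ell=3$ the middle vertex $b_2$ is adjacent to exactly $v_{k+2}$ in $A$ and exactly $v_{k+3}$ in $B$, so $G_1*b_2$ deletes the path edge $v_{k+2}v_{k+3}$; and $*b_1$ can flip arbitrarily many edges inside $\{v_0,\ldots,v_{k+2},w_1,\ldots,w_k\}$, destroying both the induced path and the patch. "Absorbing the extra flips into one more local complementation at a path vertex" is not something one local complementation can do in general, and no argument is given. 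This is precisely why the paper does \emph{not} shorten the sequence first: it applies Lemma~\ref{lem:shortening} first to force the \emph{last} blocking vertex $b_\ell$ to have a unique neighbor on the tail (a ``nice'' sequence), and then runs a bespoke minimality/induction argument on the sequence length $r$, where each reduction step is an explicit combination of local complementations at $b_r$, $v_{k+2}$ and $v_i$, chosen case by case so that $G[A\cup\{v_{k+2}\}]$ and the path--patch structure are provably preserved (at a controlled cost of one path edge per step, which is where the $+3$ rather than $+6$ in the length bound comes from).

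The second gap is your choice of bipartition. The paper takes $A=\{v_0,\ldots,v_{k+1}\}\cup Q$ and $B=\{v_{k+2},\ldots,v_t\}$, so that $A_0=\{v_{k+1},w_k\}$ and the very first blocking inequality $\rho^*(A,B\cup\{b_1\})>1$ says exactly that $N(b_1)\cap A$ is nonempty and different from $\{v_{k+1},w_k\}$ --- which is patch condition (ii) verbatim. Your bipartition puts $v_{k+2}$ into $A$, so the blocking condition only excludes $N(b)\cap A=\{v_{k+2}\}$ and leaves the forbidden case $N(b)\cap L=\{v_{k+1},w_k\}$ entirely open. The proposed fix (pivoting $b$ with $v_{k+1}$ or $w_k$) again flips edges among path and patch vertices and is not analyzed; the correct fix is simply to use the paper's bipartition, at the price of a blocking sequence of length up to $4$ rather than $3$ (since $\abs{A_0}=2$), which the paper's length bookkeeping accommodates.
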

\begin{proof}
  Let $P=v_0v_1\ldots v_t$ be an induced path of length $t$ in $G$
  and $Q=w_1,w_2,\ldots,w_k$ be its $k$-patch.
  Suppose that $G$ has no vertex-minor isomorphic to a cycle of 
  length $2n+1$.

  Let $A=\{v_0,v_1,\ldots,v_{k+1}\}\cup Q$.
  By Proposition~\ref{prop:short-patch}, we may assume that 
  $G$ has a blocking sequence $b_1,b_2,\ldots,b_\ell$ of length at
  most $4$
  for $(A,V(P)\setminus A)$ because $v_{k+2}$ is the only vertex
  in $V(P)\setminus A$ having neighbors in $A$.

  Notice that $P\setminus A$ is an induced path of $G$.
  We say that a blocking sequence $b_1,b_2,\ldots,b_\ell$ for
  $(A,V(P)\setminus A)$ is \emph{nice}
  if $b_\ell$ has a unique neighbor in $V(P)\setminus A$, 
  that is also a unique neighbor of $v_{k+2}$ in $V(P)\setminus A$.

  We know that 
  $b_\ell$ has neighbors in $\{v_{k+3},\ldots,v_t\}$
  by the definition of a blocking sequence.
  We take $H=G[A\cup Q\cup \{b_1, b_2, \ldots, b_{\ell-1}\}]$.
  By Lemma~\ref{lem:shortening}, there exist a graph $G_\ell$ locally
  equivalent to $G$ and an induced path $P_\ell
  =v_0v_1\cdots v_{k+2}  v_iv_{i+1}\cdots v_t$ of $G_\ell$ 
  for some $i$ with a  $k$-patch $Q$
  such that 
  $G_\ell[A\cup\{v_{k+2}\}]=G[A\cup\{v_{k+2}\}]$,
  a sequence
  $b_1,b_2,\ldots, b_\ell$ is a nice blocking sequence for 
  $(A,V(P_\ell)\setminus A)$ in $G_\ell$,
  and 
  $\abs{E(P_\ell)}\ge t-6(n-1)^2+6$.

  Let $r\ge 1$ be minimum  such that
  there exist a graph $G'$ locally equivalent to $G$ and
  an induced path $P'=v_0 v_1\cdots v_{k+2} v_i v_{i+1}\cdots v_m$ for
  some $i$
  with a $k$-patch $Q$  in $G'$
  such that 
  $G'[A\cup\{v_{k+2}\}]=G[A\cup\{v_{k+2}\}]$,
  a sequence
  $b_1,b_2,\ldots,b_r$ is a nice blocking sequence for
  $(A,V(P')\setminus A)$ in $G'$,
  and $\abs{E(P')}\ge t-6(n-1)^2+6+r-\ell$.
  Such $r$ exists because $G_\ell$ and $P_\ell$ satisfy the condition when $r=\ell$.

  We claim that $r=1$.
  Suppose $r>1$.

  Suppose that $b_r$ is non-adjacent to $v_{k+1}$ in $G'$. Then 
  $v_i$ is the only neighbor of $b_r$ in $ V(P')$ in $G'$
  and $b_r$ is adjacent to $b_{r-1}$ in $G'$.
  If $b_{r-1}$ is non-adjacent to $v_{k+2}$, 
  then take $G''=G'*b_r$ and $P''=P'$; in $G''$, 
  a sequence $b_1,b_2,\ldots,b_{r-1}$ is a nice
  blocking sequence for $(A,V(P')\setminus A)$
  and the length of $P'$ is at least $t-6(n-1)^2+6+r-\ell$.
  This leads a contradiction to the assumption that $r$ is minimized. 
  Therefore $b_{r-1}$ is adjacent to $v_{k+2}$.
  Then take $G''=G'*b_r*v_i$ with $P''=v_0v_1\cdots v_{k+2} v_{i+1} \cdots
  v_m$.
  Then $b_1,b_2,\ldots,b_{r-1}$ is a nice blocking sequence for
  $(A,V(P'')\setminus A)$ in $G''$ and the length of $P''$ is at least 
  $t-6(n-1)^2+6+r-\ell-1$.  This contradicts to the assumption that
  $r$ is chosen to be minimum.

  Therefore $b_r$ is adjacent to $v_{k+1}$ in $G'$. Since
  $b_r$ is the last vertex in the blocking sequence, 
  $b_r$ is also adjacent to $w_k$ in $G'$.
  If $b_{r-1}$ is non-adjacent to $v_{k+2}$, 
  then take $G''=G'*v_{k+2}*b_r$ and $P''=P'$; in $G''$, 
  a sequence $b_1,b_2,\ldots,b_{r-1}$ is a nice
  blocking sequence for $(A,V(P'')\setminus A)$
  and the length of $P''$ is at least $t-6(n-1)^2+6+r-\ell$,
  contradicting our assumption on $r$.
  So $b_{r-1}$ is adjacent to $v_{k+2}$.
  Then we take $G''=G'*v_{k+2}*b_r*v_i$ with $P''=v_0v_1\cdots v_{k+2} v_{i+1} \cdots
  v_m$.
  Then $b_1,b_2,\ldots,b_{r-1}$ is a nice blocking sequence for
  $(A,V(P'')\setminus A)$ in $G''$ and the length of $P''$ is at least 
  $t-6(n-1)^2+6+r-\ell-1$.  This again  contradicts to the assumption
  on $r$.
  This proves that $r=1$.

  Since $b_1$ is a nice blocking sequence for $(A,V(P')\setminus A)$
  in $G'$,
  $b_1$ has a neighbor in $A$ in $G'$
  and $N_{G'}(b_1)\cap A\neq \{v_{k+1},w_k\}$.
  In addition, $v_i$ is the only neighbor of $b_1$ among $V(P')\setminus A$ in $G'$.
  Now it is easy to see that $w_1,w_2,w_3,\ldots,w_k,b_1$ is a
  $(k+1)$-patch of $P'$ in $G'$.
  And, since $\ell\le 4$, we have $\abs{E(P')}\ge t-6(n-1)^2+3$.
\end{proof}
\begin{PROP}\label{prop:patchingpath}
  Let $N\ge 4$ be an integer.
  If a prime graph $G$ on at least $5$ vertices
  has an induced path of length 
  $L=(6(n-1)^2-2)(N-2)-1$,
  then there exists a graph $G'$ locally equivalent to $G$ having a
  fully patched
  induced path of length $N$,
  unless $G$ has a cycle of length $2n+1$ as a vertex-minor.
\end{PROP}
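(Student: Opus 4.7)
The plan is to bootstrap from the unpatched induced path to a fully patched one via a single invocation of Lemma~\ref{lem:1patch} followed by $N-3$ applications of Lemma~\ref{lem:patchinginductively}, adding one patch vertex at a time while carefully tracking the remaining path length.

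Set $G_0 = G$ with its given induced path $P_0$ of length $L = (6(n-1)^2-2)(N-2)-1$. First I would apply Lemma~\ref{lem:1patch} to $G_0$; its hypothesis $L \geq 6(n-1)^2-3$ holds since $N \geq 4$ and $n \geq 2$. The conclusion produces either the desired vertex-minor $C_{2n+1}$ (and the proposition holds) or a graph $G_1$ locally equivalent to $G$ carrying a $1$-patched induced path $P_1$ of length $\ell_1 \geq L - 6(n-1)^2 + 6$.

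Then, for $k = 1, 2, \ldots, N-3$, I would iteratively apply Lemma~\ref{lem:patchinginductively} to $G_k$ with $P_k$, obtaining either a $C_{2n+1}$ vertex-minor (done) or a graph $G_{k+1}$ locally equivalent to $G$ that carries a $(k+1)$-patched induced path $P_{k+1}$ of length $\ell_{k+1} \geq \ell_k - 6(n-1)^2 + 3$. The hypothesis of Lemma~\ref{lem:patchinginductively} at step $k$ requires $\ell_k \geq 6(n-1)^2 + k$; unrolling the recursion and substituting the given value of $L$ yields
\[
\ell_k - (6(n-1)^2 + k) \geq (6(n-1)^2 - 2)(N-k-3),
\]
which is nonnegative for every $k \leq N-3$ (and $n \geq 2$). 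Hence every iteration goes through, and $\ell_{N-2} \geq N$ at the end.

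Finally, the $(N-2)$-patched induced path $P_{N-2}$ in $G_{N-2}$ has length at least $N$. Condition (i) in the definition of a patch only becomes stronger when the host path is shortened, while conditions (ii) and (iii) refer only to the early vertices $v_0, \ldots, v_i, w_1, \ldots, w_{i-1}$; thus truncating $P_{N-2}$ to its first $N+1$ vertices preserves the $(N-2)$-patch and yields a fully patched induced path of length exactly $N$ in $G_{N-2}$. Taking $G' = G_{N-2}$, which is locally equivalent to $G$ by transitivity, completes the proof. The heart of the argument lies in Lemmas~\ref{lem:1patch} and~\ref{lem:patchinginductively}; granted those, the proposition is essentially a bookkeeping calculation, and the main point of care is that the constant $L = (6(n-1)^2-2)(N-2)-1$ is calibrated so that the hypothesis of Lemma~\ref{lem:patchinginductively} remains satisfied at every step, including the very last one where the slack vanishes.
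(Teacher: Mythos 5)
Your proof is correct and follows essentially the same route as the paper: one application of Lemma~\ref{lem:1patch} followed by $N-3$ applications of Lemma~\ref{lem:patchinginductively}, with identical length bookkeeping (the paper's closing arithmetic $L-6(n-1)^2+6-(N-3)(6(n-1)^2-3)=N$ is exactly your recursion unrolled). The only differences are cosmetic --- you make explicit the final truncation to length exactly $N$ and the step-by-step check of the hypothesis $\ell_k\ge 6(n-1)^2+k$, both of which the paper leaves implicit --- while your standing assumption $n\ge 2$ is the one point the paper does justify, by noting that for small $n$ the ``unless'' clause already follows from Theorem~\ref{thm:bouchet}.
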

\begin{proof}
  Suppose that $G$ has no cycle of length $2n+1$ as a vertex-minor.
  Then $n\ge 3$ by Theorem~\ref{thm:bouchet}.
  By Lemma~\ref{lem:1patch}, we may assume that $G$
  has a $1$-patched path of length $L-6(n-1)^2+6$.
  By Lemma~\ref{lem:patchinginductively}, 
  we may assume that $G$ has an $(N-2)$-patched path of length 
  \[
  L-6(n-1)^2+6- (N-3)(6(n-1)^2-3)=N
  \]
  Thus $G$ has a fully patched induced path of length $N$.
\end{proof}
\subsection{Finding a cycle from a fully patched path}
We aim to find a cycle as a vertex-minor
in a sufficiently long fully patched path.

Let $P=v_0v_1\cdots v_n$ be an induced path of a graph
$G$
with a $(n-2)$-patch $Q=w_1w_2w_3,\ldots w_{n-2}$.
Let $A_1=\{v_0,v_1\}$ and 
for $i=2,\ldots,n-2$, let $A_i=\{v_0,v_1,\ldots,v_i,w_1,w_2,\ldots,w_{i-1}\}$
and $B_i=V(P)\setminus A_i$ for all $i\in\{1,2,\ldots,n-2\}$.

For $i\ge 1$, let $L(w_i)$ be the minimum $j\ge 0$ such that 
\[ \rho_G^*(A_{j+1},B_{j+1}\cup \{w_i\})>1.\]
Since $w_i$ is a blocking sequence for $(A_i,B_i)$, 
$L(w_i)$ is well defined and $L(w_i)<i$.

We classify vertices in $Q$ as follows.
\begin{itemize}
\item A vertex $w_i$ has \emph{Type 0} if $L(w_i)=0$ and $w_i$ is adjacent to $v_0$.
\item A vertex $w_i$ has \emph{Type 1} if $L(w_i)\ge 1$ and $w_i$
  has  no neighbor in $A_{L(w_i)}$
  and $w_i$ is adjacent to exactly one of $v_{L(w_i)+1}$ and $w_{L(w_i)}$.
\item A vertex $w_i$ has \emph{Type 2} if $L(w_i)=1$ and $w_i$ is
  adjacent to $v_1$, non-adjacent to $v_0$.
\item A vertex $w_i$ has \emph{Type 3} if $L(w_i)\ge 2$ and $w_i$ 
  has no neighbor in $A_{L(w_i)-1}$ 
  and $w_i$ is adjacent to both $v_{L(w_i)}$ and $w_{L(w_i)-1}$.
\end{itemize}
By the definition of fully patched paths, we can deduce the following
lemma easily.
\begin{LEM}
  Each vertex in $Q$ has Type 0, 1, 2, or 3.
\end{LEM}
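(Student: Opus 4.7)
The plan is to fix $w_i$, set $\ell := L(w_i)$, and read off the type directly from the rank conditions that define $\ell$. First I would establish the baseline: for every $j$ with $0 \le j \le n-2$, patch axiom~(i) applied to $w_j$ (when $j\ge 1$) forces every edge between $A_{j+1}$ and $B_{j+1} = \{v_{j+2},\ldots,v_n\}$ to have its $B_{j+1}$-endpoint equal to $v_{j+2}$, contributed by $v_{j+1}\in A_{j+1}$ and (when $j\ge 1$) by $w_j\in A_{j+1}$. So $A(G)[A_{j+1},B_{j+1}]$ has exactly one nonzero column and $\rho_G^*(A_{j+1},B_{j+1})=1$. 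Appending the column indexed by $w_i$ strictly raises the rank iff the characteristic vector of $N_G(w_i)\cap A_{j+1}$ is neither $0$ nor the characteristic vector of $\{v_{j+1}\}\cup(\{w_j\}\text{ when }j\ge 1)$.

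Next I would separate the case $\ell=0$ from $\ell\ge 1$. When $\ell=0$, $A_1=\{v_0,v_1\}$ and the $v_2$-column is the indicator of $\{v_1\}$, so independence of the $w_i$-column forces $w_i\sim v_0$; this is Type~0. When $\ell\ge 1$, minimality forces the $w_i$-column on $A_\ell$ to be either zero or equal to the $v_{\ell+1}$-column, giving three alternatives: (a) $N_G(w_i)\cap A_\ell=\emptyset$; (b) $\ell\ge 2$ and $N_G(w_i)\cap A_\ell=\{v_\ell,w_{\ell-1}\}$; (c) $\ell=1$ and $N_G(w_i)\cap A_1=\{v_1\}$. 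In~(a), the definition of $\ell$ at step $j=\ell$ forces $N_G(w_i)\cap\{v_{\ell+1},w_\ell\}$ to be nonempty and different from $\{v_{\ell+1},w_\ell\}$ itself, i.e.\ exactly one of $\{v_{\ell+1}\}$ or $\{w_\ell\}$, which is Type~1. In cases~(b) and~(c) the $w_i$-column on $A_{\ell+1}$ already carries a $1$ at $v_\ell$, a coordinate where the $v_{\ell+2}$-column vanishes, so the rank jump is automatic; these branches yield Type~3 and Type~2 respectively.

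I expect no substantive obstacle: the lemma is really a dictionary from ``$\rho_G^*$ jumps at step $\ell$'' to ``the adjacency pattern of $w_i$ falls into one of four families.'' The things to keep straight are the degenerate $j=0$ row, where $A_1$ has no $w$-vertex and the $v_2$-column carries a single $1$, and the boundary $\ell=1$ version of alternative~(c), which is exactly what distinguishes Type~2 from the general Type~3 pattern.
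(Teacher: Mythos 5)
Your proposal is correct and is essentially the paper's own argument: both reduce the rank conditions defining $L(w_i)$ to comparing the $w_i$-column with the unique nonzero column (indexed by $v_{j+2}$, with support $\{v_{j+1}\}\cup\{w_j\}$ for $j\ge1$) of $A(G)[A_{j+1},B_{j+1}]$, and the resulting trichotomy (zero on $A_\ell$, equal to the $v_{\ell+1}$-column with $\ell=1$, or equal to it with $\ell\ge2$) is exactly the paper's case split into Types $1$, $2$, $3$, with $\ell=0$ giving Type $0$.
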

\begin{proof}
  If $w_i$ is adjacent to $v_0$, then $\rho_G^*(A_1,B_1\cup\{w_i\})>1$ and therefore
  $L(w_i)=0$, implying that $w_i$ has Type 0.
  We may now assume that $w_i$ is non-adjacent to $v_0$ and so
  $L(w_i)>0$.
  
  If $w_i$ has no neighbors in $A_{L(w_i)}$, then 
  $\rho_G^*(A_{L(w_i)+1},B_{L(w_i)+1}\cup\{w_i\})
  =\rho_G^*(A_{L(w_i)+1}\setminus
  A_{L(w_i)},B_{L(w_i)+1}\cup\{w_i\})>1$.
  Thus $v_{L(w_i)+2}$ and $w_i$ cannot have the same set of
  neighbors in  $A_{L(w_i)+1}\setminus
  A_{L(w_i)}=\{v_{L(w_i)+1},w_{L(w_i)}\}$.
  By the definition of fully patched paths, 
  $v_{L(w_i)+2}$ is adjacent to both $v_{L(w_i)+1}$ and $w_{L(w_i)}$.
  It follows that $w_i$ is adjacent to exactly one of $v_{L(w_i)+1}$
  and $w_{L(w_i)}$.
  So $w_i$ has Type 1.

  Now we may assume that $w_i$ has some neighbors in $A_{L(w_i)}$.
  By definition, 
  \[\rho_G^*(A_{L(w_i)},B_{L(w_i)}\cup\{w_i\})\le 1\]
  and therefore $w_i$ and $v_{L(w_i)+1}$ have the same set of
  neighbors in $A_{L(w_i)}$. 
  Therefore, if $L(w_i)=1$, then $w_i$ is adjacent to $v_1$, implying
  that $w_i$ has Type 2.
  If $L(w_i)>1$, then $w_i$ is adjacent to both $v_{L(w_i)}$ and
  $w_{L(w_i)-1}$, and so $w_i$ has Type 3.
\end{proof}

We say that a pair of paths $P^i_1$ and $P^i_2$ from
$\{v_0,v_1\}$ to $\{v_{i+1},w_i\}$ 
is \emph{good}
if
\begin{enumerate}[(i)]
\item 
$P^i_1$ and $P^i_2$ are vertex-disjoint induced paths on $ A_{i+1}$,
\item 
 for each $j\in \{1,2,\ldots,i-1\}$,
$w_j\in V(P^i_1)\cup V(P^i_2)$ or $v_{j+1}\in V(P^i_1)\cup V(P^i_2)$,
\item $G[ V(P^i_1)\cup V(P^i_2)]+v_{i+1}w_i$ is a generalized ladder with
  two defining paths $P^i_1$ and $P^i_2$.
\end{enumerate}

\begin{LEM}\label{lem:findladder}
  For all $i\in \{1,2,\ldots,n-2\}$, 
  $G$ has a good pair of paths $P^i_1$ and $P^i_2$
  from $\{v_0, v_1\}$ to $\{v_{i+1}, w_i\}$.
\end{LEM}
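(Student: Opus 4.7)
I will prove the lemma by induction on $i$, splitting into cases according to the type of $w_i$ established by the preceding classification. The guiding idea is that, apart from the base case (Type~0), one builds $(P^i_1,P^i_2)$ by applying the induction hypothesis at level $\ell=L(w_i)<i$ to obtain a good pair $(P^\ell_1,P^\ell_2)$ from $\{v_0,v_1\}$ to $\{v_{\ell+1},w_\ell\}$ (assume without loss of generality that $P^\ell_1$ ends at $v_{\ell+1}$ and $P^\ell_2$ ends at $w_\ell$), and then incorporating $w_i$ together with the vertices $v_{\ell+2},\ldots,v_{i+1}$.

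Concretely, for Type~0 ($w_i$ adjacent to $v_0$) set $P^i_1=v_0w_i$ and $P^i_2=v_1v_2\cdots v_{i+1}$. For Type~1 (where $w_i$ has no neighbor in $A_\ell$ and is adjacent to exactly one of $v_{\ell+1},w_\ell$), I append $w_i$ to whichever of $P^\ell_1,P^\ell_2$ it is adjacent to, and extend the other along $P$ by $v_{\ell+2},\ldots,v_{i+1}$, using the patch edge $w_\ell v_{\ell+2}$ or the path edge $v_{\ell+1}v_{\ell+2}$ at the splice. For Types~2 and~3 (where $w_i$ has the same $A_\ell$-neighborhood as $v_{\ell+1}$, namely $\{v_1\}$ if $\ell=1$ and $\{v_\ell,w_{\ell-1}\}$ if $\ell\ge 2$), I substitute $w_i$ for the endpoint $v_{\ell+1}$ of $P^\ell_1$, and extend $P^\ell_2$ by $v_{\ell+2},\ldots,v_{i+1}$ via $w_\ell v_{\ell+2}$. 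In every case, induced-ness and disjointness are routine from the type constraints and patch condition~(i); condition~(ii) at level $i$ follows from the induction hypothesis for $j<\ell$, from $w_\ell\in V(P^i_2)$ for $j=\ell$, and from $v_{j+1}$ lying in the $P$-extension for $j>\ell$.

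The main obstacle is verifying that $G[V(P^i_1)\cup V(P^i_2)]+v_{i+1}w_i$ is a generalized ladder, i.e., that its chords are non-crossing. The crucial observation for the swap case (Types~2 and~3) is that, because $w_i$ and $v_{\ell+1}$ have identical neighborhoods in $A_\ell\supseteq V(P^\ell_1)\setminus\{v_{\ell+1}\}$, the swap preserves the level-$\ell$ chord pattern verbatim; the only genuinely new interior chord is $w_iw_\ell$ (if this edge is present in $G$), and it sits precisely at the position of the former added chord $v_{\ell+1}w_\ell$ in the level-$\ell$ ladder, so non-crossing is maintained. The chords from $w_i$ to the extension vertices, together with the added last chord $v_{i+1}w_i$, all share the endpoint $w_i$, so they do not cross each other; and since every IH chord has strictly smaller $p$-index than $w_i$, none is crossed by these new chords. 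The appending construction for Type~1 is verified analogously, with the splice chord $v_{\ell+1}v_{\ell+2}$ (or $w_\ell v_{\ell+2}$) occupying the slot just past the IH block and lying below, in $q$-coordinate, every chord emanating from $w_i$.
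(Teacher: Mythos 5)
Your proposal is correct and takes essentially the same approach as the paper: induction on $i$ via $\ell=L(w_i)$, the same case split by type, and the same path constructions (appending for Type~1, endpoint substitution plus extension along $w_\ell v_{\ell+2}\cdots v_{i+1}$ for Types~2 and~3). The only differences are presentational — the paper writes out the Type~2 paths explicitly rather than folding that case into the swap, and its non-crossing check for Type~3 is an explicit case analysis whose content is precisely your observation that the identical $A_\ell$-neighborhoods of $w_i$ and $v_{\ell+1}$ make the swapped chord pattern coincide with the level-$\ell$ one.
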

\begin{proof}
We proceed by induction on $i$.
  If $w_i$ has Type 0, then
  let $P^i_1=v_1v_2 \cdots v_{i+1}$ and $P^i_2=v_0w_i$.
  Since $v_0$ has no neighbors in $\{v_2, v_3, \ldots, v_{i+1}\}$, 
  $G[ V(P^i_1)\cup V(P^i_2)]+v_{i+1}w_i$ is a generalized ladder with
  two defining paths $P^i_1$ and $P^i_2$.
  Also, $V(P^i_1)\cup V(P^i_2)\subseteq A_{i+1}$ and for all $j\in \{1, 2, \ldots, i-1\}$, $v_{j+1}\in V(P^i_1)$. 
  Thus, the pair $(P^i_1,P^i_2)$ is good.
  
  If $w_i$ has Type 2, then 
  let $P^i_1=v_0w_1v_3v_4 \cdots v_{i+1}$ and $P^i_2=v_1w_i$.
  By the definition of a patched path,
  $v_1$ is not adjacent to $w_1$.
  So, $v_1$ has no neighbors in $\{w_1, v_3, v_4, \ldots, v_{i+1}\}$, and
   therefore $G[ V(P^i_1)\cup V(P^i_2)]+v_{i+1}w_i$ is a generalized ladder with
  two defining paths $P^i_1$ and $P^i_2$.
  Clearly, $V(P^i_1)\cup V(P^i_2)\subseteq A_{i+1}$.
  Moreover, $w_1\in V(P^i_1)$ and for each $j\in \{2,\ldots,i-1\}$,
	$v_{j+1}\in V(P^i_1)$.
	Therefore, the pair $(P^i_1,P^i_2)$ is good.

\begin{figure}
  \centering
  \tikzstyle{v}=[circle, draw, solid, fill=black, inner sep=0pt, minimum width=3pt]
   \begin{tikzpicture}
    \draw (1,0)--(6.5,0);
    \node [v,label=$w_i$ (Type 1)] (wi) at (6,1){};
    \node [v,label=below:$v_{i+1}$] (vi1) at (6,0){};
    \draw (wi)--(6.5,1);
    \node [v,label=$x$] (wj) at (1,1){};
    \node [v,label=below:$y$] (vj1) at (1,0){};
    \node [v,label=below:$v_{L(w_i)+2}$] (vj2) at (2,0){};
    \node [v,label=below right:$v_{L(w_i)+3}\,\,\cdots$] (vj3) at (3,0){};
    \draw (wj)--(vj2);
    \draw[dashed] (wi)--(vj1);
    \draw (wi)--(5,0.6);
    \draw (wi)--(5.5,0.6);
    \draw (wi)--(5.7,0.6);
    \draw node at  (-1,1) {(a)};
    \draw (0,1.2) [snake=zigzag,thick,blue] node [label=left:$P^i_2$]{} --
    (wj); \draw  [blue,thick, out=60,in=180] (wj) to  (wi);
    \draw [snake=zigzag,thick,red] (0,.2) node[label=left:$P^i_1$] {}--
    (vj1);\draw [red,thick] (vj1)-- (vj2)--(vi1);
      \node  (wi) at (3,-1){        \begin{minipage}{5cm}
          \begin{align*}
            x&\in \{v_{L(w_i)+1}, w_{L(w_i)}\}\\
            y&\in \{v_{L(w_i)+1}, w_{L(w_i)}\}\setminus \{x\}
          \end{align*}
        \end{minipage}};
        \end{tikzpicture}
  \begin{tikzpicture}
    \draw (2,0)--(6.5,0);
    \node [v,label=$w_i$ (Type 3)] (wi) at (6,1){};
    \node [v,label=below:$v_{i+1}$] (vi1) at (6,0){};
    \draw (wi)--(6.5,1);
    \node [v,label=$x$] (wj) at (1,1){};
    \node [v,label=below:$y$] (vj1) at (0.5,0.3){};
    \node [v,label=below:$v_{L(w_i)+1}$] (s) at (2,0){};
    \node [v,label=above right:$w_{L(w_i)}$] (vj2) at (2,1){};
    \node [v,label=below right:$v_{L(w_i)+2}\,\,\cdots$] (vj3) at (3,0){};
    \draw (wj) to (s) to (vj3);
    \draw [dashed] (wj)--(vj2);
    \draw[dashed] (wi)--(vj1);
    \draw[dashed] (vj1)--(s);
    \draw (wi)--(5,0.6);
    \draw (wi)--(5.5,0.6);
    \draw (wi)--(5.7,0.6);
    \draw node at  (-1,1) {(b)};
    \draw (0,1.2) [snake=zigzag,thick,blue] node [label=left:$P^i_2$]{} --
    (wj); \draw  [blue,thick,out=60,in=180] (wj) to (wi);
    \draw [snake=zigzag,thick,red] (0,.2) node[label=left:$P^i_1$] {}--
    (vj1);\draw [red,thick] (vj1)-- (vj2) to (vj3) to (vi1);
    \node  (wi) at (3,-1){$x\in \{v_{L(w_i)}, w_{L(w_i)-1}\}$};
  \end{tikzpicture}

 \caption{Constructing a generalized ladder in a fully patched path. The vertex $w_i$ has Type 1 in (a) and has Type 3 in (b).}
  \label{fig:findladder}
\end{figure}
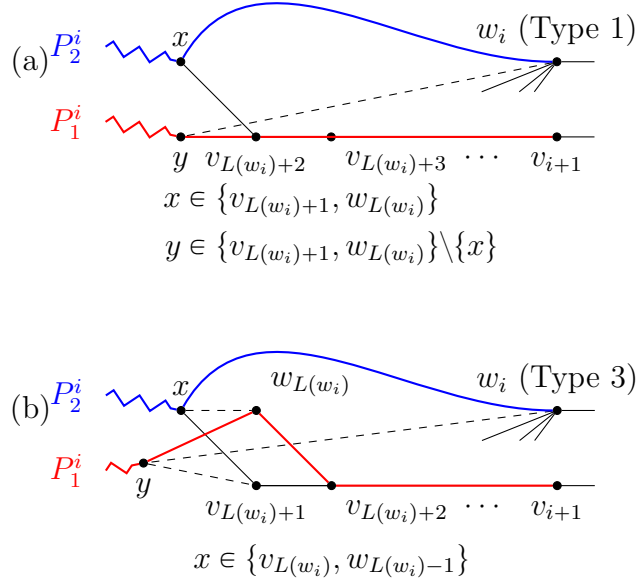

   Now, we may assume that $w_i$ has Type 1 or Type 3.
   Since $L(w_i)\ge 1$, by the induction hypothesis, 
   $G$ has a good pair of paths $P^{L(w_i)}_1$, $P^{L(w_i)}_2$ from $\{v_0, v_1\}$ to $\{v_{L(w_i)+1}, w_{L(w_i)}\}$.
     
   Suppose $w_i$ has Type 1 and therefore $w_i$ is adjacent to exactly one of $v_{L(w_i)+1}$ and $w_{L(w_i)}$.
	Let $\{x,y\}=\{v_{L(w_i)+1}, w_{L(w_i)}\}$ such that $x$ is adjacent to $w_i$.
	We may assume that the paths $P^{L(w_i)}_1$ and $P^{L(w_i)}_2$ end at 
	$y$ and $x$, respectively.	
	Let $P^i_1$ be a path 
	\[P^{L(w_i)}_1+yv_{L(w_i)+2}v_{L(w_i)+3} \cdots v_{i+1}\]
	and let $P^i_2$ be a path $P^{L(w_i)}_2+xw_i$.
	See Figure~\ref{fig:findladder}.
	By the induction hypothesis, 
	$V(P^{L(w_i)}_1)\cup V(P^{L(w_i)}_2)\subseteq A_{L(w_i)+1}\subseteq A_{i+1}$, and  
	for each $j\in \{1,2,\ldots,L(w_i)-1\}$,
        $V(P^{L(w_i)}_1)\cup V(P^{L(w_i)}_2)$ contains $w_j$ or $v_{j+1}$.
	Thus it follows that $V(P^i_1)\cup V(P^i_2)\subseteq A_{i+1}$ and 
	for each $j\in \{1,2,\ldots,i-1\}$,
        $V(P^{i}_1)\cup V(P^{i}_2)$ contains $w_j$ or $v_{j+1}$.
	
	We claim that $G[ V(P^i_1)\cup V(P^i_2)]+v_{i+1}w_i$ is a generalized ladder with the defining paths $P^i_1$ and $P^i_2$.
	By the induction hypothesis, it is enough to show that
	there are no two crossing chords $xa$ and $w_ib$ for some $a,
        b\in V(P^i_1)$.
        Since 
        $w_i$ has no neighbor in $A_{L(w_i)}$
        and $w_i$ and $y$ are non-adjacent,
        $b\in X=\{v_{k}: k\in \{L(w_i)+2,L(w_i)+3,\ldots,i+1\}\}$.
        Since $x$ has no neighbor in $X\setminus\{v_{L(w_i)+2}\}$, we
        deduce that $xa$ and $w_ib$ cannot cross and therefore 
        $G[ V(P^i_1)\cup V(P^i_2)]+v_{i+1}w_i$ is a generalized ladder.
	This proves that if $w_i$ has Type 1, then $(P^i_1,P^i_2)$ is
        a good pair.

  Finally, suppose that $w_i$ has Type 3
  and so  $w_i$ is adjacent to both $v_{L(w_i)}$ and
  $w_{L(w_i)-1}$.
  By symmetry, we may assume that $P^{L(w_i)}_2$ ends at
  $v_{L(w_i)+1}$.
  Let $x$ be the predecessor of $v_{L(w_i)+1}$ in $P^{L(w_i)}_2$.
  Since $P^{L(w_i)}_2$ is on $A_{L(w_i)+1}$ and $v_{L(w_i)+1}$ has
  only two neighbors $v_{L(w_i)}$, $w_{L(w_i)-1}$ in $A_{L(w_i)+1}$, 
  either $x=v_{L(w_i)}$ or
  $x=w_{L(w_i)-1}$.
  Let $y$ be the predecessor of $w_{L(w_i)}$ in $P^{L(w_i)}_1$.
  	Let $P^{i}_1$ be a path \[P^{L(w_i)}_1+w_{L(w_i)}v_{L(w_i)+2}v_{L(w_i)+3} \cdots v_{i+1}\] 
	and let $P^{i}_2$ be a path obtained from $P^{L(w_i)}_2$ by removing $v_{L(w_i)+1}$ and adding $xw_i$. See Figure~\ref{fig:findladder}(b).
	It follows from our construction and the induction hypothesis
        that 
	$V(P^i_1)\cup V(P^i_2)\subseteq A_{i+1}$ and 
        $V(P^{i}_1)\cup V(P^{i}_2)$ contains $w_j$ or $v_{j+1}$
	for each $j\in \{1,2,\ldots,i-1\}$.
   
   We claim that $G[ V(P^i_1)\cup V(P^i_2)]+v_{i+1}w_i$ is a generalized ladder with the defining paths $P^i_1$ and $P^i_2$.
   By the induction hypothesis, it is enough to prove that 
   there are no two chords $xa$ and $w_ib$ such that $a, b\in V(P^i_1)$
   and $b$ precedes $a$ in $P^i_1$.   
   Suppose not.
   Since $w_i$ has no neighbor in $A_{L(w_i)-1}$,
   neighbors of $w_i$ in $P^i_1$ are in $\{y,w_{L(w_i)}\}\cup\{v_{k}:
   k\in \{L(w_i)+2,L(w_i)+3,\ldots,i+1\}\}$.
   Since $x$ has no neighbor in $\{v_{k}:
   k\in \{L(w_i)+2,L(w_i)+3,\ldots,i+1\}\}$, we deduce that 
   $a=w_{L(w_i)}$ and $b=y$. Since $w_i$ has no neighbor in
   $A_{L(w_i)-1}$, $b$ is one of $v_{L(w_i)}$ and $w_{L(w_i)-1}$ other
   than $x$.
   Thus $w_{L(w_i)}$ is adjacent to both $v_{L(w_i)}$ and
   $w_{L(w_i)-1}$.	
   This contradicts (iii) because $v_{L(w_i)+1}$ is also 
   adjacent to both $v_{L(w_i)}$ and
   $w_{L(w_i)-1}$ and so $G[V(P^{L(w_i)}_1)\cup
   V(P^{L(w_i)}_2)]+v_{L(w_i)+1}w_{L(w_i)}$ is not a generalized ladder.
\end{proof}

\begin{LEM}\label{lem:path-to-ladder}
  If a graph has  a fully patched induced path of length $n$,
  then it has a generalized ladder having at least $n+2$ vertices
  as an induced subgraph.
\end{LEM}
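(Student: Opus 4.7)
The plan is to invoke Lemma~\ref{lem:findladder} with $i=n-2$ and then adjoin the single vertex $v_n$ in order to turn the virtual terminating edge $v_{n-1}w_{n-2}$, which is not an edge of $G$, into a genuine terminating chord $v_nw_{n-2}$.

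First I would apply Lemma~\ref{lem:findladder} with $i=n-2$ to a fully patched induced path $P=v_0v_1\cdots v_n$ with $(n-2)$-patch $w_1,w_2,\ldots,w_{n-2}$, obtaining a good pair of vertex-disjoint induced paths $P^{n-2}_1,P^{n-2}_2\subseteq A_{n-1}$ joining $\{v_0,v_1\}$ to $\{v_{n-1},w_{n-2}\}$ such that $G[V(P^{n-2}_1)\cup V(P^{n-2}_2)]+v_{n-1}w_{n-2}$ is a generalized ladder. Note that $v_{n-1}w_{n-2}\notin E(G)$ by the patch condition (for $i=n-2$, the vertex $v_n$ is the only neighbor of $w_{n-2}$ among $v_{n-1},v_n$), so this ladder is not yet an induced subgraph of $G$.

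To repair this I would append $v_n$ to whichever of the two paths ends at $v_{n-1}$, say $P^{n-2}_1$, and claim that \[H=G[V(P^{n-2}_1)\cup V(P^{n-2}_2)\cup\{v_n\}]\] is a generalized ladder with defining paths $P^{n-2}_1$ extended by $v_n$ and $P^{n-2}_2$ unchanged. The key observation is that $v_n\notin A_{n-1}$ and its only neighbors inside $A_{n-1}$ are $v_{n-1}$ (since $P$ is induced) and $w_{n-2}$ (since for every $j<n-2$ the patch condition forbids $w_jv_n\in E(G)$). Hence the extended path is still induced, the first chord $v_0v_1$ is unchanged, the new terminating chord $v_nw_{n-2}$ is genuine, and the non-crossing condition is automatic because $v_n$ is the extreme endpoint of its path, so no chord can cross any edge at $v_n$.

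Finally I would count: condition (ii) of a good pair yields, for each $j\in\{1,\ldots,n-3\}$, at least one vertex of $\{w_j,v_{j+1}\}$ on $P^{n-2}_1\cup P^{n-2}_2$; these $n-3$ vertices are pairwise distinct and distinct from the four endpoints $v_0,v_1,v_{n-1},w_{n-2}$, giving $\abs{V(P^{n-2}_1)}+\abs{V(P^{n-2}_2)}\ge n+1$ and therefore $\abs{V(H)}\ge n+2$. The only bookkeeping that needs care is checking that no two chords of $H$ cross after replacing the virtual chord $v_{n-1}w_{n-2}$ by the genuine chord $v_nw_{n-2}$, but since both are terminating chords and $v_n$ inherits the extremal position, this reduces directly to the non-crossing property given by the good pair.
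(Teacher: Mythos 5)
Your proposal is correct and follows essentially the same route as the paper's proof: apply Lemma~\ref{lem:findladder} with $i=n-2$, observe that $v_n$ is adjacent in $G$ to exactly $v_{n-1}$ and $w_{n-2}$ among the vertices of the good pair, adjoin $v_n$ to obtain a genuine induced generalized ladder, and count $(n-3)+5=n+2$ vertices. The extra care you take with the non-crossing condition is fine but not needed beyond the observation that $v_n$ sits at the extreme end of its path.
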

\begin{proof}
  Let $P=v_0v_1\cdots v_n$ be the induced path of length $n$
  with  an $(n-2)$-patch $Q=w_1w_2\cdots w_{n-2}$.
  Lemma~\ref{lem:findladder} provides a good pair of paths $P^{n-2}_1$ and $P^{n-2}_2$
  from $\{v_0, v_1\}$ to $\{v_{n-1}, w_{n-2}\}$ 
  such that $G[V(P^{n-2}_1)\cup V(P^{n-2}_2)]+v_{n-1}w_{n-2}$ is a generalized ladder
  and 
  $V(P^{n-2}_1)\cup V(P^{n-2}_2)$ contains 
  $w_j$  or $v_{j+1}$
  for each $j\in \{1,2,\ldots,n-3\}$.
  Since $v_n$ is only adjacent to $v_{n-1}$ and $w_{n-2}$ in $G$,
  $G'=G[V(P^{n-2}_1)\cup V(P^{n-2}_2)\cup \{v_n\}]$ is a generalized ladder.
  Since $v_0,v_1,v_n, v_{n-1}, w_{n-2}\in V(G')$, 
  $G'$ has at least $(n-3)+5=n+2$ vertices.
\end{proof}

Now we are ready to prove the main theorem of this section.
\begin{LEM}\label{lem:path-to-cycle}
  Let $n\ge 1$.
  If a prime graph has an induced path of length 
  $110592 n^7$, then 
  it has a cycle of length $4n+3$ as a vertex-minor.
\end{LEM}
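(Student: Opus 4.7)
The plan is to chain three earlier results of the paper together, with the key step being a careful alignment of parameters. Let $G$ be a prime graph with an induced path of length $110592n^7$; I would argue by contradiction, assuming $G$ has no cycle of length $4n+3$ as a vertex-minor. The first move is to invoke Proposition~\ref{prop:patchingpath}, but with $2n+1$ substituted in place of $n$, so that the excluded vertex-minor in that proposition becomes $C_{2(2n+1)+1} = C_{4n+3}$, matching our assumption.

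Aiming for a fully patched induced path of length $N := 4608n^5 - 2$ in some graph locally equivalent to $G$, the hypothesis of Proposition~\ref{prop:patchingpath} calls for an induced path of length
\[
(6(2n+1-1)^2 - 2)(N-2) - 1 = (24n^2 - 2)(4608n^5 - 4) - 1,
\]
and an easy expansion shows this is at most $24n^2 \cdot 4608n^5 = 110592 n^7$. Thus the hypothesis is met (and the side conditions $N \ge 4$ and $\abs{V(G)} \ge 5$ are trivially satisfied for $n \ge 1$), so Proposition~\ref{prop:patchingpath} delivers a graph $G'$ locally equivalent to $G$ carrying a fully patched induced path of length $N$.

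The remaining two steps are immediate. Lemma~\ref{lem:path-to-ladder} converts this fully patched path of length $N$ into a generalized ladder on at least $N + 2 = 4608n^5$ vertices, sitting as an induced subgraph of $G'$ and hence as a vertex-minor of $G$. Proposition~\ref{prop:ladder} applied to this generalized ladder then produces a cycle of length $4n+3$ as a vertex-minor, contradicting the assumption and completing the proof.

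Since every ingredient is in place, there is no genuine conceptual obstacle; the proof is a direct chaining. The only point deserving attention is the constant, which must be chosen so that the two patching steps compose cleanly. The bound $110592 = 24 \cdot 4608$ arises precisely as the product of the cost $6(2n+1-1)^2 = 24n^2$ coming from the path-patching step (with the shifted parameter $2n+1$) and the cost $4608n^5$ coming from the ladder-to-cycle step, with the lower-order terms absorbed into the slack $-96n^2 - 9216n^5 + 7$.
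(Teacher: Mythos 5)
Your proof is correct and follows the paper's own argument essentially verbatim: apply Proposition~\ref{prop:patchingpath} with the parameter shifted to $2n+1$ (so the excluded cycle is $C_{4n+3}$ and the cost per patching step is $6(2n)^2-2$), then Lemma~\ref{lem:path-to-ladder}, then Proposition~\ref{prop:ladder}. The only cosmetic difference is your choice $N=4608n^5-2$ where the paper takes $N=4608n^5$, and both versions share the paper's own minor mismatch that Proposition~\ref{prop:ladder} is stated for $n\ge 2$ while the lemma allows $n\ge 1$.
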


\begin{proof}%
  Let $G$ be a prime graph having an induced path of length $110592n^7$.
  Suppose that $G$ has no cycle of length $4n+3$ as a vertex-minor.
  Let $N=4608n^5$.
  Then 
  \[
  (6(2n)^2-2)(N-2)-1 <  110592n^7.
  \]
  Thus by Proposition~\ref{prop:patchingpath},
  there exists a graph $G'$ locally equivalent to $G$
  having a fully patched induced path of length $N$.
  By Lemma~\ref{lem:path-to-ladder}, $G'$ must have a generalized
  ladder having at least $N+2$ vertices as an induced subgraph. 
  By Proposition~\ref{prop:ladder}, we deduce that $G'$ has a cycle of
  length $4n+3$ as a vertex-minor.
\end{proof}
\begin{proof}[Proof of Theorem~\ref{thm:path-to-cycle}]
  Let $k=\lfloor n/4\rfloor$.
  Let $G$ be a prime graph having a path of length at least $6.75n^7$.
  Then  $G$ has a path of length $6.75(4k)^7= 110592k^7$,
  and by Lemma~\ref{lem:path-to-cycle}, $G$ has a cycle of length
  $4k+3\ge n$ as a vertex-minor.
\end{proof}

\section{Main Theorem}\label{sec:main}

In this section, we prove the following.
\begin{THM}\label{thm:mainthm}
  For every $n$, there is $N$ such that
  every prime graph on at least $N$ vertices has a vertex-minor
  isomorphic to $C_n$ or $\K_n\mat\K_n$. 
\end{THM}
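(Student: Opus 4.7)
The plan is to first establish the second reduction announced in the introduction---that every prime graph on at least $N$ vertices has a vertex-minor isomorphic to $P_n$ or $\K_n\mat\K_n$---and then combine it with Theorem~\ref{thm:path-to-cycle}. Setting $M=\lceil 6.75\,n^7\rceil+1$, I would apply that reduction with parameter $\max(M,n)$ to obtain a threshold $N$; for a prime graph $G$ on at least $N$ vertices we then get a vertex-minor isomorphic to $P_M$ or $\K_n\mat\K_n$. In the latter case we are done. In the former, the vertex-minor $P_M$ realizes as an induced subgraph of some graph $G'$ locally equivalent to $G$; since local complementation preserves primeness (Bouchet), $G'$ is also prime, so Theorem~\ref{thm:path-to-cycle} yields $C_n$ as a vertex-minor of $G'$, and hence of $G$.

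For the second reduction step itself, the plan is an iterated Ramsey argument producing what the authors call a \emph{broom}. Given a large prime $G$ with no $P_M$ and no $\K_n\mat\K_n$ as a vertex-minor, Theorem~\ref{thm:boundedsize}(2) and primeness already force $G$ to contain a huge clique as a vertex-minor; using Lemma~\ref{lem:primeinduced} we may replace $G$ by a prime induced subgraph carrying that clique. I would then apply Ramsey's theorem iteratively to the interactions between this clique and the remaining vertices: first classifying each external vertex by the set of its neighbors within a small core, then classifying pairs of external vertices by the cross-edges between them, and so on. The output of this iteration should be a broom---roughly, two large cliques or stables whose inter-edges form one of the structured patterns described in Section~\ref{subsec:construction}.

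Once a sufficiently rich broom is extracted, a short case analysis closes the argument: the broom must realize one of $\K_n\tri\S_n$, $\S_n\tri\S_n$, $\K_n\antimat\S_n$, or $\S_n\antimat\S_n$ as an induced subgraph of a locally equivalent graph. The first two are pivot- or locally equivalent to $P_{2n}$ by Lemma~\ref{lem:lengthonecase2}, supplying $P_M$ after raising the Ramsey parameters; the last two contain $\K_{n-2}\mat\K_{n-2}$ as a vertex-minor by Lemma~\ref{lem:lengthonecase1}. Either outcome contradicts the standing assumption, completing the reduction.

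The main obstacle will be organizing the broom construction so that the final Ramsey step is actually forced into one of those four configurations. Each intermediate stage multiplies the required number of vertices through a Ramsey number, which is why the final $N$ is astronomical in $n$. A secondary difficulty is maintaining primeness (or at least enough connectivity) throughout the iteration; the blocking-sequence framework developed in Section~\ref{sec:blocking} is the natural tool here, playing the same role as in Section~\ref{sec:path}---it lets us locally modify $G$ to expose a hidden structured substructure without destroying the boundary conditions needed for the next Ramsey step.
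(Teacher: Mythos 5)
Your top-level architecture is exactly the paper's: Theorem~\ref{thm:mainthm} is reduced to Proposition~\ref{prop:secexclude} (every large prime graph has $P_c$ or $\K_c\mat\K_c$ as a vertex-minor) plus Theorem~\ref{thm:path-to-cycle}, and your handling of the $P_M$ case (realize it as an induced path in a locally equivalent, hence still prime, graph before invoking Theorem~\ref{thm:path-to-cycle}) is the correct way to glue the two halves. The starting point (Theorem~\ref{thm:boundedsize}(2) giving a large star, i.e.\ a $(1,t,1)$-broom, via Lemma~\ref{lem:primeinduced}) and the tools you name --- iterated Ramsey, the Ding--Oporowski--Oxley--Vertigan bipartite theorem, Lemmas~\ref{lem:lengthonecase1} and~\ref{lem:lengthonecase2}, and blocking sequences --- are all the ones the paper uses.

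The gap is in the endgame of the broom iteration. You assert that a sufficiently rich broom ``must realize one of $\K_n\tri\S_n$, $\S_n\tri\S_n$, $\K_n\antimat\S_n$, or $\S_n\antimat\S_n$'' and that either outcome contradicts the standing assumption. That is not how the dichotomy falls. A broom is a handle path with fibers attached near one end; it need not contain any of those four patterns, and the generic outcome of the Ramsey step (Theorem~\ref{thm:largebipartite} applied to the fibers versus their outside neighbors) is $\S_w\mat\S_w$ with the outside part a stable set --- which yields neither $P_c$ nor $\K_c\mat\K_c$, only a broom with longer fibers. The missing idea is the progress measure that makes the iteration terminate: (i) when no winning configuration appears, the fiber length strictly increases (Propositions~\ref{prop:lengthone} and~\ref{prop:incfiber}; this is where primeness enters, since every fiber with at least two vertices is blocked by an outside vertex, and the interaction of blockers with fibers is what gets Ramsey-classified in Lemmas~\ref{lem:fiber1}--\ref{lem:fiber3}); (ii) a single sufficiently long fiber can be converted into a strictly taller handle (Proposition~\ref{prop:nexthandle}); and (iii) the process stops because a broom whose handle has height $c-2$ \emph{is} $P_c$. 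Without this length-then-height bootstrapping, the iteration you describe has no termination condition, and the contradiction you invoke at the end is not forced.
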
 
By Theorem~\ref{thm:path-to-cycle}, it is enough to prove the
following proposition.
\begin{PROP}\label{prop:secexclude}
  For every $c$, there exists $N$ such that 
  every prime graph on at least $N$ vertices 
  has a vertex-minor isomorphic to either $P_c$ or $\K_{c}\mat\K_{c}$.
\end{PROP}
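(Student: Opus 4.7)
My plan is to apply Ramsey's theorem iteratively inside $G$ to build up a large, highly structured substructure -- informally a \emph{broom} -- and then to extract either $P_c$ or $\K_c\mat\K_c$ using local complementations together with the identities of Lemmas~\ref{lem:lengthonecase1} and~\ref{lem:lengthonecase2}. First I would apply Ramsey's theorem to $G$ itself: when $\abs{V(G)}\ge N$ with $N$ sufficiently large, $G$ contains an induced clique or stable set $S$ of size $m$ much larger than $c$. Since $G$ is prime, hence connected, every vertex of $S$ has a neighbor outside $S$; by pigeonhole I may choose such neighbors to be distinct, producing a matched pair consisting of $\{s_1,\ldots,s_k\}\subseteq S$ and $\{t_1,\ldots,t_k\}\subseteq V(G)\setminus S$ with each $s_it_i$ an edge. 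Applying Ramsey once more to $G[\{t_1,\ldots,t_k\}]$ makes this second set a clique or a stable set as well, and a final Ramsey step applied to the bipartite pattern between $\{s_i\}$ and $\{t_j\}$ (off the matching) makes that pattern uniform. The resulting induced subgraph of $G$ is then isomorphic to one of the graphs $X\mat Y$, $X\antimat Y$, or $X\tri Y$, with $X,Y\in\{\K_k,\S_k\}$.

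The plan concludes by a short case analysis on the broom. By Lemma~\ref{lem:lengthonecase2}, both $\S_k\tri\S_k$ and $\K_k\tri\S_k$ are (pivot- or locally) equivalent to $P_{2k}$, so any $\tri$-configuration supplies $P_c$ once $k\ge c/2$. By Lemma~\ref{lem:lengthonecase1}, $\K_k\antimat\S_k$ and $\S_k\antimat\S_k$ contain $\K_{k-2}\mat\K_{k-2}$ as a vertex-minor, and the other $\antimat$-configurations reduce to these via a local complementation at an added twin. The problematic cases are the purely $\mat$-configurations, in particular $\S_k\mat\S_k$ (an induced matching) and $\K_k\mat\S_k$ (a clique with pendants), neither of which directly supplies a target. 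In these degenerate cases I would invoke the primeness of $G$: each of these graphs admits many splits, so by primeness there must be additional vertices of $V(G)$ whose adjacency to the broom destroys all such splits, and a further Ramsey step applied to those \emph{destroyer} vertices promotes the broom into a non-degenerate one, from which either $P_c$ or $\K_c\mat\K_c$ can finally be extracted.

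The main obstacle will be handling the degenerate $\mat$-cases. There, the broom alone is insufficient, and the primeness of $G$ must be used with care to pull in additional structure (destroyer vertices, and possibly destroyers of destroyers) without entering an infinite descent; the right formulation of the broom should make this extraction monotone, e.g.\ measured by the cut-rank between the two matched sides. A secondary, unavoidable difficulty is that the several nested Ramsey applications force $N$ to grow as an iterated Ramsey number in $c$, giving an astronomical bound -- consistent with the authors' remark in the introduction.
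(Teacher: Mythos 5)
There are two genuine gaps here. The first is the pigeonhole step at the very beginning: from a large clique or stable set $S$ and connectivity alone you cannot ``choose such neighbors to be distinct.'' All of $S$ may have a single common neighbor outside $S$ (think of the leaves of a star), and pigeonhole points in the opposite direction. What actually makes the argument work in the paper is that $S$ is taken to be the set of singleton fibers of a wide broom with center $x$, $T=N_G(S)\setminus\{x\}$, and \emph{primeness} (no two vertices have the same neighborhood) is what licenses the application of the Ding--Oporowski--Oxley--Vertigan theorem (Theorem~\ref{thm:largebipartite}) to produce an induced copy of $\S_k\mat\S_k$, $\S_k\tri\S_k$, or $\S_k\antimat\S_k$; a bipartite Ramsey argument ``off the matching'' would only ever yield the $\mat$ and $\antimat$ patterns, never $\tri$, so Theorem~\ref{thm:largebipartite} cannot be replaced by the step you describe.

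The second and more serious gap is that the ``degenerate $\mat$-case'' you defer is not a corner case -- it is essentially the entire content of the proposition, and your sketch does not resolve it. When $G[S',T']\cong\S_k\mat\S_k$ with $T'$ stable, one obtains a wide broom of length $2$, and the paper then runs a carefully organized double induction: primeness supplies, for each fiber $F$, a vertex blocking $F$ (via Proposition~\ref{prop:block}); Lemma~\ref{lem:biparim} and a four-colour Ramsey step normalize how blockers meet fibers; Lemmas~\ref{lem:fiber1}--\ref{lem:fiber3} then either produce $P_c$ or $\K_c\mat\K_c$ outright or increase the fiber length (Proposition~\ref{prop:incfiber}); once a single fiber is huge, Proposition~\ref{prop:nexthandle} converts it into a longer handle; and termination holds because the handle height is bounded by $c-3$ (otherwise $P_c$ appears) while the width, though shrinking, is kept large enough by starting astronomically wide. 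Your proposal correctly senses the danger of infinite descent and even guesses that a cut-rank-type monotonicity is needed, but it supplies neither the blocking-vertex analysis nor the terminating measure, so as written the argument does not close.
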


Here is the proof of Theorem~\ref{thm:mainthm} assuming Proposition \ref{prop:secexclude}.
\begin{proof}[Proof of Theorem~\ref{thm:mainthm}]
  We take $c=\lceil 6.75n^7\rceil$ and apply
  Proposition~\ref{prop:secexclude} and Theorem \ref{thm:path-to-cycle}.
\end{proof}

For integers $h,w,\ell\ge 1$, 
a \emph{$(h,w,\ell)$-broom} of a graph $G$
is a connected induced subgraph $H$ of $G$
such that 
\begin{enumerate}[(i)]
\item $H$ has an induced path $P$ of length $h$ from some vertex $v$
  called the \emph{center}, 
\item $P\setminus v$ is a component of $H\setminus v$, 
\item $H\setminus V(P)$ has $w$ components, each having exactly $\ell$ vertices.
\end{enumerate}
The path $P$ is called a \emph{handle} of $H$ and 
each component of $H\setminus V(P)$ is called a \emph{fiber} of $H$.
If $H=G$, then we say that $G$ is a $(h,w,\ell)$-broom.
We call $h$, $w$, $\ell$  the \emph{height}, \emph{width}, \emph{length},
respectively,  of a~$(h,w,\ell)$-broom. See Figure~\ref{fig:broom}.
Observe that $v$ has one or more neighbors in each fiber.

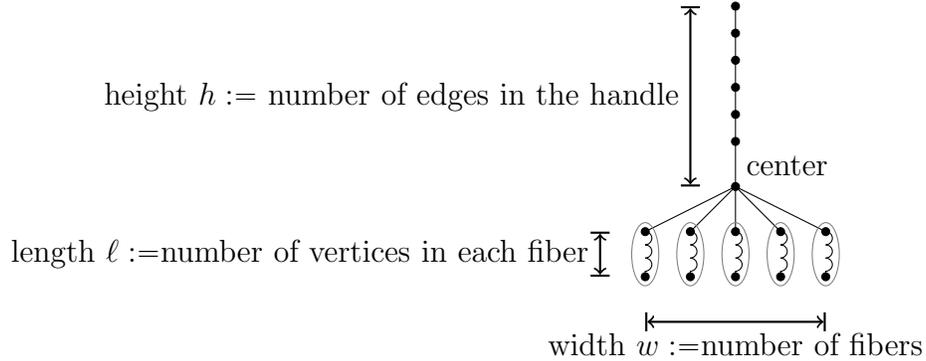
\begin{figure}
  \tikzstyle{v}=[circle, draw, solid, fill=black, inner sep=0pt, minimum width=3pt]
  \begin{tikzpicture}[scale=0.06,rotate=-90]
    \draw (0,30) node [above right] {center} {};
    \draw (0,30) node [v] {} 
    \foreach \x in {-10,-16,..., -40}
    {   -- (\x,30) node[v]{} };
    \draw[|<->|,thick] (-40,20)
    -- node[left,sloped] {height $h:=$ number of edges in the handle} (0,20);
    \draw[|<->|,thick] (30,50)
    -- node[below] {width $w:=$number of fibers} (30,10);
    \draw[|<->|,thick] (10,0)
    -- node[left,sloped] {length $\ell:=$number of vertices in each fiber} (20,0);
    
    \foreach \x in {10,20,..., 50}
    {
      \draw (0,30) -- (10,\x)node[v]{} [snake=bumps]--(20,\x) node [v]{};
      \draw [color=gray]  (15,\x) ellipse (7 and 3);
    }
  \end{tikzpicture}
  \caption{A~$(h,w,\ell)$-broom. }\label{fig:broom}
\end{figure}

Here is the rough sketch of the proof. 
If a prime graph $G$ 
has no vertex-minor isomorphic to $P_c$ or $\K_c\mat\K_c$ and
$G$ has a broom having huge width as a vertex-minor, 
then it has a vertex-minor isomorphic to a broom with larger length
and sufficiently big width.
So, we increase the length of a broom while keeping its width
big.
If we obtain a broom of big length by repeatedly applying this process,
then we will obtain a broom of larger height.
By growing the height, we will eventually obtain a long
induced path.

To start the process, we need an initial broom with sufficiently big width.
For that purpose, we use the following Ramsey-type theorem.
\begin{THM}[%
  folklore; see Diestel~{\cite[Proposition 9.4.1]{Diestel2010}}]\label{thm:binary}
  For positive integers $c$ and $t$, there exists $N=g_0(c,t)$ such that 
  every connected graph on at least $N$ vertices must contain 
  $\K_{t+1}$, $\K_{1,t}$, or $P_c$ as an induced subgraph.
\end{THM}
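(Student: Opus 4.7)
The plan is to prove the contrapositive: a connected graph $G$ avoiding all three of $\K_{t+1}$, $\K_{1,t}$, and $P_c$ as induced subgraphs has boundedly many vertices, so any $N$ exceeding this bound suffices. I would proceed in two independent steps and then combine them by a simple ball-counting argument.

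First I would bound the maximum degree of $G$ using Ramsey's theorem. If some vertex $v$ had at least $R(t+1, t)$ neighbors, then within the induced subgraph on $N(v)$ Ramsey's theorem would produce either a clique of size $t+1$, giving an induced $\K_{t+1}$ in $G$, or an independent set of size $t$, which together with $v$ gives an induced $\K_{1,t}$. Hence every vertex of $G$ has degree at most $\Delta := R(t+1,t) - 1$.

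Second I would bound the diameter of $G$. The key observation is that a shortest path between two vertices of $G$ is automatically an induced path, since any chord would yield a shorter route. Consequently the absence of an induced $P_c$ forces the diameter of $G$ to be at most $c - 2$.

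Combining these two bounds via breadth-first search from an arbitrary vertex, $G$ contains at most
\[ 1 + \Delta\sum_{i=0}^{c-3}(\Delta-1)^i \]
vertices, a finite quantity depending only on $c$ and $t$. Taking $g_0(c,t)$ to be one more than this number gives the theorem. There is no real obstacle in this argument; the only step requiring any care is noticing that shortest paths are induced, which links the forbidden-path hypothesis to a genuine diameter bound, after which the counting is routine.
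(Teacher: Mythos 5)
Your argument is correct and is precisely the standard proof of this folklore result (the paper itself only cites Diestel rather than proving it): Ramsey's theorem applied to a neighborhood bounds the maximum degree by $R(t+1,t)-1$, the observation that shortest paths are induced bounds the diameter by $c-2$, and the Moore-type ball count then bounds $\abs{V(G)}$. No gaps; the degenerate cases of very small $c$ or $t$ are trivially covered.
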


By Theorem~\ref{thm:binary}, 
if $G$ is prime and $\abs{V(G)}\ge g_0(c,t+1)$, then either $G$ has an induced subgraph isomorphic to $P_c$ or 
	$G$ has a vertex-minor isomorphic to $K_{1,t+1}$. 
	Since a~$(1,t,1)$-broom is isomorphic to $K_{1,t+1}$,
	we conclude that every sufficiently large prime graph has a vertex-minor isomorphic to a
        $(1,t,1)$-broom, unless it has an induced subgraph isomorphic
        to $P_c$.
\subsection{Increasing the length of a broom}
We now show that 
if a prime graph has a broom having sufficiently large width, 
we can find a broom having larger length
after applying local complementation and shrinking the width.

In the following proposition, we want to find a wide broom of length $2$ when we are
given a sufficiently wide broom of length $1$,
when the graph has no 
$P_c$
or $\K_c\mat\K_c$ as  a vertex-minor.
\begin{PROP}\label{prop:lengthone}
  For all integers $c\ge 3$ and $t\ge 1$, 
  there exists $N=g_1(c,t)$ such that 
  for each $h\ge 1$, 
  every prime graph having a~$(h,N,1)$-broom %
  has a vertex-minor isomorphic to a~$(h,t,2)$-broom, 
  $\K_c\mat\K_c$, or $P_c$.
\end{PROP}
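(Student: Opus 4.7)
Let $G$ be a prime graph containing an $(h,N,1)$-broom $H$, with center $v$, handle $P=vv_1\cdots v_h$, and leaves $W=\{w_1,\ldots,w_N\}$; set $S=V(G)\setminus V(H)$. Suppose $G$ has no vertex-minor isomorphic to $P_c$ or $\K_c\mat\K_c$. Since $P$ already contains $P_{h+1}$, I may assume $h\le c-2$. The plan is to select, for $t$ leaves, a ``private'' extension $u_i\in S$ so that the pairs $\{w_i,u_i\}$ serve as length-$2$ fibres of a new broom retaining the original handle $P$; every obstruction to such a selection will be converted into $\K_c\mat\K_c$.

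The first step extracts structural information from primality. If two leaves had $N_G(w_i)\cap S=N_G(w_j)\cap S$, then $(\{w_i,w_j\},V(G)\setminus\{w_i,w_j\})$ would be a split, so the $N$ external neighbourhoods are pairwise distinct and at most one is empty. Likewise, if every external neighbour of every $w_i$ lay in the common neighbourhood $T=\bigcap_i N_G(w_i)\cap S$, then all external neighbourhoods would coincide, again contradicting primality. After discarding $O(1)$ leaves I may therefore fix, for each remaining $w_i$, an extension $u_i\in N_G(w_i)\cap S$. A multi-stage Ramsey reduction is then applied, colouring each index $i$ by (a)~whether $u_i\sim v$ and whether $u_i\sim v_j$ for each $j=1,\ldots,h$, and each pair $(i,j)$ by (b)~whether $u_i=u_j$, (c)~whether $u_i\sim u_j$, and (d)~whether $u_i\sim w_j$. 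Since $h\le c-2$, the number of vertex-colour classes is $2^{O(c)}$ and the number of pair-colour classes is constant, so taking $N=g_1(c,t)$ to be a sufficiently large iterated Ramsey number in $c$ and $t$ yields a homogeneous sub-family of size at least $t+c+2$ in which the $u_i$'s are pairwise distinct.

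In the benign pattern --- $u_i\not\sim v_j$ for every $j\ge1$, the $u_i$'s are pairwise non-adjacent, and $u_i\not\sim w_j$ for $i\ne j$ --- the fibres $\{w_i,u_i\}$ together with the handle $P$ form the desired $(h,t,2)$-broom; we need not worry whether $u_i\sim v$, as $v\in V(P)$. Every other homogeneous pattern yields $\K_c\mat\K_c$. If the $u_i$'s form a clique while $u_i\not\sim v_j$ for every $j\ge 1$, I delete $V(P)\setminus\{v\}$ and $S\setminus U$ and apply $G*v\setminus v$: when $u_i\not\sim v$ throughout this produces $\K_t\mat\K_t$, while when $u_i\sim v$ throughout it produces $\K_t\antimat\S_t$, from which $\K_{t-1}\mat\K_{t-1}$ follows by Lemma~\ref{lem:lengthonecase1}(1). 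If instead all $u_i$ are adjacent to a common handle vertex $v_{j_0}$ with $1\le j_0\le h$, I first apply $G*v_{j_0}$ (which toggles clique/independence of $U$ and pollutes $v_{j_0\pm1}$) and then repeat the previous operation; sub-case analysis on whether $j_0=1$ or $j_0\ge 2$, together with whether $u_i\sim v$, always reduces to one of $\K_t\mat\K_t$ or $\K_t\antimat\S_t$. Choosing $t\ge c+1$ in every sub-case supplies $\K_c\mat\K_c$.

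The main technical obstacles I expect lie in two sub-cases. The first is $j_0=1$ of the common handle-neighbour step, where $v_{j_0-1}=v$ cannot be deleted; one must carefully track that the local complementation $G*v_1$ introduces the edges $v\sim u_i$ and that the subsequent $G*v$ operation correctly produces the claimed $\K_t\antimat\S_t$ or $\K_t\mat\K_t$ structure. The second is the pattern where the $u_i$'s are pairwise non-adjacent and non-adjacent to $V(P)\setminus\{v\}$ but ``universally'' adjacent to $w_j$ for every $j\ne i$ in the sub-family: primality forbids this globally but not automatically on the Ramsey-extracted sub-family, so an additional re-choice of the $u_i$'s --- again driven by primality, by demanding that each $u_i$ be non-adjacent to some $w_j$ chosen from the sub-family --- must be inserted before the final Ramsey reduction. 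It is this bookkeeping that forces $g_1(c,t)$ to be an iterated Ramsey number growing tower-like in $c$ and $t$.
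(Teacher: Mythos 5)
Your overall strategy (extract a wide length-$1$ broom structure, attach one external neighbour to each leaf, Ramsey-reduce the adjacency patterns, and convert every bad homogeneous pattern into $P_c$ or $\K_c\mat\K_c$) is the right shape, and several sub-cases (the $U$-clique cases, the handle clean-up costing $(c-1)h$ leaves) match the paper. But there is a genuine gap at the heart of the argument: choosing a single representative $u_i\in N_G(w_i)\cap S$ per leaf and then applying Ramsey to the pair-colouring ``$u_i\sim w_j$?'' does not suffice. First, pairwise-distinct external neighbourhoods do not yield a large system of \emph{distinct} representatives (all $N$ neighbourhoods may share a common vertex $u$, and an adversarial or unlucky choice gives $u_i=u$ for every $i$; a family of $N$ distinct sets need not admit a transversal of size more than about $\log N$ on any subfamily). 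Second, and more seriously, one homogeneous outcome of your colouring is that $G[U,W']$ is \emph{complete bipartite} with $U$ stable and unattached to the handle; the induced subgraph on $V(P)\cup W'\cup U$ then has the split $(W'\cup U,\,V(P))$ and is totally decomposable, so it contains neither $P_c$ nor $\K_c\mat\K_c$ as a vertex-minor. You cannot escape this outcome using only the selected vertices; you flag this problem yourself at the end, but the proposed patch (``re-choose the $u_i$'s, demanding non-adjacency to some $w_j$ of the subfamily'') is circular, since the subfamily is only determined after the Ramsey step. You also omit the chain/half-graph homogeneous outcome ($u_i\sim w_j$ iff $i\le j$), which yields $P_c$ via $\S_n\tri\S_n$ rather than $\K_c\mat\K_c$, so the assertion that ``every other homogeneous pattern yields $\K_c\mat\K_c$'' is not correct as stated.

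The paper closes exactly this gap with Theorem~\ref{thm:largebipartite} (Ding--Oporowski--Oxley--Vertigan): because the leaves have pairwise distinct neighbourhoods in $T=N_G(S)\setminus\{x\}$, one finds large subsets $S'$, $T'$ with $G[S',T']$ isomorphic to $\S_w\mat\S_w$, $\S_w\tri\S_w$, or $\S_w\antimat\S_w$ --- the complete-bipartite outcome is impossible there precisely because it destroys distinctness of neighbourhoods, which your Ramsey extraction discards. The $\tri$ and $\antimat$ outcomes are dispatched by Lemmas~\ref{lem:lengthonecase1} and~\ref{lem:lengthonecase2}, and only the induced-matching outcome survives to build the $(h,t,2)$-broom. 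To repair your proof you would need either to invoke that bipartite unavoidability theorem (or reprove it), or to return to primality of $G$ (e.g.\ via blocking sequences) each time the complete-bipartite outcome arises; as written, the argument does not go through.
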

We will use the following theorem.
\begin{THM}[Ding, Oporowski, Oxley,
  Vertigan~\cite{DOOV1996}]\label{thm:largebipartite}
     For every positive integer $n$,
     there exists $N=f(n)$ 
     such that for every bipartite graph $G$ with a bipartition $(S,T)$, 
     if no two vertices in $S$ have the same set of neighbors
     and      $\abs{S}\ge N$,  
     then $S$ and $T$ have $n$-element subsets $S'$ and $T'$, respectively, such that 
     $G[S',T']$ is isomorphic to $\S_{n}\mat\S_{n}$, $\S_{n}\tri\S_{n}$, or
     $\S_{n}\antimat\S_{n}$. 
\end{THM}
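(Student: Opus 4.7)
My plan is to apply the bipartite Ramsey-type Theorem~\ref{thm:largebipartite} to the bipartite graph between the leaves of the given broom and their external neighbors, and then resolve each of the three resulting cases via Lemmas~\ref{lem:lengthonecase1} and~\ref{lem:lengthonecase2}. First I observe that if $h\ge c-1$ the handle is already an induced path with at least $c$ vertices, yielding $P_c$ directly, so I may assume $h\le c-2$. Let $L=\{u_1,\ldots,u_N\}$ be the leaves and $v$ their common neighbor. Using primeness I argue that, after discarding at most one leaf, all remaining leaves have pairwise distinct nonempty external neighborhoods: any two leaves $u,u'$ with the same external neighborhood have identical neighborhoods in $V(G)\setminus\{u,u'\}$, so $\{u,u'\}$ is a module of $G$, giving a split; similarly the leaves with empty external neighborhood form a module, so there is at most one of them.

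Let $L'\subseteq L$ be the resulting set of at least $N-1$ leaves and let $S$ denote their external neighbors in $V(G)\setminus V(\text{broom})$. Applying Theorem~\ref{thm:largebipartite} to $G[L',S]$, with $N-1\ge f(n)$ for $n$ to be specified, yields $n$-element subsets $L''\subseteq L'$ and $S''\subseteq S$ with $G[L'',S'']$ isomorphic to one of $\S_n\mat\S_n$, $\S_n\tri\S_n$, or $\S_n\antimat\S_n$. The last two are easy: Lemma~\ref{lem:lengthonecase2}(1) makes $\S_n\tri\S_n$ pivot-equivalent to $P_{2n}$, yielding $P_c$ once $2n\ge c$; and Lemma~\ref{lem:lengthonecase1}(2) gives $\S_n\antimat\S_n$ the vertex-minor $\K_{n-2}\mat\K_{n-2}$, which contains $\K_c\mat\K_c$ once $n-2\ge c$.

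The main obstacle is the case $\S_n\mat\S_n$, in which we obtain an induced matching $\{u_iw_i:i\in[n]\}$ and must produce a $(h,t,2)$-broom. The pairs $\{u_i,w_i\}$ have the right internal structure, but the $w_i$'s may be adjacent to one another and to handle vertices in $G$. I plan to clean this up by combining pigeonhole on the $2^{h+1}\le 2^{c-1}$ possible handle-adjacency types of the $w_i$'s with Ramsey's theorem on the mutual $w_i$-adjacencies, extracting a subset on which both are uniform. If the $w_i$'s form a clique on this subset, then local complementation at $v$ turns the $u_i$'s into a clique and deletion of $v$ and the handle produces $\K_t\mat\K_t$, which contains $\K_c\mat\K_c$ when $t\ge c$. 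If the $w_i$'s are mutually independent and their common handle-adjacency type is contained in $\{v\}$, then $\{u_i,w_i\}$ are already legitimate fibers of a $(h,t,2)$-broom with the original handle.

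The hardest sub-case is when the common handle-adjacency type contains some $p_k$ with $k\ge 1$. Here I plan to perform a short sequence of at most $h\le c-2$ pivots involving the handle and the $w_i$'s, chosen so as to remove the unwanted $w_i$-to-$p_k$ edges while preserving both the induced-path handle and the induced-matching fibers. The key technical difficulty will be carrying out these operations without introducing new chords among the handle vertices or new edges among the $w_i$'s. Setting $N=f(n)+2$ with $n$ large enough in terms of $c$ and $t$ (concretely something like $n\ge 2^{c-1}R(c+1,t+1)(c+t)$) then yields a suitable $g_1(c,t)$.
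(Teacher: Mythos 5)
Your proposal does not prove the statement in question. The statement is Theorem~\ref{thm:largebipartite} itself (the Ding--Oporowski--Oxley--Vertigan result): for every $n$ there is $N=f(n)$ so that any bipartite graph with a bipartition $(S,T)$ in which the vertices of $S$ have pairwise distinct neighborhoods and $\abs{S}\ge N$ contains $n$-element subsets $S'$, $T'$ with $G[S',T']$ isomorphic to $\S_n\mat\S_n$, $\S_n\tri\S_n$, or $\S_n\antimat\S_n$. Your argument begins by \emph{applying} this very theorem (``Applying Theorem~\ref{thm:largebipartite} to $G[L',S]$\ldots''), so as a proof of the statement it is circular: the entire Ramsey-theoretic content --- extracting an induced matching, a chain (half-graph) pattern, or a co-matching from the hypothesis that the $S$-side neighborhoods are distinct --- is assumed rather than established. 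Nothing in your write-up addresses why distinct neighborhoods on a large side force one of these three canonical bipartite configurations, which is the whole substance of the theorem; in the paper this result is imported from \cite{DOOV1996} and not reproved.

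What you have actually sketched is a proof of Proposition~\ref{prop:lengthone} (passing from a wide $(h,N,1)$-broom to a $(h,t,2)$-broom, $P_c$, or $\K_c\mat\K_c$ in a prime graph), i.e.\ an application of Theorem~\ref{thm:largebipartite}, and even there your hardest sub-case (cleaning up edges from the matched vertices $w_i$ to internal handle vertices $p_k$ by a sequence of pivots) is only announced as a plan, whereas the paper's Proposition~\ref{prop:lengthone} avoids such surgery: it shows each handle vertex $y\neq x$ with at least $c$ neighbors in $T'$ already yields $\K_c\mat\K_c$ (via a pivot in the matching to remove $xy$ if needed), and otherwise simply deletes at most $(c-1)h$ matched pairs. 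To repair your submission you would need either to supply an actual proof of the bipartite unavoidability theorem (e.g.\ the ordering/Ramsey argument of \cite{DOOV1996}) or to recognize that the target statement is a cited external result rather than the proposition you proved.
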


\begin{proof}[Proof of Proposition~\ref{prop:lengthone}]

	Let $N=f(R(w,w))$ where 
	$f$ is the function in Theorem~\ref{thm:largebipartite},
	and $w=\max(t+(c-1)(c-3),2c-1)$.
	Suppose that $G$ has a~$(h,g_1(c,t),1)$-broom $H$. 
	Note that every fiber of $H$ is a
        single vertex. 

	Let $S$ be the union of the vertex sets of all fibers of $H$, and  $x$ be the center of $H$. 
	Let $N_G(S)\setminus \{x\}=T$. 
	Since $G$ is prime, no two vertices in $G$ have the same set of neighbors, 
	and so two distinct vertices in $S$ have different sets of neighbors in $T$.
	Since $\abs{S}=N=f(R(w,w))$,
	by Theorem~\ref{thm:largebipartite}, 
	there exist $S_0\subseteq S$, $T_0\subseteq T$ such that
	$G[S_0,T_0]$ is isomorphic to
	$\S_{R(w,w)}\mat\S_{R(w,w)}$, $\S_{R(w,w)}\tri\S_{R(w,w)}$ or $\S_{R(w,w)}\antimat\S_{R(w,w)}$. 
	Since $\abs{T_0}\ge R(w,w)$, 
	by Ramsey's theorem, there exist $S'\subseteq S_0$ and
        $T'\subseteq T_0$ such that 
	$G[S',T']$ is isomorphic to 
	$\S_{w}\mat\S_{w}$, $\S_{w}\tri\S_{w}$, or $\S_{w}\antimat\S_{w}$,
	 and $T'$ is a clique or a stable set in $G$.
	If $G[S',T']$ is isomorphic to $\S_{w}\tri\S_{w}$ or $\S_{w}\antimat\S_{w}$, 
   then by Lemmas~\ref{lem:lengthonecase1} and \ref{lem:lengthonecase2}, $G$ has a vertex-minor isomorphic to either $P_{2w}$ or $\K_{w-2}\mat\K_{w-2}$.
   Since $w\ge 2c-1$ and $c\ge 3$, we have $P_c$ or $\K_c\mat\K_c$.
	Thus we may assume that $G[S',T']$ is isomorphic to
        $\S_{w}\mat\S_{w}$.

    If $T'$ is a clique in $G$, then 
    we can remove the edges connecting $T'$ with $x$ by applying local complementation at some vertices in $S'$. 
    Thus, we can obtain a vertex-minor isomorphic to $\K_{w}\mat\K_w$ from $G[S'\cup T'\cup \{x\}]$ by applying local complementation at $x$ and deleting $x$.
    Therefore we may assume that $T'$ is a stable set in $G$.  

    We claim that each vertex $y\neq x$ in the handle of $H$ is adjacent to
    at most $c$ vertices in $T'$, or $G$ has $\K_c\mat\K_c$ as a vertex-minor.
    Suppose not. 
    If $y$ is a neighbor of $x$, then by pivoting an edge of
    $G[S',T']$, we can delete the edge $xy$. From there, we
    obtain a vertex-minor isomorphic to $\K_c\mat\K_c$
    by applying local complementation at $x$ and $y$.
    If $y$ is not adjacent to $x$, 
    then we obtain a vertex-minor
    isomorphic to $\K_c\mat\K_c$ 
    by deleting all vertices in the handle other than $x$ and $y$,
    and applying local complementation at $x$ and $y$.
    This proves the claim.

    By deleting at most $(c-1)h$ vertices in $T'$ and their pairs
    in $S'$, we can assume that no vertex other than $x$ in the handle has a neighbor
    in $T'$ and this broom has width at least $w-(c-1)h$.
    If $h+2\ge c$, then we have $P_c$ as an induced subgraph. 
    Thus we may assume that $h\le c-3$.
    Since $w-(c-1)h\ge w-(c-1)(c-3)\ge t$, we obtain a vertex-minor isomorphic to a~$(h,t,2)$-broom. 
\end{proof}

We now aim to increase the length of a broom when the broom has length
at least $2$.
For a fiber $F$ of a broom $H$,
we say that a vertex $v\in V(G)\setminus V(H)$ \emph{blocks} $F$ 
if \[\rho_G^*(V(F),(V(H)\setminus V(F))\cup\{v\})>1.\]
 If $G$ is prime
and $F$ has at least two vertices, then $G$ has a blocking sequence for
$(V(F),V(H)\setminus V(F))$ by Proposition~\ref{prop:block}
and therefore there exists a vertex $v$ that blocks $F$
because we can take the first vertex in the blocking sequence.

\begin{LEM}\label{lem:blockone}
  Let $G$ be a  graph 
  and let $x,y$ be  two vertices such that 
  $\rho_G(\{x,y\})=2$
  and $G\setminus x\setminus y$ is connected.
  Then there exists some sequence $v_1,v_2,\ldots,v_n\in
  V(G)\setminus\{x,y\}$ of (not necessarily distinct) vertices such
  that
  $G*v_1*v_2\cdots *v_n $ has an induced path of length $3$ from $x$
  to $y$.
\end{LEM}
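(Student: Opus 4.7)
Write $A = N_G(x) \setminus \{y\}$ and $B = N_G(y) \setminus \{x\}$. The condition $\rho_G(\{x,y\}) = 2$ is equivalent, over $\mathbb{F}_2$, to $A$ and $B$ being nonempty and $A \neq B$, and by Lemma~\ref{lem:cutrank} this property is preserved in every graph locally equivalent to $G$, and in particular in every graph obtained by local complementations at vertices outside $\{x,y\}$. My plan splits into two parts: first, produce by such local complementations a graph $G'$ containing an induced $x$--$y$ path of length at least $3$; second, shorten any such path to length exactly $3$.

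For the first part, the connectedness of $G\setminus x\setminus y$ lets me pick a shortest path $u_0u_1\cdots u_k$ in $G\setminus x\setminus y$ with $u_0\in A$ and $u_k\in B$. By the minimality, the internal vertices avoid $A\cup B$ and no two non-consecutive path vertices are adjacent, so this path is induced. If both $xy\notin E(G)$ and $A\cap B=\emptyset$ (which forces $k\ge 1$), then $x,u_0,u_1,\ldots,u_k,y$ is already an induced path of length $k+2\ge 3$. Otherwise I have to engineer these two conditions; since the only way to toggle $xy$ via local complementation at a vertex outside $\{x,y\}$ is to apply $G'\ast z$ for some common neighbor $z\in A'\cap B'$, when $A\cap B=\emptyset$ I first create a common neighbor by applying local complementation at $u_0$ (a direct computation on the $\{x,y\}$-bipartite adjacency matrix shows this introduces a vertex into the updated $A'\cap B'$), then use $\ast z$ at that vertex to cancel $xy$. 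Any residual common neighbors are then eliminated by local complementation at a vertex in $A'\setminus B'$ or $B'\setminus A'$ adjacent to one; the existence of such a distinguishing vertex is exactly the content of $A'\ne B'$, which is kept by the rank-two invariant.

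For the second part, suppose we have an induced path $v_0v_1\cdots v_\ell$ with $v_0=x$, $v_\ell=y$, and $\ell\ge 3$ in the current graph $G''$. Since $v_1$'s only path-neighbors are $v_0$ and $v_2$ and no $v_j$ with $j\ge 3$ lies in $N_{G''}(v_1)$, applying local complementation at $v_1$ toggles, among the vertex set $\{v_0,v_2,v_3,\ldots,v_\ell\}$, exactly the edge $v_0v_2$; the restriction of $G''\ast v_1$ to this set is therefore an induced path of length $\ell-1$. Iterating this shortening step until the length reaches $3$ finishes the proof. The main obstacle is the case $xy\in E(G)$ with $A\cap B=\emptyset$: no common neighbor is initially available, so one must be engineered along the shortest $A$--$B$ path, with care taken that the engineering local complementations do not destroy the induced walk from $x$ to $y$. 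The bookkeeping for this step, together with the dual task of clearing residual common neighbors when $A\cap B\ne\emptyset$, is where the technical weight of the argument lies.
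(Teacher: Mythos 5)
Your reformulation of the hypothesis ($\rho_G(\{x,y\})=2$ if and only if $A=N_G(x)\setminus\{y\}$ and $B=N_G(y)\setminus\{x\}$ are nonempty and distinct, an invariant under local complementation) and your shortening step are both correct: once an induced $x$--$y$ path of length $\ell\ge 4$ exists, local complementation at its second vertex toggles, among the remaining path vertices, only the pair consisting of the first and third vertices, so the length drops by one and you may iterate down to $3$. The genuine gap is in the first half, where you must actually produce an induced $x$--$y$ path of length at least $3$; two of the specific claims there fail. First, when $xy\in E(G)$ and $A\cap B=\emptyset$, you assert that local complementation at $u_0$ ``introduces a vertex into the updated $A'\cap B'$''. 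But if the shortest $A$--$B$ path has length $k\ge 2$, then $u_0$ has no neighbour in $B$ (otherwise $k\le 1$) and $y\notin N_G(u_0)$, so $*u_0$ leaves $N(y)$ untouched and creates no common neighbour --- it merely moves $u_1$ into $A'$. You would have to iterate along the whole path and track how $A'$ evolves, which you do not do. Second, to clear a residual common neighbour $z\in A'\cap B'$ you invoke a vertex of the symmetric difference of $A'$ and $B'$ that is \emph{adjacent to} $z$, claiming its existence is ``exactly the content of $A'\neq B'$''; that inequality only makes the symmetric difference nonempty, not adjacent to $z$. Even when such a vertex $w$ exists, $*w$ may push other neighbours of $w$ into $A'\cap B'$, so the clearing phase also lacks a termination argument. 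You explicitly defer the remaining ``bookkeeping,'' and that bookkeeping is the substance of the lemma.

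For comparison, the paper avoids all of this by induction on $\abs{V(G)}$: it deletes a vertex $t$ of $G\setminus x\setminus y$ whose removal preserves both connectivity of $G\setminus x\setminus y$ and $\rho(\{x,y\})=2$ whenever such a vertex exists, shows that otherwise $G\setminus x\setminus y$ is a path (in which case one local complementation at an internal vertex restores deletable vertices), and finishes with a finite check on four-vertex graphs. If you wish to keep your direct construction, you need to replace the two claims above by an honest induction or potential argument that simultaneously controls the size of $A'\cap B'$ and the presence of the edge $xy$ as the fix-up complementations are performed.
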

\begin{proof}
  We proceed by induction on $\abs{V(G)}$.
  If $\abs{V(G)}=4$, then it is easy to check all cases to obtain a
  path of length $3$. To do so, first observe that up to symmetry, there are
  $2$ cases in $G[\{x,y\}, V(G)\setminus\{x,y\}]$; either it is a
  matching of size $2$ or a path of length $3$. In both cases, one can find a
  desired sequence of vertices to apply local complementation, see
  Figure~\ref{fig:4vertex} for all possible graphs on $4$-vertices up
  to isomorphism.
  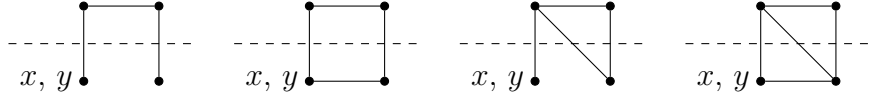
\begin{figure}
    \centering
    \tikzstyle{v}=[circle, draw, solid, fill=black, inner sep=0pt, minimum width=3pt]
    \begin{tikzpicture}
      \begin{scope}[xshift=0cm]
      \node[v] (x) at (0,0) {};
      \node[v] (y) at (1,0) {};
      \node[v] (z) at (1,1) {};
      \node[v] (w) at (0,1) {};
      \draw[dashed](-1,.5)--(1.5,.5);
      \draw (x)--(w)--(z)--(y);
      \node [left ] at (0,0)  {$x$, $y$};
      \end{scope}
      \begin{scope}[xshift=3cm]
      \node[v] (x) at (0,0) {};
      \node[v] (y) at (1,0) {};
      \node[v] (z) at (1,1) {};
      \node[v] (w) at (0,1) {};
      \draw[dashed](-1,.5)--(1.5,.5);
      \node [left ] at (0,0)  {$x$, $y$};
      \draw (x)--(w)--(z)--(y)--(x);
      \end{scope}
      \begin{scope}[xshift=6cm]
      \node[v] (x) at (0,0) {};
      \node[v] (y) at (1,0) {};
      \node[v] (z) at (1,1) {};
      \node[v] (w) at (0,1) {};
      \draw[dashed](-1,.5)--(1.5,.5);
      \node [left ] at (0,0)  {$x$, $y$};
      \draw (x)--(w)--(z)--(y);
      \draw (w)--(y);
      \end{scope}
      \begin{scope}[xshift=9cm]
      \node[v] (x) at (0,0) {};
      \node[v] (y) at (1,0) {};
      \node[v] (z) at (1,1) {};
      \node[v] (w) at (0,1) {};
      \draw[dashed](-1,.5)--(1.5,.5);
      \node [left ] at (0,0)  {$x$, $y$};
      \draw (x)--(w)--(z)--(y)--(x);
      \draw (w)--(y);
      \end{scope}
    \end{tikzpicture}
    \caption{Dealing with $4$-vertex graphs in Lemma~\ref{lem:blockone}.}
    \label{fig:4vertex}
  \end{figure}

  Now we may assume that $G$ has at least $5$ vertices.
  Let $A_1=N_G(x)\setminus (N_G(y)\cup \{y\})$, $A_2=N_G(x)\cap N_G(y)$, 
  and $A_3=N_G(y)\setminus (N_G(x)\cup\{x\})$. 
  Clearly $\rho_G(\{x,y\})=2$ is equivalent to say that at least two
  of $A_1$, $A_2$, $A_3$ are nonempty.

  We say a vertex $t$ in $G\setminus x\setminus y$ \emph{deletable} if
  $G\setminus x\setminus y\setminus t$ is connected.
  If there is a deletable vertex not in $A_1\cup A_2\cup A_3$, then 
  $\rho_{G\setminus t}(\{x,y\})=2$ and we apply the induction
  hypothesis to find an induced path.
  Thus we may assume that all deletable vertices are in $A_1\cup
  A_2\cup A_3$.

  If $\abs{A_i}>1$ and $A_i$ has  a deletable vertex $t$ for some
  $i=1,2,3$, then 
  $\rho_{G\setminus t}(\{x,y\})=2$ and so we obtain a sequence by
  applying the induction hypothesis. 
  So we may assume that 
  if $A_i$ has a deletable vertex, then $\abs{A_i}=1$.

  If there are three deletable vertices $t_1$, $t_2$, $t_3$
  in $G\setminus x\setminus y$,   then we may assume $A_i=\{t_i\}$. 
  However, $\rho_{G\setminus t_1}(\{x,y\})=2$ because $A_2$, $A_3$ are
  nonempty and therefore we obtain an induced path from $x$ to $y$ by
  the induction hypothesis.

  Thus we may assume that $G\setminus x\setminus y$ has at most $2$
  deletable vertices.
  So  $G\setminus x\setminus y$ has maximum degree
  at most $2$ 
  because otherwise 
  we can choose leaves of a spanning tree of
  $G\setminus x\setminus y$ using all edges incident with a vertex of
  the maximum degree.
  If $G\setminus x\setminus y$ is a cycle, then every vertex is
  deletable and so $G\setminus x\setminus y$ is a path.
  Let $w$ be a degree-$2$ vertex in $G\setminus x\setminus y$.
  Then $G*w$ has at least $3$ deletable vertices and therefore we find
  a desired sequence $v_1,v_2,\ldots,v_n$ such that
  $G*w*v_1*v_2\cdots*v_n$ has an induced path of length $3$ from $x$
  to $y$.
\end{proof}

\begin{LEM}\label{lem:blockmany}
  Let $G$ be a graph and let $x$, $y$ be two vertices in $G$, 
  and let $F_1$, $F_2, \ldots, F_c$ be the components of $G\setminus x\setminus y$.
  If $\rho^*_G(\{x,y\}, F_i)=2$ for all $1\le i\le c$,
  then $G$ has a vertex-minor isomorphic to $\K_{c}\mat\K_{c}$.
\end{LEM}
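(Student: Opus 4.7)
The plan is to produce, via local complementations inside each component $F_i$, an induced path of length $3$ from $x$ to $y$ lying in $\{x,y\}\cup V(F_i)$; two final local complementations at $x$ and $y$, followed by deletion of all unused vertices, will then give exactly $\K_c\mat\K_c$. For each $i$, let $G_i=G[\{x,y\}\cup V(F_i)]$; then $\rho_{G_i}(\{x,y\})=\rho^*_G(\{x,y\},V(F_i))=2$, and $G_i\setminus x\setminus y=F_i$ is connected. Lemma \ref{lem:blockone} gives a sequence $\sigma_i$ of local complementations at vertices of $V(F_i)$ such that $G_i*\sigma_i$ contains an induced path $x-a_i-b_i-y$ with $a_i,b_i\in V(F_i)$. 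The crucial observation is that any local complementation at a vertex of $V(F_i)$ alters only edges inside $V(F_i)\cup\{x,y\}$, since the neighbors of such a vertex lie in $V(F_i)\cup\{x,y\}$; in particular, operations in different components interact only through the single edge $xy$.

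I would apply $\sigma_1,\ldots,\sigma_c$ in succession, but at step $i$ I invoke Lemma \ref{lem:blockone} on the \emph{current} graph's restriction to $\{x,y\}\cup V(F_i)$, not on the original $G_i$. The hypotheses still hold because the earlier operations change neither the adjacencies between $\{x,y\}$ and $V(F_i)$ nor the edges inside $F_i$, preserving both $\rho(\{x,y\})=2$ and connectivity of $F_i$. This adaptive choice guarantees that $xy$ is a non-edge after each $\sigma_i$, and since $\sigma_j$ for $j\ne i$ touches neither $a_i,b_i$ nor any edge incident to them except possibly $xy$, the final graph $G'$ contains, for every $i$, the induced path $x-a_i-b_i-y$; moreover $a_i,b_i,a_j,b_j$ are pairwise non-adjacent for $i\ne j$, as they lie in distinct components of $G\setminus x\setminus y$. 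Setting $H=G'[\{x,y,a_1,b_1,\ldots,a_c,b_c\}]$, we have $N_H(x)=\{a_1,\ldots,a_c\}$ and $N_H(y)=\{b_1,\ldots,b_c\}$, both stable in $H$, and the only edges of $H$ are the matching $\{a_ib_i\}$ together with these two stars. Local complementation at $x$ turns $\{a_1,\ldots,a_c\}$ into a clique, a subsequent one at $y$ turns $\{b_1,\ldots,b_c\}$ into a clique, and deleting $x,y$ leaves exactly $\K_c\mat\K_c$. The main obstacle is controlling the edge $xy$ throughout the process, since its reappearance would upgrade each induced $P_4$ into a $C_4$ and destroy the required structure; this is precisely what the adaptive application of Lemma \ref{lem:blockone} resolves.
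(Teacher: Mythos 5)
Your proposal is correct and takes essentially the same approach as the paper: reduce each component $F_i$ to an induced path of length $3$ from $x$ to $y$ using Lemma~\ref{lem:blockone}, then apply $*x*y$ and delete everything outside the resulting paths. The only difference is bookkeeping — the paper packages the componentwise reduction as an induction on $\abs{V(G)}+\abs{E(G)}$, whereas you run the reductions in sequence and verify directly that local complementations inside distinct components interact only through the edge $xy$, which your adaptive invocation of Lemma~\ref{lem:blockone} correctly keeps absent in the final graph.
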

\begin{proof}
	We proceed by induction on $\abs{V(G)}+\abs{E(G)}$.

        Suppose that $G[V(F_i)\cup \{x,y\}]$ is not an induced path of
        length $3$ from $x$ to $y$.
        By Lemma~\ref{lem:blockone}, there exists a sequence
        $v_1,v_2,\ldots,v_n\in V(F_i)$ such that 
        $G[V(F_i)\cup \{x,y\}]*v_1*v_2\cdots*v_n$ has an induced path of length $3$ from
        $x$ to $y$. If $\abs{V(F_i)}\ge 3$, then we 
        delete all vertices in $F_i$ not on
        this path and apply the induction hypothesis.
        If $\abs{V(F_i)}=2$, then
        $\abs{E(G[V(F_i)\cup \{x,y\}])} > \abs{E(G[V(F_i)\cup \{x,y\}]*v_1*v_2*\cdots
          *v_n)}$  because two vertices in $F_i$ are connected,
        $G[\{x,y\},V(F_i)]$ has at least two edges,
        and $G[V(F_i)\cup \{x,y\}]$ is not an induced path of length
        $3$ from $x$ to $y$.
        So we apply the induction hypothesis to
        $G*v_1*v_2*\cdots*v_n$
        to obtain a vertex-minor isomorphic to $\K_c\mat\K_c$.
        
        Therefore we may assume that  $G[V(F_i)\cup \{x,y\}]$ is  an induced path of
        length $3$ from $x$ to $y$ for all $i$.
        Thus $G*x*y\setminus x\setminus y$ is indeed isomorphic to $\K_c\mat\K_c$.
\end{proof}

\begin{LEM}\label{lem:biparim}
Let $t$ be a positive integer, and $G$ be a bipartite graph with a
bipartition $(S,T)$ such that every vertex in $T$ has degree at least $1$.
Then either $S$ has a vertex of degree at least $t+1$
or $G$ has an induced matching of size at least $\abs{T}/{t}$.
\end{LEM}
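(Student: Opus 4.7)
The plan is to assume that no vertex of $S$ has degree greater than $t$, and under that hypothesis produce an induced matching of size at least $\abs{T}/t$. I would argue by induction on $\abs{T}$; the case $\abs{T}=0$ is trivial.

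For the inductive step, the crucial move is to pick $v\in T$ minimising $\abs{N_G(v)}$ over all vertices of $T$ (such a $v$ exists and satisfies $\abs{N_G(v)}\ge 1$ since every vertex of $T$ has degree at least $1$). Then I would choose an arbitrary neighbour $u\in N_G(v)$, place $uv$ into the matching, and recurse on the bipartite subgraph
\[G'\;=\;G\bigl[(S\setminus N_G(v))\cup(T\setminus N_G(u))\bigr].\]

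The only delicate thing to verify before invoking the inductive hypothesis on $G'$ is that every vertex of $T\setminus N_G(u)$ still has at least one neighbour in $S\setminus N_G(v)$, and this is the main obstacle of the argument. It is exactly where the minimality of $\abs{N_G(v)}$ gets used: for any $v'\in T\setminus N_G(u)$ we have $u\notin N_G(v')$, so if $v'$ had no neighbour in $S\setminus N_G(v)$ we would get $N_G(v')\subseteq N_G(v)\setminus\{u\}$, forcing $\abs{N_G(v')}<\abs{N_G(v)}$ and contradicting the choice of $v$.

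Once this claim is in place, the induction hypothesis applied to $G'$ (with the same parameter $t$, since $S$-degrees only decrease) produces an induced matching $M'$ of $G'$ of size at least $\abs{T\setminus N_G(u)}/t\ge(\abs{T}-t)/t=\abs{T}/t-1$, where we used $\abs{N_G(u)}\le t$. Every edge of $M'$ has its $S$-endpoint outside $N_G(v)$ and its $T$-endpoint outside $N_G(u)$, so $uv$ is non-adjacent in $G$ to every edge of $M'$, and $M'\cup\{uv\}$ is an induced matching of $G$ of size at least $\abs{T}/t$, which completes the induction.
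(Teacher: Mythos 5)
Your proof is correct and follows essentially the same strategy as the paper's: an induction on $\abs{T}$ that peels off one induced-matching edge while discarding at most $t$ vertices of $T$, giving $(\abs{T}-t)/t+1=\abs{T}/t$. The only difference is the device used to preserve the degree hypothesis for the recursion --- the paper first prunes $S$ until some vertex of $T$ has degree $1$ and peels off the resulting pendant edge, whereas you select a minimum-degree vertex of $T$ and delete both neighbourhoods, using minimality to guarantee every surviving $T$-vertex keeps a neighbour; both devices work equally well.
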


\begin{proof}
  We claim that 
  if every vertex in $S$ has degree at most $t$, then 
  $G$ has an induced matching of size at least $\abs{T}/t$.
  We proceed by induction on $\abs{T}$.
  This is trivial if $\abs{T}=0$.
  If $0<\abs{T}\le t$, then we can simply pick an edge to form an
  induced matching of size $1$.
  So we may assume that $\abs{T}>t$.
  
  We may assume that $T$ has a vertex $w$ of degree $1$, because otherwise
  we can delete a vertex in $S$ and apply the induction hypothesis.
  Let $v$ be the unique neighbor of $w$. 
  By the induction hypothesis, $G\setminus v\setminus
  N_G(v)$ has an induced matching $M'$ of size at least
  $(\abs{T}-t)/t$. Now $M'\cup\{vw\}$ is a desired induced matching.
\end{proof}

\begin{LEM}\label{lem:fiber1}
  Let $H$ be a broom in a graph $G$
  having $n$ fibers $F_1,F_2,\ldots,F_n$
  given with $n$ vertices $v_1, v_2, \ldots, v_n$ in
  $V(G)\setminus V(H)$
  such that
  \begin{enumerate}
  \item $v_i$ blocks $F_j$ if and only if $i=j$,
  \item $v_i$ has a neighbor in $F_j$ if and only if $i\le j$.
  \end{enumerate}
  If $n\ge R(c+1,c+1)$, then 
  $G$ has a vertex-minor isomorphic to $P_c$.
\end{LEM}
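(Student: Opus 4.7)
My plan is to apply Ramsey's theorem to the $v_i$'s and combine with the blocking data to exhibit an induced subgraph isomorphic to $\S_m \tri \S_m$ or $\K_m \tri \S_m$ with $2m \ge c$, from which $P_c$ arises as a vertex-minor via Lemma~\ref{lem:lengthonecase2}.

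Since $n \ge R(c+1,c+1)$, Ramsey's theorem gives indices $i_1<\cdots<i_{c+1}$ for which $\{v_{i_k}:1\le k\le c+1\}$ forms either a clique or a stable set in $G$. The blocking and neighborhood hypotheses then translate into the following local statement: comparing $v_{i_{k'}}$ with the center $x$ of the broom on $V(F_{i_k})$, we have equality of their neighborhoods when $k'<k$ (for if the rows of $v_{i_{k'}}$ and $x$ differed on $F_{i_k}$, the single vertex $v_{i_{k'}}$ would already constitute a blocking sequence, contradicting (1)), emptiness of $N_G(v_{i_{k'}})\cap V(F_{i_k})$ when $k'>k$ (directly from (2)), and inequality when $k'=k$ (from (1) again, since $v_{i_k}$ does block~$F_{i_k}$).

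For each $k$ I then pick a witness $w_k\in V(F_{i_k})$: inside $N_G(x)\cap N_G(v_{i_k})\cap V(F_{i_k})$ if this set is non-empty (type~I, so $v_{i_k}\sim w_k$), and otherwise inside $N_G(x)\cap V(F_{i_k})$ (type~II, so $v_{i_k}\not\sim w_k$; note that in type~II the sets $N_G(x)$ and $N_G(v_{i_k})$ are forced to be disjoint non-empty on $F_{i_k}$). In either case $x\sim w_k$, so by the preceding paragraph $v_{i_{k'}}\sim w_k$ if and only if $k'\le k$ in type~I, and if and only if $k'<k$ in type~II; moreover $w_k\not\sim w_{k'}$ when $k\ne k'$ because distinct fibers are distinct components of $H\setminus V(P)$. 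A pigeonhole on the two types extracts either $\ge\lceil c/2\rceil$ indices all of type~I or $\ge\lceil c/2\rceil+1$ indices all of type~II. Restricting to such a subset of size $r$ (and, in type~II, deleting one peripheral $v$ and one peripheral $w$ after an index shift) yields an induced copy of $\S_m\tri\S_m$ when the $v_{i_k}$'s are pairwise non-adjacent, or of $\K_m\tri\S_m$ when pairwise adjacent, with $2m\ge c$; Lemma~\ref{lem:lengthonecase2} then supplies a $P_{2m}$ vertex-minor of $G$ that contains $P_c$ as an induced subgraph.

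The main obstacle is the combined Ramsey-plus-pigeonhole bookkeeping needed to make the single Ramsey bound $R(c+1,c+1)$ actually suffice: the type~I and type~II branches demand different size thresholds ($\lceil c/2\rceil$ versus $\lceil c/2\rceil+1$), and verifying that both parities of $c$ are covered when splitting the $c+1$ Ramsey indices into two types requires a short but careful count. Once the witnesses are chosen, the rest of the proof is a direct verification of adjacencies to recognize $\S_m\tri\S_m$ or $\K_m\tri\S_m$ inside $G$ and an appeal to Lemma~\ref{lem:lengthonecase2}.
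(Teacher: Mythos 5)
Your argument is correct and follows essentially the same route as the paper: apply Ramsey to the $v_i$'s, extract a half-graph between selected $v$'s and representatives $w$ chosen from $N_G(x)$ in the corresponding fibers, recognize $\S_m\tri\S_m$ or $\K_m\tri\S_m$ with $2m\ge c$, and invoke Lemma~\ref{lem:lengthonecase2}. The only difference is cosmetic: the paper sidesteps the ambiguous diagonal adjacency $v_{t_k}$--$w_k$ by pairing odd-indexed $v$'s with even-indexed $w$'s, whereas you resolve it by your type~I/type~II case split and a pigeonhole, which also works within the bound $R(c+1,c+1)$.
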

\begin{proof}
  	If $j>i$, then $v_i$ has a neighbor in $F_j$, but $v_i$ does not block $F_j$. 
  	Therefore, $v_i$ is adjacent to every vertex in $V(F_j)\cap N_H(x)$ for $j>i$.
  	Since $n\ge R(c+1,c+1)$, there exist $1\le t_1< t_2 \cdots < t_{c+1}\le n$ such that
  	$\{v_{t_1}, v_{t_2}, \ldots, v_{t_{c+1}}\}$ is a clique or a stable set of $G$.
  	For $1\le i\le c+1$, let $w_i$ be a vertex in $V(F_{t_i})\cap N_H(x)$. 
	Clearly, \[G[\{v_{t_1}, v_{t_3}, \ldots, v_{t_{2\lceil c/2\rceil -1}}\},\{w_2,w_4, \ldots, w_{2\lceil c/2\rceil}\}]\] is isomorphic to $\S_{\lceil c/2\rceil}\tri\S_{\lceil c/2\rceil}$.
	
        By Lemma~\ref{lem:lengthonecase2}, $\S_{\lceil c/2\rceil}\tri\S_{\lceil c/2\rceil}$ or  
	$\S_{\lceil c/2\rceil}\tri\K_{\lceil c/2\rceil}$ has a vertex-minor isomorphic to $P_{c}$.   
\end{proof}

\begin{LEM}\label{lem:fiber2}
  Let $H$ be a broom in a graph $G$
  having $n$ fibers $F_1,F_2,\ldots,F_n$.
  Let  $v_1, v_2, \ldots, v_n$ be vertices in 
  $V(G)\setminus V(H)$
  such that
  \begin{enumerate}
  \item $v_i$ blocks $F_j$ if and only if $i=j$,
  \item $v_i$ has a neighbor in $F_j$ for all $i$ and $j$.
  \end{enumerate}
  If $n\ge R(c+2,c+2)$, 
  then $G$ has a vertex-minor isomorphic to $\K_c\mat\K_c$.
\end{LEM}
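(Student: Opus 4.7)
The plan is to exhibit, inside a locally equivalent graph, an induced subgraph isomorphic to $\K_{c+2}\antimat\S_{c+2}$ or $\S_{c+2}\antimat\S_{c+2}$, and then invoke Lemma~\ref{lem:lengthonecase1} to extract $\K_c\mat\K_c$ as a vertex-minor.

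First, since $n\ge R(c+2,c+2)$, Ramsey's theorem yields indices $i_1<\cdots<i_{c+2}$ such that $\{v_{i_1},\ldots,v_{i_{c+2}}\}$ is either a clique or a stable set in $G$; relabel these as $v_1,\ldots,v_{c+2}$ and restrict attention to the corresponding fibers $F_1,\ldots,F_{c+2}$. Let $x$ be the center of the broom $H$. For every $j\ne i$, the non-blocking hypothesis gives $\rho^*_G(V(F_i),(V(H)\setminus V(F_i))\cup\{v_j\})\le 1=\rho^*_G(V(F_i),V(H)\setminus V(F_i))$ (the equality holds because $x$ is the only vertex of $V(H)\setminus V(F_i)$ with neighbors in $F_i$, and has at least one such neighbor as $H$ is connected). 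Combined with the hypothesis that $v_j$ has a neighbor in $F_i$, this forces $N_G(v_j)\cap V(F_i)=N_G(x)\cap V(F_i)$, because two nonzero columns of a rank-$1$ matrix over $\mathbb{F}_2$ must be identical. On the other hand, $v_i$ blocks $F_i$, so its neighborhood there differs from $x$'s, and one can pick a witness $w_i\in V(F_i)$ on which $v_i$ and $x$ disagree.

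Now, selecting each $w_i$ of a uniform ``X-type,'' namely with $x\sim w_i$ and $v_i\not\sim w_i$ (so that $v_j\sim w_i$ for every $j\ne i$ by the preceding paragraph), the induced subgraph $G[\{v_1,\ldots,v_{c+2}\}\cup\{w_1,\ldots,w_{c+2}\}]$ takes the form $\K_{c+2}\antimat\S_{c+2}$ in the clique case and $\S_{c+2}\antimat\S_{c+2}$ in the stable case. Here I use that the $w_i$'s lie in pairwise distinct fibers of the induced subgraph $H$, hence $W=\{w_1,\ldots,w_{c+2}\}$ is a stable set of $G$. Lemma~\ref{lem:lengthonecase1}(1) then supplies $\K_{c+1}\mat\K_{c+1}$ as a vertex-minor in the clique case, and Lemma~\ref{lem:lengthonecase1}(2) supplies $\K_c\mat\K_c$ in the stable case; either outcome contains the required $\K_c\mat\K_c$ as a vertex-minor.

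The principal obstacle is arranging the uniform X-type choice: a fiber $F_i$ may admit only V-type witnesses (with $x\not\sim w$ and $v_i\sim w$), in which case one must either swap the roles of $x$ and $v_i$ via a suitable local complementation or pivot that leaves the other fibers unaffected, or show that an uncontrolled mix of X-type and V-type witnesses still yields the target vertex-minor through additional local complementation on the resulting bipartite structure. Carrying this out so that no pigeonhole factor inflates the Ramsey bound beyond $R(c+2,c+2)$ is the delicate technical step, and will likely rely on a careful case analysis of the three possible patterns of the rank-$2$ submatrix $\rho^*_G(V(F_i),\{x,v_i\})=2$ for each fiber.
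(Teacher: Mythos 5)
Your overall strategy is the same as the paper's: use Ramsey to make $\{v_{t_1},\dots,v_{t_{c+2}}\}$ homogeneous, note that non-blocking plus having a neighbor forces $N_G(v_j)\cap V(F_i)=N_G(x)\cap V(F_i)$ for $j\ne i$, pick one witness per fiber to build $\S_{c+2}\antimat\S_{c+2}$ or $\K_{c+2}\antimat\S_{c+2}$, and finish with Lemma~\ref{lem:lengthonecase1}. However, the step you yourself flag as ``the principal obstacle'' --- what to do when a fiber $F_{t_i}$ contains no vertex adjacent to $x$ but not to $v_{t_i}$ --- is exactly the crux of the lemma, and you have not carried it out; you only list candidate strategies. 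This is a genuine gap, and moreover the first strategy you suggest (swapping the roles of $x$ and $v_{t_i}$) cannot work fiber-by-fiber, since $x$ is shared by all $c+2$ fibers and any local complementation or pivot that ``swaps'' $x$ with $v_{t_i}$ would disturb the witnesses already secured in the other fibers.

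The paper's resolution is local to each fiber and costs nothing in the Ramsey bound. Write $A_1^{(i)}=(N_G(v_{t_i})\setminus N_G(x))\cap V(F_{t_i})$, $A_2^{(i)}=(N_G(v_{t_i})\cap N_G(x))\cap V(F_{t_i})$, $A_3^{(i)}=(N_G(x)\setminus N_G(v_{t_i}))\cap V(F_{t_i})$. If $A_3^{(i)}=\emptyset$ (no X-type witness), then both $A_1^{(i)}$ and $A_2^{(i)}$ are nonempty, since otherwise the columns of $x$ and $v_{t_i}$ restricted to $F_{t_i}$ would be equal or one of them zero, contradicting $\rho^*_G(V(F_{t_i}),\{x,v_{t_i}\})=2$. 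Take $a_1\in A_1^{(i)}$, $a_2\in A_2^{(i)}$ at minimum distance inside the fiber; the internal vertices of a shortest path between them lie outside $A_1^{(i)}\cup A_2^{(i)}\cup A_3^{(i)}$, hence are non-adjacent to $x$ and to every $v_{t_j}$, so locally complementing at them makes $a_1a_2$ an edge without touching any adjacency involving the $v_{t_j}$'s or the other fibers. One further local complementation at $a_1$ (adjacent to $v_{t_i}$, non-adjacent to every $v_{t_j}$ with $j\ne i$) flips $a_2$'s adjacency to $v_{t_i}$ only, turning $a_2$ into the required witness $z_i$: non-adjacent to $v_{t_i}$, adjacent to all $v_{t_j}$, $j\ne i$, and still in its own fiber so that $\{z_1,\dots,z_{c+2}\}$ remains stable. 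Without this construction your proof does not go through, since fibers admitting only V-type witnesses genuinely occur.
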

\begin{proof}
    If $i\ne j$, then $v_j$ does not block $F_i$ and therefore $N_G(v_j)\cap V(F_i)=N_G(x)\cap V(F_i)$.
	Since $n\ge R(c+2,c+2)$, there exist $1\le t_1< t_2 \cdots < t_{c+2}\le n$ such that
  	$\{v_{t_1}, v_{t_2}, \ldots, v_{t_{c+2}}\}$ is a clique or a stable set of $G$.

  	We claim that for each $1\le i\le c+2$, 
  	there exist a sequence $w^{(i)}_1,w^{(i)}_2, \ldots,
        w^{(i)}_{k_i}$ of $k_i\ge 0$ vertices in
        $V(F_{t_i})\setminus (N_G(x)\cup N_G(v_{t_i}))$ and $z_i\in V(F_{t_i})$
  	such that $z_i$ is not adjacent to $v_{t_i}$ in $G*w^{(i)}_1*w^{(i)}_2* \cdots* w^{(i)}_{k_i}$
  	but $z_i$ is adjacent to $v_{t_j}$ in $G*w^{(i)}_1*w^{(i)}_2* \cdots* w^{(i)}_{k_i}$ for all $j\neq i$.
  	
  	Let $A^{(i)}_1=(N_G(v_{t_i})\setminus N_G(x))\cap V(F_{t_i})$, 
  	$A^{(i)}_2=(N_G(v_{t_i})\cap N_G(x))\cap V(F_{t_i})$ and $A^{(i)}_3=(N_G(x)\setminus N_G(v_{t_i}))\cap V(F_{t_i})$.

	If $A^{(i)}_3\neq \emptyset$, 
	then a vertex $z_i$ in $A^{(i)}_3$ satisfies the claim. 
	So we may assume $A^{(i)}_3$ is empty.
	Then $A^{(i)}_1\neq \emptyset$ and 
	$A^{(i)}_2\neq \emptyset$,
	otherwise $\rho^*_G(\{v_{t_i},v_{t_j}\},V(F_{t_i}))\le1$ for all $j\neq i$
	because $N_G(v_{t_j})\cap V(F_{t_i})=N_G(x)\cap V(F_{t_i})$. 
	We choose $a^{(i)}_1\in A^{(i)}_1$ and $a^{(i)}_2\in A^{(i)}_2$ so that 
	the distance from $a^{(i)}_1$ to $a^{(i)}_2$ in $F_i$ is minimum.
	
	Let $P_i$ be a shortest path from $a^{(i)}_1$ to $a^{(i)}_2$ in $F_{t_i}$.
	Note that each internal vertex of $P_i$ is not contained in $A^{(i)}_1\cup A^{(i)}_2$.
	After applying local complementation at all internal vertices of $P_i$, 
	$a^{(i)}_1$ is adjacent to $a^{(i)}_2$
        and 
        $v_{t_i}$,
        and non-adjacent to 
        $v_{t_j}$ for all $j\ne i$.
	So by applying one more local complementation at $a^{(i)}_1$
        if necessary,  
        we can delete the edges between $a^{(i)}_2$ and $v_{t_j}$ for all
        $j\ne i$.
	And then, $z_i=a^{(i)}_2$ satisfies the claim.

        Now, take $G'=G*w^{(1)}_1*\cdots *w^{(1)}_{k_1}
        *w^{(2)}_1*\cdots *w^{(2)}_{k_2}
        \cdots
        *w^{(c+2)}_1*\cdots *w^{(c+2)}_{k_{c+2}}$. 
        Since each $w^{(i)}_k$ has no neighbors in $\{v_{t_1}, v_{t_2}, \ldots,
        v_{t_{c+2}}\}$ in $G$,
        applying local complementation at $w^{(i)}_{k}$ does not change the adjacency between any two vertices in $\{v_{t_1}, v_{t_2}, \ldots, v_{t_{c+2}}\}$.
        Thus the induced subgraph of $G'$ on 
        $\{z_1, z_2, \ldots, z_{c+2}\}\cup \{v_{t_1}, v_{t_2}, \ldots,
        v_{t_{c+2}}\}$ is
        isomorphic to $\S_{c+2}\antimat\S_{c+2}$ or $\S_{c+2}\antimat\K_{c+2}$,
	and by Lemma~\ref{lem:lengthonecase1}, $G$ has a vertex-minor isomorphic to $\K_{c}\mat\K_{c}$. 
\end{proof}

\begin{LEM}\label{lem:fiber3}
  Let $H$ be a $(h,n,\ell)$-broom in a graph $G$
  having $n$ fibers $F_1,F_2,\ldots,F_n$
  given with $n$ vertices $v_1, v_2, \ldots, v_n$ in
  $V(G)\setminus V(H)$
  such that
  \begin{enumerate}
			\item $v_i$ blocks $F_j$ if and only if $i=j$,
			\item if $i\neq j$, then $v_i$ has no neighbor in $F_j$.
			\end{enumerate}
	If $n\ge R(t+(c-1)(c-3),c)$, then $G$ has a vertex-minor isomorphic to $P_c$, $\K_c\mat\K_c$, or a $(h,t,\ell +1)$-broom.
\end{LEM}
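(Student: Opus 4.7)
The plan is to apply Ramsey's theorem to $v_1,v_2,\ldots,v_n$, viewed as vertices of a complete graph whose edges are coloured by adjacency in $G$. Since $n\ge R(t+(c-1)(c-3),c)$, this produces either (A) a $c$-element index set $I$ with $\{v_i:i\in I\}$ a clique of $G$, or (B) an index set $J$ of size $m:=t+(c-1)(c-3)$ with $\{v_j:j\in J\}$ a stable set. I would first note that every $v_i$ must have at least one neighbor $u_i\in V(F_i)$, since otherwise the column of $v_i$ in $A(G)[V(F_i),(V(H)\setminus V(F_i))\cup\{v_i\}]$ would be zero and $v_i$ could not block $F_i$.

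In Case~(A), the induced subgraph on $\{v_i:i\in I\}\cup\{u_i:i\in I\}$ is isomorphic to $\K_c\mat\S_c$: the $v_i$'s form a clique, the $u_i$'s lie in distinct fibers hence are pairwise non-adjacent, and $v_iu_j$ is an edge only when $i=j$ by hypothesis~(2). My plan is to upgrade this to a $\K_c\mat\K_c$ vertex-minor by invoking the center $x$ and the fact that each $F_i$ is a connected subgraph containing a neighbor of $x$. For each $i\in I$ I would apply local complementations along a shortest path in $F_i$ from $u_i$ to some neighbor of $x$ (a standard path-pivoting argument) to produce, in the modified graph, a vertex $w_i\in V(F_i)$ adjacent to both $x$ and $v_i$, without disturbing the adjacencies of $v_i$ outside $F_i$. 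Suitable pivots would then kill any $xv_i$ edges, and local complementation at $x$ would turn the stable set $\{w_i\}$ into a clique; deleting $x$ would leave $\K_c\mat\K_c$ as an induced subgraph.

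For Case~(B), write $J=\{j_1,\ldots,j_m\}$. If the handle $P$ has length $h\ge c-2$, then prepending a fiber neighbor of $x$ to $P$ gives an induced path on at least $c$ vertices, supplying $P_c$; so I may assume $h\le c-3$. I would then prove the claim: every non-center handle vertex is adjacent to at most $c-1$ of the stable $v_j$'s, for otherwise $G$ admits a vertex-minor isomorphic to $P_c$ or $\K_c\mat\K_c$. Indeed, if some $y\in V(P)\setminus\{x\}$ were adjacent to $c$ of the $v_j$'s, say $v_{j_1},\ldots,v_{j_c}$, then $G[\{y,v_{j_1},\ldots,v_{j_c},u_{j_1},\ldots,u_{j_c}\}]$ would be a caterpillar with spine $y,v_{j_1},\ldots,v_{j_c}$ and pendants $u_{j_k}$; local complementation at $y$ makes the $v_{j_k}$'s a clique while essentially leaving each fiber and the center $x$ intact, reducing to Case~(A).

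Granting the claim, at most $h(c-1)\le (c-3)(c-1)=(c-1)(c-3)$ of the stable $v_j$'s are adjacent to some non-center handle vertex, so at least $t$ of them, say $v_{j'_1},\ldots,v_{j'_t}$, have no neighbor in $V(P)\setminus\{x\}$. Then the induced subgraph
\[
H':=G\Bigl[V(P)\cup\bigcup_{k=1}^{t}\bigl(V(F_{j'_k})\cup\{v_{j'_k}\}\bigr)\Bigr]
\]
is a $(h,t,\ell+1)$-broom: the handle is $P$, and for each $k$ the set $V(F_{j'_k})\cup\{v_{j'_k}\}$ is connected (since $v_{j'_k}$ has a neighbor in $F_{j'_k}$), has size $\ell+1$, forms a component of $H'\setminus V(P)$, and has no edges to any other fiber or to a non-center handle vertex. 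The hardest part will be the Case~(A) manipulation: concretely producing, for each $i\in I$, a common neighbor $w_i$ of $x$ and $v_i$ inside $F_i$ and eliminating every $xv_i$ edge, so that the $\K_c\mat\S_c$-substructure together with $x$ actually delivers $\K_c\mat\K_c$ rather than only $\K_{c-1}\mat\K_{c-1}$.
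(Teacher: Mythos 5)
Your proposal is correct and its skeleton is the same as the paper's: Ramsey on $v_1,\dots,v_n$, a clique case yielding $\K_c\mat\K_c$ by collapsing each fiber to a common neighbour of $x$ and $v_i$ and locally complementing at $x$, and a stable-set case in which each non-center handle vertex meets at most $c-1$ of the $v_j$'s, leaving $m-(c-1)h\ge t$ of them to extend their fibers to length $\ell+1$. Where you differ is only in the auxiliary tools. For the clique case the paper applies Lemma~\ref{lem:blockone} to $G[V(F_{t_i})\cup\{x,v_{t_i}\}]$ (legitimate because $v_{t_i}$ blocking $F_{t_i}$ forces $\rho^*_G(\{x,v_{t_i}\},V(F_{t_i}))\ge 2$, as $x$ is the only vertex of $V(H)\setminus V(F_{t_i})$ with neighbours in the fiber); your hand-rolled reduction works but needs the standard care of picking a closest pair $a\in N(v_i)\cap V(F_i)$, $b\in N(x)\cap V(F_i)$ so that the internal vertices of the connecting path avoid $N(x)\cup N(v_i)$, and the observation that after deleting the rest of $F_i$ one has $N(w_i)=\{x,v_i\}$, so $*w_i$ flips exactly the unwanted edge $xv_i$. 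For the degree claim in the stable case, the paper invokes Lemma~\ref{lem:blockmany} on $\{x,y\}$ with components $V(F_{t_i})\cup\{v_{t_i}\}$, whereas you locally complement at $y$ --- which disturbs no fiber adjacency, since non-center handle vertices have no fiber neighbours --- to make the $c$ offending $v_j$'s a clique and recurse into your clique case; this is sound and somewhat more self-contained, at the cost of re-running the clique-case machinery. The remaining steps (the $h\le c-3$ reduction and the verification that the enlarged fibers are components of the new broom minus the handle) match the paper exactly.
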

\begin{proof}
  	Since $n\ge R(t+(c-1)(c-3),c)$, there exist $1\le t_1< t_2 \cdots < t_{k}\le n$ such that
	either
	\begin{enumerate}
  	\item $k=c$ and $\{v_{t_1}, v_{t_2}, \ldots, v_{t_{k}}\}$ is a clique in $G$, or 
  	\item $k=t+(c-1)(c-3)$ and $\{v_{t_1}, v_{t_2}, \ldots, v_{t_{k}}\}$ is a stable set in $G$. 
	\end{enumerate}

	First, we assume that $k=c$ and $\{v_{t_1}, v_{t_2}, \ldots, v_{t_{k}}\}$ is a clique.
	For each $t_i$, 
	since $\rho^*_G(\{x,v_{t_i}\},V(F_{t_i}))\ge 2$, 
	by Lemma~\ref{lem:blockone}, 
	there exists some sequence $w_1, w_2, \ldots, w_n\in V(F_{t_i})$ 
	of (not necessarily distinct) vertices such that
    $G[V(F_{t_i})\cup \{x,v_{t_i}\}]*w_1*w_2\cdots *w_n$ has 
    an induced path of length $2$ from $v_{t_i}$ to $x$. 
    By applying local complementation at $x$, we have a vertex-minor isomorphic to $\K_{c}\mat\K_{c}$.
    
    Now, suppose that $k=t+(c-1)(c-3)$ and $\{v_{t_1}, v_{t_2}, \ldots, v_{t_{k}}\}$ is a stable set in $G$.
    Let $P$ be the handle of $H$.
    If $h+2\ge c$, then we have $P_c$ as an induced subgraph. 
    Thus we may assume that $h\le c-3$.
    We assume that a vertex $y\in V(P)\setminus \{x\}$ adjacent to $c$ vertices in $\{v_1, v_2, \ldots, v_{k}\}$.
    Then since $\rho^*_G(\{x,y\}),V(F_i)\cup \{v_{t_i}\})=2$ for each $i$, 
    by Lemma~\ref{lem:blockmany},
    we have a vertex-minor isomorphic to $\K_c\mat\K_c$.
    Thus, every vertex in the handle other than $x$ 
    cannot have more than $c-1$ neighbors in $\{v_{t_1}, v_{t_2}, \ldots, v_{t_{k}}\}$.
    By deleting at most $(c-1)h$ vertices in $\{v_{t_1}, v_{t_2}, \ldots, v_{t_{k}}\}$, 
    we can remove all edges from $V(P)\setminus \{x\}$ to $\{v_{t_1}, v_{t_2}, \ldots, v_{t_{k}}\}$. 
    Since \[k-(c-1)h\ge k-(c-1)(c-3)\ge t,\]
    we have a vertex-minor isomorphic to a~$(h,t,\ell+1)$-broom. 
\end{proof}

\begin{PROP}\label{prop:incfiber}
For positive integers $c$ and $t$, 
there exists $N=g_2(c,t)$ such that 
for all integers $\ell\ge 2$ and $h\ge 1$, 
every prime graph having a~$(h,N,\ell)$-broom 
has a vertex-minor isomorphic to a~$(h,t,\ell+1)$-broom,
$P_c$, or $\K_c\mat\K_c$. 	
\end{PROP}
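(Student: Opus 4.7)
The plan is to use Proposition~\ref{prop:block} to attach a blocking vertex to every fiber and then refine the collection by Ramsey-type arguments until one of Lemmas~\ref{lem:fiber1}--\ref{lem:fiber3} or Lemma~\ref{lem:blockmany} applies. Let $H$ be a $(h,N,\ell)$-broom in $G$ with center $x$ and fibers $F_1,\ldots,F_N$. Because $H$ is connected and induced, only $x$ among $V(H)\setminus V(F_i)$ has neighbors in $V(F_i)$, so $\rho^*_G(V(F_i),V(H)\setminus V(F_i))=1$. Since $\ell\ge 2$ and $G$ is prime, Proposition~\ref{prop:block} furnishes for each $i$ a blocking vertex $v_i\in V(G)\setminus V(H)$ for $(V(F_i),V(H)\setminus V(F_i))$. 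Directly from the definition of cut-rank, $v$ blocks $F$ if and only if $\emptyset\ne N_G(v)\cap V(F)\ne N_G(x)\cap V(F)$, so each ordered pair $(v,F)$ admits exactly one of three labels: $\mathrm{N}$ (no neighbor), $\mathrm{S}$ (neighbor, same as $x$'s), or $\mathrm{B}$ (blocks).

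A first observation disposes of any vertex that blocks many fibers. If some vertex $v$ blocks $c$ fibers $F_{i_1},\ldots,F_{i_c}$, form $G':=G[\{x,v\}\cup\bigcup_k V(F_{i_k})]$; since distinct fibers are nonadjacent in $H$ they are exactly the components of $G'\setminus\{x,v\}$, and the blocking assumption gives $\rho^*_{G'}(\{x,v\},V(F_{i_k}))=2$ for every $k$, so Lemma~\ref{lem:blockmany} produces $\K_c\mat\K_c$ as a vertex-minor of $G$. Henceforth assume no vertex blocks $c$ or more fibers.

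A greedy selection now yields a clean sub-collection. Starting from the pool of all $N$ fibers, iterate: pick any pool fiber $F$ and a blocker $v$ of $F$; then delete from the pool every fiber that $v$ blocks (including $F$). Each step removes at most $c-1$ fibers, so the process produces $n\ge 1+\lfloor (N-1)/(c-1)\rfloor$ selected pairs $(F_1,v_1),\ldots,(F_n,v_n)$ in which the $v_i$'s are pairwise distinct and $v_i$ does not block $F_j$ whenever $i<j$. For each pair $1\le i<j\le n$, assign two labels: $\alpha_{ij}\in\{\mathrm{N},\mathrm{S}\}$ for $(v_i,F_j)$ and $\beta_{ij}\in\{\mathrm{N},\mathrm{S},\mathrm{B}\}$ for $(v_j,F_i)$. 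A $6$-color Ramsey application on $\binom{n}{2}$ pairs gives a sub-collection of size $m$ with a single joint label $(\alpha,\beta)$.

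The four cases with $\beta\ne\mathrm{B}$ match the structural lemmas exactly: $(\mathrm{N},\mathrm{N})$ triggers Lemma~\ref{lem:fiber3} (producing $P_c$, $\K_c\mat\K_c$, or a $(h,t,\ell+1)$-broom), $(\mathrm{S},\mathrm{S})$ triggers Lemma~\ref{lem:fiber2} (producing $\K_c\mat\K_c$), and $\{\alpha,\beta\}=\{\mathrm{N},\mathrm{S}\}$ triggers Lemma~\ref{lem:fiber1} after reversing the indexing if necessary (producing $P_c$). In the two remaining cases $\beta=\mathrm{B}$, the vertex $v_m$ blocks $F_1,\ldots,F_{m-1}$ together with $F_m$, contradicting the standing assumption once $m\ge c$. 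Setting $m:=\max\bigl(R(c+2,c+2),\,R(t+(c-1)(c-3),c)\bigr)$ and then $N:=(c-1)\cdot R(m,m,m,m,m,m)$ defines a valid $g_2(c,t)$. The main delicacy is aligning each Ramsey class with the precise neighbor/blocking hypothesis of a structural lemma; this alignment hinges on refining the coarse ``blocks / does not block'' dichotomy into the three-valued $\mathrm{N}/\mathrm{S}/\mathrm{B}$ trichotomy, which is itself a consequence of the broom structure forcing the $V(F_i)$-cut to have rank exactly $1$.
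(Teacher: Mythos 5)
Your proof is correct and follows essentially the same route as the paper's: attach a blocking vertex to every fiber, use Lemma~\ref{lem:blockmany} to cap at $c-1$ the number of fibers any single vertex can block, extract a family in which $v_i$ blocks (essentially) only $F_i$, Ramsey-colour the pairs by the neighbour pattern, and dispatch to Lemmas~\ref{lem:fiber1}--\ref{lem:fiber3}. The only deviation is that where the paper invokes Lemma~\ref{lem:biparim} to obtain a genuine induced matching in the blocking relation, you use a one-sided greedy selection and absorb the residual possibility that $v_j$ blocks $F_i$ for $i<j$ into an extra Ramsey colour, which you correctly rule out because such a $v_j$ would block at least $c$ fibers; both variants work.
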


\begin{proof}
Let $N=g_2(c,t)=(c-1)m$, where $m=R(m_1, m_2, m_2, m_2)$, 
$m_1=R(t+(c-1)(c-3),c)$, and $m_2=R(c+2,c+2)$.
Let $H$ be a~$(h,N,\ell)$-broom of $G$.
	If a vertex $w$ in $V(G)\setminus V(H)$ blocks $c$ fibers of $H$, then  
	for each fiber $F$ of them, $\rho^*_G(\{w,x\},V(F))=2$. 
	So by Lemma~\ref{lem:blockmany}, $G$ has a vertex-minor isomorphic to $\K_{c}\mat\K_{c}$. 
	Thus, a vertex in $V(G)\setminus V(H)$ can block at most $c-1$ fibers of $H$. 
	
	For each fiber $F$ of $H$, 
	there is a vertex $v\in V(G)\setminus V(H)$ that blocks $F$
	because $G$ is prime.
	Thus, by Lemma~\ref{lem:biparim}, 
	there are ${g_2(c,t)}/{(c-1)}=m$ vertices $v_1, v_2, \ldots, v_{m}$ in $V(G)\setminus V(H)$ and
	 fibers $F_1, F_2, \ldots, F_{m}$ of $H$ 
	such that for $1\le i,j\le m$, $v_i$ blocks $F_j$ if and only if $i=j$.
	For $i\neq j$, either $v_i$ has no neighbor in $F_j$ or $v_i$ has a neighbor in $F_j$ but $\rho^*_G(\{v_i,x\},V(F_j))=1$.

        We assume that $V(K_m)=\{1,2,\ldots,m\}$.
	We color the edges of $K_m$ such that an edge $\{i,j\}$ is 
	\begin{itemize}
	\item green if $N_G(v_i)\cap V(F_j)\neq \emptyset$ and $N_G(v_j)\cap V(F_i)\neq \emptyset$,
	\item red if $N_G(v_i)\cap V(F_j)\neq \emptyset$ and $N_G(v_j)\cap V(F_i)= \emptyset$,
	\item yellow if $N_G(v_i)\cap V(F_j)= \emptyset$ and $N_G(v_j)\cap V(F_i)\neq \emptyset$,
	\item blue if $N_G(v_i)\cap V(F_j)=N_G(v_j)\cap V(F_i)= \emptyset$.
	\end{itemize}
	Since 
	$\abs{V(K_m)}=m=R( m_1,m_2, m_2, m_2 )$, by Ramsey's theorem, 
	either $K_m$ has a green clique of size $m_1$,
	or $K_m$ has a monochromatic clique of size $m_2$ which is red,
        yellow, or blue.
	 
	If $K_m$ has a red clique $C$ of size $m_2$, 
	then for $i,j\in C$, 
	$v_i$ has a neighbor in $F_j$ if and only if $i\le j$.
 	Since $m_2\ge R(c+1,c+1)$, 
 	by Lemma~\ref{lem:fiber1}, 
	 $G$ has a vertex-minor isomorphic to $P_{c}$.

	Similarly, if $K_m$ has a yellow clique $C$ of size $m_2$,
	by Lemma~\ref{lem:fiber1}, 
	$G$ has a vertex-minor isomorphic to $P_c$. 
	 
	If $K_m$ has a blue clique $C$ of size $m_2$,
	then for distinct $i,j\in C$,
	$v_i$ has a neighbor in $F_j$.
	Since $m_2=R(c+2,c+2)$, 
	by Lemma~\ref{lem:fiber2}, 
  	$G$ has a vertex-minor isomorphic to $\K_c\mat\K_c$.
  
   If $K_m$ has a green clique $C$ of size $m_1$, then 
   for distinct $i,j\in C$,
   $v_i$ has no neighbor in $F_j$.
	Since $m_1=R(t+(c-1)(c-3),c)$, 
	by Lemma~\ref{lem:fiber3}, 
	$G$ has a vertex-minor isomorphic to $P_c$, $\K_c\mat\K_c$, or a $(h,t,\ell +1)$-broom.
	\end{proof}	
\subsection{Increasing the height of a broom}

\begin{PROP}\label{prop:nexthandle}
  For positive integers $c$, $ t$, 
  there exists $N=g_3(c,t)$ such that 
  for $h\ge 1$, 
  every prime graph having a~$(h,1,N)$-broom
  has a vertex-minor isomorphic to a~$(h+1,t,1)$-broom or $P_c$.
\end{PROP}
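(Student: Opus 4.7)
The plan is to apply Theorem~\ref{thm:binary} to the fiber $F$ of the given $(h,1,N)$-broom to extract either a long induced path (yielding $P_c$ directly) or a large induced star, and then to align this star with the handle via local complementations. Let $H$ be the $(h,1,N)$-broom in $G$ with center $v$, handle $v,u_1,\ldots,u_h$, and fiber $F$ on $N$ vertices, and assume $G$ has no vertex-minor isomorphic to $P_c$. I choose $N = g_0(c,T)$ for a sufficiently large $T=T(c,t)$, so that Theorem~\ref{thm:binary} applied to the connected graph $F$ yields either $P_c$ as an induced subgraph (and we are done), or a clique $K_{T+1}$, or a star $K_{1,T}$. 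In the clique case, a single local complementation at a vertex of the clique converts it into a star at that vertex; thus after at most one local complementation within $F$, we obtain a vertex $y \in V(F)$ with $T$ pairwise non-adjacent leaves $L \subseteq V(F)$.

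The ideal configuration to aim for is that $y$ is adjacent to $v$ and some subset $L' \subseteq L$ of size $t$ consists of leaves non-adjacent to $v$. When this holds, the induced subgraph on $\{y,v,u_1,\ldots,u_h\}\cup L'$ is precisely a $(h+1,t,1)$-broom with new center $y$ and new handle $y,v,u_1,\ldots,u_h$; the fiber conditions hold automatically because leaves in $L' \subseteq V(F)$ are non-adjacent to the old handle vertices $u_1,\ldots,u_h$ (as $V(F)$ is a separate component of $H$ minus the handle). When this configuration fails, I partition $L$ into $L_0$ (leaves non-adjacent to $v$) and $L_1$ (leaves adjacent to $v$), and split into two subcases. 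If $y$ is not adjacent to $v$, the connectedness of $F$ yields a shortest induced path in $F$ from $y$ to some $w \in N_G(v)\cap V(F)$; if this path has length at least $c-1$, then combined with $v$ it gives an induced $P_c$, and otherwise local complementations along this short path, using techniques akin to Lemmas~\ref{lem:shortening} and \ref{lem:blockone}, either bring $y$ adjacent to $v$ or transfer the star structure to $w$, while preserving a stable set of $t$ leaves non-adjacent to $v$. If instead $y$ is adjacent to $v$ but $|L_0| < t$, so $|L_1| \ge T-t$, a secondary Ramsey-type argument applied to the stable set $L_1$ (a common neighborhood of $y$ and $v$) either yields the desired broom directly or extracts an induced $P_c$ via a path through a vertex of $L_1$. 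The choice of $T$ of Ramsey-type magnitude in $c$ and $t$ guarantees enough surviving fibers in all cases.

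The main obstacle is the subcase where $y$ is not adjacent to $v$: the local complementations used to connect $y$ to $v$ can flip adjacencies between vertices of $L$ and $v$ as well as modify the star, so careful bookkeeping (similar in spirit to the case analysis appearing in the proof of Proposition~\ref{prop:incfiber}) is required to ensure that enough leaves remain both non-adjacent to $v$ and pairwise non-adjacent, and adjacent to the new center. The Ramsey bound for $T$ must absorb these losses, and it is this parameter, rather than the structural construction, that dictates the final bound $g_3(c,t)$.
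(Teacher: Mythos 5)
Your opening move coincides with the paper's: apply Theorem~\ref{thm:binary} to the connected fiber $F$ with $N=g_0(c,T)$, and reduce (after at most one local complementation in the clique case) to a vertex $y\in V(F)$ with $T$ stable neighbours. Your ``ideal configuration'' is also correct as far as it goes. But the generic situation is that $y$ is \emph{not} adjacent to the centre $v$, and that is exactly where your argument stops being a proof. The sentence ``local complementations along this short path \dots either bring $y$ adjacent to $v$ or transfer the star structure to $w$, while preserving a stable set of $t$ leaves non-adjacent to $v$'' is the entire content of the proposition and is not established. Worse, the route you propose is genuinely dangerous: to contract the path from $v$ to $y$ you must locally complement (or pivot) at its internal vertices, and any internal vertex adjacent to many leaves of the star turns those leaves into a clique in one step and can detach them from $y$; there is no reason the internal vertices avoid the leaves, and a Ramsey-sized $T$ does not ``absorb'' adjacency flips created by local complementation --- each operation can corrupt \emph{all} $T$ leaves simultaneously. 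Subcase (b) ($y$ adjacent to $v$ but most leaves adjacent to $v$) is likewise only sketched; it is fixable by a pivot on an edge from $y$ to a leaf, but you do not verify what that pivot does to the handle and to the remaining leaves.

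The paper's proof avoids both difficulties by not insisting that the new centre be adjacent to $v$. It keeps the clique $F'$ (of size $2t+1$), takes a \emph{shortest} path $P=p_1\cdots p_m$ from $p_1=x$ to $F'$ inside the broom, so that only $p_{m-1}$ has neighbours in $F'$ among $p_1,\dots,p_{m-1}$, and sets $S=N(p_{m-1})\cap V(F')$. If $\abs S\le t$ it deletes $S\setminus\{p_m\}$ and locally complements at a vertex $p_{m+1}\in V(F')\setminus S$; if $\abs S\ge t+1$ it locally complements at $p_m$. In either case a \emph{single} local complementation produces a star with at least $t$ stable leaves whose centre is the endpoint of an induced path of length at least $h+1$ through the old handle, $x$, and $P$; one then trims the far end of the handle to length exactly $h+1$. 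The two ideas you are missing are (i) the handle may be extended \emph{into} $F$ and shortened afterwards, so adjacency of the new centre to $v$ is never needed, and (ii) choosing the path as a shortest path to the clique confines all interaction between the path and the clique to the single vertex $p_{m-1}$, which is then disposed of by the $\abs S\le t$ versus $\abs S\ge t+1$ dichotomy. Without something playing the role of (i) and (ii), your case (a) does not go through.
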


\begin{proof}
       Let $N=g_3(c,t)=g_0(c,2t)$ 
       where $g_0$ is given in Theorem~\ref{thm:binary}.
       Suppose that $G$ has a~$(h,1,N)$-broom $H$
       and let $x$ be the center of $H$.
       Let $F$ be the fiber of $H$.

	Since $F$ is connected, by Theorem~\ref{thm:binary}, $F$ has an induced subgraph isomorphic to $P_c$, or 
	$F$ has a vertex-minor isomorphic to $K_{2t+1}$. 
	We may assume that $F$ has an induced subgraph $F'$ isomorphic to $K_{2t+1}$.  
	Let $P=p_1p_2 \ldots p_m$ be a shortest path from $p_1=x$ to $F'$ in $H$.
	Note that $m\ge 2$ and $p_{m-1}$ is adjacent to at least one vertices of $F'$. 
	Let $S=N_H(p_{m-1})\cap V(F')$.

	We claim that there exists a vertex $v\in V(F')$ such that 
	$(G*v)[V(F)\cup \{x\}]$ has an induced path of length at least $m-1$ from $x$,
	and the last vertex of the path has $t$ neighbors in $F'$ which form a stable set in $G$. 
	
	If $\abs{S}\le t$, then choose $p_{m+1}\in V(F')\setminus S$ and 
	we delete $S\setminus p_m$ from $F'$. 
	And by applying local complementation at $p_{m+1}$, we obtain a path from $x$ to $p_{m+1}$ such that $p_{m+1}$ has $t$ neighbors in $F'$ which form a stable set. 
	
	If $\abs{S}\ge t+1$, then 
	by applying local complementation at $p_m$, we obtain a path from $x$ to $p_{m}$ such that $p_{m}$ has $t$ neighbors in $F'$ which form a stable set. 
	Thus, we prove the claim.
	
	Since $m\ge 2$,
	the union of the handle of $H$ and the path in the claim 
	form a path of length at least $h+1$, 
	and the last vertex of the path has $t$ neighbors which form a stable set in $F'$.
	Therefore, $G$ has a vertex-minor isomorphic to a~$(h+1,t,1)$-broom.
	\end{proof}
 
\begin{PROP}\label{prop:longerhandle}
  For positive integers $c$, $t$,
  there exists $N=g_4(c,t)$ such that 
  for all $h\ge 1$, 
  every prime graph having a~$(h,N,1)$-broom
  has a vertex-minor isomorphic to a~$(h+1,t,1)$-broom,
  $P_c$, or $\K_c\mat\K_c$.
\end{PROP}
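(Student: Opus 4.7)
The plan is to feed Propositions~\ref{prop:lengthone}, \ref{prop:incfiber}, and \ref{prop:nexthandle} into one another in the obvious way: start from the broom of length~$1$ and enormous width that is given by hypothesis, use Proposition~\ref{prop:lengthone} to bump the length up to~$2$ at the cost of shrinking the width, then iterate Proposition~\ref{prop:incfiber} to push the length up to $g_3(c,t)$ (still at some positive width), and finally apply Proposition~\ref{prop:nexthandle} to convert this long thin broom into a broom of height~$h+1$.

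To set up the constants, I would let $L=g_3(c,t)$ and define $N_L=1$, $N_\ell=g_2(c,N_{\ell+1})$ for $2\le \ell\le L-1$, and $N_1=g_1(c,N_2)$. Then set $g_4(c,t)=N_1$. The cases $c\le 2$ are handled trivially by Theorem~\ref{thm:boundedsize}, so I would assume $c\ge 3$ (which is required by Proposition~\ref{prop:lengthone}). I would also assume $L\ge 2$; otherwise the conclusion of Proposition~\ref{prop:nexthandle} applies to any $(h,1,1)$-broom, which is already an induced subgraph of the given broom.

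Given a prime graph $G$ containing a $(h,g_4(c,t),1)$-broom as an induced subgraph, I would prove by induction on $\ell\in\{1,2,\ldots,L\}$ that $G$ has a vertex-minor isomorphic to one of $(h,N_\ell,\ell)$-broom, $(h+1,t,1)$-broom, $P_c$, or $\K_c\mat\K_c$. The base case $\ell=1$ is immediate. For the inductive step, suppose $G$ has a vertex-minor~$H_\ell$ isomorphic to a $(h,N_\ell,\ell)$-broom; by the definition of vertex-minor, there is a graph $G_\ell$ locally equivalent to $G$ containing $H_\ell$ as an induced subgraph, and since local complementation preserves primeness (Bouchet~\cite{Bouchet1987b}), $G_\ell$ is prime. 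Applying Proposition~\ref{prop:lengthone} when $\ell=1$ and Proposition~\ref{prop:incfiber} when $2\le \ell< L$ to~$G_\ell$ yields a vertex-minor of $G_\ell$ isomorphic to $(h,N_{\ell+1},\ell+1)$-broom, $P_c$, or $\K_c\mat\K_c$, and by transitivity of the vertex-minor relation this is also a vertex-minor of $G$. Once the induction reaches $\ell=L$, we have a vertex-minor isomorphic to a $(h,1,L)$-broom; lifting it again to an induced subgraph of some prime $G_L$ locally equivalent to $G$ and invoking Proposition~\ref{prop:nexthandle} produces the desired $(h+1,t,1)$-broom or $P_c$ as a vertex-minor of $G$.

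There is no genuinely new combinatorial difficulty, since all the substantive work is packaged into the three preceding propositions. The only subtle point in the bookkeeping is to realise, at every stage of the iteration, that a vertex-minor of $G$ isomorphic to a broom can always be realised as an induced subgraph of a \emph{prime} graph locally equivalent to $G$, so that the next proposition in the chain is applicable; this is immediate from the definition of vertex-minor together with the preservation of primeness under local complementation. The resulting bound $g_4(c,t)$ is of course astronomical, being an $L$-fold composition of the Ramsey-type functions $g_1$ and $g_2$.
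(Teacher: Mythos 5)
Your proof is correct and follows essentially the same route as the paper: apply Proposition~\ref{prop:lengthone} once, iterate Proposition~\ref{prop:incfiber} to grow the length up to $g_3(c,t)$, and finish with Proposition~\ref{prop:nexthandle}; your explicit recursion $N_\ell=g_2(c,N_{\ell+1})$, $N_1=g_1(c,N_2)$ is just the forward-written form of the paper's backward composition of the same three propositions. The bookkeeping point you flag (realising each intermediate broom as an induced subgraph of a prime graph locally equivalent to $G$) is handled correctly.
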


\begin{proof}
  By Proposition~\ref{prop:nexthandle}, there exists $N_0$ depending
  only on $c$ and $t$ such that 
  every prime graph having a $(h,1,N_0)$-broom has a vertex-minor
  isomorphic to a $(h+1,t,1)$-broom or $P_c$.
  By applying Proposition~\ref{prop:incfiber} $(N_0-2)$ times, 
  we deduce that there exists $N_1$ such that 
  every prime graph having a $(h,N_1,2)$-broom has a vertex-minor
  isomorphic to a $(h,1,N_0)$-broom, $P_c$, or $\K_c\mat\K_c$.
  By Proposition~\ref{prop:lengthone}, there exists $N$ such that
  every prime graph having a $(h,N,1)$-broom has a vertex-minor 
  isomorphic to a $(h,N_1,2)$-broom, $P_c$, or $\K_c\mat\K_c$.
\end{proof}

We are now ready with all necessary lemmas to prove Proposition~\ref{prop:secexclude}. 

\begin{proof}[Proof of Proposition~\ref{prop:secexclude}]
	By Theorem~\ref{thm:bouchet}, 
        every prime graph on at least $5$ vertices 
        has a vertex-minor isomorphic to $C_5$
        and $P_4$ is a vertex-minor of $C_5$.
        Therefore we may assume that $c\ge 5$.

        By applying Proposition~\ref{prop:longerhandle} $(c-3)$ times, 
        we deduce that 
        there exists a big integer $t$ depending only on $c$ such that
        every prime graph $G$ with a $(1,t,1)$-broom
        has a vertex-minor
        isomorphic to  a     $(c-2,1,1)$-broom, $P_c$, or
        $\K_c\mat\K_c$. 
        Since a $(c-2,1,1)$-broom is isomorphic to $P_c$
        and a $(1,t,1)$-broom is isomorphic to $K_{1,t+1}$, 
        we conclude that every prime graph having 
        a vertex-minor isomorphic to $K_{1,t+1}$
        has  a vertex-minor isomorphic to $P_c$ or $\K_c\mat\K_c$. 
        By Theorem~\ref{thm:boundedsize}, there exists $N$ such that 
        every connected graph on at least $N$ vertices has a
        vertex-minor isomorphic to $K_{1,t+1}$.
        This completes the proof.
\end{proof}

\section{Why optimal?} \label{sec:nonequiv}

Our main theorem (Theorem~\ref{thm:mainthm}) states that sufficiently large prime graphs
must have a vertex-minor isomorphic to $C_n$ or $\K_n\mat \K_n$.
But do we really need these two graphs?
To justify why we need both, 
we should show that for some $n$, $C_n$ is not a
vertex-minor of $\K_N\mat\K_N$ for all $N$
and similarly $\K_n\mat\K_n$ is not a vertex-minor of $C_N$ for all
$N$,
because $C_n$ and $\K_n\mat\K_n$ are also prime.

\begin{PROP}\label{prop:optimal}
  Let $n$ be a positive integer.
  \begin{enumerate}
  \item $\K_3\mat\K_3$ is not a vertex-minor of $C_n$.
  \item $C_7$ is not a vertex-minor of $\K_n\mat\K_n$.
  \end{enumerate}
\end{PROP}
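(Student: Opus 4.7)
\medskip

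The plan is to isolate, for each direction, a vertex-minor invariant that separates the two graph families.

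For part~(1), I will use the \emph{linear rank-width},
\[
\operatorname{lrw}(G) \;=\; \min_{\sigma}\;\max_{1 \le k < |V(G)|}\;\rho_G\!\bigl(\{\sigma_1,\ldots,\sigma_k\}\bigr),
\]
where $\sigma$ ranges over all linear orderings of $V(G)$. Since $\rho_G$ is invariant under local equivalence by Lemma~\ref{lem:cutrank}, and since the rank of a submatrix never exceeds the rank of the full matrix, $\operatorname{lrw}$ is invariant under local complementation and nonincreasing under restriction to an induced subgraph; it is therefore nonincreasing under the vertex-minor relation. The natural cyclic ordering of $C_n$ yields $\operatorname{lrw}(C_n)\le 2$, because each prefix cuts exactly the two edges $v_kv_{k+1}$ and $v_1v_n$, producing a bipartite adjacency submatrix of rank at most~$2$. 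On the other side I will verify that every balanced $(3,3)$ partition of $V(\K_3\mat\K_3)$ has cut-rank~$3$: the automorphism group $S_3\times\mathbb{Z}_2$ of $\K_3\mat\K_3$ (index permutations and the $a\leftrightarrow b$ swap) has exactly three orbits on the ten balanced partitions, distinguished by the number of matching edges $a_ib_i$ crossing the partition, so only three explicit rank computations are needed. Since every ordering of six vertices has a size-$3$ prefix giving such a partition, $\operatorname{lrw}(\K_3\mat\K_3)\ge 3$, and together with the previous inequality this proves~(1).

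For part~(2), I plan to combine Lemma~\ref{lem:primeinduced} with a structural induction on the number of vertices of a prime induced subgraph of $\K_n\mat\K_n$. Suppose for contradiction that $C_7$ is a vertex-minor of $\K_n\mat\K_n$; since $C_7$ is prime with $7\ge 5$ vertices, Lemma~\ref{lem:primeinduced} yields a prime induced subgraph $G_0$ of $\K_n\mat\K_n$ with $C_7$ as a vertex-minor. An induced subgraph of $\K_n\mat\K_n$ is specified by subsets $I_A,I_B\subseteq[n]$; if $|I_A\setminus I_B|\ge 2$, then any two vertices $a_i,a_j$ with $i,j\in I_A\setminus I_B$ have identical neighbourhoods outside $\{a_i,a_j\}$ (both see exactly the other $a$-vertices of $S_A$ and no $b$-vertex), so $\{a_i,a_j\}$ is a split; symmetrically for $I_B\setminus I_A$. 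Consequently $|I_A\setminus I_B|\le 1$ and $|I_B\setminus I_A|\le 1$, and every prime induced subgraph of $\K_n\mat\K_n$ is obtained from some $\K_m\mat\K_m$ by adding at most one extra unmatched $a$-vertex and at most one extra unmatched $b$-vertex. The base case $|V(G_0)|<7$ is immediate. For the inductive step, choose a vertex $v\in V(G_0)$ not appearing in the $C_7$ copy inside some graph locally equivalent to $G_0$; Lemma~\ref{lem:bouchet1} guarantees that $C_7$ is a vertex-minor of one of $G_0\setminus v$, $G_0*v\setminus v$, or $G_0\pivot vw\setminus v$. Applying Lemma~\ref{lem:primeinduced} once more produces a strictly smaller prime induced subgraph of $\K_n\mat\K_n$ still containing $C_7$, contradicting the inductive hypothesis.

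The main obstacle in part~(2) is handling the cases $G_0*v\setminus v$ and $G_0\pivot vw\setminus v$: these graphs are not, \emph{a priori}, induced subgraphs of $\K_n\mat\K_n$ but only locally equivalent to one. A case analysis according to whether $v$ is one of the two unmatched pendants of $G_0$ or a standard matched vertex, combined with Lemmas~\ref{lem:lengthonecase1} and~\ref{lem:lengthonecase2} (which describe local complementations at such vertices explicitly and reduce to $\K_{m-1}\mat\K_{m-1}$ or a path), should allow us to conclude that after the reduction one lands back inside the class of prime induced subgraphs of $\K_n\mat\K_n$ of strictly smaller size, so that the induction closes. Once this is done, the argument for part~(1) via linear rank-width is comparatively short and self-contained.
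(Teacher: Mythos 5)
Your part~(1) is correct and takes a genuinely different route from the paper. The paper first classifies all prime vertex-minors of $C_n$ on at least $5$ vertices (Lemma~\ref{lem:vertexminor-cycle}: they are locally equivalent to cycles) and then checks that $\K_3\mat\K_3$ is not locally equivalent to $C_6$; you instead use a monotone invariant, getting $\operatorname{lrw}(C_n)\le 2<3\le\operatorname{lrw}(\K_3\mat\K_3)$. Monotonicity of linear rank-width under vertex-minors does follow from Lemma~\ref{lem:cutrank} plus the submatrix observation, and your three rank computations are right: every balanced cut of the triangular prism has cut-rank $3$. One small slip in the bookkeeping: the three orbits of balanced partitions are \emph{not} distinguished by the number of crossing matching edges (both $\{a_1,a_2,a_3\}$ and $\{a_1,a_2,b_3\}$ cut all three matching edges); the orbits have sizes $1$, $3$ and $6$, but the representatives you would compute are the right ones, so the conclusion stands. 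This argument is arguably shorter and more self-contained than the paper's for this direction.

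Part~(2) has a genuine gap at exactly the point you flag as ``the main obstacle,'' and it is not a routine case analysis. Your induction lives in the class of prime induced subgraphs of $\K_n\mat\K_n$ (two cliques joined by a matching plus at most two unmatched vertices), but this class is not closed under the reductions $G_0*v\setminus v$ and $G_0\pivot vw\setminus v$ supplied by Lemma~\ref{lem:bouchet1}. For example, in $\K_4\mat\K_4$ the neighbourhood of $a_1$ induces a clique on $\{a_2,a_3,a_4\}$ together with the isolated vertex $b_1$, so $\K_4\mat\K_4*a_1\setminus a_1$ has $\{a_2,a_3,a_4\}$ independent with each $a_i$ adjacent to $b_1$ and to $b_i$; neither this graph nor the prime induced subgraphs extracted from it by Lemma~\ref{lem:primeinduced} are of clique--matching--clique form, so the induction never returns to your class. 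Lemmas~\ref{lem:lengthonecase1} and~\ref{lem:lengthonecase2} cannot repair this: they describe the constructions $\antimat$ and $\tri$, not local complementations of $\K_m\mat\K_m$. You also leave untreated the terminal case $\abs{V(G_0)}=7$, where no vertex is available to delete and one must show directly that the surviving $7$-vertex graphs are not locally equivalent to $C_7$. This is precisely why the paper abandons $\K_n\mat\K_n$ itself and passes to $H_n$, which has $\K_n\mat\K_n$ as a vertex-minor but only two vertices of degree exceeding $2$: the pair $\{H_n,J_n\}$ is closed under the Bouchet reductions in the required sense (Lemmas~\ref{lem:reducehn} and~\ref{lem:reducejn}), the induction then shows that $F_1,F_2,F_3$ are the only $7$-vertex prime vertex-minors, and a cut-rank comparison (Lemma~\ref{lem:notequiv}) rules out $C_7$. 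To complete your proof you would need either to identify and control a vertex-minor-closed superclass of your induced subgraphs, or to introduce a surrogate such as $H_n$ as the paper does.
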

Since $C_7$ is a vertex-minor of $C_n$ for all $n\ge7$, the above
proposition implies that $C_n$ is not a vertex-minor of $\K_N\mat\K_N$
when $n\ge 7$. Similarly $\K_n\mat\K_n$ is not a vertex-minor of $C_N$
for all $n\ge 3$.

We can classify all non-trivial prime vertex-minors of a
cycle graph.
\begin{LEM}\label{lem:vertexminor-cycle}
  If a prime graph $H$ on at least $5$ vertices
  is a vertex-minor of $C_n$, 
  then 
  $H$ is locally equivalent to a cycle graph.
\end{LEM}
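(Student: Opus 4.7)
The plan is to argue by strong induction on $n$. The base case $n = 5$ is immediate: since $H$ is a vertex-minor of $C_5$, $\abs{V(H)} \leq 5$, which combined with $\abs{V(H)} \geq 5$ forces $\abs{V(H)} = 5$; hence no vertex has been deleted and $H$ is locally equivalent to $C_5$. For the inductive step with $n \geq 6$, if $\abs{V(H)} = n$ the same trivial argument applies, so the substantive work is when $\abs{V(H)} < n$. There I would pick a vertex $v$ that is eventually deleted and apply Lemma~\ref{lem:bouchet1}, which guarantees that $H$ is a vertex-minor of one of the three graphs $C_n \setminus v$, $C_n \ast v \setminus v$, or $C_n \pivot vw \setminus v$ for some neighbor $w$ of $v$.

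A direct computation identifies these three graphs as $P_{n-1}$, $C_{n-1}$, and a graph $G'$ obtained by attaching a single pendant edge to $C_{n-2}$. The middle case is dispatched immediately: by the inductive hypothesis applied to the smaller cycle $C_{n-1}$, $H$ is locally equivalent to a cycle. For the other two cases, I would invoke Lemma~\ref{lem:primeinduced}: since $H$ is prime on at least five vertices, it must embed as a vertex-minor into some prime induced subgraph of the ambient graph. The remaining work is to enumerate the prime induced subgraphs of $P_{n-1}$ and of $G'$ on at least $\abs{V(H)} \geq 5$ vertices.

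Every induced subgraph of $P_{n-1}$ is a disjoint union of subpaths, and any such graph on at least four vertices admits an explicit split (either by separating components or via the standard split $(\{v_1,v_2\}, \{v_3,\ldots,v_m\})$ of $P_m$ for $m \geq 4$). Hence $P_{n-1}$ has no prime induced subgraph of size at least four, and case (i) is impossible. In $G'$, any induced subgraph containing the pendant vertex $u$ together with its cycle-neighbor $z$ admits the split $(\{u,z\}, \text{rest})$; any induced subgraph containing $u$ but not $z$ is disconnected and hence splittable; and any induced subgraph disjoint from $u$ is a subset of $C_{n-2}$, which is prime only if it is all of $V(C_{n-2})$, and otherwise decomposes as a proper disjoint union of subpaths. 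Provided $n - 2 \geq 5$ so that $C_{n-2}$ is prime, the only prime induced subgraph of $G'$ of size at least four is $C_{n-2}$ itself, so $H$ is a vertex-minor of $C_{n-2}$ and the inductive hypothesis applies.

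The main obstacle is handling the residual case $n = 6$, where $C_{n-2} = C_4$ is not prime. In that regime $G'$ has only five vertices and is itself not prime (the pendant yields a split $(\{u,z\}, \text{rest})$), so $G'$ has no prime induced subgraph of size at least five at all; consequently case (iii) is vacuous for $n = 6$ and the only surviving route is case (ii), which reduces to $C_5$. With these small-$n$ verifications in hand, the induction closes cleanly.
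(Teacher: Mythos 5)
Your proposal is correct and follows essentially the same route as the paper: induction on $n$, Lemma~\ref{lem:bouchet1} to split into the three cases $P_{n-1}$, $C_{n-1}$, and $C_{n-2}$ with a pendant vertex, and Lemma~\ref{lem:primeinduced} to kill the path case and reduce the pendant case to $C_{n-2}$. Your explicit treatment of the residual case $n=6$ (where $C_{n-2}$ is no longer prime) is a detail the paper's proof passes over silently, but it does not change the argument.
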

\begin{proof}
  We proceed by induction on $n$.
  If $n=5$, then it is trivial. Let us assume $n>5$.
  Suppose $\abs{V(H)}<\abs{V(C_n)}$.
  By Lemma~\ref{lem:bouchet1}, $H$ is a vertex-minor 
  of  $C_n\setminus v$, $C_n*v\setminus v$, or $C_n\pivot vw\setminus v$ for
  a neighbor $w$ of $v$.

  If $H$ is vertex-minor of $C_n*v\setminus v$, then we can apply the
  induction hypothesis because $C_n*v\setminus v$ is isomorphic to $C_{n-1}$.
  
  By Lemma~\ref{lem:primeinduced}, 
  $H$ cannot be a vertex-minor of $C_n\setminus v$
  because  $C_n\setminus v$ has no prime induced subgraph on at least
  $5$ vertices.

  Thus we may assume that 
  $H$ is a vertex-minor of $C_n\pivot vw\setminus v$ for a neighbor $w$ of
  $v$.
  Again, by Lemma~\ref{lem:primeinduced},
  $H$ is isomorphic to a vertex-minor of $C_{n-2}$.
\end{proof}

Classifying prime vertex-minors of $\K_n\mat\K_n$ turns out to be more
tedious. Instead of identifying prime vertex-minors of
$\K_n\mat\K_n$, we focus on characterizing prime vertex-minors on $7$
vertices
to prove (2) of Proposition~\ref{prop:optimal}.

Instead of $\K_n\mat\K_n$, we will first consider $H_n$.
 	Let $H_n$ be the graph having two specified vertices called \emph{roots}
        and $n$ internally disjoint paths of length $3$ joining the roots.
        Let $J_n$ be the graph obtained from $H_n$ by adding %
        a common neighbor of two roots.
        Then $H_n$ has $2n+2$ vertices and $J_n$ has $2n+3$ vertices,
        see Figure~\ref{fig:vmsofhn}.
 	It is easy to observe the following.
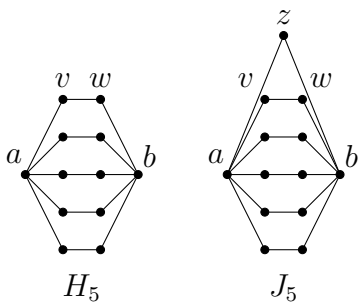
\begin{figure}
 \tikzstyle{v}=[circle, draw, solid, fill=black, inner sep=0pt, minimum width=3pt]
\begin{tikzpicture}[scale=0.05]

 \node[v] at (0,30) {}; 
 \node[v] at (30,30) {}; 
  \foreach \x in {10,20,..., 50}
 {
 \node[v] at (10,\x) {}; 
 \node[v] at (20,\x) {};
 \draw (0,30) -- (10,\x)--(20,\x)--(30,30);
 }
 \node at (-3,35) {$a$}; 
 \node at (10,55) {$v$}; 
 \node at (20,55) {$w$}; 
 \node at (33,35) {$b$};

  \node at (15,0) {$H_5$}; 
\end{tikzpicture}\quad
\begin{tikzpicture}[scale=0.05]

 \node[v] at (0,30) {}; 
 \node[v] at (30,30) {}; 
  \foreach \x in {10,20,..., 50}
 {
 \node[v] at (10,\x) {}; 
 \node[v] at (20,\x) {};
 \draw (0,30) -- (10,\x)--(20,\x)--(30,30);
 }
 \draw (0,30) -- (15,67) -- (30,30);
 \node[v] at (15,67) {};
  \node at (15,0) {$J_5$}; 

 \node at (-3,35) {$a$}; 
 \node at (5,55) {$v$}; 
 \node at (25,55) {$w$}; 
 \node at (33,35) {$b$}; 
 \node at (15,72) {$z$};

\end{tikzpicture}
\caption{The graphs $H_5$ and $J_5$. }\label{fig:vmsofhn}
\end{figure}

\begin{LEM}\label{lem:reducehn}
  Let $H$ be a prime vertex-minor of $H_n$ on at least $5$ vertices.
  If $\abs{V(H_n)}-\abs{V(H)}\ge 3$, 
  then 
  $J_{n-1}$ has a vertex-minor isomorphic to $H$.
\end{LEM}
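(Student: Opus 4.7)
The plan is to apply Lemma~\ref{lem:bouchet1} to a single carefully chosen vertex of $V(H_n)\setminus V(H)$ and to dispose of the four resulting candidates using Lemma~\ref{lem:primeinduced}. Identify $V(H)$ with its image in $V(H_n)$ and set $X := V(H_n)\setminus V(H)$; since $|X|\ge 3 > 2 = |\{a,b\}|$, the set $X$ must contain some spoke vertex, and by the obvious automorphisms of $H_n$ (permuting the $n$ spokes, and the involution swapping $a\leftrightarrow b$ together with $v_i\leftrightarrow w_i$ for all $i$) we may assume $v_1\in X$. Since $N_{H_n}(v_1)=\{a,w_1\}$, Lemma~\ref{lem:bouchet1} then yields that $H$ is a vertex-minor of one of the four graphs $H_n\setminus v_1$, $H_n*v_1\setminus v_1$, $H_n\pivot v_1 a\setminus v_1$, or $H_n\pivot v_1 w_1\setminus v_1$.

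The easy case is $H_n*v_1\setminus v_1$: local complementation at $v_1$ toggles the single non-edge $aw_1$ inside $N(v_1)=\{a,w_1\}$, and after deleting $v_1$ the vertex $w_1$ becomes a common neighbor of $a$ and $b$ while the other $n-1$ length-$3$ paths are preserved, so the result is isomorphic to $J_{n-1}$ and we are done directly. For each of the other three candidates, the plan is to locate a \emph{pendant} vertex whose presence forces a split of type $(2,\ge 2)$ in every induced subgraph on at least four vertices: in $H_n\setminus v_1$ the pendant is $w_1$ (attached only to $b$); in $H_n\pivot v_1 w_1\setminus v_1$, the pivot simply adds the edge $ab$ and swaps $v_1\leftrightarrow w_1$, so after the deletion one of the former spoke endpoints remains as a pendant attached to a root; and in $H_n\pivot v_1 a\setminus v_1$, a direct unfolding of $G\pivot xy = G*x*y*x$ shows that $a$ itself becomes a pendant attached to $w_1$.

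Since $H$ is prime on at least five vertices, Lemma~\ref{lem:primeinduced} lets us replace each of the three candidate graphs by one of its prime induced subgraphs on at least five vertices while still realizing $H$ as a vertex-minor. No such subgraph can contain the pendant, so in each case it is actually an induced subgraph of $H_{n-1}$ (for $H_n\setminus v_1$) or of $H_{n-1}+ab$ (for both pivot cases, possibly after relabeling the pendant's unique neighbor as the new $a$). Since $H_{n-1}=J_{n-1}\setminus z$ and $H_{n-1}+ab=J_{n-1}*z\setminus z$ are both vertex-minors of $J_{n-1}$, so is $H$, completing the argument. The main technical step---and the one I expect to require the most care---is the explicit computation of $H_n\pivot v_1 a\setminus v_1$, which takes a few lines to write out before the pendant structure of $a$ becomes visible; once that is done, the pendant-deletion argument disposes of the three non-trivial cases uniformly.
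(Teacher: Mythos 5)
Your proposal is correct and follows essentially the same route as the paper: apply Lemma~\ref{lem:bouchet1} at a degree-$2$ spoke vertex, recognize $H_n*v_1\setminus v_1\cong J_{n-1}$, and in the remaining cases use Lemma~\ref{lem:primeinduced} to discard a pendant vertex and land in $H_{n-1}$ or $H_{n-1}+ab$, both vertex-minors of $J_{n-1}$. The only cosmetic differences are that the paper invokes the remark after Lemma~\ref{lem:bouchet1} to fix the pivot partner as the other degree-$2$ spoke vertex (so it never computes $H_n\pivot v_1a\setminus v_1$), and it removes the extra root edge by pivoting another spoke rather than observing $H_{n-1}+ab=J_{n-1}*z\setminus z$.
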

\begin{proof}
  We may assume $n\ge 3$.
  Since at most $2$ vertices of $H_n$ have degree other than $2$, 
  there exists $v\in V(H_n)\setminus V(H)$ of degree $2$ in $H_n$.
  Let $w$ be the neighbor of $v$ having degree $2$ in $H_n$.
   Let $av'w'b$ be a path of length $3$ from $a$ to $b$ in $H_n$ such that $\{v,w\}\neq\{v',w'\}$. 

	By Lemma~\ref{lem:bouchet1}, 
	$H$ is a vertex-minor of either $H_n\setminus v$, $H_n*v\setminus v$ or $H_n\wedge vw\setminus v$.
        If $H$ is a vertex-minor of $H_n*v\setminus v$, then 
        $H$ is isomorphic to a vertex-minor of $J_{n-1}$, 
	because $H_n*v\setminus v$ is isomorphic to $J_{n-1}$.

	Since $w$ has degree $1$ in $H_n\setminus v$,
	by Lemma~\ref{lem:primeinduced}, 
        if $H$ is a vertex-minor of $H_n\setminus v$, then 
	$H$ is isomorphic to a vertex-minor of $H_n\setminus v\setminus w$.
	Since $H_n\setminus v\setminus w$ is isomorphic to $H_{n-1}$ and $H_{n-1}$ is an induced subgraph of $J_{n-1}$,
	$H$ is isomorphic to a vertex-minor of $J_{n-1}$.
	
	Similarly, if $H$ is a vertex-minor of $H_n\pivot
        vw\setminus v$, then 
	$H$ is isomorphic to a vertex-minor of $H_n\pivot vw\setminus
        v\setminus w$.
        Clearly, $(H_n\pivot vw\setminus v\setminus w)\pivot v'w'$ is isomorphic to $H_{n-1}$. 
        Since $H_{n-1}$ is an induced subgraph of $J_{n-1}$,
        $H$ is isomorphic to a vertex-minor of $J_{n-1}$, as required.
\end{proof}        

\begin{LEM}\label{lem:reducejn}
  Let $H$ be a prime vertex-minor of $J_n$ on at least $5$ vertices.
  If $\abs{V(J_n)}-\abs{V(H)}\ge 2$, 
  then 
  $H_{n}$ has a vertex-minor isomorphic to $H$.
\end{LEM}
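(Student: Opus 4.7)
The plan is to mirror the proof of Lemma \ref{lem:reducehn}: apply Lemma \ref{lem:bouchet1} at a suitable vertex $v \in V(J_n) \setminus V(H)$, and for each of the three resulting graphs either identify it with a vertex-minor of $H_n$ directly, or invoke Lemma \ref{lem:primeinduced} to reduce $H$ to a vertex-minor of a prime induced subgraph that is itself a vertex-minor of $H_n$. Two auxiliary facts underlie the whole argument: first, in $H_n$ the pivot $H_n \pivot v_iw_i$ flips only the single nonedge $ab$ and swaps the inner-vertex labels, so $H_n + ab$ is locally equivalent to $H_n$; second, $H_n * v_i \setminus v_i$ is isomorphic to $J_{n-1}$, so $J_{n-1}$ (and trivially $H_{n-1}$) is a vertex-minor of $H_n$.

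First suppose $z \notin V(H)$. Apply Lemma \ref{lem:bouchet1} with $v = z$, whose only neighbors in $J_n$ are $a$ and $b$, and these are nonadjacent. The three candidate graphs are $J_n \setminus z = H_n$ (immediate), $J_n * z \setminus z = H_n + ab$ (locally equivalent to $H_n$ by the first observation), and $J_n \pivot zw \setminus z$ for $w \in \{a,b\}$. A direct computation shows that in this last graph one of $\{a,b\}$ becomes a pendant of the other root while the remaining $2n+1$ vertices form $n$ triangles meeting at a common vertex. Every induced subgraph on at least $5$ vertices then contains either the split $\{a,b\} \mid \text{rest}$ or a split of the form $\{v_i,w_i\} \mid \text{rest}$, so no prime induced subgraph on $\ge 5$ vertices exists, and Lemma \ref{lem:primeinduced} rules this option out.

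Now suppose $z \in V(H)$. Then $|V(H_n) \setminus V(H)| = |V(J_n) \setminus V(H)| \ge 2$. Typically some inner vertex $v_i$ lies outside $V(H)$, and we apply Lemma \ref{lem:bouchet1} with $v = v_i$ and $w = w_i$. In $J_n \setminus v_i$ the vertex $w_i$ becomes a pendant of $b$, allowing Lemma \ref{lem:primeinduced} to reduce $H$ to a vertex-minor of $J_{n-1}$, hence of $H_n$ by the second auxiliary fact. In $J_n * v_i \setminus v_i$ the vertex $w_i$ becomes adjacent to both $a$ and $b$, producing a graph with two $z$-like vertices whose prime induced subgraphs again lie (as vertex-minors) in $J_{n-1}$ or $H_{n-1}$. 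The pivot options $J_n \pivot v_iw_i \setminus v_i$ and $J_n \pivot v_ia \setminus v_i$ are handled by the same split-and-reduce analysis.

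The main obstacle is the boundary subcase where $V(H_n) \setminus V(H) = \{a,b\}$ and no inner vertex is available to serve as $v$. One is then forced to apply Lemma \ref{lem:bouchet1} with $v = a$ (or $v = b$), and the three resulting graphs all contain many pendants and prominent splits. The delicate task is to enumerate the prime induced subgraphs afforded by Lemma \ref{lem:primeinduced} and verify that each such subgraph is a vertex-minor of $H_n$, again using the two auxiliary facts above.
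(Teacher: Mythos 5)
Your case analysis matches the paper's in structure, and your treatment of the first two situations is essentially the paper's argument: when a degree-$2$ vertex $v$ of a path $avwb$ is missing you reduce each of $J_n\setminus v$, $J_n*v\setminus v$, $J_n\pivot vw\setminus v$ to $J_{n-1}$ (hence to a vertex-minor of $H_n$) via Lemma~\ref{lem:primeinduced}, and when $z$ is missing you observe $J_n\setminus z\cong H_n$, that $J_n*z\setminus z=H_n+ab$ is pivot-equivalent to $H_n$, and that $J_n\pivot az\setminus z$ has no prime induced subgraph on at least $5$ vertices. Your two auxiliary facts ($H_n\pivot v_iw_i$ flips only $ab$; $H_n*v_i\setminus v_i\cong J_{n-1}$) are correct and are exactly what the paper uses.

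The genuine gap is the case you defer at the end, where a root (say $a$) is among the deleted vertices. You do not prove it, and the strategy you sketch for it would not work: you say the three graphs produced by Lemma~\ref{lem:bouchet1} at $a$ ``all contain many pendants and prominent splits,'' but that is only true of $J_n\setminus a$ (a subdivided star, which indeed has no prime induced subgraph on $\ge 5$ vertices). The other two graphs, $J_n*a\setminus a$ and $J_n\pivot az\setminus a$, are \emph{not} decomposable in any useful way --- they are locally equivalent to the prime graph $H_n$ itself, so ``enumerating prime induced subgraphs afforded by Lemma~\ref{lem:primeinduced}'' returns the whole graph and proves nothing. The missing content, which is the heart of the paper's Case 3, is an explicit local-equivalence computation: in $J_n\pivot az\setminus a$ the vertex $b$ has acquired edges to all of $N_{J_n}(a)\setminus\{z\}$, and these are removed by applying local complementation at each vertex of $N_{J_n}(b)\setminus\{z\}$, yielding $H_n$; for $J_n*a\setminus a$ one first applies $*z$ so that the same cleanup applies, and then pivots an edge $vw$ of one path to delete the leftover edge $bz$. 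Without this computation (or some substitute showing these two graphs are vertex-minors of $H_n$), the lemma is not proved.
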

\begin{proof}
    We may assume $n\ge 2$.
    Let $a,b$ be the roots of $J_n$, $azb$ be the path of length $2$, 
    and $avwb$ be a path of length $3$ from $a$ to $b$.

    \medskip\noindent
    Case 1: 
    Suppose that $V(J_n)\setminus V(H)$ has a degree-$2$ vertex 
    on a path of length $3$ from $a$ to $b$. 
    We may assume that it is $v$ by symmetry. 
    By Lemma~\ref{lem:bouchet1}, 
    $H$ is a vertex-minor of $J_n\setminus v$, $J_n*v\setminus v$, or 
    $J_n\pivot vw\setminus v$.

    If $H$ is a vertex-minor of $J_n\setminus v$, then 
    $H$ is isomorphic to a vertex-minor of $J_n\setminus v\setminus w$
    by Lemma~\ref{lem:primeinduced}, 
    because $w$ has degree $1$ in $J_n\setminus v$.
    Similarly, if $H$ is a vertex-minor of $J_n\pivot
    vw\setminus v$, then 
    $H$ is isomorphic to a vertex-minor of $J_n\pivot vw\setminus  v\setminus w$.
    Clearly, $J_n\setminus v\setminus w$ and $(J_n\pivot
    vw\setminus v\setminus w)*z$ are isomorphic to $J_{n-1}$,
    and $J_{n-1}$ is a vertex-minor of $H_n$.

    If $H$ is a vertex-minor of $J_n*v\setminus v$, then
    by Lemma~\ref{lem:primeinduced}, 
    $H$ is isomorphic to a vertex-minor of $J_n* v\setminus v\setminus w$, which is isomorphic to $J_{n-1}$,
    because $w$ and $z$ have the same set of neighbors in $J_n* v\setminus v$.
    Since $J_{n-1}$ is a vertex-minor of $H_n$, 
    $H$ is isomorphic to a vertex-minor of $H_n$.
    This proves the lemma in Case 1. 

    \medskip\noindent
    Case 2: Suppose that $z\in V(J_n)\setminus V(H)$.
    Then by Lemma~\ref{lem:bouchet1}, $H$ is a vertex-minor
    of $J_n\setminus z$, $J_n*z\setminus z$, or $J_n\pivot az\setminus
    z$. 
    Since $J_n\setminus z$ and $(J_n*z\setminus z)\pivot vw$ are isomorphic to $H_{n}$,
    we may assume that $H$ is a vertex-minor of $J_n\pivot az\setminus
    z$.
    However, $J_n\pivot az\setminus z$ has no prime induced subgraph
    on at least $5$ vertices and therefore 
    by Lemma~\ref{lem:primeinduced}, 
    $H$ cannot be a vertex-minor of $J_n\pivot az\setminus    z$,
    contradicting our assumption.

    \medskip\noindent
    Case 3: Suppose that $a$ or $b$ is contained in $V(J_n)\setminus V(H)$.
    By symmetry, let us assume $a\in V(J_n)\setminus V(H)$.
    By Lemma~\ref{lem:bouchet1}, $H$ is a vertex-minor
    of $J_n\setminus a$, $J_n*a\setminus a$, or $J_n\pivot az
    \setminus a$.

    Since $J_n\setminus a$ has no prime induced subgraph on at least
    $5$ vertices, $H$ cannot be a vertex-minor of $J_n\setminus a$ 
    by Lemma~\ref{lem:primeinduced}.

	Suppose $H$ is a vertex-minor of $J_n\pivot az\setminus a$.
    By the definition of pivoting, $b$ is adjacent to all vertices of  $N_{J_n}(a)\setminus \{z\}$ in $J_n\pivot az\setminus a$.
    We can remove all these edges between $b$ and $N_{J_n}(a)\setminus \{z\}$ by applying local complementation on all vertices of $N_{J_n}(b)\setminus \{z\}$ in $J_n\pivot az\setminus a$.
    Thus, $H_{n}$ is locally equivalent to $J_n\pivot az\setminus a$,
    and $H$ is isomorphic to a vertex-minor of $H_{n}$.

    Now suppose that $H$ is a vertex-minor of $J_n*a\setminus a$. 
    By the definition of local complementation,
    $N_{J_n}(a)$ forms a clique in $J_n*a\setminus a$.
    So, $b$ is adjacent to all vertices of $N_{J_n}(a)\setminus \{z\}$ in $(J_n*a\setminus a) *z$.
    Similarly in the above case,
    by applying local complementation on all vertices of $N_{J_n}(b)\setminus \{z\}$ in $(J_n*a\setminus a) *z$,
    we can remove all edges between $b$ and $N_{J_n}(a)\setminus \{z\}$
    in $(J_n*a\setminus a) *z$.
    Finally, by pivoting $vw$, we can remove the edge $bz$,
    and therefore, $J_n*a\setminus a$ is locally equivalent to $H_{n}$.
    Thus, $H$ is isomorphic to a vertex-minor of $H_{n}$.     
\end{proof}

\begin{figure}
 \tikzstyle{v}=[circle, draw, solid, fill=black, inner sep=0pt, minimum width=3pt]
  \newcommand\wheelgraph[2]{\subfloat[#1]{\begin{tikzpicture}[baseline=-3pt]
      \foreach \i in {1,2,...,6} {    \node [v] (v\i) at   (360*\i/6-60:.5)   {} ; }
      \draw (v1)--(v2)--(v3)--(v4)--(v5)--(v6)--(v1);
      \node[v](x) at (0,0) {};
      \foreach \y in {#2}{\draw (x)--(v\y);}
    \end{tikzpicture}}}
 \wheelgraph{$F_1$}{1,4}
 \quad
  \wheelgraph{$F_2$}{1,3,5}
  \quad
  \wheelgraph{$F_3$}{1,2,4,6}
\caption{Graphs $F_1$, $F_2$ and $F_3$. }\label{fig:noteqcycle}
\end{figure}

	Let $F_1,F_2,F_3$ be the graphs in Figure~\ref{fig:noteqcycle}.

\begin{LEM}\label{lem:primevm}
	Let $n\ge 3$ be an integer.
	If a prime graph $H$ is a vertex-minor of $H_n$ and $\abs{V(H)}=7$,
	then $H$ is locally equivalent to $F_1$, $F_2$, or $F_3$.
\end{LEM}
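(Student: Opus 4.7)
The plan is to proceed by induction on $n$, with the base case being $n = 3$. For the inductive step ($n \ge 4$), observe that $|V(H_n)| - |V(H)| = 2n - 5 \ge 3$, so Lemma~\ref{lem:reducehn} gives that $H$ is a vertex-minor of $J_{n-1}$. Then $|V(J_{n-1})| - |V(H)| = 2n - 6 \ge 2$, so Lemma~\ref{lem:reducejn} gives that $H$ is a vertex-minor of $H_{n-1}$, and the inductive hypothesis completes this step.

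For the base case $n = 3$, we have $|V(H_3)| = 8 = |V(H)| + 1$. By Lemma~\ref{lem:bouchet1}, $H$ is locally equivalent to $H_3 \setminus v$, $H_3 * v \setminus v$, or $H_3 \pivot vw \setminus v$ for some vertex $v$ and a neighbor $w$ of $v$ (the choice of $w$ being irrelevant, up to local equivalence, by the remark following Lemma~\ref{lem:bouchet1}). The automorphism group of $H_3$---generated by the root swap $a \leftrightarrow b$ combined with $v_i \leftrightarrow w_i$, together with the permutations of the three length-$3$ paths---reduces the analysis to $v \in \{a, v_1\}$. Since local equivalence preserves primeness (Bouchet's result cited in the introduction), any candidate graph containing a vertex of degree at most $1$ is not prime and produces no valid $H$. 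A direct check rules out $H_3 \setminus a$ (which has leaves $v_1,v_2,v_3$), $H_3 \setminus v_1$ (which has leaf $w_1$), and $H_3 \pivot v_1 w \setminus v_1$ for $w \in \{a, w_1\}$ (which each leave a degree-$1$ vertex after the pivot).

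The three remaining cases each produce one of the three target graphs, up to local equivalence. First, $H_3 * v_1 \setminus v_1$ replaces the length-$3$ path $a v_1 w_1 b$ by the length-$2$ path $a w_1 b$, yielding the graph $J_2$, which is a $6$-cycle together with a single vertex joined to two antipodal cycle vertices, namely $F_1$. Second, a direct computation of $H_3 \pivot a v_1 \setminus a$ shows it is isomorphic to $F_2$, the relevant $6$-cycle being formed by the alternating chords introduced by the pivot, with $w_1$ playing the role of the alternating central vertex. Third, $H_3 * a \setminus a$ consists of a triangle on $\{v_1, v_2, v_3\}$ with each $v_i$ attached through $w_i$ to $b$; applying one further local complementation at $v_1$ transforms this graph into $F_3$. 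The main obstacle is precisely this identification step---recognizing each locally complemented or pivoted graph as one of $F_1$, $F_2$, $F_3$ by tracing the underlying $6$-cycle and the correct central vertex---which is routine but requires careful bookkeeping of edge flips under local complementation and pivoting.
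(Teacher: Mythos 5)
Your proof is correct and follows essentially the same route as the paper: induction on $n$ via Lemmas~\ref{lem:reducehn} and~\ref{lem:reducejn}, and for $n=3$ a symmetry-reduced case analysis using Lemma~\ref{lem:bouchet1}. Your base case is in fact slightly more careful than the paper's, which asserts $H_3*a\setminus a$ is \emph{isomorphic} to $F_3$ (it is not, having only $9$ edges); your observation that one further local complementation at $v_1$ is needed is the accurate statement, and local equivalence is all the lemma requires.
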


\begin{proof}
  We proceed by induction on $n$. 
  If $n=3$, then 
  let $H$ be a prime $7$-vertex vertex-minor of $H_3$.
  Let $axyb$ be a path  from a root $a$ to the
  other root $b$ in $H_3$.
  By symmetry, we may assume that 
  $V(H_3)\setminus V(H)=\{x\}$ or $\{a\}$.
  By Lemma~\ref{lem:bouchet1}, 
  $H$ is locally equivalent to $H_3\setminus x$, $H_3*x\setminus
  x$, $H_3\wedge xa\setminus x$, 
  $H_3\setminus a$, $H_3*a\setminus a$, or $H_3\wedge ab\setminus a$.
  The conclusion follows because $H_3\setminus x$, $H_3\pivot
  xy\setminus x$, $H_3\setminus a$ are not prime
  and 
  $H_3*x\setminus x$, $H_3\pivot ax\setminus a$, and $H_3*a\setminus
  a$
  are isomorphic to $F_1$, $F_2$, and $F_3$, respectively.

  Suppose $n>3$. 
  By Lemma~\ref{lem:reducehn}, 
  every $7$-vertex prime vertex-minor is
  also isomorphic to a vertex-minor of $J_{n-1}$.
  By Lemma~\ref{lem:reducejn}, 
  it is isomorphic to a vertex-minor of $H_{n-1}$.
  The conclusion follows from the induction hypothesis.
\end{proof}

\begin{LEM}\label{lem:notequiv}
The graphs $F_1$, $F_2$, $F_3$ are not locally equivalent to $C_7$.
\end{LEM}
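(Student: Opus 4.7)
By Lemma~\ref{lem:cutrank}, the cut-rank function $\rho_G$ is invariant under local equivalence, so any quantity derived purely from $\rho_G$ is an invariant of the local equivalence class of $G$. I propose the following one. Let
\[
\mathcal{T}_2(G) := \{X \subseteq V(G) : |X| = 3 \text{ and } \rho_G(X) = 2\},
\]
and for each vertex $v \in V(G)$ define $N_2(v) := \bigcup \{X \setminus \{v\} : X \in \mathcal{T}_2(G),\ v \in X\}$. The multiset $\mathcal{M}(G) := \{|N_2(v)| : v \in V(G)\}$ is then determined by $\rho_G$, so $\mathcal{M}(G) = \mathcal{M}(G')$ whenever $G$ and $G'$ are locally equivalent.

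For $C_7$, the rank-$2$ triples are exactly the seven cyclic consecutive triples $\{v_i, v_{i+1}, v_{i+2}\}$: in such a triple the middle vertex has both its cycle-neighbors inside $X$ and so contributes a zero row to the biadjacency matrix, leaving two independent rows; while for any other $3$-subset a direct inspection shows the resulting $3 \times 4$ matrix has full rank $3$. Each vertex $v_i$ of $C_7$ lies in exactly three such triples, and the union of the ``other two'' vertices is $\{v_{i-2}, v_{i-1}, v_{i+1}, v_{i+2}\}$, so $|N_2(v_i)| = 4$. Hence $\mathcal{M}(C_7)$ is the multiset consisting of seven $4$'s.

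For each $F_i$, I would enumerate $\mathcal{T}_2(F_i)$ by a short case analysis on when the $3 \times 4$ biadjacency matrix has rank $2$: either (a)~some row vanishes (a vertex of $X$ has its entire open neighborhood inside $X$), (b)~two rows coincide, or (c)~the three rows sum to zero. Carrying this out with the specific adjacencies of $F_1, F_2, F_3$ yields, respectively, $\mathcal{M}(F_1) = \{4,4,4,4,6,6,6\}$ (the central vertices $v_1, v_4, x$ each lie in three rank-$2$ triples whose other members exhaust $V \setminus \{v\}$), $\mathcal{M}(F_2) = \{6,6,6,6,6,6,6\}$, and $\mathcal{M}(F_3) = \{2,2,2,2,2,2,6\}$ (only three rank-$2$ triples exist, all passing through $v_4$). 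None of these matches $\mathcal{M}(C_7)$, so none of $F_1, F_2, F_3$ is locally equivalent to $C_7$.

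The main obstacle is pure bookkeeping: there are only $\binom{7}{3} = 35$ triples per graph and each rank computation is elementary, but one must be careful with cases (b) and (c)---in particular, the triple $\{x, v_2, v_3\}$ in $F_1$ and $\{v_2, v_4, v_6\}$ in $F_2$ are rank-$2$ via sum-to-zero, and are easy to miss on a quick pass. Once the three tables of rank-$2$ triples are in hand, the multisets $\mathcal{M}$ can be read off and compared directly.
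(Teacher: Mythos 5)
Your proposal is correct and rests on the same idea as the paper's proof: by Lemma~\ref{lem:cutrank} the cut-ranks of $3$-element vertex subsets form a local-equivalence invariant, and the rank-$2$ triples of $C_7$ (exactly the seven consecutive triples) are incompatible with those of $F_1$, $F_2$, $F_3$. The only difference is packaging---you aggregate the data into the vertex-indexed multiset $\mathcal{M}(G)$, whereas the paper compares the rank-$2$ triples through one distinguished vertex---and your computed multisets for all four graphs check out.
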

\begin{figure}
  \centering
  \tikzstyle{v}=[circle, draw, solid, fill=black, inner sep=0pt, minimum width=3pt]
  \tikzstyle{xx}=[ draw, inner sep=3pt, minimum width=3pt]
  \tikzstyle{c}=[circle, draw, inner sep=2pt, minimum width=3pt]
  \newcommand\cyclepicture[1]{  \begin{tikzpicture}[baseline=-3pt]
      \foreach \i in {1,2,...,7} {    \node [v] (v\i) at   (360*\i/7-90:.5)   {} ; }
      \draw (v1)--(v2)--(v3)--(v4)--(v5)--(v6)--(v7)--(v1);
      \node [xx] at (v7) {};
      \foreach \y in {#1} {\node [c] at (v\y) {};}
    \end{tikzpicture}}
  \newcommand\wheelpicture[2]{\begin{tikzpicture}[baseline=-3pt]
      \foreach \i in {1,2,...,6} {    \node [v] (v\i) at   (360*\i/6-60:.5)   {} ; }
      \draw (v1)--(v2)--(v3)--(v4)--(v5)--(v6)--(v1);
      \node[v](x) at (0,0) {};
      \node [xx] at (x) {};
      \foreach \y in {#1}{\draw (x)--(v\y);}
      \foreach \y in {#2} {\node [c] at (v\y) {};}
    \end{tikzpicture}}
  \begin{tabular}{{c|l}}
    $C_7$ \cyclepicture{}&
    \cyclepicture{1,2}\cyclepicture{5,6}\cyclepicture{1,6}\\\hline
    $F_1$ \wheelpicture{1,4}{}&
    \wheelpicture{1,4}{1,4}
  \wheelpicture{1,4}{2,3}
  \wheelpicture{1,4}{5,6}
  \\\hline
  $F_2$ \wheelpicture{1,3,5}{}&
  \wheelpicture{1,3,5}{1,4}
  \wheelpicture{1,3,5}{2,5}
  \wheelpicture{1,3,5}{3,6}
  \\\hline
  $F_3$ \wheelpicture{1,2,4,6}{}&
  \wheelpicture{1,2,4,6}{1,4}
  \end{tabular}

  \caption{List of all $3$-vertex sets having cut-rank $2$ containing a
    fixed vertex $x$ denoted by a square.}

  \label{fig:table}
\end{figure}

\begin{proof}
        Suppose that $F_i$ is locally equivalent to $C_7$.
        Then $\rho_{F_i}(X)=\rho_{C_7}(X)$ for all $X\subseteq V(C_7)$
        by Lemma~\ref{lem:cutrank}.
        Let $x$ be the vertex in the center of $F_i$, see
        Figure~\ref{fig:table}.
        By symmetry of $C_7$, we may assume that $x$ is mapped to a
        particular vertex in $C_7$. 
	Figure~\ref{fig:table} presents all vertex subsets of size $3$
        having cut-rank $2$ and containing $x$ in graphs $C_7$, $F_1$,
        $F_2$, $F_3$.
        It is now easy to deduce that no bijection on the vertex set
        will map these subsets correctly.
\end{proof}

We are now ready to prove Proposition~\ref{prop:optimal}.
\begin{proof}[Proof of Proposition~\ref{prop:optimal}]
 (1)
 By Lemma~\ref{lem:vertexminor-cycle}, it is enough to check that
 $\K_3\mat\K_3$ is not locally equivalent to $C_6$. This can be
 checked easily.

\smallskip
\noindent       
  (2)
  By applying local complementation at roots, we can easily see that 
  $H_n$ has a vertex-minor isomorphic to $\K_n\mat\K_n$.
        Lemma~\ref{lem:primevm} states that all $7$-vertex prime
        vertex-minors of $H_n$ are $F_1$, $F_2$, and $F_3$. 
        Lemma~\ref{lem:notequiv} proves that none of them are locally
        equivalent to $C_7$.
        Thus $H_n$ has no vertex-minor isomorphic to $C_7$
        and therefore $\K_n\mat\K_n$ has no vertex-minor isomorphic to $C_7$.
\end{proof}

\section{Discussions}
\subsection{Vertex-minor ideals}
A set $I$ of graphs is called  a \emph{vertex-minor ideal}
if for all $G\in I$, all graphs isomorphic to a vertex-minor of $G$
are also contained in $I$.
We can interpret theorems in this paper in terms of vertex-minor
ideals as follows. This formulation allows us to appreciate why these
theorems are optimal.
\begin{COR}
  Let $I$ be a vertex-minor ideal. 
  \begin{description}
    \item  [Theorem~\ref{thm:boundedsize}]
      Graphs in $I$ have bounded number of vertices
      if and only if
      $\{\S_n:n\ge 3\}\not\subseteq I$.
   \item [Theorem~\ref{thm:boundedsize}]
     Connected graphs in $I$ have
     bounded number of vertices 
     if and only if $\{K_n:n\ge 3\}\not\subseteq I$.
   \item [Theorem~\ref{thm:boundedsize}]
     Graphs in $I$ have bounded number of edges
     if and only if 
     $\{K_n:n\ge 3\}\not\subseteq I$
     and $\{\S_n\mat\S_n:n\ge 1\}\not\subseteq I$.
   \item [Theorem~\ref{thm:mainthm}]
     Prime graphs in $I$ have bounded number of vertices
     if and only if 
     $\{C_n:n\ge 3\}\not\subseteq I$
     and $\{\K_n\mat\K_n:n\ge 3\}\not\subseteq I$.
  \end{description}
\end{COR}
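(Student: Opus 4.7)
The plan is to handle all four equivalences in parallel, since each is essentially a repackaging of the corresponding existence theorem and only the invoked theorem changes. For each item, I would prove the ``if'' direction by contraposition using Theorem~\ref{thm:boundedsize} or Theorem~\ref{thm:mainthm}, and prove the ``only if'' direction by exhibiting arbitrarily large members of each listed family possessing the prescribed connectivity or density.

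For the ``if'' direction of (4), suppose $\{C_n:n\ge 3\}\not\subseteq I$ and $\{\K_n\mat\K_n:n\ge 3\}\not\subseteq I$, and pick $n_1,n_2\ge 3$ with $C_{n_1}\notin I$ and $\K_{n_2}\mat\K_{n_2}\notin I$. Set $n=\max(n_1,n_2)$. By repeatedly applying $*v\setminus v$ along a cycle, $C_{n_1}$ is a vertex-minor of $C_n$; by deleting $n-n_2$ matched pairs, $\K_{n_2}\mat\K_{n_2}$ is an induced subgraph, hence a vertex-minor, of $\K_n\mat\K_n$. Because $I$ is closed under vertex-minors, $C_n\notin I$ and $\K_n\mat\K_n\notin I$. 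Theorem~\ref{thm:mainthm} applied to $n$ yields an $N$ such that every prime graph on at least $N$ vertices contains $C_n$ or $\K_n\mat\K_n$ as a vertex-minor and therefore lies outside $I$. Items (1), (2), and (3) follow by the same pattern, invoking the respective clauses of Theorem~\ref{thm:boundedsize} (with ``prime'' replaced by ``all'', ``connected'', or ``at least $N$ edges'').

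For the ``only if'' direction I must verify that each listed family consists of graphs of arbitrarily large size satisfying the stated structural property. In (1), $\abs{V(\S_n)}=n$; in (2), each $K_n$ is connected with $n$ vertices; in (3), both $K_n$ and $\S_n\mat\S_n$ have unboundedly many edges. For (4), the required facts are that $C_n$ is prime for $n\ge 5$ and that $\K_n\mat\K_n$ is prime for $n\ge 3$; granting these, each family contains arbitrarily large prime graphs, so any inclusion $\{C_n:n\ge 3\}\subseteq I$ or $\{\K_n\mat\K_n:n\ge 3\}\subseteq I$ would immediately contradict the boundedness of prime members of $I$.

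There is no genuine obstacle here; the corollary is a direct reformulation of the theorems of Sections~\ref{sec:boundsize}--\ref{sec:nonequiv}. The only points deserving a line of verification are the monotonicity remarks inside each family (in particular that $\K_{n_2}\mat\K_{n_2}$ is a vertex-minor of $\K_n\mat\K_n$ for $n\ge n_2$, and analogously for cycles and for $\S_n\mat\S_n$) and the primality of the witness graphs used in (4); both are routine.
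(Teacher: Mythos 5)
Your proof is correct and takes exactly the route the paper intends: the paper states this corollary with no proof at all, presenting it as a direct reformulation of Theorems~\ref{thm:boundedsize} and~\ref{thm:mainthm}, and the only details you flag for verification (downward monotonicity within each family under vertex-minors, and primality of $C_n$ for $n\ge 5$ and of $\K_n\mat\K_n$) are indeed the only ones needed; the primality facts are asserted in Section~\ref{sec:nonequiv}. Nothing further is required.
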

\subsection{Rough structure}
We can also regard Theorem~\ref{thm:mainthm} as a rough structure
theorem 
on graphs having no vertex-minor isomorphic to $C_n$ or $\K_n\mat\K_n$
as follows.
The \emph{$1$-join} of two graphs $G_1$, $G_2$ with two specified
vertices $v_1\in V(G_1)$, $v_2\in V(G_2)$
is the graph obtained by making the disjoint union of $G_1\setminus v_1$
and $G_2\setminus v_2$ and adding edges to join neighbors of $v_1$ in
$G_1$ with neighbors of $v_2$ in $G_2$.
\begin{COR}
  For each $n$,  there exists $N$ such that 
  every graph having no vertex-minor isomorphic to $C_n$ or
  $\K_n\mat\K_n$ 
  can be built from graphs on at most $N$ vertices
  by repeatedly taking $1$-join operation. 
\end{COR}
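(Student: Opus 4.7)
The plan is a straightforward induction on $\abs{V(G)}$ that peels off splits one at a time. Fix $n$ and let $N$ be the constant from Theorem \ref{thm:mainthm}, so every prime graph on at least $N$ vertices has $C_n$ or $\K_n\mat\K_n$ as a vertex-minor. Let $G$ be a graph with no vertex-minor isomorphic to $C_n$ or $\K_n\mat\K_n$. If $G$ is prime then $\abs{V(G)} < N$ and $G$ is already a single building block. Otherwise $G$ has a split $(A,B)$ with associated subsets $A_0\subseteq A$ and $B_0\subseteq B$, and I will decompose $G$ as a $1$-join of two strictly smaller graphs that inherit the vertex-minor hypothesis.

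Define $G_A$ on vertex set $A\cup\{b^\ast\}$ with $G_A[A]=G[A]$ and $b^\ast$ adjacent exactly to the vertices of $A_0$; symmetrically define $G_B$ on $B\cup\{a^\ast\}$ with $a^\ast$ adjacent exactly to $B_0$. By the definition of split, $G$ is precisely the $1$-join of $G_A$ and $G_B$ using markers $b^\ast$ and $a^\ast$, and because $\abs{A},\abs{B}\ge 2$ we have $\abs{V(G_A)},\abs{V(G_B)} < \abs{V(G)}$. The crucial observation is that each of $G_A$, $G_B$ still avoids the two forbidden vertex-minors. Indeed, if $B_0$ is nonempty, pick any $b_0\in B_0$; then $b_0$ has neighborhood $A_0$ in $G$, so $G_A$ is isomorphic to the induced subgraph $G\setminus(B\setminus\{b_0\})$, and any vertex-minor of $G_A$ is a vertex-minor of $G$. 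If $B_0=\emptyset$ then $G_A$ is $G[A]$ together with an isolated vertex, and since $C_n$ and $\K_n\mat\K_n$ are both connected, an isolated vertex cannot participate in any vertex-minor isomorphic to either, so any such vertex-minor of $G_A$ would already be a vertex-minor of $G[A]\subseteq G$. The same reasoning applies to $G_B$. Applying the induction hypothesis to $G_A$ and $G_B$ and then gluing their $1$-join decompositions along the single $1$-join that produces $G$ yields the desired decomposition.

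The one subtlety, handled in the preceding paragraph, is the degenerate case where one of $A_0$, $B_0$ is empty: there a $1$-join specializes to a disjoint union and the auxiliary marker is isolated, but connectedness of the forbidden graphs ensures no harm is done. Beyond this, the argument is essentially bookkeeping once one has Theorem \ref{thm:mainthm}: the split decomposition translates cleanly into iterated $1$-joins, each inductive step strictly reduces the number of vertices in each new piece, and the base case of the induction is exactly the prime case resolved by the main theorem.
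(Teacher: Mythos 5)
Your proof is correct and is exactly the argument the paper implicitly relies on (the corollary is stated without proof as an immediate consequence of Theorem~\ref{thm:mainthm}): peel off a split, replace each side by its marker-vertex graph, observe that each piece is an induced subgraph of $G$ (or a disjoint union with an isolated marker when $A_0$ or $B_0$ is empty) and hence inherits the forbidden-vertex-minor hypothesis, and induct until the pieces are prime, where the main theorem bounds their size. No gaps.
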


\section*{Acknowledgment}
This research was done while the authors were visiting University of Hamburg.
The authors would like to thank Reinhard Diestel for hosting them.


\begin{thebibliography}{10}

\bibitem{Bouchet1987a}
A.~Bouchet.
\newblock Isotropic systems.
\newblock {\em European J. Combin.}, 8(3):231--244, 1987.

\bibitem{Bouchet1987b}
A.~Bouchet.
\newblock Reducing prime graphs and recognizing circle graphs.
\newblock {\em Combinatorica}, 7(3):243--254, 1987.

\bibitem{Bouchet1988}
A.~Bouchet.
\newblock Graphic presentations of isotropic systems.
\newblock {\em J. Combin. Theory Ser. B}, 45(1):58--76, 1988.

\bibitem{Bouchet1989a}
A.~Bouchet.
\newblock Connectivity of isotropic systems.
\newblock In {\em Combinatorial Mathematics: Proceedings of the Third
  International Conference (New York, 1985)}, volume 555 of {\em Ann. New York
  Acad. Sci.}, pages 81--93, New York, 1989. New York Acad. Sci.

\bibitem{Bouchet1994}
A.~Bouchet.
\newblock Circle graph obstructions.
\newblock {\em J. Combin. Theory Ser. B}, 60(1):107--144, 1994.

\bibitem{CDOV2009}
C.~Chun, G.~Ding, B.~Oporowski, and D.~Vertigan.
\newblock Unavoidable parallel minors of 4-connected graphs.
\newblock {\em J. Graph Theory}, 60(4):313--326, 2009.

\bibitem{Cunningham1982}
W.~H. Cunningham.
\newblock Decomposition of directed graphs.
\newblock {\em SIAM J. Algebraic Discrete Methods}, 3(2):214--228, 1982.

\bibitem{Diestel2010}
R.~Diestel.
\newblock {\em Graph theory}, volume 173 of {\em Graduate Texts in
  Mathematics}.
\newblock Springer, Heidelberg, fourth edition, 2010.

\bibitem{DC2004}
G.~Ding and P.~Chen.
\newblock Unavoidable doubly connected large graphs.
\newblock {\em Discrete Math.}, 280(1-3):1--12, 2004.

\bibitem{DOOV1996}
G.~Ding, B.~Oporowski, J.~Oxley, and D.~Vertigan.
\newblock Unavoidable minors of large {$3$}-connected binary matroids.
\newblock {\em J. Combin. Theory Ser. B}, 66(2):334--360, 1996.

\bibitem{GSH1989}
C.~P. Gabor, K.~J. Supowit, and W.~L. Hsu.
\newblock Recognizing circle graphs in polynomial time.
\newblock {\em J. Assoc. Comput. Mach.}, 36(3):435--473, 1989.

\bibitem{GO2009}
J.~Geelen and S.~Oum.
\newblock Circle graph obstructions under pivoting.
\newblock {\em J. Graph Theory}, 61(1):1--11, 2009.

\bibitem{Geelen1995}
J.~F. Geelen.
\newblock {\em Matchings, matroids and unimodular matrices}.
\newblock PhD thesis, University of Waterloo, 1995.

\bibitem{Naji1985}
W.~Naji.
\newblock Reconnaissance des graphes de cordes.
\newblock {\em Discrete Math.}, 54(3):329--337, 1985.

\bibitem{OOT1993}
B.~Oporowski, J.~Oxley, and R.~Thomas.
\newblock Typical subgraphs of {$3$}- and {$4$}-connected graphs.
\newblock {\em J. Combin. Theory Ser. B}, 57(2):239--257, 1993.

\bibitem{Oum2004}
S.~Oum.
\newblock Rank-width and vertex-minors.
\newblock {\em J. Combin. Theory Ser. B}, 95(1):79--100, 2005.

\bibitem{Oum2006}
S.~Oum.
\newblock Approximating rank-width and clique-width quickly.
\newblock {\em ACM Trans. Algorithms}, 5(1):Art. 10, 20, 2008.

\bibitem{OS2004}
S.~Oum and P.~Seymour.
\newblock Approximating clique-width and branch-width.
\newblock {\em J. Combin. Theory Ser. B}, 96(4):514--528, 2006.

\bibitem{Ramsey1930}
F.~P. Ramsey.
\newblock On a problem of formal logic.
\newblock {\em Proc. London Math. Soc.}, s2-30:264--286, 1930.

\end{thebibliography}
\end{document}